\numberwithin{equation}{section}
\newtheorem{thm}{Theorem}[section]
\newtheorem{lem}[thm]{Lemma}
\newtheorem{prop}[thm]{Proposition}
\newtheorem{cor}[thm]{Corollary}
\theoremstyle{definition}
\newtheorem{defn}[thm]{Definition}
\theoremstyle{remark}
\newtheorem{rmk}[thm]{Remark}
\newtheorem{ex}[thm]{Example}
\newtheorem{notn}[thm]{Notation}
\newtheorem{convention}[thm]{Convention}
\newcommand\Om{\Omega}
\newcommand\vp{\varphi}
\newcommand \ve{\varepsilon}
\newcommand{\cat}{\mathbf}
\newcommand{\op}{\mathcal}
\newcommand{\ob}{\operatorname{Ob}}
\newcommand{\mor}{\operatorname{Mor}}
\newcommand\cotens [3]{{#1}\underset {#3}\square {#2}}
\newcommand\egal[2]{\overset {#1}{\underset {#2}\rightrightarrows }}
\newcommand\adjunct[2]{\overset {#1}{\underset {#2}\rightleftarrows }}
\newcommand\fib{\ar @{->>} [r]} 
\newcommand\cof{\;\ar@{ >->}[r]} 
\newcommand\ccof{\;\ar@{ >->}[rr]} 
\newcommand\wefib{\fib ^{\sim}} 
\newcommand\wecof{\cof^{\sim}} 
\renewcommand{\Bar}{\mathscr B}
\renewcommand{\Bar}{\mathscr B}
\newcommand\D{\mathcal D}
\newcommand\tot{\operatorname{Tot}}
\newcommand\map{\operatorname{Map}}
\newcommand\can{\operatorname{Can}}
\newcommand\ext{\operatorname{Ext}}
\newcommand\colim{\operatorname{colim}}
\newcommand\hocolim{\operatorname{hocolim}}
\newcommand\T{\mathbb T}
\newcommand\K{\mathbb K}
\begin{document}

\title[Homotopic descent and codescent]{A general framework for\\ homotopic descent and codescent}

\author{Kathryn Hess}

\address{Institut de g\'eom\'etrie, alg\`ebre et topologie (IGAT) \\
    \'Ecole Polytechnique F\'ed\'erale de Lausanne \\
    CH-1015 Lausanne \\
    Switzerland}
\email{kathryn.hess@epfl.ch}
\keywords{Descent, completion, simplicially enriched categories, Adams spectral sequence, monads and comonads}
\subjclass[2000]{Primary: 55U10, Secondary: 55T15, 55U35, 18G30, 18G40, 18G55, 19D50, 14F99}
    
\begin{abstract} In this paper we elaborate a general homotopy-theoretic framework in which to study problems of descent  and completion and  of their duals, codescent and cocompletion.  Our approach to homotopic (co)descent  and to derived (co)completion can be viewed as $\infty$-category-theoretic, as our framework is constructed in the universe of simplicially enriched categories, which are a model for $(\infty, 1)$-categories.

We provide general criteria, reminiscent of Mandell's theorem on $E_{\infty}$-algebra models of $p$-complete spaces, under which homotopic (co)descent is satisfied. Furthermore, we construct general descent and codescent spectral sequences, which we interpret in terms of derived (co)completion and homotopic (co)descent.

 We show that a number of very well-known spectral sequences, such as the unstable and stable Adams spectral sequences, the Adams-Novikov spectral sequence and the descent spectral sequence of a map, are examples of general (co)descent spectral sequences.  There is also a close relationship between the Lichtenbaum-Quillen conjecture and homotopic descent along the Dwyer-Friedlander map from algebraic K-theory to \'etale K-theory.  Moreover, there are intriguing analogies between derived cocompletion (respectively, completion) and homotopy left (respectively, right) Kan extensions and their associated assembly (respectively, coassembly) maps.
\end{abstract}

\date{\today}

\maketitle

\tableofcontents

\section{Introduction}

The notions of descent and completion have long played a significant role in algebraic geometry, number theory, category theory and homotopy theory.  In this paper we elaborate a general homotopy-theoretic framework in which to study problems of descent  and completion and  of their duals, codescent and cocompletion.  Our approach to homotopic (co)descent  and to derived (co)completion can be viewed as $\infty$-category-theoretic, as our framework is constructed in the universe of simplicially enriched categories, which are a model for $(\infty, 1)$-categories (or $\infty$-categories, as homotopy theorists often call them).

We provide general criteria, expressed in terms of derived (co)completeness in a way reminiscent of the Main Theorem in \cite{mandell}, under which homotopic (co)descent is satisfied (Theorems \ref{thm:equiv-monad} and \ref{thm:equiv-comonad}).  We also prove homotopic versions of Beck's classical characterization of (co)monads of (co)descent type (Theorems \ref{thm:htpic-coBeck} and \ref{thm:htpic-Beck}). Furthermore, we construct general descent and codescent spectral sequences, which we interpret in terms of derived (co)completion and homotopic (co)descent.

To illustrate the breadth and flexibility of the framework built here, we explore the relationship between  Baum-Connes and Farrell-Jones-type isomorphism conjectures and derived cocompletion (Corollary \ref{cor:assembly}) on the one hand and   between the embedding calculus and derived completion on the other hand (Remark \ref {rmk:calculus}). We show moreover that a number of very well-known spectral sequences, such as the unstable and stable Adams spectral sequences (sections \ref{sec:descSS} and \ref{sec:ass}), the Adams-Novikov spectral sequence (section \ref{sec:anss}) and the descent spectral sequence of a map (section \ref{sec:dualhtpicGroth}), are examples of general (co)descent spectral sequences.  Finally, we describe the close relationship between the Lichtenbaum-Quillen conjecture and homotopic descent along the Dwyer-Friedlander map from algebraic K-theory to \'etale K-theory (section \ref{sec:lq}).

We begin in section \ref{sec:classical}  by recalling the framework of classical descent and codescent theory, expressed in terms of monads and comonads and of their associated Eilenberg-Moore categories of algebras and coalgebras  (Definitions \ref{defn:Talg} and \ref{defn:Kcoalg}).    We also remind the reader of important, well-known examples: Grothendieck descent theory along a ring homomorphism and codescent along a continuous map, with its application to bundle theory.  We generalize both of these examples to arbitrary monoidal categories,  describing descent along a morphism of monoids  and codescent along a morphism of comonoids. We emphasize the description of Grothendieck descent theory in terms of the descent co-ring associated to the fixed monoid morphism (Example \ref{ex:co-ring}) and its dual in terms of the codescent ring associated to the fixed comonoid morphism (Example \ref{ex:codescring}).

Section \ref{sec:simplicial} is devoted to the study of simplicial structures related to descent and codescent.  We begin by recalling the well-known cobar construction associated to a monad and bar construction associated to a comonad, of which the Amitsur complex (Remark \ref{rmk:amitsur})  and the \v Cech nerve (Remark \ref{rmk:cech}) are important examples, respectively. We prove a technical homotopy-theoretic result about each of these constructions (Lemmas \ref{lem:Reedyfib} and \ref{lem:Reedycofib}) required at two critical junctures later in the paper.  We then discuss how simplicial enrichment of a category induces simplicial enrichment on associated Eilenberg-Moore categories of algebras and coalgebras and explore various properties of these induced enrichments.  In particular, we prove that if a (co)monad is appropriately compatible with the simplicial enrichment of the category on which it acts, then the associated canonical (co)descent data functor (Definitions \ref{defn:desccat} and \ref{defn:codesccat}) is simplicial (Proposition \ref{prop:simplfunct}), and certain of its components are actually isomorphisms (Proposition \ref{prop:adj-iso}), which turns out to be of crucial importance in the next sections.

Derived completion along a monad and cocompletion along a comonad are the subjects of section \ref{sec:derivedcompletion}.  Given a simplicial model category $\cat M$ on which a monad $\T$ acts compatibly with the simplicial structure, we define notions of $\T$-equivalence and of $\T$-complete objects (Definitions \ref{defn:Tequiv} and \ref{defn:Tcompobj}), and provide examples.  We define the $\T$-completion of an object in $\cat M$ in terms of the totalization of the $\T$-cobar construction and prove its homotopy invariance.  We then introduce the related notions of strict $\T$-completeness and strong $\T$-completeness, which are also formulated in terms of the $\T$-cobar construction.  We give examples and show that strictly $\T$-complete, fibrant objects are $\T$-complete.  Finally, we point out that strictly $\T$-complete objects may be best understood as $\infty$-$\T$-algebras, i.e, as $\T$-algebras, up to an infinite family of higher structure maps.  We then dualize all of our theory of derived completion, to formulate a theory of derived cocompletion along a comonad.  To conclude the section, we consider the relationship between derived cocompletion and assembly and, dually, between derived completion and coassembly.

In section \ref{sec:htpicdesc} we are finally ready to define homotopic (co)descent for simplicially enriched categories (Definitions \ref{defn:htpicdesc} and \ref{defn:htpiccodesc}).    In the case of a (co)monad acting on a simplicial model category, we provide criteria for  homotopic (co)descent (Theorems \ref{thm:equiv-monad} and \ref{thm:equiv-comonad}), which are  analogous to the Main Theorem in Mandell's paper \cite{mandell}.  The criteria are formulated in terms of strict (co)completion, motivating our introduction of this notion. We point out that versions of ``faithfully flat (co)descent'' (Corollaries \ref{cor:ffd} and \ref{cor:ffcod}) are immediate consequences of these criteria.  We then explain how to construct the (co)descent spectral sequence that arises naturally from the action of a (co)monad on a simplicial model category as special cases of the extended homotopy spectral sequence of Bousfield and Kan \cite[X.6]{bousfield-kan}. The notions of both derived (co)completion and homotopic (co)descent play a role in interpreting these spectral sequences, which we can view as interpolating backwards along the simplicial canonical (co)descent data functor.  There is a long tradition in topology of studying generalizations of the Adams spectral sequence of this sort (e.g., \cite{bendersky-curtis-miller}, \cite{bendersky-hunton}, \cite{bendersky-thompson}, \cite{dwyer-miller-neisendorfer} and \cite {carlsson-chgring}), to which our construction and interpretation of this spectral sequence clearly adheres.

The examples motivating our work are studied in section \ref{sec:htpicGroth}.  We begin by developing the general theory of homotopic Grothendieck descent and of derived completion along a morphism of monoids in a monoidal model category.  We observe in particular that for the monad associated to a monoid map $\vp:B\to A$ to satisfy homotopic descent means that the category of $B$-modules is locally homotopy Tannakian, in a reasonable sense (Definition \ref{defn:htpicGrothdesc} and Remark \ref{rmk:lhT}).  We also show that a version of faithfully flat descent holds in this homotopical context (Corollary \ref{cor:ffd-groth}).

We then sketch a number of intriguing concrete applications, which will be further developed in later articles.  In particular, we indicate how both the stable Adams spectral sequence and the Adams-Novikov spectral sequence arise as descent spectral sequences associated to unit maps of ring spectra.  Fixing some prime $\ell$ and a noetherian $\mathbb Z[\frac 1 \ell]$-algebra $A$, we then describe the close relationship between the Lichtenbaum-Quillen conjecture for $A$ and homotopic descent along  the Dwyer-Friedlander map from the algebraic K-theory spectrum of $A$ to its \'etale K-theory spectrum. Finally, we sketch an application of homotopic Grothendieck descent to the construction of a spectral sequence converging from Quillen homology of a ring spectrum to homotopy.

Turning all the arrows around, we then develop a theory of homotopic Grothen\-dieck codescent and derived cocompletion along any morphism in a cartesian model category.  We show in particular that the comonad associated to pulling back over a Kan fibration of simplicial sets admitting a section satisfies homotopic codescent.

In the appendices the reader will find the proof of Theorem \ref{thm:assembly}, as well as a brief review of the various simplicial and model structures that come in handy in this article.

\subsection{Perspectives}

\begin{itemize}

\item We have chosen in this paper to work with simplicial model categories, both because they are a tractable model for $(\infty,1)$-categories and because, as Rezk, Schwede and Shipley proved in \cite{rezk-schwede-shipley}, a large class of model categories are at least Quillen equivalent to simplicial model categories.  In collaboration with P. M\"uller, we are developing analogous theories of homotopic descent and codescent for dg-categories (categories enriched over chain complexes) and for spectral categories (categories enriched over spectra).  We suspect that much of the theory translates directly into both of these contexts, opening the door to new and significant applications.

\item It would be very interesting to determine how the theory developed here is related to Lurie's theory of $(\infty,1)$-monads \cite[Chapter 3]{lurie} and, in particular to his Barr-Beck-type monadicity theorem (Theorem 3.4.5 in \cite{lurie}).  The approach taken here is quite different from that of Lurie. In particular the concepts applied here are considerably less sophisticated than those Lurie introduces, perhaps because our goals were different.

Most of the results in this paper are stated in terms of monads on simplicially enriched categories such that the underlying endofunctor of the monad is a simplicial functor, and the multiplication and the unit of the monad are simplicial natural transformations, which are concepts just powerful enough for our purposes. The goal of this paper is not to develop the ultimate theory of descent, but rather to set up a minimally complicated framework that is general enough to describe a wide range of particular descent theories that are relevant in homotopy theory and their related spectral sequences.

\item It seems likely that many well-known (co)descent-type spectral sequences and descent-related notions, beyond those treated in section \ref{sec:htpicGroth}, can be expressed within the framework developed in this paper.  For example, the homotopy fixed point spectral sequence, as set up by  Davis in \cite {davis}, is almost certainly a descent spectral sequence for an appropriate monad, while the \'etale cohomological descent spectral sequence developed by Jardine \cite[\S 6.1]{jardine} is most probably a special case of a codescent spectral sequence for a well-chosen comonad.  

\item Motivated by the close links between descent theory and Galois theory, in joint work with V. Karpova, we are exploring explore the relationship between homotopic Hopf-Galois theory, as developed in \cite{hess:hhg}, and homotopic descent theory.
\end{itemize} 

\subsection{Notation and conventions}
\begin{itemize}
\item Let $\cat C$ be a category, and let $A,B\in \ob \cat C$.  In these notes, the class of morphisms from $A$ to $B$ is denoted $\cat C(A,B)$.  The identity morphism on an object $A$ is usually denoted $A$ as well.
\medskip

\item If $C$ is an object of a category $\cat C$, then $C/\cat C$ denotes the under category of morphisms with domain $C$, while $\cat C/C$ denotes the overcategory of morphisms with codomain $C$.
\medskip

\item Arbitrary ordinary categories are denoted $\cat C$ or $\cat D$, while simplicially enriched categories are called $\cat S$ or $\cat S'$.  Finally, we write $\cat M$ or $\cat M'$ for model categories, whether or not they are simplicial.
\medskip

\item  If $F:\cat C\adjunct{}{} \cat D:U$ is a pair of adjoint functors, then $F$ (for ``free'') is the left adjoint, while $U$ (for ``underlying'') is the right adjoint.  We refer to this adjoint pair as the \emph{$(F,U)$-adjunction}.
\medskip

\item Let $\bold \Delta$ denote the category with $\ob \bold \Delta =\mathbb N$, where $\bold \Delta (m,n)$ is the set of order-preserving maps from $\{0,...,m\}$ to $\{0,..,n\}$. The standard $n$-simplex is denoted $\Delta[n]$.

 If $\cat C$ is any other category, then $\cat C^{\bold \Delta^{op}}$ (respectively, $\cat C^{\bold \Delta}$) is the category of simplicial (respectively, cosimplicial) objects in $\cat C$.  We write $\cat {sSet}$ for $\cat {Set}^{\bold \Delta ^{op}}$.
\medskip

\item If $X$ is an object in $\cat C$, then $cc^\bullet X$ denotes the constant cosimplicial object that is $X$ in each level, where all cofaces and codegeneracies are identity maps.  Similarly, $cs_{\bullet}X$ denotes the constant simplicial object that is $X$ in each level.
\medskip

\item If $\cat M$ is a model category, we always assume that the categories $\cat M^{\bold \Delta}$ and $\cat M^{\bold \Delta^{op}}$ are endowed with their Reedy model category structure \cite [Ch 15]{hirschhorn}.  The details of their Reedy structure that are necessary for this paper can be found in the appendix \ref{sec:reedy}.
\end{itemize}

\subsection{Acknowledgments}

The author would like to express great appreciation to Jack Morava, for the highly stimulating exchanges of email, motivated by \cite{morava}, that launched this project.  She would also like to thank Bill Dwyer very warmly for having suggested that she consider both the general (co)monad approach to (co)descent and $\infty$-categories in the context of this project.   She is moreover deeply grateful  Emmanuel Farjoun  for having provided the opportunity to talk about this project in an informal seminar at Hebrew University, where the excellent questions asked by the audience greatly aided in refining the theory. Finally, she would like to thank Daniel Davis for having pointed out a mistake in the original proof of Proposition \ref{prop:criterion}, and an anonymous referee for having asked very good questions.

\section{Classical descent and codescent}\label{sec:classical}

Here we recall the classical theory of descent and codescent, as expressed in terms of monads and comonads.  We take some care in doing so, to establish clearly notation and terminology that we need throughout the paper and to ensure that the reader understands exactly what we are generalizing when we define homotopic descent and codescent  in section \ref{sec:htpicdesc}.

We refer the reader to \cite{mesablishvili} for further details.  

\subsection{Foundations of the theory of (co)descent}

\subsubsection{(Co)monads and their (co)algebras}
\begin{defn} Let $\cat C$ be a category.  A \emph{monad} on $\cat C$ consists of an endofunctor $T:\cat C\to \cat C$, together with natural transformations $\mu:T\circ T \to T$ and $\eta:Id_{\cat C}\to T$ such that $\mu$ is appropriately associative and unital.  In other words, $\T=(T,\mu, \eta)$ is a monoid in the category of endofunctors of $\cat C$, which is monoidal under composition.

Dually, a \emph{comonad} on $\cat C$ an endofunctor $K:\cat C\to \cat C$, together with natural transformations $\Delta:K\to K\circ K$ and $\ve:K\to Id_{\cat C}$ such that $\Delta$ is appropriately coassociative and counital., i.e., $\K=(K,\Delta, \ve)$ is a comonoid in the category of endofunctors of $\cat C$.
\end{defn}

\begin{ex}  If $F:\cat C \adjunct{}{} \cat D:U$ is a pair of adjoint functors, with unit $\eta:Id_{\cat C}\to UF$ and counit $\ve: FU\to Id_{\cat D}$, then $(UF,U\ve F, \eta)$ is a monad on $\cat C$ and $(FU, F\eta U, \ve)$ is a comonad on $\cat D$.
\end{ex}

To any monad we can associate a useful category of ``algebras.''

\begin{defn}\label{defn:Talg}  Let $\T=(T,\mu,\eta)$ be a monad on a category $\cat C$.  The objects of the \emph{Eilenberg-Moore category of $\T$-algebras}, denoted $\cat C^\T$,  are pairs $(A, m)$, where $A\in \ob \cat C$ and $m\in \cat C(TA, A)$, which is appropriately associative and unital, i.e.,
$$m\circ Tm=m\circ \mu _{A}\quad \text{and}\quad m\circ \eta_{A}=Id_{A}.$$  
A morphism in $\cat C^\T$ from $(A,m)$ to $(A',m')$ is a morphism $f:A\to A'$ in $\cat C$ such that $m'\circ Tf=f\circ m$.
\end{defn}

The category $\cat C^\T$ of $\T$-algebras  is related to the underlying category $\cat C$ as follows.

\begin{rmk}\label{rmk:T-adjunct}  Let $\T=(T,\mu,\eta)$ be a monad on a category $\cat C$. The forgetful functor $U^{\T}:\cat C^\T\to \cat C$ admits a left adjoint
$$F^{\T}:\cat C\to \cat C^\T,$$
called the \emph{free $\T$-algebra functor}, which is defined on objects by
$$F^{\T}(X) = (TX, \mu_{X})$$
and on morphisms by
$$F^{\T}(f)=Tf.$$

Note that $\T$ itself is the monad associated to the $(F^\T,U^\T)$-adjunction.  On the other hand, the comonad associated to the $(F^\T,U^\T)$-adjunction is 
$$\K^\T=(F^\T U^\T, F^\T \eta U^\T, \ve^\T),$$
where $\ve^\T$ is the counit of the $(F^\T,U^\T)$-adjunction, which is given by
$$(\ve^\T)_{(A,m)}=m:(TA,\mu _{A})\to (A,m).$$  
\end{rmk}

In the case of a monad $\T$ arising from an adjunction $F:\cat C \adjunct{}{} \cat D:U$, it is natural to  wonder about the relationship between $\cat D$ and the category of $\T$-algebras, which is mediated by the following comparison functor.

\begin{defn} Let $F:\cat C \adjunct{}{} \cat D:U$ be a pair of adjoint functors, with unit $\eta:Id_{\cat C}\to UF$ and counit $\ve: FU\to Id_{\cat D}$.  Let $\T$ denote the associated monad.  The \emph{canonical $\T$-algebra functor}
$$\can ^\T:\cat D\to \cat C^\T$$
is defined on objects by
$$\can ^\T(D)=(UD, U\ve_{D})$$
and on morphisms by
$$\can ^\T(f)=Uf.$$
If $\cat D$ admits coequalizers, then $\can^\T$ has a left adjoint, the ``indecomposables'' functor
$$\operatorname{Q}^\T:\cat C^\T\to \cat D,$$
which is defined on objects by
$$\operatorname{Q}^\T(A,m)=\operatorname{coequal}(FUFA\egal{Fm}{\ve_{FA}} FA).$$

The functor $U$ is \emph{monadic} if $\can ^\T$ is an equivalence of categories.
\end{defn}

\begin{rmk} \label{rmk:monad-diagram}For any adjunction $F:\cat C \adjunct{}{} \cat D:U$ with associated monad $\T$, the diagram
$$\xymatrix{\cat C \ar @<.7ex>[r]^-{F}\ar @{=} [d]&\cat D\ar @<.7ex>[l]^-{U}\ar[d]^{\can^\T}\\ {\cat C} \ar @<.7ex>[r]^-{F^\T}&\cat C^\T\ar @<.7ex>[l]^-{U^\T}}$$
commutes, i.e., $\can ^\T \circ F=F^\T$ and $U^\T \circ \can ^\T=U$.
\end{rmk}

If all one requires is for the canonical comparison functor to be fully faithful, then the following characterization, due to Beck (as formulated e.g., in \cite [Section 3.3]{barr-wells}), proves very useful.  

\begin{thm}\label{thm:triple} Let $F:\cat C \adjunct{}{} \cat D:U$ be a pair of adjoint functors, with unit $\eta:Id_{\cat C}\to UF$ and counit $\ve: FU\to Id_{\cat D}$.  Let $\T$ denote the associated monad. If $\cat D$ admits coequalizers, then $\can ^\T: \cat D\to \cat C^\T$ is fully faithful if and only if the counit $Q^\T\circ \can ^\T(D) \to D$ of the $(Q^\T, \can ^\T)$-adjunction is an isomorphism for all $D\in \ob \cat D$.
\end{thm}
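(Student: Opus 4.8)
The plan is to recognize the statement as an instance of the standard categorical fact that, for an adjunction $L\dashv R$, the right adjoint $R$ is fully faithful if and only if the counit of the adjunction is a natural isomorphism. The hypothesis that $\cat D$ admits coequalizers is used only to guarantee that the left adjoint $Q^\T$ of $\can^\T$, and hence the counit of the $(Q^\T,\can^\T)$-adjunction, actually exist; once that is in place the argument is purely formal. To fix notation, write $\tilde\eta:Id_{\cat C^\T}\to \can^\T Q^\T$ for the unit and $\tilde\ve:Q^\T\can^\T\to Id_{\cat D}$ for the counit of the $(Q^\T,\can^\T)$-adjunction, so that the triangle identity $\can^\T\tilde\ve\circ\tilde\eta\can^\T=Id_{\can^\T}$ holds and, for $(A,m)\in\ob\cat C^\T$ and $D\in\ob\cat D$, the adjunction bijection
$$\cat D\big(Q^\T(A,m),D\big)\;\xrightarrow{\ \cong\ }\;\cat C^\T\big((A,m),\can^\T D\big)$$
sends a morphism $g$ to $\can^\T g\circ\tilde\eta_{(A,m)}$.

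The heart of the proof is the following observation: for each $D\in\ob\cat D$, the map induced by $\can^\T$ on morphism sets factors as the composite
$$\cat D(D,D')\;\xrightarrow{\ -\circ\tilde\ve_D\ }\;\cat D\big(Q^\T\can^\T D,D'\big)\;\xrightarrow{\ \cong\ }\;\cat C^\T\big(\can^\T D,\can^\T D'\big).$$
Indeed, using the formula for the adjunction bijection together with the triangle identity $\can^\T\tilde\ve_D\circ\tilde\eta_{\can^\T D}=Id_{\can^\T D}$, this composite sends $f$ to $\can^\T(f\circ\tilde\ve_D)\circ\tilde\eta_{\can^\T D}=\can^\T f\circ\can^\T\tilde\ve_D\circ\tilde\eta_{\can^\T D}=\can^\T f$. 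Since the second arrow in the composite is always a bijection, $\can^\T$ is full and faithful on $\cat D(D,D')$ for every $D'$ if and only if precomposition with $\tilde\ve_D$ is a bijection for every $D'$; by the Yoneda lemma, the latter holds for all $D'$ precisely when $\tilde\ve_D$ is an isomorphism. Letting $D$ vary, $\can^\T$ is fully faithful if and only if $\tilde\ve_D$ is an isomorphism for every $D\in\ob\cat D$, and since $\tilde\ve$ is a natural transformation by construction, this makes $\tilde\ve$ a natural isomorphism.

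I do not anticipate a genuine obstacle here; the content is the reduction to the general counit criterion. The only step that deserves a line of care is the verification that the displayed composite really is the action of $\can^\T$ on hom-sets, which is exactly where the triangle identity for the $(Q^\T,\can^\T)$-adjunction is invoked. It is also worth noting, when writing this out, that the functor $Q^\T$ is defined on objects as the coequalizer $\operatorname{coequal}(FUFA\rightrightarrows FA)$ precisely so that this adjunction exists, which is the single place the coequalizer hypothesis on $\cat D$ is needed.
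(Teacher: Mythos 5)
Your argument is correct: this statement is exactly the general fact that a right adjoint is fully faithful if and only if the counit of the adjunction is an isomorphism, with the coequalizer hypothesis serving only to guarantee that the left adjoint $Q^\T$ (and hence the counit) exists, and your verification via the triangle identity and Yoneda is the standard proof of that fact. The paper gives no proof of its own here — it quotes the result from Beck as formulated in Barr–Wells — so your write-up supplies precisely the expected argument and matches the intended one.
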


\begin{rmk} Note that, under the hypotheses of the theorem above, 
$$Q^\T\circ \can ^\T(D) =\operatorname{coequal}(FUFUD\egal{FU\ve_{D}}{\ve_{FUD}} FUD).$$
\end{rmk}

The dual situation for comonads can be described as follows.

\begin{defn}\label{defn:Kcoalg}  Let $\K=(K, \Delta, \ve)$ be a comonad on $\cat D$.  The objects of the \emph{Eilenberg-Moore category of $K$-coalgebras}, denoted $\cat D_{\K}$,  are pairs $(D, \delta)$, where $D\in \ob \cat D$ and $\delta\in \cat D(D, KD)$, which is appropriately coassociative and counital, i.e.,
$$K\delta \circ \delta = \Delta_{D}\circ \delta\quad\text{and}\quad \ve_{D}\circ \delta =Id_{D}.$$ 
A morphism in $\cat D_{\K}$ from $(D,\delta)$ to $(D',\delta')$ is a morphism $f:D\to D'$ in $\cat D$ such that $Kf\circ \delta=\delta'\circ f$.
\end{defn}

The category $\cat D_{\K}$ of $\K$-coalgebras is related to the underlying category $\cat D$ as follows.

\begin{rmk} \label{rmk:K-adjunct} Let $\K=(K, \Delta, \ve)$ be a comonad on $\cat D$. The forgetful functor $U_{\K}:\cat D_{\K}\to \cat D$ admits a right adjoint
$$F_{\K}:\cat D\to \cat D_{\K},$$
called the \emph{cofree $\K$-coalgebra functor}, which is defined on objects by
$$F_{\K}(X) = (KX, \Delta_{X})$$
and on morphisms by
$$F_{\K}(f)=Kf.$$
Note that  that $\K$ itself is the comonad associated to the $(U_{\K},F_{\K})$-adjunction. On the other hand, the monad associated to the $(U_{\K},F_{\K})$-adjunction is
$$\T_{\K}=(F_{\K}U_{\K}, F_{\K}\ve U_{\K}, \eta_{\K}),$$
where $\eta_{\K}$ is the unit of the $(U_{\K},F_{\K})$-adjunction, which is given by
$$(\eta_{\K})_{(D,\delta)}=\delta: (D,\delta)\to (KD, \Delta _{D}).$$
\end{rmk}

As in the monad case, if the comonad $\K$ arises from an adjunction $F:\cat C \adjunct{}{} \cat D:U$, then a comparison functor, defined below, mediates between $\cat C$ and $\cat D_{\K}$.

\begin{defn} Let $F:\cat C \adjunct{}{} \cat D:U$ be a pair of adjoint functors, with unit $\eta:Id_{\cat C}\to UF$ and counit $\ve: FU\to Id_{\cat D}$.  Let $\K$ denote the associated comonad.  The \emph{canonical $\K$-coalgebra functor}
$$\can _{\K}:\cat C\to \cat D_{\K}$$
is defined on objects by
$$\can _{\K}(C)=(FC,F\eta_{C})$$
and on morphisms by
$$\can _{\K}(f)=Ff.$$
If $\cat C$ admits equalizers, then $\can _{\K}$ has a right adjoint, the ``primitives'' functor
$$\operatorname{Prim}_{\K}:\cat D_{\K}\to \cat C,$$
which is defined on objects by 
$$\operatorname{Prim}_{\K}(D, \delta)=\operatorname{equal} (UD \egal {U\delta}{\eta_{UD}} UFUD).$$

The functor $F$ is \emph{comonadic} if $\can _{\K}$ is an equivalence of categories.
\end{defn}

\begin{rmk}\label{rmk:comonad-diagram} For any adjunction $F:\cat C \adjunct{}{} \cat D:U$ with associated comonad $\K$, the diagram
$$\xymatrix{\cat C \ar @<.7ex>[r]^-{F}\ar [d]_{\can_{\K}}&\cat D\ar @<.7ex>[l]^-{U}\ar@{=}[d]^{}\\ {\cat C}_{\K} \ar @<.7ex>[r]^-{U_{\K}}&\cat D\ar @<.7ex>[l]^-{F_{\K}}}$$
commutes, i.e., $U_{\K}\circ \can _{\K}=F$ and $\can _{\K}\circ U=F_{\K}$.
\end{rmk}

If all one requires is for the canonical comparison functor to be fully faithful, then the following characterization, due to Beck (where we dualize the formulation in e.g., in \cite [Section 3.3]{barr-wells}), proves very useful.

\begin{thm}\label{thm:cotriple} Let $F:\cat C \adjunct{}{} \cat D:U$ be a pair of adjoint functors, with unit $\eta:Id_{\cat C}\to UF$ and counit $\ve: FU\to Id_{\cat D}$.  Let $\K$ denote the associated comonad. If $\cat C$ admits equalizers, then $\can _{\K}: \cat C\to \cat D_{\K}$ is fully faithful if and only if the unit $C\to \operatorname{Prim}_{\K}\circ \can_{\K}(C)$ of the $(\can _{\K},\operatorname{Prim}_{\K})$-adjunction is an isomorphism for all $C\in \ob \cat C$.
\end{thm}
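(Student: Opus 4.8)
The statement is the formal dual of Theorem~\ref{thm:triple}, so the plan is to deduce it by passing to opposite categories, rather than reproving anything. Beginning with $F:\cat C\adjunct{}{}\cat D:U$ with unit $\eta$ and counit $\ve$, I would form the opposite adjunction $U^{op}:\cat D^{op}\adjunct{}{}\cat C^{op}:F^{op}$, whose unit is $\ve^{op}:Id_{\cat D^{op}}\to F^{op}U^{op}$ and whose counit is $\eta^{op}$. A direct computation then shows that the monad on $\cat D^{op}$ associated to this adjunction is $\bigl((FU)^{op},(F\eta U)^{op},\ve^{op}\bigr)$, i.e.\ it is exactly the comonad $\K=(FU,F\eta U,\ve)$ read in $\cat D^{op}$, where comultiplication becomes multiplication and the counit becomes a unit; write $\K^{op}$ for this monad.

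Next I would unwind the dictionary. Comparing Definitions~\ref{defn:Talg} and \ref{defn:Kcoalg} identifies $(\cat D^{op})^{\K^{op}}$ with $(\cat D_{\K})^{op}$; comparing the defining formulas identifies $\can^{\K^{op}}$ with $(\can_{\K})^{op}$ and, as soon as $\cat C$ admits equalizers (equivalently, $\cat C^{op}$ admits coequalizers), identifies the indecomposables functor $\operatorname{Q}^{\K^{op}}$ with $(\operatorname{Prim}_{\K})^{op}$. Hence the unit $C\to\operatorname{Prim}_{\K}\circ\can_{\K}(C)$ of the $(\can_{\K},\operatorname{Prim}_{\K})$-adjunction is precisely the opposite of the counit $\operatorname{Q}^{\K^{op}}\circ\can^{\K^{op}}(C)\to C$ of the $(\operatorname{Q}^{\K^{op}},\can^{\K^{op}})$-adjunction. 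Since ``fully faithful'' and ``is an isomorphism'' are self-dual notions, Theorem~\ref{thm:triple} applied to $U^{op}\dashv F^{op}$ gives exactly the asserted equivalence.

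For a self-contained version (one that does not invoke the black-boxed Theorem~\ref{thm:triple}), I would instead isolate and prove the general principle that, for any adjunction $L\dashv R$ with unit $\eta$, the left adjoint $L$ is fully faithful if and only if $\eta_a$ is an isomorphism for every object $a$; one then applies it with $L=\can_{\K}$ and $R=\operatorname{Prim}_{\K}$, which do form an adjunction once $\cat C$ has equalizers. The key point is that, by naturality of $\eta$ and the triangle identities, the adjunction bijection $\cat B(La,La')\cong\cat A(a,RLa')$ carries the map $L\colon\cat A(a,a')\to\cat B(La,La')$ to post-composition with $\eta_{a'}$; so $L$ is bijective on all these hom-sets precisely when every $\eta_{a'}$ is invertible---the ``if'' direction is immediate, and the ``only if'' follows from the Yoneda lemma, e.g.\ by chasing $Id_{RLa'}$ and $Id_{a'}$ through the relevant bijections to exhibit a two-sided inverse of $\eta_{a'}$.

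I do not expect a genuine obstacle, since the result is entirely formal; the one place that demands care is the bookkeeping in the dualization---keeping straight that the counit of $F\dashv U$ becomes the unit of $U^{op}\dashv F^{op}$, that $\Delta$ turns into a multiplication, and that the formulas defining $\can_{\K}$ and $\operatorname{Prim}_{\K}$ are the literal transposes of those defining $\can^{\T}$ and $\operatorname{Q}^{\T}$, so that the ``unit is an isomorphism'' condition matches up correctly with the ``counit is an isomorphism'' condition of Theorem~\ref{thm:triple}.
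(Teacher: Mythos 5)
Your proposal is correct and matches the paper's treatment: the paper states Theorem \ref{thm:cotriple} without proof, obtaining it exactly by dualizing Beck's characterization (Theorem \ref{thm:triple}, cited from Barr--Wells), which is precisely your opposite-category argument, including the correct bookkeeping of units, counits, $\can_{\K}$ versus $\can^{\T}$, and $\operatorname{Prim}_{\K}$ versus $\operatorname{Q}^{\T}$. Your additional self-contained argument---that for any adjunction $L\dashv R$ the left adjoint is fully faithful if and only if the unit is an isomorphism, applied to $\can_{\K}\dashv\operatorname{Prim}_{\K}$---is also correct and in fact supplies more detail than the paper itself gives.
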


\begin{rmk} Note that, under the hypotheses of the theorem above, 
$$\operatorname{Prim}_{\K}\circ \can_{\K}(C)=\operatorname{equal} (UFC \egal {U\eta_{C}}{\eta_{UFC}} UFUFC).$$
\end{rmk}

\subsubsection{Monads and descent}

When the adjunction from which we start is 
$$F^\T:\cat C \adjunct{}{} \cat C^\T:U^\T$$
 itself, for some monad $\T$ on a category $\cat C$, we can formulate the well-known notion of descent in terms of algebras over a monad and coalgebras over a comonad.

\begin{defn}\label{defn:desccat}  The \emph {descent category} of a monad $\T$ on a category $\cat C$, denoted $\op D(\T)$, is the category $(\cat C^\T)_{\K^\T}$ of $\K^\T$-coalgebras in the category of $\T$-algebras. The objects of $\op D(\T)$ are called \emph{descent data}.

The monad $\T$ satisfies \emph{descent} if 
$$\can_{\K^\T}: \cat C\to \op D(\T)$$
 is fully faithful.  If $\can_{\K^\T}$ is an equivalence of categories, i.e., if the free $\T$-algebra functor $F^\T:\cat C\to \cat C^\T$ is comonadic, then $\T$ is satisfies \emph{effective descent}.
\end{defn}

To formulate a correct homotopical generalization of the notion of descent, we need to unfold this rather intricate definition.  We begin by analyzing the objects in $\op D(\T)$.

\begin{rmk}\label{rmk:chardescdat}  Let $\T=(T,\mu, \eta)$ be any monad on $\cat C$. A descent datum for $\T$  is a triple $(H, m, \delta)$, where $m: TH\to H$ (respectively $\delta: H\to TH$) is a morphism in $\cat C$ that is associative and unital (respectively,  coassociative and counital) and 
$$\xymatrix{TH \ar[r]^m\ar [d]_{T\delta}&H\ar [d]^\delta\\ T^2H\ar[r]^{\mu_{H}}&TH}$$
commutes.  We use the symbol $H$ for the underlying object in $\cat C$, as descent data are of a Hopf-like nature, being endowed with compatible multiplication and comultiplication.

For any object $C$ in $\cat C$, the canonical descent datum associated to $C$ is
$$\can_{\K^\T}(C)=(TC, \mu _{C}, T\eta_{C}).$$
It follows that for any object $C$,
$$F_{\K^\T}F^\T(C)=(T^2C, \mu_{TC}, T\eta_{TC}\big)=\can _{\K^\T} (TC).$$
\end{rmk}

We next examine in more detail the notion of (effective) descent.

\begin{rmk} If $\T=(T,\mu, \eta)$ is a monad on a category $\cat C$ admitting equalizers, then for all descent data $(H,m,\delta)$,
$$\operatorname{Prim}_{\K^\T}(H,m,\delta)=\operatorname{equal}(H\egal{\delta}{\eta_{H}} TH),$$
where the equalizer is computed in $\cat C$.    In particular, for all objects $C$ in $\cat C$,
 $$\operatorname{Prim}_{\K^\T}\can_{\K^\T}(C)=\operatorname{equal}(TC\egal{T\eta_{C}}{\eta_{TC}} T^2C).$$

As a consequence of Beck's theorem for comonads (Theorem \ref{thm:cotriple}) we obtain the following characterization of descent, of which we prove a homotopic version (Theorem \ref{thm:htpic-coBeck}) later in this article.

\begin{thm}\label{thm:coBeck}  If $\T=(T,\mu, \eta)$ is a monad on a category $\cat C$ admitting equalizers, then $\T$ satisifes descent if and only if 
$$C\xrightarrow {\eta_{C}}TC\egal{T\eta_{C}}{\eta_{TC}} T^2C $$
is an equalizer for all objects $C$ in $\cat C$.
\end{thm}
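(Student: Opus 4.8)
The plan is to deduce this directly from Beck's theorem for comonads (Theorem \ref{thm:cotriple}), applied to the adjunction $F^\T:\cat C\adjunct{}{}\cat C^\T:U^\T$, whose associated comonad is $\K^\T$ and whose associated Eilenberg--Moore category of coalgebras is precisely the descent category $\op D(\T)=(\cat C^\T)_{\K^\T}$. Since $\cat C$ admits equalizers by hypothesis, the hypothesis of Theorem \ref{thm:cotriple} is satisfied (the source of the left adjoint $F^\T$ being $\cat C$), so it tells us that $\T$ satisfies descent --- i.e., $\can_{\K^\T}:\cat C\to\op D(\T)$ is fully faithful --- if and only if the unit $C\to\operatorname{Prim}_{\K^\T}\circ\can_{\K^\T}(C)$ of the $(\can_{\K^\T},\operatorname{Prim}_{\K^\T})$-adjunction is an isomorphism for every object $C$ of $\cat C$.

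Next I would rewrite this unit morphism explicitly. By Remark \ref{rmk:chardescdat}, $\can_{\K^\T}(C)=(TC,\mu_C,T\eta_C)$, and by the Remark immediately preceding the statement, $\operatorname{Prim}_{\K^\T}\can_{\K^\T}(C)=\operatorname{equal}\bigl(TC\egal{T\eta_C}{\eta_{TC}}T^2C\bigr)$, the equalizer being computed in $\cat C$. It then remains to identify the unit morphism itself. Unwinding the $(\can_{\K},\operatorname{Prim}_{\K})$-adjunction for the comonad $\K$ of a general adjunction $(F,U)$: a morphism $\can_{\K}(C)=(FC,F\eta_C)\to(D,\delta)$ is a map $g:FC\to D$ with $\delta\circ g=FUg\circ F\eta_C$, and transposing via the $(F,U)$-adjunction, together with the naturality of $\eta$, shows that such $g$ correspond bijectively to maps $C\to UD$ equalizing $U\delta$ and $\eta_{UD}$, hence to maps $C\to\operatorname{Prim}_{\K}(D,\delta)$. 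Taking $g=Id_{FC}$ identifies the unit at $C$ with the unique factorization of $\eta_C:C\to UFC$ through the equalizer $\operatorname{Prim}_{\K}\can_{\K}(C)$. In the case at hand this says the unit $C\to\operatorname{Prim}_{\K^\T}\can_{\K^\T}(C)$ is exactly the canonical map induced by $\eta_C:C\to TC$, which indeed factors through the equalizer since $T\eta_C\circ\eta_C=\eta_{TC}\circ\eta_C$ by naturality of $\eta$.

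Finally I would conclude: the unit at $C$ is an isomorphism if and only if $\eta_C:C\to TC$ exhibits $C$ as the equalizer of $T\eta_C$ and $\eta_{TC}$, i.e., if and only if
$$C\xrightarrow{\eta_C}TC\egal{T\eta_C}{\eta_{TC}}T^2C$$
is an equalizer diagram; imposing this for all $C$ gives the stated equivalence. The only step demanding genuine care --- rather than pure bookkeeping --- is the identification of the $(\can_{\K^\T},\operatorname{Prim}_{\K^\T})$-adjunction unit with the map induced by $\eta_C$; once that is in hand, the result is a direct translation of Theorem \ref{thm:cotriple} through the two preceding Remarks, and in particular no further hypothesis on $\cat C$ beyond the existence of equalizers is needed.
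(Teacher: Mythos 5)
Your proposal is correct and follows exactly the paper's route: the paper states Theorem \ref{thm:coBeck} as an immediate consequence of Beck's theorem for comonads (Theorem \ref{thm:cotriple}) applied to the $(F^\T,U^\T)$-adjunction, using the preceding Remark's identification $\operatorname{Prim}_{\K^\T}\can_{\K^\T}(C)=\operatorname{equal}\bigl(TC\egal{T\eta_{C}}{\eta_{TC}}T^2C\bigr)$. Your additional unwinding of the $(\can_{\K^\T},\operatorname{Prim}_{\K^\T})$-adjunction unit as the factorization of $\eta_C$ through that equalizer is exactly the bookkeeping the paper leaves implicit.
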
 

 If $\T$ satisfies effective descent, then, in addition, any descent datum $(H,m,\delta)$ must be naturally isomorphic to the canonical descent datum associated to its object of ``primitives'', i.e.,
$$(H,m,\delta)\cong\can_{\K^\T}\operatorname{Prim}_{\K^\T}(H,m,\delta).$$
\end{rmk}

\subsubsection{Comonads and codescent}

Starting now from the adjunction $$U_{\K}:\cat D_{\K}\adjunct{}{} \cat D:F_{\K}$$ for some monad $\K$ on a category $\cat D$, we present the elements of the theory of codescent.

\begin{defn}\label{defn:codesccat}  The \emph{codescent category} of a comonad $\K$ on a category $\cat D$, denoted $\op D^{co}(\K)$, is the category $(\cat D_{\K})^{\T_{\K}}$ of $\T_{\K}$-algebras in the category of $\K$-coalgebras. The objects of $\op D^{co}(\K)$ are called \emph{codescent data}.

The comonad $\K$ satisfies  \emph{codescent} if
$$\can^{\T_{\K}}:\cat D\to \op D^{co}(\K)$$
is fully faithful.  If $\can^{\T_{\K}}$ is an equivalence of categories, i.e., if the cofree $\K$-coalgebra functor $F_{\K}:\cat D\to \cat D_{\K}$ is monadic, then $\K$ satisfies \emph{effective codescent}.
\end{defn}

We again spell out more explicitly what this definition means, in order to generalize it correctly later.

\begin{rmk}  Let $\K=(K,\Delta, \ve)$ be any comonad on $\cat D$.  A codescent datum for $\K$ is a triple $(H, \delta, m)$, where $\delta: H\to KH$ (respectively $m: KH\to H$) is a morphism in $\cat D$ that is coassociative and counital (respectively,  associative and unital) and 
$$\xymatrix{KH\ar [d]_{\Delta_{H}}\ar[r]^m&H\ar [d]^\delta\\ K^2H \ar [r]^{Km}&KH}$$
commutes. We again use the symbol $H$ for the underlying object in $\cat D$, as codescent data are also of a Hopf-like nature.

For any object $D$ in $\cat D$, the canonical codescent datum associated to $D$ is
$$\can^{\T_{\K}}(D)=(KD,\Delta_{D},K\ve_{D}).$$
In particular, for all $D\in \ob D$,
$$F^{\T_{\K}}F_{\K}(D)=(K^2D, \Delta_{KD},K{\eta_{KD}})=\can ^{\T_{\K}} (KD).$$
\end{rmk}

Finally, we examine in more detail the notion of (effective) codescent.

\begin{rmk} If $\K=(K,\Delta, \ve)$ is a comonad on a category $\cat D$ admitting coequalizers, then for all codescent data $(H,\delta,m)$,
$$\operatorname{Q}^{\T_{\K}}(H,\delta,m)=\operatorname{coequal}(KH\egal{m}{\ve_{KH}} H),$$
where the coequalizer is computed in $\cat D$.  In particular, for all objects $D$ in $\cat D$,
$$\operatorname{Q}^{\T_{\K}}\can ^{\T_{\K}}(D)=\operatorname{coequal}(K^2D\egal{K\ve_{D}}{\ve_{KD}} KD).$$

As a consequence of Beck's theorem for monads (Theorem \ref{thm:triple}) we obtain the following characterization of codescent, of which we prove a homotopic version (Theorem \ref{thm:htpic-Beck}) later in this article.

\begin{thm}\label{thm:Beck}  If $\K=(K,\Delta, \ve)$ is a comonad on a category $\cat D$ admitting coequalizers, then $\K$ satisifes codescent if and only if 
$$K^2D\egal{K\ve_{D}}{\ve_{KD}} KD \xrightarrow {\ve_{D}} D$$
is a coequalizer for all objects $D$ in $\cat D$.
\end{thm}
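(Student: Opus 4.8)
The plan is to derive this directly from Beck's theorem for monads (Theorem \ref{thm:triple}), exactly dually to the way Theorem \ref{thm:coBeck} follows from Theorem \ref{thm:cotriple}. I would apply Theorem \ref{thm:triple} to the adjunction $U_{\K}:\cat D_{\K}\adjunct{}{}\cat D:F_{\K}$, whose left adjoint is $U_{\K}$, whose right adjoint is $F_{\K}$, and whose associated monad is $\T_{\K}$ (Remark \ref{rmk:K-adjunct}). Since $\cat D$ is assumed to admit coequalizers, the hypotheses of Theorem \ref{thm:triple} are met, with the roles of ``$\cat C$'' and ``$\cat D$'' there played by $\cat D_{\K}$ and $\cat D$ respectively; in particular the indecomposables functor $Q^{\T_{\K}}$ left adjoint to $\can^{\T_{\K}}$ exists. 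Theorem \ref{thm:triple} then yields that $\can^{\T_{\K}}:\cat D\to\op D^{co}(\K)$ is fully faithful if and only if the counit $Q^{\T_{\K}}\can^{\T_{\K}}(D)\to D$ of the $(Q^{\T_{\K}},\can^{\T_{\K}})$-adjunction is an isomorphism for every $D\in\ob\cat D$. By Definition \ref{defn:codesccat}, the left-hand condition is precisely that $\K$ satisfies codescent.

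Next I would make the counit explicit. Specializing the general formula for $Q^{\T}\can^{\T}(D)$ (with $F=U_{\K}$ and $U=F_{\K}$, so that the composite endofunctor $U_{\K}F_{\K}$ of $\cat D$ is $K$ and the counit of $U_{\K}\dashv F_{\K}$ is $\ve$) gives
$$Q^{\T_{\K}}\can^{\T_{\K}}(D)=\operatorname{coequal}\bigl(K^{2}D\egal{K\ve_{D}}{\ve_{KD}}KD\bigr),$$
the coequalizer being computed in $\cat D$, as recorded in the remark immediately preceding the theorem. Naturality of $\ve$ applied to the morphism $\ve_{D}:KD\to D$ gives $\ve_{D}\circ K\ve_{D}=\ve_{D}\circ\ve_{KD}$, so $\ve_{D}$ factors uniquely through this coequalizer, and a routine check against the triangle identities — using the explicit descriptions of $\can^{\T_{\K}}$ and $Q^{\T_{\K}}$ on objects — identifies the resulting map $Q^{\T_{\K}}\can^{\T_{\K}}(D)\to D$ with the counit of the $(Q^{\T_{\K}},\can^{\T_{\K}})$-adjunction at $D$.

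Combining the two steps: $\K$ satisfies codescent if and only if, for every $D\in\ob\cat D$, the canonical comparison map induced by $\ve_{D}$ from $\operatorname{coequal}\bigl(K^{2}D\egal{K\ve_{D}}{\ve_{KD}}KD\bigr)$ to $D$ is an isomorphism, i.e.\ if and only if $\ve_{D}:KD\to D$ exhibits $D$ as the coequalizer of $K\ve_{D}$ and $\ve_{KD}$, which is exactly the assertion that
$$K^{2}D\egal{K\ve_{D}}{\ve_{KD}}KD\xrightarrow{\ve_{D}}D$$
is a coequalizer for all $D$. The only point in the argument that is not purely formal is the identification of the induced map with the adjunction counit; I expect this diagram chase with the triangle identities to be the main (though still routine) obstacle. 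Everything else is bookkeeping, and indeed the entire statement is formally dual to Theorem \ref{thm:coBeck}, which provides an independent check.
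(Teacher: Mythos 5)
Your proposal is correct and follows exactly the paper's route: the paper obtains Theorem \ref{thm:Beck} precisely as a consequence of Beck's theorem for monads (Theorem \ref{thm:triple}) applied to the $(U_{\K},F_{\K})$-adjunction with associated monad $\T_{\K}$, using the formula $Q^{\T_{\K}}\can^{\T_{\K}}(D)=\operatorname{coequal}(K^2D\rightrightarrows KD)$ recorded in the surrounding remark and the fact that codescent means $\can^{\T_{\K}}$ is fully faithful. Your extra verification that the adjunction counit is the map induced by $\ve_{D}$ is exactly the routine identification the paper leaves implicit.
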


 If $\K$ satisfies effective codescent, then, in addition, any codescent datum $(H,\delta,m)$ must be naturally isomorphic to the canonical codescent datum associated to its object of ``indecomposables'', i.e.,
$$(H,\delta,m)\cong\can^{\T_{\K}}\operatorname{Q}^{\T_{\K}}(H,\delta,m).$$
\end{rmk}

\subsection{Grothendieck descent and its dual}
\subsubsection{Grothendieck descent for modules}\label{sec:grothendieck}
The example of descent that we present here is a slight generalization of classical Grothendieck descent for modules over rings.

Let $(\cat M, \wedge, I)$ be a monoidal category with all coequalizers. Let $\vp: B\to A$ be a morphism of monoids in $\cat M$, which induces an adjunction
$$-\underset B\wedge  A: \cat {Mod}_{B}\adjunct{}{} \cat {Mod}_{A}:\vp^*$$
and therefore a monad $\T_{\vp}$ with underlying endofunctor
$$\vp^* (-\underset B\wedge A):\cat {Mod}_{B}\to \cat {Mod}_{B}$$
and a comonad $\K_{\vp}$ with underlying endofunctor
$$\vp^*(-)\underset B\wedge A:\cat {Mod}_{A}\to \cat {Mod}_{A}.$$

The central problem of Grothen\-dieck descent theory for modules is to determine when $-\underset B\wedge A$ is comonadic, i.e., when 
$$\can _{\K_{\vp}}:\cat{Mod}_{B}\to (\cat {Mod}_{A})_{\K_{\vp}}$$
is an equivalence of categories. Since, as is well known (cf. \cite[\S 8]{mesablishvili}), $\vp^*$ is always monadic, i.e., 
$$\can ^{\T_{\vp}}:\cat {Mod}_{A}\xrightarrow \simeq (\cat {Mod}_{B})^{\T_{\vp}}$$  
is an equivalence, it follows that $\T_{\vp}$ is of effective descent if and only if $-\underset B\wedge A$ is comonadic.  

A great deal is known about conditions under which $-\underset B\wedge A$ is comonadic.  For example, if  $\cat M$ is the category of abelian groups and $\vp$ is a homomorphism of commutative rings, then $-\underset B\wedge A$ is comonadic if and only if $A$ is pure as a $B$-module.

There is a useful alternate description of effective descent for modules in monoidal categories, in terms of the following generalization of an important notion from ring theory.  We suppose henceforth that the functors 
$$X\wedge -, -\wedge X:\cat M\to \cat M$$
preserve colimits for all objects $X$ in $\cat M$, so that $-\underset A\wedge-$ defines a monoidal structure on the category $_{A}\cat {Mod}_{A}$ of a $A$-bimodules.

\begin{defn}  Let $A$ be a monoid. An \emph{$A$-co-ring} is a comonoid in the monoidal category $(_{A}\cat {Mod}_{A},-\underset A\wedge -, A)$ of $A$-bimodules.  In other words, an $A$-co-ring is an $A$-bimodule $W$ that is endowed with a coassociative, counital comultiplication $\psi:W\to W\underset A\wedge W$ that is a morphism of $A$-bimodules.
\end{defn}

\begin{ex}\label{ex:co-ring} Let $\vp:B\to A$ be any morphism of monoids in $\cat M$.  The \emph{descent co-ring associated to $\vp$}, denoted $W_{\vp}$ and also known as the \emph{canonical co-ring on $\vp$}, has as underlying $A$-bimodule $A\underset B\wedge A$, endowed with a comultiplication $\psi_{can}$, which is equal to the composite
$$A\underset B\wedge A\cong A\underset B\wedge B\underset B\wedge A\xrightarrow{A\underset B\wedge\vp\underset B\wedge A} A\underset B\wedge A\underset B\wedge A\cong (A\underset B\wedge A) \underset A\wedge (A \underset B\wedge A).$$
The morphism $\bar \mu:A\underset B\wedge A\to A$ induced by the multiplication map of $A$ is the counit of $\psi_{can}$. 
\end{ex}

\begin{rmk}  The descent co-ring $W_{\vp}$ is a Hopf algebroid over $A$ (generally, without antipode), where the left and right units are
$$\vp\underset B\wedge A: A\to W_{\vp}$$
and
$$A\underset B\wedge \vp:A\to W_{\vp}.$$
\end{rmk}

\begin{defn}  Let $A$ be a monoid, and let $(W,\psi, \epsilon)$ be an $A$-co-ring.  The category $\cat {M}^W_{A}$ is the category of \emph{$W$-comodules in the category of right $A$-modules}.  In other words, an object of $\cat{M}^W_{A}$ is a right $A$-module $M$ together with a morphism $\theta:M\to M\underset A\wedge W$ of right $A$-modules such that  the diagrams
 $$\xymatrix{M\ar[rr]^\theta \ar [d]^\theta&&M\underset A\wedge W\ar [d]^{\theta\underset A\wedge W}&& M\ar [r]^(0.4)\theta\ar[dr]^=&M\underset A\wedge W\ar[d]^{M\underset A\wedge \epsilon}\\
M\underset A\wedge W\ar [rr]^{M\underset A\wedge \psi}&&M\underset A\wedge W\underset A\wedge W&&&M}$$
commute. Morphisms in $\cat M^W_{A}$ are morphisms of $A$-modules that respect the $W$-coactions.
\end{defn}

\begin{ex}Let $\vp: B\to A$ be any morphism of monoids in $\cat M$, and let $$W_{\vp}=(A\underset B\wedge A, \psi_{can},\epsilon_{can}),$$ the descent co-ring associated to $\vp$.  If the monoid $A$ is augmented, the category $\cat M^{W_{\vp}}_{A}$ is isomorphic to $\op D(\T_{\vp})$, the descent category associated to $\T_{\vp}$.  Analyzing the definition of descent data in this specific case, we see that an object of $\op D(\T_{\vp})$  is a right $A$-module $M$ endowed with a morphism $\theta: M\to M\underset B\wedge A$ of right $A$-modules such that the diagrams
$$\xymatrix{M\ar[rr]^\theta \ar [d]^\theta&&M\underset B\wedge A\ar [d]^{\theta\underset B\wedge A}&&M\ar [r]^(0.4)\theta\ar[dr]^=&M\underset B\wedge A\ar[d]^{\bar r}\\
M\underset B\wedge A\ar [rr]^{M\underset B\wedge \vp \underset B\wedge A}&&M\underset B\wedge A\underset B\wedge A&&&M}$$
commute, where $\bar r$ is induced by the right $A$-action on $M$, and we have suppressed the restriction of scalars, $\vp^*$, from the notation.  
The morphisms in $\op D(\T_\vp)$ are $A$-module morphisms respecting the structure maps.

The key to showing that $\cat M^{W_{\vp}}_{A}$ and $\op D(\T_\vp)$ are isomorphic is the observation that 
$$M\underset A\wedge W_{\vp}=M\underset A\wedge A\underset B\wedge A\cong M\underset B\wedge A$$
for all right $A$-modules $M$.
\end{ex}

\begin{rmk} Under the isomorphism $\cat M^{W_{\vp}}_{A}\cong\op D(\T_\vp)$,
$$\can _{\vp}:=\operatorname{Can}_{\K_{\vp}}: \cat{Mod}_{B}\to \op D(\T_\vp)$$
 is defined on objects by $\operatorname{Can}_{{\vp}}(M)=(M\underset B\wedge A, \theta_{M})$, with $\theta_{M}=M\underset B\wedge \vp \underset B\wedge A$.  This is a particularly nice descent situation, where the descent category is \emph{Tannakian}, i.e., equivalent to a category of comodules over a coalgebra in a monoidal category.  We return to a general consideration of this special descent framework in an upcoming article.  We expect Hovey's recent work on a homotopic version of the Eilenberg-Watts theorem \cite{hovey:e-w} to prove useful in this context.
 
 If $\cat {Mod}_{B}$ admits equalizers, then the right adjoint of the functor $\operatorname{Can}_{{\vp}}$ is $$\operatorname{Prim}_{\vp}:\op D(\T_\vp)\to \cat {Mod}_{B},$$ where
$$\operatorname{Prim}_{\vp}(N,\theta)=\operatorname{equal}( N\egal {\theta}{N\underset B\wedge \vp} N\underset B\wedge A).$$
\end{rmk}

\subsubsection{Dualizing Grothendieck descent}\label{sec:dualGroth}

We now dualize the results of the previous section, studying codescent associated to a morphism of comonoids.  We again suppose that $(\cat M, \wedge, I)$ is a monoidal category, this time with all equalizers.

Let $C$ be a comonoid in $\cat M$. If $(M,\rho)$ is a right $C$-comodule and $(N,\lambda)$ is a left $C$-comodule, then their cotensor product is defined to be
$$M\underset C\square N=\operatorname{equal}(M\otimes N\egal {\rho\otimes N}{M\otimes \lambda} M\otimes C\otimes N),$$ which is an object in $\cat M$. We assume  that $-\underset C\square -$ is naturally associative, giving rise to a monoidal structure on ${}_{C}\cat {Comod}_{C}$, the category of $C$-bicomodules.

It is helpful to keep in mind the special class of examples  in which the monoidal product $\wedge$ is the categorical product $\times$, so that $\cat M$ is \emph{cartesian}.  If $\cat M$ is cartesian, then any object $X$ has a natural comonoid structure given by the diagonal morphism $\Delta_{X}$.  Furthermore, a right (or left) coaction of  $(X,\Delta_{X})$ on an object $Y$ is determined by a morphism $Y\to X$ in $\cat M$.  More precisely,
$$\cat {Comod}_{X} \cong \cat M/X\cong {}_{X}\cat {Comod},$$
where $\cat M/X$ denotes the slice category, of which the objects are morphisms in $\cat M$ with target $X$ and the morphisms are commuting triangles. 
Note that the cotensor product of comodules corresponds to pullback of morphisms under this identification, which gives rise to a monoidal structure on ${}_{X}\cat {Comod}_{X}$. 

We consider here  the following sort of adjunction.  Let $\vp: C\to D$ be a morphism of comonoids in $\cat M$, which induces an adjunction
$$\vp_{!}:\cat {Comod}_{C}\adjunct{}{} \cat {Comod}_{D}:\cotens{-}CD,$$
where $\vp_{!}$ is the ``extension of scalars'' functor.
The associated monad $\T_{\vp}$ has underlying endofunctor
$$\vp_{!}(-)\underset D\square C:\cat {Comod}_{C}\to \cat {Comod}_{C},$$
while the endofunctor underlying the associated comonad $\K_{\vp}$ is  
$$\vp_{!}(-\underset D\square C):\cat {Comod}_{D}\to \cat {Comod}_{D}.$$

Just as the ``restriction of scalars'' functor is monadic in the case of a morphism of monoids, the functor
$$\vp_{!}:\cat {Comod}_{C}\to \cat {Comod}_{D}$$
is comonadic, i.e., 
$$\can_{K_{\vp}}:\cat {Comod}_{C}\to (\cat {Comod}_{D})_{\K_{\vp}}$$
is an equivalence of categories.  Consequently, $\K_{\vp}$ is of effective codescent if and only if $-\underset D\square C$ monadic, i.e., if and only if 
$$\can ^{\T_{\vp}}:\cat{Comod}_{D}\to (\cat {Comod}_{C})^{\T_{\vp}}$$
is an equivalence of categories.  

As in the module case, there is a useful alternate description of effective codescent.

\begin{defn}  Let $C$ be a comonoid. A \emph{$C$-ring} is a monoid in $(_{C}\cat {Comod}_{C},-\underset C\square -, C)$.  In other words, a $C$-ring is an $C$-bicomodule $V$ that is endowed with an associative, unital multiplication $\gamma:V\underset C\square V\to V$ that is a morphism of $C$-bicomodules.
\end{defn}

\begin{ex}\label{ex:codescring} Let $\vp:C\to D$ be any morphism of comonoids in $\cat M$.  The \emph{codescent ring associated to $\vp$}, denoted $V^\vp$, has as underlying $C$-bicomodule $C\underset D\square C$, endowed with a multiplication $\gamma_{can}$, which is equal to the composite
$$\cotens {(\cotens CCD)}{(\cotens CCD)} C\cong \cotens {\cotens CCD}CD \xrightarrow {\cotens {\cotens C\vp D}CD} \cotens {\cotens CDD} CD\cong \cotens CCD.$$
The morphism $\bar \Delta:C\to \cotens CCD$ induced by the comultiplication map $\Delta$ of $C$ is the unit of $\gamma_{can}$. 
\end{ex}

\begin{defn}  Let  $C$ be a comonoid, and let $(V,\gamma, \eta)$ be a $C$-ring.  The category $\cat {M}^C_{V}$ is the category of \emph{$V$-modules in the category of right $C$-comodules}.  In other words, an object of $\cat{M}^C_{V}$ is a right $C$-comodule $M$ together with a morphism $\theta:\cotens MVC \to M$ of right $C$-comodules such that  the diagrams
 $$\xymatrix{\cotens {\cotens MVC}VC \ar[rr]^{\cotens \theta VC} \ar [d]^{\cotens M\gamma C}&&\cotens MVC\ar [d]^{\theta}&& M\ar [r]^(0.4){\cotens M\eta C}\ar[dr]^=&\cotens MVC\ar[d]^{\theta}\\
\cotens MVC\ar [rr]^{\theta}&&M&&&M}$$
commute. Morphisms in $\cat M^C_{V}$ are morphisms of $C$-comodules that respect the $V$-actions.
\end{defn}

\begin{ex} Let $\vp:C\to D$ be any morphism of comonoids in $\cat M$, and let 
$$V^{\vp}=(\cotens CCD, \gamma_{can},\eta_{can}),$$
 the codescent ring associated to $\vp$.  If $C$ is coaugmented, then the category $\cat M^{C}_{V^\vp}$ is isomorphic to the category of codescent data  $\op D^{co}(\K_{\vp})$. An object of $\op D^{co}(\K_\vp)$  is a right $C$-comodule $M$ endowed with a morphism $\theta: \cotens MCD \to M$ such that the diagrams
$$\xymatrix{\cotens {\cotens MCD}CD \ar[rr]^{\cotens \theta CD} \ar [d]^{\cotens {\cotens M\vp D}CD}&&\cotens MCD\ar [d]^{\theta}&&M\ar [r]^(0.4){\bar\rho}\ar[dr]^=&\cotens MCD \ar[d]^{\theta}\\
\cotens MCD\ar [rr]^\theta &&{M}&&&M}$$
commute, where $\bar \rho$ is induced by the right $C$-coaction on $M$, and we have suppressed the extension of scalars, $\vp_{!}$, from the notation.  
The morphisms in $\op D^{co}(\K_{\vp})$ are $C$-comodule morphisms respecting the structure maps.

The key to showing that $\cat M^{C}_{V^\vp}$ and $\op D^{co}(\K_{\vp})$ are isomorphic is the observation that 
$$\cotens M{V^\vp}C=\cotens M{\cotens CCD} C\cong \cotens MCD$$
for all right $C$-comodules $M$.
\end{ex}

\begin{rmk} Under the isomorphism $\cat M^{C}_{V^\vp}\cong \op D^{co}(\K_{\vp})$,
$$\can ^\vp:=\operatorname{Can}^{\T_{\vp}}: \cat{Comod}_{D}\to \op D^{co}(\K_{\vp}),$$
 is defined on objects by $\can^{\T_{\vp}}(M)=(\cotens MCD, \theta_{M})$, with $\theta_{M}=\cotens M{\cotens \vp C D}D$.  This is a particularly nice codescent situation, where the codescent category is \emph{coTannakian}, i.e., equivalent to a category of modules over a algebra in a monoidal category.  We return to a general consideration of this special codescent framework in an upcoming article.  We expect Hovey's recent work on a homotopic version of the Eilenberg-Watts theorem \cite{hovey:e-w} to prove useful in this context.
 
 If $\cat {Comod}_{D}$ admits coequalizers, then the left adjoint of the functor $\can^{{\vp}}$ is
$$\operatorname{Q}^{\vp}(N,\theta)=\operatorname{coequal}( \cotens NCD \egal {\theta}{\cotens N\vp D} N).$$
\end{rmk}

\section{Simplicial structures and (co)descent}\label{sec:simplicial}

In order to define homotopic (co)descent, we need  the simplicial structures associated to any (co)monad that we present here.  We begin with the well-known generalized cobar and bar constructions associated to a monad and a comonad, proving in each case an important technical lemma that holds when the underlying category is a model category.  We then discuss simplicial enrichment of Eilenberg-Moore categories of (co)algebras and establish crucial simplicial adjunction isomorphisms (Proposition \ref{prop:adj-iso}).

\subsection{Generalized cobar and bar constructions}

Given a  monad on a category $\cat C$, we can canonically associate to any object in $\cat C$ a very important and well-known object in $\cat C^{\bold \Delta}$, which plays a crucial role in our discussion of derived completion and homotopic descent.

\begin{defn} Let $\T=(T,\mu,\eta)$ be a monad on a category $\cat C$. The \emph{cosimplicial $\T$-cobar construction} is a functor
$$\Om ^\bullet_{\T}:\cat C\to \cat C ^{\mathbf \Delta}$$
defined for $C\in \ob \cat C$ by
$$\Om_{\T}^{n}C= T^{n+1}(C)$$
and for all $0\leq i\leq n$,
$$d^{i}=T^{i} \eta_{T^{n-i}C}:\Om _{\T}^n C\to \Om_{\T}^{n+1}C,$$
while for all $0\leq j\leq n$,
$$s^j=T^{j}\mu_{ T^{n-j}C}:\Om _{\T}^{n+1} C\to \Om_{\T}^{n}C,$$
where $T^0:=Id_{\cat C}$.
\end{defn}

\begin{rmk}  The $\T$-cobar construction is naturally coaugmented by $\eta_{C}:C\to TC$ for all $C\in \ob \cat C$, which gives rise in the obvious way to a morphism of cosimplicial objects
$$\eta_{C}^\bullet: cc^\bullet C\to \Om ^\bullet_{\T}C.$$
\end{rmk} 

The following technical result is a crucial ingredient of the proofs of Theorems \ref{thm:equiv-monad} and \ref{thm:e2-desc}.  For a brief discussion of induced model structures, the reader is referred to appendix \ref{sec:indmodcat}.

\begin{lem}\label{lem:Reedyfib} Let $\T=(T,\mu, \eta)$ be a monad on a model category $\cat M$. Suppose that 
$U^\T:\cat M^\T\to \cat M$ right-induces a model category structure on $\cat M^\T$ and that $U_{\K^\T}:\op D(\T)\to \cat M^\T$ then left-induces a model category structure on $\op D(\T)$.

If $\Om^\bullet_{\T}(Y)$ is Reedy fibrant in $\cat M^{\bold \Delta}$, then $\can_{\K^\T}^{\bold\Delta}\Om^\bullet_{\T}(Y)$ is Reedy fibrant in $\op D(\T)^{\bold \Delta}$.
\end{lem}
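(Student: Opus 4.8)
The plan is to understand Reedy fibrancy of $\can_{\K^\T}^{\bold\Delta}\Om^\bullet_{\T}(Y)$ by comparing the relevant matching objects with those of $\Om^\bullet_{\T}(Y)$ itself, using the fact that $U_{\K^\T}:\op D(\T)\to\cat M^\T$ detects fibrations (since it left-induces the model structure) and that $U^\T:\cat M^\T\to\cat M$ likewise detects fibrations (since it right-induces). The key observation is that the composite forgetful functor $U^\T\circ U_{\K^\T}:\op D(\T)\to\cat M$ sends $\can_{\K^\T}$ to the identity on underlying objects; more precisely, by Remark \ref{rmk:chardescdat}, the underlying object in $\cat C$ of $\can_{\K^\T}(C)$ is $TC$, so on underlying objects $\can_{\K^\T}^{\bold\Delta}\Om^\bullet_{\T}(Y)$ is the cosimplicial object $n\mapsto T^{n+2}Y$. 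I would first record that a cosimplicial object $Z^\bullet$ in $\op D(\T)$ is Reedy fibrant if and only if for each $n$ the latching-style matching map $Z^n\to M^n Z$ is a fibration in $\op D(\T)$, which, because both forgetful functors detect fibrations, holds if and only if the underlying map in $\cat M$ is a fibration — \emph{provided} the forgetful functors commute with the formation of the matching objects $M^n$.

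The second step is exactly that commutation: matching objects in a Reedy structure are built as finite limits (in fact iterated pullbacks indexed by the latching category of $\bold\Delta$), so I would invoke the standard fact that a right adjoint preserves limits together with the observation that $U^\T$ and $U_{\K^\T}$ are each right adjoints (Remarks \ref{rmk:T-adjunct} and \ref{rmk:K-adjunct} give the free/cofree left adjoints). Hence $M^n\bigl(\can_{\K^\T}^{\bold\Delta}\Om^\bullet_{\T}(Y)\bigr)$ has underlying object in $\cat M$ equal to $M^n\bigl(U^\T U_{\K^\T}\can_{\K^\T}^{\bold\Delta}\Om^\bullet_{\T}(Y)\bigr)=M^n\bigl(\Om^\bullet_{\T}(TY)\bigr)$, the $n$-th matching object of the $\T$-cobar construction on $TY$. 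So the matching map whose fibrancy we must check is, on underlying objects, precisely the $n$-th matching map of $\Om^\bullet_{\T}(TY)$.

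The third step is then to deduce that $\Om^\bullet_{\T}(TY)$ is Reedy fibrant from the hypothesis that $\Om^\bullet_{\T}(Y)$ is. Here I would use the extra-degeneracy / coaugmentation structure: $\Om^\bullet_{\T}(TY)$ is, up to the usual décalage, a shift of $\Om^\bullet_{\T}(Y)$ — concretely $\Om^n_{\T}(TY)=T^{n+2}Y=\Om^{n+1}_{\T}(Y)$, and the coface and codegeneracy maps of $\Om^\bullet_{\T}(TY)$ are those of $\Om^\bullet_{\T}(Y)$ in the appropriate range, so $\Om^\bullet_{\T}(TY)$ is the cosimplicial object obtained from $\Om^\bullet_{\T}(Y)$ by forgetting the last coface in each degree (the "path object" / shifted cosimplicial object). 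A shifted (décalage) cosimplicial object of a Reedy fibrant cosimplicial object is again Reedy fibrant — this is the dual of the familiar statement that $\operatorname{dec}$ of a Reedy cofibrant simplicial object is Reedy cofibrant, which follows from analyzing how the latching/matching categories of $\bold\Delta$ sit inside one another. Combining: the underlying matching maps of $\can_{\K^\T}^{\bold\Delta}\Om^\bullet_{\T}(Y)$ are fibrations in $\cat M$, hence the matching maps are fibrations in $\op D(\T)$ by detection, hence $\can_{\K^\T}^{\bold\Delta}\Om^\bullet_{\T}(Y)$ is Reedy fibrant in $\op D(\T)^{\bold\Delta}$.

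I expect the main obstacle to be the bookkeeping in the third step: making precise the sense in which $\can_{\K^\T}^{\bold\Delta}\Om^\bullet_{\T}(Y)$ is a décalage of $\Om^\bullet_{\T}(Y)$ and that décalage preserves Reedy fibrancy, and in particular verifying that the natural isomorphism $F_{\K^\T}F^\T\cong\can_{\K^\T}T$ on underlying objects from Remark \ref{rmk:chardescdat} is genuinely compatible with all the coface and codegeneracy maps, so that the comparison of matching maps is an identification of diagrams and not merely of objects. The detection-of-fibrations and limit-preservation steps are routine given the hypotheses.
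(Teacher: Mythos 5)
There is a genuine gap, and it sits exactly where your argument leans on the functor $U_{\K^\T}$. You assert that $U_{\K^\T}:\op D(\T)\to\cat M^\T$ \emph{detects fibrations} because it left-induces the model structure, and that it is a \emph{right adjoint} and hence commutes with the limits defining matching objects. Both claims are backwards. In a left-induced structure the inducing functor creates the weak equivalences and the \emph{cofibrations}; the fibrations are only characterized by a lifting property and are in general not detected (nor preserved) by $U_{\K^\T}$. Moreover $U_{\K^\T}$ is the \emph{left} adjoint of the $(U_{\K^\T},F_{\K^\T})$-adjunction (the cofree functor $F_{\K^\T}$ is its right adjoint, cf. Remark \ref{rmk:K-adjunct}), so there is no general reason it should preserve the products and equalizers out of which $M_{n}$ is built. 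Consequently your reduction --- ``the matching map in $\op D(\T)$ is a fibration iff its underlying map in $\cat M$ is a fibration, and that underlying map is the matching map of a d\'ecalage of $\Om^\bullet_{\T}Y$'' --- is unjustified at both steps, and the second half of your argument (Reedy fibrancy of the shifted cosimplicial object in $\cat M$) would not give the conclusion even if it were carried out. This is precisely the difficulty the hypotheses are set up to avoid: only $U^\T$ detects fibrations (right-induced), while on the $\op D(\T)$ side one can only push fibrations \emph{forward} along the right Quillen functor $F_{\K^\T}$.

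The paper's proof goes the other way around. It first observes that all the codegeneracies of $\Om^\bullet_{\T}Y$ are morphisms of $\T$-algebras (the $s^j$ for $j\geq 1$ because $T=U^\T F^\T$, and $s^0$ because $\mu_Z:T^2Z\to TZ$ is a $\T$-algebra map), so the matching objects and matching maps of $\Om^\bullet_{\T}Y$ lift to $\cat M^\T$; since $U^\T$ right-induces the structure and preserves limits, the lifted matching maps $F^\T(T^nY)\to\lim\op T_nY$ are fibrations in $\cat M^\T$. It then applies the right Quillen, limit-preserving functor $F_{\K^\T}$, using $F_{\K^\T}F^\T(T^nY)=\can_{\K^\T}(\Om^n_{\T}Y)$ and $F_{\K^\T}(\op T_n Y)=M_n\can_{\K^\T}^{\bold\Delta}(\Om^\bullet_{\T}Y)$, to conclude that the matching maps in $\op D(\T)$ are fibrations. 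If you want to salvage your approach, you would have to replace the appeal to $U_{\K^\T}$ by exactly this use of $F_{\K^\T}$, i.e.\ exhibit the matching maps of $\can_{\K^\T}^{\bold\Delta}\Om^\bullet_{\T}(Y)$ as images under $F_{\K^\T}$ of fibrations in $\cat M^\T$; the d\'ecalage observation alone cannot do the job (and, as a minor point, the underlying cosimplicial object is $T^{\bold\Delta}\Om^\bullet_{\T}Y$, which is the shift of $\Om^\bullet_{\T}Y$ but is not literally $\Om^\bullet_{\T}(TY)$, whose zeroth cofaces are units $\eta$ not of the form $T(-)$).
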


\begin{proof} Throughout this proof, we use the notation of appendix \ref{sec:reedy}.  Since $T=U^\T F^\T$, it is immediate that if $j\geq 1$, then there exists $t^j\in \mor \cat M^\T$ such that $s^j=U^\T t^j$.  Moreover, since
$$\xymatrix{T^3 Z \ar [r]^{T\mu_{Z}} \ar [d]_{\mu _{TZ}}&T^2Z\ar [d]^{\mu_{Z}}\\
		T^2 Z\ar[r]^{\mu _{Z}}& TZ}$$
commutes for all objects $Z$ in $\cat M$,  the multiplication map $\mu_{Z}: T^2 Z\to TZ$ is a morphism of $\T$-algebras for all $Z$, whence the existence of $t^0\in \mor \cat M^\T$ such that  $s^0=U^\T t_{0}:T^mY\to T^{m-1}Y$ as well, for all $m$.

It follows that 
\begin{multline*}
M_{n}\Om^\bullet_{\T}Y=\prod_{0\leq i\leq n}\Om_{\T}^nY \egal{\psi_{1}}{\psi_{2}} \prod_{0\leq i<j\leq n-1} \Om_{\T}^{n-1}Y\\ =U^\T\Big(\prod_{0\leq i\leq n}F^\T(T^{n-1}Y) \egal{\upsilon_{1}}{\upsilon_{2}} \prod_{0\leq i<j\leq n-1} F^\T(T^{n-2}Y) )\Big)=:U^\T \op T_{n}(Y),
\end{multline*}
where if  
$$\prod_{0\leq i\leq n}F^\T(T^{n-1}Y)\xrightarrow {p_{i}} F^\T(T^{n-1}Y)\quad\text{and}\quad \prod_{0\leq i<j\leq n-1} F^\T(T^{n-2}Y)\xrightarrow{p_{i,j}} F^\T(T^{n-2}Y)$$ are the obvious projections, then 
$$p_{i,j}\upsilon_{1}=t^{i}p_{j}\quad \text{and}\quad p_{i,j}\upsilon_{2}=t^{j-1} p_{i}.$$
Thus, since $\Om^\bullet_{\T}Y$ is Reedy fibrant by hypothesis and the model category structure on $\cat M^\T$ is right-induced by $U^{\T}$,  the natural map
$$F^\T (T^nY) \to \lim \op T_{n}Y,$$
which is induced by $t_{0},...,t_{n-1}:F^\T (T^nY) \to F^\T (T^{n-1} Y)$, is a fibration for all $n\geq0$.  Note that we are also using here that $U^\T$ commutes with limits, as it is a right adjoint.

Applying the right Quillen functor $F_{\K^\T}:\cat M^\T \to \op D(\T)$, we obtain fibrations
$$F_{\K^\T}F^\T (T^n Y)\to F_{\K^\T}( \lim \op T_{n}Y)\cong \lim F_{\K^\T}\big(\op T_{n }(Y)\big)$$
for all $n$.  On the other hand, as already observed above,
$$F_{\K^\T}F^\T (T^n Y) = \can_{\K^\T}(\Om ^n_{\T}Y),$$
which generalizes easily to 
$$F_{\K^\T}\big(\op T_{n }(Y)\big)=M_{n}\can_{\K^\T}^{\bold\Delta} (\Om^\bullet_{\T}Y).$$
We conclude that $\can_{\K^\T}^{\bold\Delta} (\Om^\bullet_{\T}Y)$ is Reedy fibrant.
\end{proof}

\begin{rmk}\label{rmk:Reedyfib} When considering derived completion and homotopic descent later in this paper, we will be particularly interested in monads $\T$ such that $\Om ^\bullet_{\T}$ sends fibrant objects to fibrant objects. This fibrancy condition, though constraining, is not impossible to fulfill.   For example, if $\cat M=\cat {sSet}$ and, for all simplicial sets $X$, $TX$ is the simplicial set underlying a simplicial group, then $\Om^\bullet_{\T}X$ is always Reedy fibrant \cite [X.4.10]{bousfield-kan}.
\end{rmk}

Of course, there is an equally important dual construction for comonads.

\begin{defn} Let $\K=(K,\Delta, \ve)$ be a comonad on $\cat D$.  The \emph{simplicial $\K$-bar construction} is a functor
$$\Bar ^\K_{\bullet}:\cat D\to \cat D ^{\mathbf \Delta^{op}}$$
defined for $D\in \ob \cat D$ by
$$\Bar ^\K_{n}D= K^{n+1}(D)$$
and for all $0\leq i\leq n$,
$$d_{i}=K^{i} \ve_{K^{n-i}D}:\Bar ^\K_{n} D\to \Bar ^\K_{n-1}D,$$
while for all $0\leq j\leq n$,
$$s_{j}=K^{j}\Delta_{ K^{n-j}D}:\Bar ^\K_{n} D\to \Bar ^\K_{n+1} D,$$
where $K^0:=Id_{\cat D}$.
\end{defn}

\begin{rmk}  The $\K$-bar construction is naturally augmented by $\ve_{D}:KD\to D$ for all $D\in \ob \cat D$, which gives rise in the obvious way to a morphism of simplicial objects
$$(\ve_{D})_{\bullet}:\Bar ^\K_{\bullet}D\to cs_{\bullet}D.$$
\end{rmk} 

To prove Theorems \ref{thm:equiv-comonad} and \ref{thm:e2-codesc}, we need the following lemma.  The proof is strictly dual to that of Lemma \ref{lem:Reedyfib}.

\begin{lem}\label{lem:Reedycofib} Let $\K=(K,\Delta, \ve)$ be a monad on a model category $\cat M$. Suppose that 
$U_{\K}:\cat M_{\K}\to \cat M$ left-induces a model category structure on $\cat M_{\K}$ and that $U^{\T_{\K}}:\op D^{co}(\K)\to \cat M_{\K}$ then right-induces a model category structure on $\op D(\T)$.

If $\Bar^\K_{\bullet}X$ is Reedy cofibrant in $\cat M^{\bold \Delta^{op}}$, then $(\can^{\T_{\K}})^{\bold\Delta^{op}}(\Bar^\K_{\bullet}X)$ is Reedy cofibrant in $\op D^{co}(\K)^{\bold \Delta^{op}}$.
\end{lem}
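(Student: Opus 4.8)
The plan is to dualize the proof of Lemma~\ref{lem:Reedyfib} line by line. Where that argument builds a Reedy \emph{fibrant} cosimplicial object by controlling matching objects and pushes a fibration through the right Quillen functor $F_{\K^\T}$, the present argument will build a Reedy \emph{cofibrant} simplicial object by controlling latching objects and push a cofibration through the left Quillen functor $F^{\T_\K}\colon\cat M_\K\to\op D^{co}(\K)$. Observe first that all the Quillen-adjointness facts we need are forced by the hypotheses: since $U^{\T_\K}$ right-induces the model structure on $\op D^{co}(\K)$, it is right Quillen, so its left adjoint $F^{\T_\K}$ is left Quillen, and $F^{\T_\K}$, being a left adjoint, preserves all colimits; likewise $U_\K$, which left-induces the structure on $\cat M_\K$, is left Quillen, preserves and reflects cofibrations, and (again as a left adjoint) preserves colimits.

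First I would check that the degeneracy operators of $\Bar^\K_\bullet X$ lift along $U_\K$ to $\cat M_\K$, dually to the lifting of the codegeneracies of the cobar construction along $U^\T$. For $j\geq 1$ one has $s_j=K\bigl(K^{j-1}\Delta_{K^{n-1-j}X}\bigr)$, and since $K=U_\K F_\K$ this is $U_\K$ of a morphism of $\cat M_\K$; for $j=0$, coassociativity of $\Delta$ is exactly the statement that $\Delta_Z\colon(KZ,\Delta_Z)\to(K^2Z,\Delta_{KZ})$ is a map of $\K$-coalgebras, so $s_0=\Delta_{K^{n-1}X}$ lifts as well — this dualizes the use of associativity to see that $\mu_Z\colon T^2Z\to TZ$ is a $\T$-algebra map. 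Consequently the degree-$n$ latching diagram of $\Bar^\K_\bullet X$ lifts to a diagram in $\cat M_\K$, and since $U_\K$ preserves colimits, $L_n\Bar^\K_\bullet X=U_\K\bigl(\op K_n(X)\bigr)$ for the corresponding colimit $\op K_n(X)$ computed in $\cat M_\K$. As $\Bar^\K_\bullet X$ is Reedy cofibrant in $\cat M^{\bold \Delta^{op}}$, the natural map $L_n\Bar^\K_\bullet X\to\Bar^\K_n X$ is a cofibration in $\cat M$; it is $U_\K$ of the natural map $\colim\op K_n(X)\to F_\K(K^nX)$, so, because $U_\K$ reflects cofibrations, that latter map is a cofibration in $\cat M_\K$ for every $n\geq 0$.

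Finally I would apply $F^{\T_\K}$. Being left Quillen, it sends $\colim\op K_n(X)\to F_\K(K^nX)$ to a cofibration in $\op D^{co}(\K)$; being a left adjoint it preserves colimits, and it satisfies $F^{\T_\K}\circ F_\K=\can^{\T_\K}\circ K$ (dually to $F_{\K^\T}\circ F^\T=\can_{\K^\T}\circ T$), so $F^{\T_\K}\bigl(F_\K(K^nX)\bigr)=\can^{\T_\K}(\Bar^\K_n X)$ and $F^{\T_\K}\bigl(\op K_n(X)\bigr)\cong L_n\bigl((\can^{\T_\K})^{\bold \Delta^{op}}\Bar^\K_\bullet X\bigr)$, exactly as $F_{\K^\T}(\op T_n(Y))=M_n\can_{\K^\T}^{\bold \Delta}(\Om^\bullet_\T Y)$ in the earlier proof. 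Thus $L_n\bigl((\can^{\T_\K})^{\bold \Delta^{op}}\Bar^\K_\bullet X\bigr)\to\can^{\T_\K}(\Bar^\K_n X)$ is a cofibration for all $n$, which is precisely Reedy cofibrancy of $(\can^{\T_\K})^{\bold \Delta^{op}}(\Bar^\K_\bullet X)$ in $\op D^{co}(\K)^{\bold \Delta^{op}}$. I do not expect a genuine obstacle: the one thing demanding care is keeping the variances straight — which forgetful functor induces in which direction, hence whether its adjoint is left or right Quillen and whether one needs preservation of limits or of colimits — together with the observation that we never need $\Bar^\K_\bullet X$ itself to be Reedy cofibrant \emph{in} $\cat M_\K^{\bold \Delta^{op}}$ (its faces are not coalgebra morphisms), only that each individual latching map lifts to a cofibration there.
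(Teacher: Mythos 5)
Your proposal is correct and follows exactly the route the paper intends: the paper gives no separate argument for this lemma, stating only that the proof is strictly dual to that of Lemma~\ref{lem:Reedyfib}, and your dualization (lifting the degeneracies along $U_{\K}$ via $K=U_{\K}F_{\K}$ and coassociativity of $\Delta$, identifying latching objects using that $U_{\K}$ creates colimits and reflects cofibrations in the left-induced structure, then pushing the latching cofibrations through the left Quillen functor $F^{\T_{\K}}$ with $F^{\T_{\K}}F_{\K}=\can^{\T_{\K}}\circ K$) is precisely that dual argument. The bookkeeping of variances and indices is handled correctly, so there is nothing to add.
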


\begin{rmk}\label{rmk:Reedycofib}  An analysis of the cofibrancy constraint on $\Bar^\K_{\bullet}X$, dual to that in Remark \ref{rmk:Reedyfib}, shows that it is not an unreasonble condition to impose.  For example, every object $\cat{sSet}^{\bold\Delta^{op}}$ is Reedy cofibrant \cite[IV.32]{goerss-jardine}, so in particular if $\K$ is any comonad on $\cat {sSet}$, then $\Bar^\K_{\bullet}X$ is Reedy cofibrant for all simplicial sets $X$.
\end{rmk}

\subsection{Simplicial enrichments of Eilenberg-Moore categories}\label{sec:simplenr}

We begin by observing that simplicial enrichment of a category  induces simplicial enrichment of its associated Eilenberg-Moore categories of (co)algebras over a (co)monad with underlying simplicial endofunctor.  We refer the reader to appendix \ref{sec:simplmodelcat} for the terminology and notation we use here.

\begin{lem}\label{lem:simplenr} If $\cat S$ is a simplicially enriched category, $\T=(T,\mu,\eta)$ is a monad on $\cat S$ such that $T$ is a simplicial functor, and $\K=(K,\Delta, \ve)$ is a comonad on $\cat S$ such that $K$ is a simplicial functor, then $\cat S^\T$ and $\cat S_{\K}$ are also naturally simplicially enriched.  In particular,  
\begin{enumerate}
\item $\map_{\cat S^\T}:(\cat S^\T)^{op} \times \cat S^\T\to \cat {sSet}$ is defined
for any $\T$-algebras $(A,m)$ and $(A',m')$ by
$$\map _{\cat S^\T}\big((A,m),(A'm')\big)=\operatorname{equal} \big(\map_{\cat S} (A,A') \egal {m^*}{m'_{*}\circ T_{A,A'}}\map_{\cat S}(T A, A')\big);$$
and 
\item $\map_{\cat S_{\K}}:(\cat S_{\K})^{op} \times \cat S_{\K}\to \cat {sSet}$ is defined
for any $\K$-coalgebras $(C,\delta)$ and $(C',\delta')$ by
$$\map _{\cat S_{\K}}\big((C,\delta),(C',\delta')\big)=\operatorname{equal} \big(\map_{\cat S} (C,C') \egal {\delta'_{*}}{\delta^* \circ K_{C,C'}}\map_{\cat S}(C, KC')\big).$$
\end{enumerate}
\end{lem}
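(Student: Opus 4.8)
The plan is to construct the claimed enrichment of $\cat S^\T$ directly from the equalizer formula in (1), verify the three simplicial-category axioms, and then deduce (2) by dualization. First I would use that $T$, being a simplicial functor, comes equipped with maps of simplicial sets $T_{A,A'}\colon \map_{\cat S}(A,A')\to\map_{\cat S}(TA,TA')$ compatible with enriched composition and identities; the two arrows $m^{*}$ and $m'_{*}\circ T_{A,A'}$ appearing in (1) are therefore genuine maps in $\cat{sSet}$, and their equalizer $\map_{\cat S^\T}((A,m),(A',m'))$ exists, being formed levelwise. Evaluating at $0$-simplices recovers exactly the set of $\T$-algebra morphisms $\cat S^\T((A,m),(A',m'))$, so the underlying ordinary category of the proposed enrichment is $\cat S^\T$; moreover the equalizer inclusions assemble into a monomorphism $\map_{\cat S^\T}((A,m),(A',m'))\hookrightarrow\map_{\cat S}(A,A')$ in $\cat{sSet}$, natural in both variables.

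Next I would define composition and units in $\cat S^\T$ as the restrictions, along these monomorphisms, of the composition and units of $\cat S$. The point that needs checking is that the composite
$$\map_{\cat S^\T}((A',m'),(A'',m''))\times\map_{\cat S^\T}((A,m),(A',m'))\longrightarrow\map_{\cat S}(A,A'')$$
obtained from the two inclusions and the composition of $\cat S$ factors through the sub-simplicial-set $\map_{\cat S^\T}((A,m),(A'',m''))$, i.e.\ equalizes $m^{*}$ and $m''_{*}\circ T_{A,A''}$; this is a short diagram chase whose only ingredients are associativity of composition in $\cat S$, compatibility of $T_{-,-}$ with composition, and the equalizer conditions carried by the two factors (on points it is simply $gf\circ m=g\circ m'\circ Tf=m''\circ Tg\circ Tf=m''\circ T(gf)$). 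The analogous, easier, verification that the unit $\Delta[0]\to\map_{\cat S}(A,A)$ factors through $\map_{\cat S^\T}((A,m),(A,m))$ uses only that $T$ preserves enriched identities. (Only the simplicial functoriality of $T$ enters here; $\mu$ and $\eta$ play no role.)

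Finally I would verify the associativity and unit coherences. Here the monomorphisms $\map_{\cat S^\T}((A,m),(A',m'))\hookrightarrow\map_{\cat S}(A,A')$ do the work: post-composing either side of a coherence diagram for $\cat S^\T$ with the relevant inclusion produces the corresponding coherence diagram for $\cat S$, which commutes by hypothesis, whence the original diagram commutes since the inclusion is monic. This establishes (1); statement (2) follows by the evident dualization, most cleanly by applying (1) to the opposite category $\cat S^{op}$, on which $\K$ becomes a monad $K^{op}$ with simplicial underlying functor and the $\K$-coalgebras in $\cat S$ become the $K^{op}$-algebras in $\cat S^{op}$, or else by transcribing the argument above with every arrow reversed. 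The lemma is on the whole routine; the one step where a little care is required is the composition factorization, as it is the single place where the equalizer constraints on the two factors must be combined with the compatibility of the structure maps $T_{-,-}$ (dually $K_{-,-}$) of the simplicial functor with composition.
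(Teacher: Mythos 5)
Your proposal is correct and follows essentially the same route as the paper's proof: define the mapping spaces as the levelwise equalizer (a simplicial subset of $\map_{\cat S}(A,A')$ whose $0$-simplices recover $\cat S^\T$-morphisms), check that composition and units of $\cat S$ restrict using the compatibility of the simplicial functor $T$ with composition and identities, and obtain (2) by strict dualization. The coherence axioms and the observation that only the simplicial structure of $T$ (not $\mu$, $\eta$) is needed match the paper's argument, so no further comment is required.
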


\begin{rmk} In the lemma above, equalizers of simplicial maps are understood to be constructed levelwise using the following explicit model for the equalizer of two set maps.
$$\operatorname{equal}\big(X\egal{f}{g} Y)=\{x\in X\mid f(x)=g(x)\}.$$
\end{rmk}

\begin{proof} (1) For any objects $X,Y,Z$ in $\cat S$, let 
$$c:\map_{\cat S} (Y,Z)\times \map_{\cat S} (X,Y)\to \map_{\cat S} (X,Z)$$
denote the composition map of the simplicial enrichment, and let
$$i_{X}:*\to \map _{\cat S}(X.X)$$
denote the unit map.

As defined above, $\map _{\cat S^\T}\big((A,m),(A'm')\big)$ is a simplicial subset of $\map_{\cat S} (A,A')$, for all $\T$-algebras $(A,m)$ and $(A',m')$.  Since it is obvious that 
$$\map _{\cat S^\T}\big((A,m),(A'm')\big)_{0}=S^\T\big((A,m), (A',m')\big),$$
to prove that the definition above of $\map _{\cat S^\T}(-,-)$ gives a simplicial enrichment of $\cat S^\T$,
it suffices to check:
\begin{itemize}
\item[(a)] $c(f',f)\in \map _{\cat S^\T}\big((A,m),(A''m'')\big)_{n}$ for all $f\in \map _{\cat S^\T}\big((A,m),(A'm')\big)_{n}$ and $f'\in \map _{\cat S^\T}\big((A',m'),(A''m'')\big)_{n}$; and
\item [(b)]  $i_{A}(*)\in \map _{\cat S^\T}\big((A,m),(A,m)\big)_{0}$ for all possible multiplications $m:TA\to A$.
\end{itemize}

To prove (a), note that $f\in \map _{\cat S^\T}\big((A,m),(A'm')\big)_{n}$ and $f'\in \map _{\cat S^\T}\big((A',m'),(A''m'')\big)_{n}$ if and only if
$$m^*(f)=m'_{*}\big(T_{A,A'}(f)\big)\quad\text{and}\quad (m')^*(f')=m''_{*}\big(T_{A',A''}(f')\big).$$
Lemma \ref{lem:simplcomp} therefore implies that
$$m^*\big(c(f',f)\big)=m''_{*}\Big(c\big(T_{A',A''}(f'),T_{A,A'}(f)\big)\Big).$$
On the other hand, since $T$ is a simplicial functor,
$$c\big(T_{A',A''}(f'),T_{A,A'}(f)\big)=T_{A,A''}\big(c(f',f)\big),$$
and we can conclude that $c(f',f)\in  \map _{\cat S^\T}\big((A,m),(A''m'')\big)_{n}$.

The following sequence of equalities, which are true for all $\T$-algebras $(A,m)$, establishes that condition (b) holds as well.
\begin{multline*}
m^*\big( i_{A}(*)\big)=c\big(i_{A}(*), m)=m\\
=c\big(m, i_{TA}(*)\big)=c\Big(m, T_{A,A}\big(i_{A}(*)\big)\Big)=m_{*}\Big(T_{A,A}\big(i_{A}(*)\big)\Big).
\end{multline*}

The proof of (2) is strictly dual to the proof of (1) and therefore left to the reader.

\end{proof}

Tensoring and cotensoring over $\cat {sSet}$ can also sometimes carry over to Eilenberg-Moore categories of (co)algebras. Note that we use below the notation $\theta$ and $\tau$ introduced in Theorem \ref{thm:simplfunctor} and Remark \ref{rmk:tau}.

\begin{lem}\label{lem:tens-cotens}  Let $\cat S$ be a simplicially enriched category that is tensored and cotensored over $\cat {sSet}$.  Let $\T=(T,\mu,\eta)$ be  a monad on $\cat S$ and $\K=(K,\Delta, \ve)$  a comonad on $\cat S$.
\begin{enumerate}
\item If $T$ is a simplicial functor, then the Eilenberg-Moore category $\cat S^\T$  of $\T$-algebras is cotensored over $\cat {sSet}$, via
$$(-)_{\T}^{(-)}: \cat S^\T\times \cat {sSet}^{op} \to \cat S^\T: \big((A,m), L)\mapsto (A^L, m^L\circ \tau_{A,L}).$$
\item If $K$ is a simplicial functor, then the Eilenberg-Moore category $\cat S_{\K}$ of $\K$-coalgebras is tensored over $\cat {sSet}$, via
$$-\overset \K\otimes -:\cat S_{\K}\times \cat {sSet}\to \cat S_{\K}:\big((C,\delta), L\big)\mapsto \big(C\otimes L, \theta_{C,L}\circ (\delta \otimes L)\big).$$
\end{enumerate}
\end{lem}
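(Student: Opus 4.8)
The plan is to verify directly that the stated formulas define a cotensor (respectively, tensor) on the Eilenberg-Moore category, the crux being that the proposed structure maps $m^L\circ\tau_{A,L}$ and $\theta_{C,L}\circ(\delta\otimes L)$ are genuine $\T$-algebra (respectively, $\K$-coalgebra) structures, and that the requisite adjunction isomorphism $\cat S^\T\big((A,m),(B,n)^L_\T\big)\cong\cat {sSet}\big(L,\map_{\cat S^\T}((A,m),(B,n))\big)$ holds. I will treat part (1) in detail and remark that (2) is strictly dual, so the reader may supply it.

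First I would recall that, since $T$ is a simplicial functor on the tensored-cotensored category $\cat S$, the natural maps $\tau_{A,L}\colon T(A^L)\to (TA)^L$ of Remark \ref{rmk:tau} are available and are coherent with respect to $\mu$ and $\eta$ (i.e.\ compatible with the monad multiplication in the evident square, and with the unit via $\tau_{A,\Delta[0]}=\mathrm{id}$). Using these coherences I would check the two algebra axioms for $\bar m:=m^L\circ\tau_{A,L}\colon T(A^L)\to A^L$: that $\bar m\circ\eta_{A^L}=\mathrm{id}$, which follows from $m\circ\eta_A=\mathrm{id}$ together with the unital coherence of $\tau$; and that $\bar m\circ T\bar m=\bar m\circ\mu_{A^L}$, which follows from the associativity $m\circ Tm=m\circ\mu_A$ after pasting the naturality square of $\tau$ against $Tm$ and the $\mu$-coherence square for $\tau$. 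One also checks functoriality: for $\ell\colon L'\to L$ in $\cat {sSet}$ the induced map $A^\ell\colon A^L\to A^{L'}$ is a $\T$-algebra morphism, by naturality of $\tau$ in the simplicial-set variable, and likewise a $\T$-algebra morphism $f\colon (A,m)\to(B,n)$ induces $f^L\colon (A^L,\bar m)\to (B^L,\bar n)$.

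Next I would establish the defining adjunction of the cotensor. Given $(A,m)$, $(B,n)$ in $\cat S^\T$ and $L\in\cat {sSet}$, the underlying cotensor isomorphism in $\cat S$ gives $\cat S(A,B^L)\cong\cat{sSet}(L,\map_{\cat S}(A,B))$, and I would verify that under this bijection the subset $\cat S^\T\big((A,m),(B^L,\bar n)\big)$ of maps compatible with $\bar n$ corresponds exactly to the subset of simplicial maps $L\to\map_{\cat S}(A,B)$ landing in the equalizer $\map_{\cat S^\T}((A,m),(B,n))$ described in Lemma \ref{lem:simplenr}(1). This matching is a diagram chase: a map $f\colon A\to B^L$ satisfies $\bar n\circ Tf=f\circ m$ iff, after transposing, the adjoint simplicial map $\hat f$ equalizes $n^*$ and $n'_*\circ T_{A,B}$ levelwise, using the definition of $\tau$ as the mate of $T_{A,B}$ under the tensor-cotensor adjunction. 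The naturality of this bijection in all three variables is then automatic from naturality of the underlying cotensor adjunction and of the inclusions of equalizers.

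The main obstacle I anticipate is purely bookkeeping: writing down and keeping straight the several coherence squares relating $\tau$ (resp.\ $\theta$) to $\mu,\eta$ (resp.\ $\Delta,\ve$) and to the simplicial composition map $c$, and making sure the mate-correspondence between $T_{A,B}$ and $\tau_{A,L}$ is deployed correctly when transposing the algebra-compatibility condition into the equalizer condition. There is no conceptual difficulty — everything is forced by the simpliciality of $T$ and the universal properties of $(-)^{(-)}$ and $-\otimes-$ — but the dual statement (2), where one instead uses that a $\K$-coalgebra structure on $C\otimes L$ is built from $\delta\otimes L$ followed by $\theta_{C,L}\colon (KC)\otimes L\to K(C\otimes L)$, should be spelled out by dualizing every arrow; I would simply assert this and leave it to the reader, as is done elsewhere in this section.
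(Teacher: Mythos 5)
Your overall strategy -- verify the algebra axioms for $\bar m=m^L\circ\tau_{A,L}$ directly, then transpose the compatibility condition through the underlying cotensor adjunction and match it against the equalizer description of $\map_{\cat S^\T}$ from Lemma \ref{lem:simplenr} -- is exactly the ``elementary'' verification the paper leaves to the reader (the paper only records the resulting natural isomorphisms), and the adjunction half of your argument is fine. The genuine gap is in your first step: you assert that $\tau$ is ``coherent with respect to $\mu$ and $\eta$'' because $T$ is a simplicial functor, citing $\tau_{A,\Delta[0]}=\mathrm{id}$. But the unit and associativity coherences of Theorem \ref{thm:simplfunctor} and Remark \ref{rmk:tau} are coherences in the \emph{simplicial-set} variable (the monoidal unit $\Delta[0]$ and products $L\times L'$); they say nothing about the monad structure maps. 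What your verification of the two algebra axioms actually needs is $\tau_{A,L}\circ\eta_{A^L}=(\eta_A)^L$ (for unitality of $\bar m$) and $\tau_{A,L}\circ\mu_{A^L}=(\mu_A)^L\circ\tau_{TA,L}\circ T(\tau_{A,L})$ (for associativity), and these identities are precisely the statement that $\eta$ and $\mu$ are \emph{simplicial natural transformations}; they are not consequences of the simpliciality of $T$ alone. Concretely, transposing the unit axiom gives $m\circ T(ev_A)\circ\theta_{A^L,L}\circ(\eta_{A^L}\otimes L)$, which collapses to $ev_A$ only once one knows $\theta_{X,L}\circ(\eta_X\otimes L)=\eta_{X\otimes L}$, i.e.\ simplicial naturality of $\eta$; naturality of $\eta$ in the ordinary sense never interacts with $\theta$ or $\tau$.

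So, as written, the key step is unjustified under the hypotheses you (and the lemma) state. The repair is either to add the hypothesis that $\mu$ and $\eta$ (resp.\ $\Delta$ and $\ve$ in part (2)) are simplicial natural transformations -- harmless, since this assumption is in force wherever the paper uses the lemma, cf.\ Proposition \ref{prop:simplfunct} and Convention \ref{conv:descss}, and is acknowledged as the standing setting in the introduction -- or to verify those two squares by hand in the case at issue; but do not claim they follow from $T$ being simplicial. The remaining points you sketch (functoriality of $(A^L,\bar m)$ in both variables, and the identification of $\cat S^\T\big((A,m),(B^L,\bar n)\big)$ with the simplicial maps $L\to\map_{\cat S^\T}\big((A,m),(B,n)\big)$, upgraded to the enriched isomorphism by applying the set-level bijection to $L\times\Delta[n]$) are indeed routine, and the dual argument for $\K$-coalgebras carries the same caveat for $\Delta$ and $\ve$.
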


The proof of this lemma is quite elementary and thus left to the reader. In particular, it is easy to check  that there are natural isomorphisms
$$\map_{\cat S^\T}\big((A,m), (A',m')_{\T}^L)\cong \map _{\cat{sSet}}\Big(L, \map_{\cat S^\T}\big((A,m), (A',m')\big)\Big)$$
for all $\T$-algebras $(A,m)$ and $(A',m')$ and all simplicial sets $L$, as well as
$$\map_{\cat S_{\K}}\big((C,\delta)\overset \K\otimes L, (C', \delta')\big)\cong\map_{\cat{sSet}}\Big(L, \map_{\cat S_{\K}}\big( (C,\delta), (C',\delta')\big)\Big)$$
for all $\K$-coalgebras $(C,\delta)$ and $(C', \delta')$ and all simplicial sets $L$.

When $\cat S$ is cotensored (respectively, tensored) over $\cat {sSet}$, as well as simplicially enriched, then the mapping spaces for $\T$-algebras (respectively, $\K$-coalgebras) satisfy a very useful simplicial adjunction property.

\begin{lem} \label{lem:simpl-adj} Let $\cat S$ be a simplicially enriched category,  and let $\T=(T,\mu,\eta)$  and $\K=(K,\Delta, \ve)$ be a monad and  a comonad on $\cat S$ such that $T$ and $K$ are simplicial functors.
\begin{enumerate}
\item If $\cat S$ is cotensored over $\cat {sSet}$, then for all objects $X$ in $\cat S$ and all $\T$-algebras $(A,m)$, there is a natural isomorphism
$$\map_{\cat S}\big(X, U^\T(A,m)\big)\cong \map_{\cat S^\T}\big(F^\T X, (A,m)\big).$$
\item If $\cat S$ is tensored over $\cat {sSet}$, then for all objects $Y$ in $\cat S$ and all $\K$-coalgebras $(C,\delta)$, there is a natural isomorphism
$$\map_{\cat S}\big( U_{\K}(C,\delta), Y)\cong \map_{\cat S_{\K}}\big((C,\delta), F_{\K}Y\big).$$
\end{enumerate}
\end{lem}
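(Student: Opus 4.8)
The plan is to deduce both enriched adjunction isomorphisms formally from the corresponding unenriched adjunctions, $F^\T\dashv U^\T$ and $U_\K\dashv F_\K$, together with the (co)tensorings of the Eilenberg--Moore categories supplied by Lemma~\ref{lem:tens-cotens}. For part~(1), since $T$ is a simplicial functor, Lemma~\ref{lem:tens-cotens}(1) makes $\cat S^\T$ cotensored over $\cat{sSet}$, and the explicit formula $(A,m)_\T^L=(A^L,m^L\circ\tau_{A,L})$ shows that $U^\T$ strictly preserves cotensors, i.e.\ $U^\T\big((A,m)_\T^L\big)=\big(U^\T(A,m)\big)^L$ for every simplicial set $L$. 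Recall also that in any simplicially enriched category $\cat E$ cotensored over $\cat{sSet}$ one has, naturally in $[n]$, an isomorphism $\map_{\cat E}(P,Q)_n\cong\cat E(P,Q^{\Delta[n]})$, namely the Yoneda identification $\map_{\cat E}(P,Q)_n\cong\cat{sSet}(\Delta[n],\map_{\cat E}(P,Q))$ followed by the defining adjunction of the cotensor in its $\cat{sSet}$-variable.

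The argument then consists of chaining these together. Applying the last remark in $\cat S^\T$ with $P=F^\T X$ and $Q=(A,m)$, then the unenriched adjunction isomorphism $\cat S^\T(F^\T X,B)\cong\cat S(X,U^\T B)$ with $B=(A,m)_\T^{\Delta[n]}$, then the fact that $U^\T$ preserves cotensors, and finally the same Yoneda-plus-cotensor identity back in $\cat S$, one obtains
$$\map_{\cat S^\T}\big(F^\T X,(A,m)\big)_n\cong\cat S^\T\big(F^\T X,(A,m)_\T^{\Delta[n]}\big)\cong\cat S\big(X,(U^\T(A,m))^{\Delta[n]}\big)\cong\map_{\cat S}\big(X,U^\T(A,m)\big)_n.$$
Each step is natural in $[n]$, in $X$, and in $(A,m)$, so these bijections assemble into the asserted natural isomorphism of simplicial sets.

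Part~(2) is strictly dual. Since $K$ is a simplicial functor, Lemma~\ref{lem:tens-cotens}(2) makes $\cat S_\K$ tensored over $\cat{sSet}$ via $(C,\delta)\overset\K\otimes L=\big(C\otimes L,\theta_{C,L}\circ(\delta\otimes L)\big)$, so that $U_\K$ strictly preserves tensors. Using the dual mapping-space identity $\map_{\cat E}(P,Q)_n\cong\cat E(P\otimes\Delta[n],Q)$ valid in a tensored $\cat E$, the unenriched adjunction $\cat S_\K(Z,F_\K Y)\cong\cat S(U_\K Z,Y)$ with $Z=(C,\delta)\overset\K\otimes\Delta[n]$, and the tensoring of $\cat S$, one gets
$$\map_{\cat S_\K}\big((C,\delta),F_\K Y\big)_n\cong\cat S_\K\big((C,\delta)\overset\K\otimes\Delta[n],F_\K Y\big)\cong\cat S\big(U_\K(C,\delta)\otimes\Delta[n],Y\big)\cong\map_{\cat S}\big(U_\K(C,\delta),Y\big)_n,$$
again naturally in all arguments.

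The only genuine bookkeeping is the verification that these levelwise bijections commute with the simplicial operators, so that they indeed constitute a morphism of simplicial sets; this amounts precisely to the naturality of the (co)tensor adjunction in its $\cat{sSet}$-variable and of the unenriched $(F^\T,U^\T)$- and $(U_\K,F_\K)$-adjunctions in the algebra and coalgebra variables, and is entirely formal. Alternatively, one can argue directly from Lemma~\ref{lem:simplenr}: there $\map_{\cat S^\T}(F^\T X,(A,m))$ is the simplicial subset of $\map_{\cat S}(TX,A)$ cut out by the equalizer condition, and one checks that precomposition with $\eta_X$ and the assignment $f\mapsto m\circ Tf$ (that is, $m_*\circ T_{X,A}$) restrict to mutually inverse isomorphisms between this equalizer and $\map_{\cat S}(X,A)$, the required identities reducing levelwise to the naturality of $\mu$ and $\eta$ as simplicial natural transformations, the monad axioms, and the $\T$-algebra axioms for $(A,m)$; part~(2) is dual.
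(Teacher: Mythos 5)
Your proof is correct and follows essentially the same route as the paper: the paper likewise computes $\map_{\cat S^\T}\big(F^\T X,(A,m)\big)_{n}=\cat S^\T\big(F^\T X,(A,m)^{\Delta[n]}\big)\cong\cat S\big(X,U^\T((A,m)^{\Delta[n]})\big)=\cat S(X,A^{\Delta[n]})=\map_{\cat S}(X,A)_{n}$, using the cotensoring of Lemma \ref{lem:tens-cotens}, the unenriched $(F^\T,U^\T)$-adjunction, and the fact that $U^\T$ preserves cotensors, with (2) handled dually via the tensoring. Your closing alternative via the equalizer description of Lemma \ref{lem:simplenr} is extra and not needed, but the main argument matches the paper's.
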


\begin{proof} (1) Recall the definition of the cotensoring of $\cat S^\T$ over $\cat {sSet} $ from Lemma \ref{lem:tens-cotens}.  For all $n$,
\begin{align*}
\map_{\cat S^\T}\big(F^\T X,(A,m)\big)_{n}&=\cat S^\T\big(F^\T X,  (A,m)^{\Delta[n]}\big)\\
	&\cong \cat S\Big(X, U^\T\big((A,m)^{\Delta[n]}\big)\Big)\\
	&=\cat S(X,A^{\Delta[n]})\\
	&=\map_{\cat S}(X,A)_{n}.
\end{align*}

The proof of (2) is dual to the proof above, using the tensoring of $\cat S$, rather than the cotensoring.
\end{proof}

As a consequence of this lemma, we obtain natural isomorphisms that are the key to proving our homotopical versions of the classical characterization of (co)descent.

\begin{cor}\label{cor:adj-can} Let $\cat S$ be a simplicially enriched category,  and let $\T=(T,\mu,\eta)$  and $\K=(K,\Delta, \ve)$ be a monad and  a comonad on $\cat S$ such that $T$ and $K$ are simplicial functors, and $\eta$ and $\ve$ are simplicial natural transformations.
\begin{enumerate}
\item If $\cat S$ is cotensored over $\cat {sSet}$, then for all objects $X,Y$ in $\cat S$, there is a natural isomorphism
$$\map_{\cat S}\big(X, \operatorname{Prim}_{\K^\T}\circ \can_{\K^\T}(Y)\big)\cong \map_{\op D(\T)}\big(\can_{\K^\T}(X),  \can_{\K^\T}(Y)\big).$$
\item If $\cat S$ is tensored over $\cat {sSet}$,  then for all objects $X,Y$ in $\cat S$, there is a natural isomorphism
$$\map_{\cat S}\big(Q^{\T_{\K}}\circ \can^{\T_{\K}}(X), Y\big)\cong \map_{\op D^{co}(\K)}\big(\can^{\T_{\K}}(X),  \can^{\T_{\K}}(Y)\big).$$
\end{enumerate}
\end{cor}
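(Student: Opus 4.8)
The plan is to prove (1) and then obtain (2) by a strictly dual argument: interchange Lemmas~\ref{lem:simplenr}(1) and \ref{lem:simpl-adj}(1) with their counterparts \ref{lem:simplenr}(2) and \ref{lem:simpl-adj}(2), replace ``cotensored'' by ``tensored'', and replace the equalizer description of $\operatorname{Prim}_{\K^\T}\circ\can_{\K^\T}$ from the remark preceding Theorem~\ref{thm:coBeck} by the coequalizer description of $\operatorname{Q}^{\T_{\K}}\circ\can^{\T_{\K}}$ from the remark preceding Theorem~\ref{thm:Beck}. For (1) itself the strategy is to rewrite the right-hand side as an equalizer of mapping spaces in $\cat S^\T$, transport that equalizer diagram across the simplicial adjunction isomorphism of Lemma~\ref{lem:simpl-adj}(1) into $\cat S$, and finally move the resulting equalizer of simplicial sets inside $\map_{\cat S}(X,-)$.

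First I would note that $\op D(\T)=(\cat S^\T)_{\K^\T}$ is simplicially enriched: since $T$ is a simplicial functor, $\cat S^\T$ is simplicially enriched by Lemma~\ref{lem:simplenr}(1), and applying Lemma~\ref{lem:simplenr}(2) to the (simplicial) comonad $\K^\T$ on $\cat S^\T$ then enriches $\op D(\T)$. Recalling from Remark~\ref{rmk:comonad-diagram} that the underlying $\T$-algebra of $\can_{\K^\T}(Z)$ is $F^\T Z$, with $\K^\T$-coaction $F^\T\eta_Z\colon F^\T Z\to K^\T F^\T Z$, the formula for $\map_{\cat S_{\K}}$ in Lemma~\ref{lem:simplenr}(2) specializes to
\[
\map_{\op D(\T)}\big(\can_{\K^\T}(X),\can_{\K^\T}(Y)\big)=\operatorname{equal}\Big(\map_{\cat S^\T}(F^\T X,F^\T Y)\egal{(F^\T\eta_Y)_{*}}{(F^\T\eta_X)^{*}\circ (K^\T)_{F^\T X,F^\T Y}}\map_{\cat S^\T}\big(F^\T X,K^\T F^\T Y\big)\Big).
\]

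Next I would feed this diagram through the natural isomorphism $\map_{\cat S^\T}(F^\T X,-)\cong\map_{\cat S}\big(X,U^\T(-)\big)$ of Lemma~\ref{lem:simpl-adj}(1), which applies because $\cat S$ is cotensored over $\cat{sSet}$; here $U^\T F^\T Y=TY$ and $U^\T K^\T F^\T Y=T^{2}Y$. Naturality of this isomorphism in the target variable turns the first parallel map into post-composition with $U^\T F^\T\eta_Y=T\eta_Y$. For the second map I would rewrite $(K^\T f)\circ F^\T\eta_X=F^\T\big(U^\T f\circ\eta_X\big)$, observe that $U^\T f\circ\eta_X$ is precisely the $(F^\T,U^\T)$-adjunct of $f$, and then invoke naturality of $\eta$ to conclude that this map becomes post-composition with $\eta_{TY}$. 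This yields
\[
\map_{\op D(\T)}\big(\can_{\K^\T}(X),\can_{\K^\T}(Y)\big)\cong\operatorname{equal}\Big(\map_{\cat S}(X,TY)\egal{(T\eta_Y)_{*}}{(\eta_{TY})_{*}}\map_{\cat S}(X,T^{2}Y)\Big).
\]
Finally, since $\cat S$ is cotensored over $\cat{sSet}$ the functor $\map_{\cat S}(X,-)\colon\cat S\to\cat{sSet}$ preserves all limits, so it carries the equalizer of $T\eta_Y$ and $\eta_{TY}$ computed in $\cat S$ onto the displayed equalizer of simplicial sets; by the remark preceding Theorem~\ref{thm:coBeck} that equalizer in $\cat S$ is exactly $\operatorname{Prim}_{\K^\T}\circ\can_{\K^\T}(Y)$, and the desired isomorphism follows. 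Each of the three steps is natural in $X$ and $Y$, hence so is the composite.

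The step I expect to require the most care is the identification, after applying Lemma~\ref{lem:simpl-adj}(1), of the second of the two parallel maps: one must unwind simultaneously the description of the $\K^\T$-coaction produced by Lemma~\ref{lem:simplenr}(2) and the simplicial adjunction isomorphism, and then use the naturality square for $\eta$ at the level of mapping spaces. This is the only place where the hypothesis that $\eta$ (respectively $\ve$) is a \emph{simplicial} natural transformation, rather than merely a natural transformation, genuinely enters: it guarantees that post-composition with $\eta_{TY}$ is a morphism of simplicial sets compatible with the adjunction isomorphisms. The remaining verifications --- that $K^\T$ is a simplicial functor, so that Lemma~\ref{lem:simplenr}(2) applies, and that $\map_{\cat S}(X,-)$ preserves equalizers --- are routine given the simplicial machinery assembled earlier in this section.
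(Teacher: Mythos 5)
Your proposal is essentially the paper's own proof: write $\map_{\op D(\T)}\big(\can_{\K^\T}(X),\can_{\K^\T}(Y)\big)$ as the equalizer supplied by Lemma \ref{lem:simplenr}, transport it across the simplicial adjunction isomorphism of Lemma \ref{lem:simpl-adj}(1), use the simplicial naturality of $\eta$ to identify the second parallel map with $(\eta_{TY})_{*}$, pull the equalizer inside $\map_{\cat S}(X,-)$ to recover $\operatorname{Prim}_{\K^\T}\circ\can_{\K^\T}(Y)$, and dualize for (2). The one slip is your justification of the last step: being cotensored over $\cat{sSet}$ does not make $\map_{\cat S}(X,-)$ limit-preserving --- that is what tensoring buys, via $\map_{\cat S}(X,Z)_{n}\cong\cat S(X\otimes\Delta[n],Z)$; the paper is equally silent on this point (it attributes the whole identification to the simplicial naturality of $\eta$), so in the merely cotensored setting one should either regard the equalizer defining $\operatorname{Prim}_{\K^\T}$ as an enriched (conical) one or note that in the intended applications $\cat S$ is a simplicial model category and hence tensored as well.
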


\begin{proof} (1) Using the formulas in Lemma \ref{lem:simplenr} and the formula for $\can_{\K^\T}$ from Remark \ref{rmk:chardescdat}, we calculate that
\begin{align*}
\map_{\op D(\T)} &\big(\can_{\K^\T}(X),  \can_{\K^\T}(Y)\big)\\
&=\operatorname{equal} \big(\map_{\cat S^\T} (F^\T X,F^\T Y) \egal {(T\eta_{Y})_{*}}{T\eta_{X}^* \circ T_{TX,TY}}\map_{\cat S^\T}(F^\T X, F^\T TY)\big)\\
&\overset {(a)}\cong\operatorname{equal} \big(\map_{\cat S} ( X,T Y) \egal {(T\eta_{Y})_{*}}{\eta_{X}^* \circ T_{X,TY}}\map_{\cat S}( X, T^2Y)\big)\\
&\overset {(b)}\cong \map _{\cat S}\big(X,\operatorname{equal}(TY\egal {(T\eta_{Y})_{*}}{(\eta_{TY})_{*}} T^2Y)\big)\\
&=\map_{\cat S}\big(X, \operatorname{Prim}_{\K^\T}\circ \can_{\K^\T}(Y)\big).
\end{align*}
Here, isomorphism (a) is a consequence of Lemma \ref{lem:simpl-adj}(1), while isomorphism (b) follows from the fact that $\eta$ is a simplicial natural transformation.

The proof of (2) is dual to the proof above.
\end{proof}
We now show that the simplicial enrichments we have defined on categories of algebras and coalgebras are compatible with the enrichment of the underlying category, in the following sense.

\begin{prop}\label{prop:simplfunct} Let $\cat S$ be a simplicially enriched category that is tensored and cotensored over $\cat {sSet}$.  If $\T=(T,\mu,\eta)$   is a monad on $\cat S$ such that $T$ is a simplicial functor, and $\mu$ and $\eta$ are simplicial natural transformations, then
the free $\T$-algebra functor $F^\T:\cat S\to \cat S^\T$ and the canonical descent data functor $\can _{\K^\T}:\cat S\to \op D(\T)$ are both simplicial functors.

Dually, if $\K=(K,\Delta, \ve)$   is a comonad on $\cat S$ such that $K$ is a simplicial functor, and $\Delta$ and $\ve$ are simplicial natural transformations, then
the free $\K$-coalgebra functor $F_{\K}:\cat S\to \cat S_{\K}$ and the canonical codescent data functor $\can^{\T_{\K}}:\cat S\to \op D^{co}(\K)$ are both simplicial functors.
\end{prop}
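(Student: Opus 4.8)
The plan is to verify directly that each of the four functors preserves the simplicial mapping-space structure, i.e.\ that the maps it induces on hom-objects are maps of simplicial sets compatible with composition and units. Since the canonical (co)descent data functors factor nicely through the free (co)algebra functors, the real content is in the first of each pair, and the second follows formally. I will only write out the monad case; the comonad case is strictly dual and can be left to the reader.

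First I would treat $F^\T:\cat S\to \cat S^\T$. On objects it sends $X$ to $(TX,\mu_X)$; on mapping spaces I claim the simplicial map
$$
\map_{\cat S}(X,Y)\xrightarrow{T_{X,Y}} \map_{\cat S}(TX,TY)\hookrightarrow \map_{\cat S^\T}\big(F^\T X, F^\T Y\big)
$$
is well-defined, where $T_{X,Y}$ is the structure map of the simplicial functor $T$ and the second map is inclusion of a simplicial subset. The point to check is that for each $n$ and each $f\in\map_{\cat S}(X,Y)_n$, the element $T_{X,Y}(f)$ lies in the equalizer defining $\map_{\cat S^\T}\big((TX,\mu_X),(TY,\mu_Y)\big)_n$; concretely, $\mu_X^*\big(T_{X,Y}(f)\big)=(\mu_Y)_*\big(T_{TX,TY}(T_{X,Y}(f))\big)$. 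This is exactly the statement that $\mu:T\circ T\to T$ is a simplicial natural transformation — that is the hypothesis we are given — combined with the functoriality of $T$ applied to composites, via Lemma~\ref{lem:simplcomp}. Compatibility with composition and units then follows because $T$ is a simplicial functor and the enrichment on $\cat S^\T$ was \emph{defined} (in Lemma~\ref{lem:simplenr}) so that the inclusions into $\map_{\cat S}$ respect $c$ and $i$; one transports the corresponding identities along $T_{X,Y}$. I would also note that one needs $\cat S$ cotensored over $\cat{sSet}$ only to know that $\cat S^\T$ is genuinely simplicially enriched in a compatible way (Lemma~\ref{lem:tens-cotens}), so the statement about $F^\T$ being simplicial is meaningful.

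For $\can_{\K^\T}:\cat S\to \op D(\T)$, recall from Remark~\ref{rmk:chardescdat} that $\can_{\K^\T}(Y)=(TY,\mu_Y,T\eta_Y)$, so as a functor to $(\cat S^\T)_{\K^\T}$ it sends $Y$ to the $\K^\T$-coalgebra $\big(F^\T Y,\ (T\eta_Y)\big)$ — note $T\eta_Y$ is a $\T$-algebra map precisely because $\mu$ is a simplicial natural transformation, the same computation as above. By the formula in Lemma~\ref{lem:simplenr}(2) applied to the comonad $\K^\T$ on $\cat S^\T$, the mapping space $\map_{\op D(\T)}\big(\can_{\K^\T}(X),\can_{\K^\T}(Y)\big)$ is an equalizer sitting inside $\map_{\cat S^\T}(F^\T X,F^\T Y)$, and we have just shown $\map_{\cat S}(X,Y)$ maps into the latter via $F^\T$. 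So it remains to check that $T_{X,Y}(f)$ lands in this further equalizer, i.e.\ that it is compatible with the two coaction-induced maps built from $T\eta_X$ and $T\eta_Y$; this reduces, after using Lemma~\ref{lem:simpl-adj}(1) to rewrite things in $\cat S$, to the fact that $\eta$ is a simplicial natural transformation — essentially the same manipulation that appeared in the proof of Corollary~\ref{cor:adj-can}(1). Composition- and unit-compatibility for $\can_{\K^\T}$ then come for free from those for $F^\T$ together with the way the $\op D(\T)$-enrichment was defined as a sub-enrichment.

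The main obstacle — though it is bookkeeping rather than a genuine difficulty — is keeping the three layers of equalizer-defined mapping spaces straight ($\cat S$ inside which sits $\cat S^\T$, inside the hom of which sits $\op D(\T)=(\cat S^\T)_{\K^\T}$) and checking at each layer that the relevant structure map ($T_{X,Y}$) respects the equalizer condition. Each such check is precisely one of the hypotheses "$\mu$ (resp.\ $\eta$) is a simplicial natural transformation" unwound via Lemma~\ref{lem:simplcomp}, so the proof is essentially a matter of citing Lemmas~\ref{lem:simplenr}, \ref{lem:tens-cotens} and \ref{lem:simpl-adj} at the right moments and invoking duality for the comonad half.
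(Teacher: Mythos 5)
Your proposal is correct and follows essentially the same route as the paper: one checks that $T_{X,Y}$ corestricts first to $\map_{\cat S^\T}(F^\T X,F^\T Y)$ using the simplicial naturality of $\mu$, and then further to $\map_{\op D(\T)}\big(\can_{\K^\T}(X),\can_{\K^\T}(Y)\big)$ using the simplicial naturality of $\eta$, with composition and unit compatibility inherited from $T_{-,-}$ and the comonad half obtained by duality. Your detour through Lemma~\ref{lem:simpl-adj}(1) and Corollary~\ref{cor:adj-can}(1) for the second corestriction is harmless but unnecessary (the paper checks it directly, without cotensoring), and note that the enrichment of $\cat S^\T$ in Lemma~\ref{lem:simplenr} does not actually require the (co)tensoring hypothesis.
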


\begin{proof}  We treat the monad case and leave the strictly dual, comonad case to the reader.  The idea underlying this proof is that if $f:X\to Y$ is any morphism in $\cat S$, then $Tf$ is a morphism of $\T$-algebras between $F^\T X$ and $F^\T Y$ and also a morphism of descent data from $\can _{\K^\T}(X)$ to $\can _{\K^\T}(Y)$.  We simply generalize this argument to simplices of positive dimension.

Note that $T^2$ is a simplicial functor because $T$ is, so that it makes sense to require that $\mu$ be a simplicial natural transformation.  

That $\mu$ is a simplicial natural transformation means that there is a family of simplicial maps
$$\{\mu_{X}:*\to \map_{\cat S}(T^2X, TX)\mid X\in \ob \cat S\}$$
such that 
 $$(\mu _{Y})_{*}\circ T_{TX,TY}\circ T_{X,Y}=(\mu _{Y})_{*}\circ T^2_{X,Y}=(\mu_{X})^*\circ T_{X,Y}.$$  
 It follows 
$$T_{X,Y}:\map_{\cat S}(X,Y)\to \map _{\cat S}(TX,TY)$$
factors through $\map_{\cat S^\T}(F^\T X, F^\T Y)$.  Let
$$F^\T_{X,Y} :\map_{\cat S}(X,Y)\to \map_{\cat S^\T}(F^\T X, F^\T Y)$$
denote the corestriction of $T_{X,Y}$. The compatibility of $F^\T_{-,-}$ with simplicial composition and identities is an immediate consequence of the same properties of $T_{-,-}$.

To treat the canonical descent data functor, recall from Remark \ref{rmk:chardescdat} that for all $X\in \ob \cat S$,
$$\can_{\K^\T}(X)=(TX, \mu _{X}, T\eta_{X}),$$
so that 
\begin{multline*}
\map_{\op D(\T)}\big(\can_{\K^\T}(X), \can_{\K^\T}(Y)\big)_{n}=\\
\big\{g\in \map_{\cat S^\T}(F^\T X, F^\T Y)\mid (T\eta_{X})^*\big(T_{TX,TY}(g)\big)=(T\eta_{Y})_{*}(g)\big\}.\end{multline*}

Exactly as above, the fact that $\eta: Id\to T$ is a simplicial natural transformation tells us that $F^\T_{X,Y} $ factors through $\map_{\op D(\T)}\big(\can_{\K^\T}(X), \can_{\K^\T}(Y)\big)$.  We can therefore let 
$$(\can_{\K^\T})_{X,Y}:\map_{\cat S}(X,Y)\to\map_{\op D(\T)}\big(\can_{\K^\T}(X), \can_{\K^\T}(Y)\big)$$
denote the corestriction of $F^\T_{X,Y}$ and conclude that $\can_{\K^\T}$ is a simplicial functor.
\end{proof}

\begin{rmk} Under the hypotheses of Proposition \ref{prop:simplfunct}, $U^\T: \cat S^\T\to \cat S$ and $U_{\K}:\cat S_{\K}\to \cat S$ are obviously simplicial functors:  for any $(A,m),(A',m')\in \ob \cat S^\T$, 
$$U^\T_{(A,m),(A',m')}:\map_{\cat S^\T}\big((A,m),(A',m')\big) \to \map _{\cat S}(A,A')$$
is just the inclusion.  The components of $U_{\K}$ are also inclusions.
\end{rmk}

The next result, in which we further probe the nature of $\can _{\K^\T}$ and $\can ^{ \T_{\K}}$ as simplicial functors, is crucial to the proof of our criteria for homotopic (co)descent (Theorems \ref{thm:equiv-monad} and \ref{thm:equiv-comonad}) in section \ref{sec:htpicdesc}.

\begin{prop}\label{prop:adj-iso}  Let $\cat S$ be a simplicially enriched category.  If $\T=(T,\mu,\eta)$   is a monad on $\cat S$ such that $T$ is a simplicial functor, and $\mu$ and $\eta$ are simplicial natural transformations, then 
$$(\can_{\K^\T})_{X,TY}:\map_{\cat S}(X,TY)\to \map_{\op D(\T)}\big(\can_{\K^\T}(X), \can_{\K^\T}(TY)\big)$$
is an isomorphism for all $X, Y\in \ob \cat S$.

Dually, if $\K=(K,\Delta, \ve)$   is a comonad on $\cat S$ such that $K$ is a simplicial functor, and $\Delta$ and $\ve$ are simplicial natural transformations, then 
$$(\can^{\T_{\K}})_{KX,Y}:\map_{\cat S}(KX,Y)\to \map_{\op D^{co}(\K)}\big(\can^{\T_{\K}}(KX), \can^{\T_{\K}}(Y)\big)$$
is an isomorphism for all $X, Y\in \ob \cat S$.
\end{prop}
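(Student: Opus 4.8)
The plan is to prove the monad statement and obtain the comonad statement by the strictly dual argument. First I would make the target mapping space completely explicit. Recall from Remark~\ref{rmk:chardescdat} that $\can_{\K^\T}(X)=(TX,\mu_X,T\eta_X)$ and $\can_{\K^\T}(TY)=(T^2Y,\mu_{TY},T\eta_{TY})$, and from the proof of Proposition~\ref{prop:simplfunct} that $(\can_{\K^\T})_{X,TY}$ is the corestriction of $T_{X,TY}$, so it sends an $n$-simplex $f$ of $\map_{\cat S}(X,TY)$ to the $n$-simplex $Tf$ of $\map_{\cat S}(TX,T^2Y)$. Applying Lemma~\ref{lem:simplenr} twice --- once to the monad $\T$ on $\cat S$ and once to the comonad $\K^\T$ on $\cat S^\T$, whose underlying endofunctor $F^\T U^\T$ is a simplicial functor by Proposition~\ref{prop:simplfunct} and the remark following it --- and using that $U^\T$ and $U_{\K^\T}$ are simplicial functors whose components are inclusions, I can identify $\map_{\op D(\T)}\big(\can_{\K^\T}(X),\can_{\K^\T}(TY)\big)$ with the sub-simplicial set of $\map_{\cat S}(TX,T^2Y)$ consisting of those $n$-simplices $g$ satisfying the two identities $g\circ\mu_X=\mu_{TY}\circ Tg$ and $T\eta_{TY}\circ g=Tg\circ T\eta_X$, where composition and application of $T$ are understood at the level of $n$-simplices via the simplicial enrichment.

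Next I would exhibit an inverse. Define a simplicial map
$$\Phi\colon\map_{\op D(\T)}\big(\can_{\K^\T}(X),\can_{\K^\T}(TY)\big)\longrightarrow\map_{\cat S}(X,TY),\qquad g\longmapsto \mu_Y\circ g\circ\eta_X,$$
that is, the composite of the inclusion into $\map_{\cat S}(TX,T^2Y)$ with precomposition by $\eta_X$ and postcomposition by $\mu_Y$; this is manifestly a simplicial map. I would then check that $\Phi$ is a two-sided inverse to $(\can_{\K^\T})_{X,TY}$ on $n$-simplices. On one side, $\Phi\big((\can_{\K^\T})_{X,TY}(f)\big)=\mu_Y\circ Tf\circ\eta_X=\mu_Y\circ\eta_{TY}\circ f=f$, where the middle equality is naturality of $\eta$ at the level of $n$-simplices (which is exactly what it means for $\eta$ to be a simplicial natural transformation, cf.\ the proof of Proposition~\ref{prop:simplfunct}) and the last is the unit law $\mu\circ\eta T=\mathrm{id}$. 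On the other side, for $g$ an $n$-simplex of the target we compute $T\big(\Phi(g)\big)=T\mu_Y\circ Tg\circ T\eta_X=T\mu_Y\circ T\eta_{TY}\circ g=T(\mu_Y\circ\eta_{TY})\circ g=g$, using functoriality of $T$, the second defining identity $Tg\circ T\eta_X=T\eta_{TY}\circ g$ of the target mapping space, and again the unit law; hence $(\can_{\K^\T})_{X,TY}\big(\Phi(g)\big)=g$. Note that only the $\K^\T$-coalgebra identity, and not the $\T$-algebra identity, is used here.

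Finally, I would deduce the comonad statement by dualizing throughout: $\eta$ and $\mu$ are replaced by $\ve$ and $\Delta$, the monad unit laws by the comonad counit laws, composites are reversed, and the inverse of $(\can^{\T_{\K}})_{KX,Y}$ is given by $g\mapsto\ve_Y\circ g\circ\Delta_X$, this time invoking the $\T_{\K}$-algebra identity on $g$.

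I do not anticipate a genuine obstacle; the two points requiring care are (a) keeping the bookkeeping straight when Lemma~\ref{lem:simplenr} is applied twice, so that the correct pair of identities cuts out the target mapping space, and (b) being sure that the naturality squares of $\eta$ and $\mu$ may be invoked at the level of $n$-simplices --- which is precisely the content of the hypothesis that they be simplicial natural transformations. The conceptual reason no tensoring or cotensoring hypothesis is needed here, in contrast with Lemma~\ref{lem:simpl-adj}, is that the inverse $\Phi$ is assembled from honest composition with the structure morphisms $\eta_X$ and $\mu_Y$ rather than extracted from an enriched adjunction.
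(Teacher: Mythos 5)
Your proof is correct and is essentially the paper's own argument: your inverse $\Phi(g)=\mu_Y\circ g\circ\eta_X$, i.e.\ $(\eta_X)^*\circ(\mu_Y)_*$, is precisely the composite $\alpha^{-1}\circ\beta$ of the two isomorphisms the paper builds through the intermediate space $\map_{\cat S^\T}(F^\T X,F^\T Y)$, and your two verifications invoke the same ingredients (simplicial naturality of $\eta$, the unit laws, and the defining identities of the descent-data mapping space from Lemma \ref{lem:simplenr}). Collapsing the paper's factorization $\alpha=\beta\circ(\can_{\K^\T})_{X,TY}$ into a single directly verified two-sided inverse, with the target mapping space made explicit, is only an organizational difference, not a different route.
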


\begin{proof}  As usual, we prove the monad case and leave the comonad case to the reader.  The idea of the proof is to use the $(F^\T, U^\T)$-adjunction and the $(U_{\K^\T},F_{\K^\T})$-adjunction to prove the existence of isomorphisms
$$\map_{\cat S}(X,TY)\overset \alpha{\underset \cong\rightarrow} \map_{\cat S^\T}(F^\T X, F^\T Y) \overset \beta{\underset \cong\leftarrow}\map_{\op D(\T)}\big(\can_{\K^\T}(X), \can_{\K^\T}(TY)\big)$$
such that $\alpha=\beta\circ (\can _{\K^\T})_{X,TY}$,
implying that $(\can _{\K^\T})_{X,TY}$ is also an isomorphism, as desired.

It is not enough simply to apply Lemma \ref{lem:simpl-adj}, as we need an explicit description of the isomorphism $\alpha$.  Moreover, since $\cat S^\T$ may not be tensored over $\cat {sSet}$ even if $\cat S$ is, we cannot suppose that $\op D(\T)$ is tensored and therefore cannot necessarily apply Lemma \ref{lem:simpl-adj} to obtain the isomorphism $\beta$.

The simplicial map $\alpha$ is defined by
$$\alpha (g)=(\mu_{Y})_{*}\big(F^\T_{X,TY} (g)\big),$$
for any $g\in\map_{\cat S}(X,TY)$ and its inverse $\alpha':\map_{\cat S^\T}(F^\T X, F^\T Y)\to \map_{\cat S}(X,TY)$  by
$$\alpha'(h)=(\eta_{X})^*(h),$$
for any $h\in \map_{\cat S^\T}(F^\T X, F^\T Y)$.
To see that $\alpha'$ is indeed the inverse of $\alpha$, observe that
\begin{align*}
\alpha'\circ \alpha(g)&=(\eta_{X})^* (\mu_{Y})_{*}\big(F^\T_{X,TY}(g)\big)\\
		&\overset {(1)}= (\mu_{Y})_{*}(\eta_{X})^*\big(F^\T_{X,TY}(g)\big)\\
		&\overset {(2)}=(\mu_{Y})_{*}(\eta_{TY})_{*}(g)\\
		&=(\mu_{Y}\eta_{TY})_{*}(g)\\
		&=g.
\end{align*}
Equality (1) follows from the associativity of simplicial composition, while equality (2) holds since $\eta$ is a simplicial natural transformation.
Furthermore,
\begin{align*}
\alpha\circ\alpha '(h)&=(\mu_{Y})_{*} F^\T _{X,TY}\big((\eta_{X})^*(h)\big)\\
				&\overset {(3)}=(\mu_{Y})_{*} (\eta_{TX})^* F^\T_{TX,TY}(h)\\
				&\overset{(4)}=(\eta_{TX})^*(\mu_{Y})_{*}F^\T_{TX,TY}(h)\\
				&\overset{(5)}=(\eta_{TX})^*(\mu_{X})^*(h)\\
				&=(\mu_{X}\eta_{TX})^*(h)\\
				&=h.
\end{align*}
Equality (3) holds because the simplicial functor $F^\T$ respects composition. Associativity of simplicial composition implies equality (4), while equality (5) is a consequence of the fact that $h$ is not just an element of $\map _{\cat S}(TX,TY)$, but actually an element of its simplicial subset $\map_{\cat S^\T}(F^\T X, F^\T Y)$.

As for the simplicial map $\beta$, it is given by
$$\beta (k)=(\mu _{Y})_{*}(k),$$
for all $k\in\map_{\op D(\T)}\big(\can_{\K^\T}(X), \can_{\K^\T}(TY)\big)$, while its inverse $\beta'$ satisfies
$$\beta'(h)=(\eta_{TX})^*\big ( (F_{\K^\T})_{F^\T X, F^\T Y}(g)\big).$$
This definition of $\beta$ makes sense, since 
$$\map_{\op D(\T)}\big(\can_{\K^\T}(X), \can_{\K^\T}(TY)\big)\subset \map _{\cat S^\T}(F^\T X, F^\T (TY)\big)$$
by construction, and the object in $\cat S$ underlying $F^\T (TY)$ is $T^2Y$.  Moreover, $\mu _{Y}$ is itself a map of $\T$-algebras.  On the other hand, since $F_{\K^\T}F^\T Y=\can _{\K^\T}(TY)$ and, as is easily checked, $\eta _{TX}:TX \to T^2X$ underlies a morphism of descent data $\can _{\K^\T}(X)\to \can _{\K^\T}(TX)$, the definition of $\beta'$ is also acceptable.
The proof that $\beta$ and $\beta'$ are mutually inverse strongly resembles the proof above that $\alpha'=\alpha^{-1}$, and we therefore omit it.  

Since the components of the simplicial functor $\can_{\K^\T}$ are corestrictions of the components of $T$, it is clear that $\alpha=\beta \circ (\can_{\K^\T})_{X,TY}$, and we can therefore conclude.  
\end{proof}

\section{Derived completion and cocompletion}\label{sec:derivedcompletion}

In this section we introduce the notion of the derived (co)completion along a (co)monad, which is clearly strongly influenced by the definition of Bousfield-Kan $p$-completion of simplicial sets \cite{bousfield-kan}, as well as by Carlsson's derived completion of module spectra along ring spectra \cite{carlsson}.  Derived (co)completion plays an important role both in the formulation of conditions under which homotopic (co)descent holds (cf. Theorems \ref{thm:equiv-monad} and \ref{thm:equiv-comonad})  and in the analysis of the (co)descent spectral sequences (cf. Theorems \ref{thm:e2-desc} and \ref{thm:e2-codesc}).  To conclude this section, we describe the close relationship between assembly maps out of homotopy left Kan extensions and derived cocompletion (Corollary \ref{cor:assembly}) and sketch possible applications to the Baum-Connes and Farrell-Jones conjectures.  Dually, coassembly maps into homotopy right Kan extensions and derived completions are likewise closely related, which should have interesting implications for embedding calculus.

\subsection{Derived completion along a monad}\label{sec:completion}

\begin{convention}\label{conv:descss} Unless otherwise specificed, $\T=(T,\mu, \eta)$ denotes a monad on a simplicial model category such that $T$ is a simplicial functor.  We assume moreover that $\cat M^\T$ is endowed with the simplicial enrichment of Lemma \ref{lem:simplenr} and with a model category structure right-induced by $U^\T:\cat M^\T\to \cat M$ (Definition \ref{defn:induced}).
\end{convention}

By analogy with the definition of $p$-complete topological spaces, where $p$ is a prime, we formulate the following definitions.  We are also inspired by Carlsson's definition of the derived completion of module spectra along commutative ring spectra \cite{carlsson}.  We begin by enlarging the class of morphisms considered to be equivalences.

\begin{defn} \label{defn:Tequiv} Let $X$ and $Y$ be cofibrant objects in $\cat M$. A morphism $f:X\to Y$ of in $\cat M$ is a \emph{$\T$-equivalence} if the induced morphism of simplicial sets
 $$\map_{\cat M} (Y, U^\T A )\xrightarrow {f^*}\map_{\cat M} (X,U^\T A)$$
 is a weak equivalence for all fibrant $\T$-algebras $A$. 
 \end{defn}
 
 It is not difficult to characterize $\T$-equivalences, at least under reasonable conditions.
 
 \begin{lem}\label{lem:char-Tequiv} Let $f\in \cat M(X,Y)$, where $X$ and $Y$ are cofibrant. If $Tf$ is a weak equivalence, then $f$ is a $\T$-equivalence.  If $\cat M^\T$ admits a simplicial model category structure, then the converse holds as well.
 \end{lem}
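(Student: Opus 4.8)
The plan is to establish both directions by exploiting the $(F^{\T},U^{\T})$-adjunction together with the enriched/simplicial structure, reducing everything to statements about mapping spaces into fibrant $\T$-algebras. For the first direction, suppose $Tf\colon TX\to TY$ is a weak equivalence. The key observation is that for a fibrant $\T$-algebra $A=(A,m)$, the adjunction $\cat M(X,U^{\T}A)\cong \cat M^{\T}(F^{\T}X,A)$ has a simplicial refinement: by Lemma \ref{lem:simpl-adj}(1) there is a natural isomorphism
$$\map_{\cat M}\big(X, U^{\T}A\big)\cong \map_{\cat M^{\T}}\big(F^{\T}X, A\big).$$
Under this identification the map $f^{*}$ induced on mapping spaces into $U^{\T}A$ corresponds to $(F^{\T}f)^{*}=(Tf)^{*}\colon \map_{\cat M^{\T}}(F^{\T}Y,A)\to \map_{\cat M^{\T}}(F^{\T}X,A)$. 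Now I would argue that since $Tf$ is a weak equivalence in $\cat M$ and $F^{\T}$ takes values in free $\T$-algebras on cofibrant objects (so $F^{\T}X, F^{\T}Y$ are cofibrant in the right-induced structure, by Convention \ref{conv:descss}) while $A$ is fibrant, the induced map on $\map_{\cat M^{\T}}$ is a weak equivalence of simplicial sets. Concretely, one can either invoke that $\map_{\cat M^{\T}}(F^{\T}(-),A)$ depends only on the underlying object through $U^{\T}$ and the homotopy type of $T(-)$, or spell out that $f^{*}$ agrees, via the isomorphism above, with the map $\map_{\cat M}(TY,U^{\T}A)\to\map_{\cat M}(TX,U^{\T}A)$ induced by $Tf$ — and the latter is a weak equivalence because $Tf$ is and $U^{\T}A$ is fibrant in $\cat M$ while $TX,TY$ are cofibrant.

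For the converse, assume $\cat M^{\T}$ carries a simplicial model structure and $f\colon X\to Y$ is a $\T$-equivalence. I want to conclude $Tf$ is a weak equivalence in $\cat M$. The strategy is: $\T$-equivalence says $f^{*}\colon\map_{\cat M}(Y,U^{\T}A)\to\map_{\cat M}(X,U^{\T}A)$ is a weak equivalence for all fibrant $\T$-algebras $A$; translating through Lemma \ref{lem:simpl-adj}(1) again, this says $\map_{\cat M^{\T}}(F^{\T}Y,A)\to\map_{\cat M^{\T}}(F^{\T}X,A)$ is a weak equivalence for all fibrant $A$. Since $\cat M^{\T}$ is a simplicial model category and $F^{\T}X,F^{\T}Y$ are cofibrant, this is precisely the statement that $F^{\T}f\colon F^{\T}X\to F^{\T}Y$ is a weak equivalence in $\cat M^{\T}$ (a map between cofibrant objects inducing weak equivalences on $\map(-,A)$ for all fibrant $A$ is a weak equivalence, by the standard Yoneda-type argument in a simplicial model category — or one may simply apply it to a fibrant replacement). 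But the model structure on $\cat M^{\T}$ is right-induced along $U^{\T}$, so a map is a weak equivalence in $\cat M^{\T}$ if and only if its image under $U^{\T}$ is a weak equivalence in $\cat M$; and $U^{\T}F^{\T}f=Tf$. Hence $Tf$ is a weak equivalence.

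The main obstacle I anticipate is the bookkeeping in the converse: showing rigorously that "$\map_{\cat M^{\T}}(F^{\T}(-),A)$ is a weak equivalence for all fibrant $A$" implies "$F^{\T}f$ is a weak equivalence" requires care about whether $F^{\T}X$ and $F^{\T}Y$ are genuinely cofibrant in the right-induced structure (which should follow since $F^{\T}$ is left Quillen with respect to that structure and $X,Y$ are cofibrant) and about the precise form of the Yoneda argument in a not-necessarily-combinatorial simplicial model category — one may need to first replace $F^{\T}X, F^{\T}Y$ functorially and check the comparison map commutes. The first direction is comparatively soft, essentially formal once Lemma \ref{lem:simpl-adj}(1) is in hand. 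I would present the first direction in full and then dispatch the converse by the right-induced-plus-simplicial argument sketched above, remarking that fibrant replacement in $\cat M^{\T}$ reduces to the case where the standard simplicial-model-category detection of weak equivalences applies.
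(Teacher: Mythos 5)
Your proposal is correct and follows essentially the same route as the paper: the forward direction via the simplicial adjunction isomorphism of Lemma \ref{lem:simpl-adj}(1), the right-induced structure on $\cat M^\T$ and left Quillen-ness of $F^\T$, and the converse via the detection of weak equivalences by mapping spaces into fibrant objects in a simplicial model category (the paper cites Hirschhorn, Proposition 9.7.1) together with $U^\T F^\T f=Tf$. The only caveat is your alternative aside asserting that $TX,TY$ are cofibrant in $\cat M$ — that need not hold (only $F^\T X, F^\T Y$ are cofibrant in $\cat M^\T$) — but your main line of argument does not rely on it.
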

 
 In particular, since $F^\T$ is a left Quillen functor, any weak equivalence with cofibrant source and target  is a $\T$-equivalence.
 
 \begin{proof}   If $Tf$ is a weak equivalence, then $F^\T f$ is a weak equivalence of cofibrant $\T$-algebras, since $F^\T$ is a left Quillen functor.  It follows that 
   $$(F^\T f)^*:\map_{\cat M^\T}(F^\T Y, A)\xrightarrow\sim \map_{\cat M^\T}(F^\T X, A)$$
   is a weak equivalence of simplicial sets for all fibrant $\T$-algebras $A$, and thus that 
    $$f^*:\map_{\cat M}( Y, U^\T A)\xrightarrow\sim \map_{\cat M}(X, U^\T A)$$
    is as well, by Lemma \ref{lem:simpl-adj} (1).  We conclude that
   $f$ is a $\T$-equivalence.
 
  On the other hand, if  $f$ is a $\T$-equivalence and therefore $(F^\T f)^*$ is a weak equivalence for all fibrant $\T$-algebras $A$, then $F^\T f=Tf$ is itself a weak equivalence, by  \cite[Proposition 9.7.1]{hirschhorn}.
   \end{proof}

\begin{defn}\label{defn:Tcompobj}
A fibrant object $Z$ in $\cat M$ is \emph{$\T$-complete} if for every $\T$-equivalence $f:X\to Y$, the induced morphism of simplicial sets 
 $$\map_{\cat M} (Y, Z )\xrightarrow {f^*}\map_{\cat M} (X,Z)$$
 is a weak equivalence.
 \end{defn} 
 
\begin{rmk} Obviously, if $A$ is a fibrant $\T$-algebra, then $U^\T A$ is a $\T$-complete object of $\cat M$.  In particular, if $TX$ is a fibrant object of $\cat M$, then it is $\T$-complete, since $F^\T X$ is then a fibrant $\T$-algebra.
\end{rmk}

\begin{rmk} It is clear that $\T$-completeness is a homotopy-invariant notion:  if two fibrant objects $Z$ and $Z'$ are weakly equivalent, then $Z$ is $\T$-complete if and only if $Z'$ is $\T$-complete.
\end{rmk}

When the endofunctor underlying the monad $\T$ is homotopically faithful, in the sense of the definition below, $\T$-equivalences and $\T$-complete objects are particularly easy to characterize.

\begin{defn} \label{defn:cofibhtpicfaith} Let $\cat M$ and $\cat {M'}$ be model categories. A functor $F:\cat M\to \cat {M'}$ is \emph{cofibrantly homotopically faithful} if a morphism $f$ in $\cat M$ with cofibrant source and target  is a weak equivalence only if $F(f)$ is a weak equivalence in $\cat {M'}$. 
\end{defn}

The next lemma is an immediate consequence of Lemma \ref{lem:char-Tequiv}.

\begin{lem}\label{lem:faithful} Suppose that  $\cat M^\T$ admits a simplicial model category structure. If  $T$ is homotopically faithful, then a morphism in $\cat M$ between cofibrant objects is a $\T$-equivalence if and only if it is a weak equivalence. Moreover, all fibrant objects in $\cat M$ are then $\T$-complete.
\end{lem}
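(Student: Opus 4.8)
The plan is to derive both assertions formally from Lemma~\ref{lem:char-Tequiv}, using the two standing hypotheses (that $\cat M^\T$ carries a simplicial model structure and that $T$ is cofibrantly homotopically faithful). No real computation is involved; the work is entirely in unwinding the definitions of $\T$-equivalence and $\T$-completeness and chaining implications in the right order.

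For the characterization of $\T$-equivalences, let $f:X\to Y$ be a morphism between cofibrant objects of $\cat M$. In one direction, if $f$ is a weak equivalence then it is a $\T$-equivalence; this is already the remark recorded immediately after Lemma~\ref{lem:char-Tequiv}, and depends only on $F^\T$ being left Quillen (so that $F^\T f$ is a weak equivalence of cofibrant $\T$-algebras, hence $Tf=U^\T F^\T f$ is a weak equivalence since the model structure on $\cat M^\T$ is right-induced by $U^\T$, whence Lemma~\ref{lem:char-Tequiv} applies). In the other direction, suppose $f$ is a $\T$-equivalence. Since $\cat M^\T$ carries a simplicial model structure, the converse half of Lemma~\ref{lem:char-Tequiv} applies and gives that $Tf$ is a weak equivalence; as $X$ and $Y$ are cofibrant, cofibrant homotopical faithfulness of $T$ (Definition~\ref{defn:cofibhtpicfaith}) then forces $f$ to be a weak equivalence. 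This settles the first assertion.

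For the second assertion, I would take a fibrant object $Z$ of $\cat M$ and an arbitrary $\T$-equivalence $f:X\to Y$; by Definition~\ref{defn:Tequiv} its source and target are cofibrant, so the first assertion shows that $f$ is a weak equivalence between cofibrant objects. Since $\cat M$ is a simplicial model category and $Z$ is fibrant, the induced map $f^*:\map_{\cat M}(Y,Z)\to\map_{\cat M}(X,Z)$ is then a weak equivalence of simplicial sets: this is the standard fact that $\map_{\cat M}(-,Z)$ sends weak equivalences between cofibrant objects to weak equivalences whenever $Z$ is fibrant, which follows from the pushout-product axiom together with Ken Brown's lemma. Hence $Z$ verifies the defining condition for $\T$-completeness (Definition~\ref{defn:Tcompobj}), and the proof is complete.

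I do not anticipate a genuine obstacle. The only ingredients beyond bookkeeping are the converse direction of Lemma~\ref{lem:char-Tequiv}, which is exactly where the simplicial structure on $\cat M^\T$ is used, and the homotopy invariance of the mapping complex $\map_{\cat M}(-,Z)$ for $Z$ fibrant; the single point worth care is to invoke the faithfulness hypothesis only for morphisms with cofibrant source and target, which costs nothing since $\T$-equivalences are defined only between such objects.
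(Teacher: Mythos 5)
Your proof is correct and follows exactly the route the paper intends: the paper records this lemma as an immediate consequence of Lemma~\ref{lem:char-Tequiv}, and your argument is precisely the unwinding of that reduction (weak equivalences between cofibrant objects are $\T$-equivalences via $F^\T$ being left Quillen, the converse via the simplicial structure on $\cat M^\T$ plus reflection of weak equivalences by $T$, and $\T$-completeness of fibrant objects via the standard SM7/Ken Brown fact that $\map_{\cat M}(-,Z)$ takes weak equivalences between cofibrant objects to weak equivalences when $Z$ is fibrant). No gaps.
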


Without assuming homotopic faithfulness, we can form an interesting class of $\T$-complete objects as follows,  using the $\T$-cobar construction.

\begin{lem}\label{lem:T-comp}  Let $Z$ be an object of $\cat M$.  If the $\T$-cobar construction $\Om ^\bullet_{\T}Z$ is Reedy fibrant, then $\tot \Om ^\bullet _{\T}Z$ is $\T$-complete.
\end{lem}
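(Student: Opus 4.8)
The plan is to isolate a general principle and then specialize: \emph{if $E^\bullet$ is a Reedy fibrant cosimplicial object of $\cat M$ all of whose values are $\T$-complete, then $\tot E^\bullet$ is $\T$-complete.} Granting this, the lemma is the case $E^\bullet=\Om^\bullet_\T Z$, because each $\Om^n_\T Z=T^{n+1}Z=T(T^n Z)$ is $\T$-complete: a Reedy fibrant cosimplicial object is objectwise fibrant, so $T^{n+1}Z$ is fibrant in $\cat M$, and a fibrant object of the form $T(-)$ is $\T$-complete by the remark following Definition~\ref{defn:Tcompobj} --- which uses precisely that $\cat M^\T$ carries the model structure right-induced along $U^\T$, so that $F^\T(T^n Z)$ is then a fibrant $\T$-algebra.

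To prove the principle, first note that $\tot E^\bullet$ is fibrant, since $\tot\colon\cat M^{\bold\Delta}\to\cat M$ is right Quillen for the Reedy structure ($\cat M$ being a simplicial model category by our standing convention). Next, for any cofibrant object $W$ of $\cat M$ I would use the natural isomorphism
$$\map_{\cat M}\bigl(W,\tot E^\bullet\bigr)\;\cong\;\tot\bigl([n]\mapsto\map_{\cat M}(W,E^n)\bigr),$$
which holds because $\map_{\cat M}(W,-)$ preserves limits and turns cotensors over $\cat{sSet}$ into function complexes, while $\tot$ is assembled entirely from limits and such cotensors; it is natural in $E^\bullet$, hence compatible with maps induced in the $W$-variable. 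Since $W$ is cofibrant, $\map_{\cat M}(W,-)\colon\cat M\to\cat{sSet}$ is right Quillen, so it sends the Reedy fibrant $E^\bullet$ to a Reedy fibrant cosimplicial simplicial set $\map_{\cat M}(W,E^\bullet)$.

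Now let $f\colon X\to Y$ be a $\T$-equivalence with $X$ and $Y$ cofibrant. For each $n$ the object $E^n$ is fibrant and $\T$-complete, so $f^*\colon\map_{\cat M}(Y,E^n)\to\map_{\cat M}(X,E^n)$ is a weak equivalence; hence the map $\map_{\cat M}(-,E^\bullet)$ induced by $f$ is a levelwise weak equivalence between Reedy fibrant cosimplicial simplicial sets, i.e.\ a weak equivalence in the Reedy model structure. Applying the right Quillen functor $\tot$ (which, by Ken Brown's lemma, preserves weak equivalences between Reedy fibrant objects) together with the displayed isomorphism shows that $f^*\colon\map_{\cat M}(Y,\tot E^\bullet)\to\map_{\cat M}(X,\tot E^\bullet)$ is a weak equivalence. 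With the fibrancy noted above, this is exactly the assertion that $\tot E^\bullet$ is $\T$-complete.

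The only non-formal point, and the step I would check most carefully, is the specialization in the first paragraph: one must feed the fibrancy of each $T^{n+1}Z$ correctly through the right-induced model structure on $\cat M^\T$ in order to conclude that every value of $\Om^\bullet_\T Z$ is $\T$-complete. After that, the argument is a routine assembly of standard facts about simplicial model categories, Reedy model structures, and $\tot$.
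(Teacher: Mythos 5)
Your proof is correct and follows essentially the same route as the paper's: levelwise fibrancy of $\Om^\bullet_{\T}Z$ plus the right-induced structure on $\cat M^\T$ makes each level of the form $U^\T$ of a fibrant $\T$-algebra, a $\T$-equivalence then induces a levelwise weak equivalence of Reedy fibrant cosimplicial mapping spaces, and $\tot$ together with the isomorphism $\map_{\cat M}(W,\tot\Om^\bullet_{\T}Z)\cong\tot\map_{\cat M}(W,\Om^\bullet_{\T}Z)$ finishes the argument. Packaging the core step as a general closure principle for $\T$-complete objects under totalization is a harmless (and slightly more reusable) reorganization, not a different proof.
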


\begin{proof} Since $\Om ^\bullet_{\T}Z$ is Reedy fibrant, its totalization is a fibrant object of $\cat M$.  Moreover, $\Om ^n_{\T}Z$ must be fibrant for all $n$. Consequently, $F^\T(T^nZ)$ is a fibrant $T$-algebra for all $n$ because $\Om ^n_{\T}Z=T^{n+1}Z=U^\T F^\T (T^nZ)$ and $U^\T$ right-induces the model category structure on $\cat M^\T$.

It follows that if $f:X\to Y$ is a $\T$-equivalence, then  
$$\map_{\cat M} (Y, \Om ^n_{\T}Z )\xrightarrow {f^*}\map_{\cat M} (X,\Om ^n_{\T}Z)$$
is a weak equivalence of simplicial sets for all $n$.  Furthermore, since $X$ and $Y$ are cofibrant, the cosimplicial simplicial sets $\map_{\cat M} (X, \Om ^\bullet_{\T}Z )$ and $\map_{\cat M} (Y, \Om ^\bullet_{\T}Z )$ are both Reedy fibrant. The morphism of cosimplicial simplicial sets induced by $f$,
$$f^\bullet:\map_{\cat M} (Y, \Om ^\bullet_{\T}Z )\to \map_{\cat M} (X, \Om ^\bullet_{\T}Z ),$$
is thus a levelwise weak equivalence of Reedy fibrant objects.  Applying $\tot$, we obtain a weak equivalence of simplicial sets
$$\tot f^\bullet:\tot \map_{\cat M} (Y, \Om ^\bullet_{\T}Z )\to \tot\map_{\cat M} (X, \Om ^\bullet_{\T}Z ).$$

Recall that $\tot$ can be calculated as an equalizer.  Moreover, if $G:\cat D\to \cat M$ is a functor from any small category to a simplicial model category, then
$$\lim _{\cat D}\map_{\cat M} (X, G)\cong \map _{\cat M}(X, \lim _{\cat D}G).$$
Thus,  
$$\tot \map_{\cat M} (W, \Om ^\bullet_{\T}Z )\cong \map _{\cat M}(W, \tot \Om ^\bullet_{\T}Z )$$
for all objects $W$, and we can conclude.
\end{proof}

Lemma \ref{lem:T-comp} justifies the next definition.

\begin{defn}\label{defn:modelTcomp}  Let $\widehat X$ be a fibrant replacement of an object $X$ in $\cat M$. If $\Om ^\bullet _{\T}\widehat X$ is Reedy fibrant, then $\tot \Om ^\bullet _{\T}\widehat X$ is a \emph{model of the derived $\T$-completion of $X$}.
\end{defn}

Homotopy invariance of derived $\T$-completion is guaranteed under the following conditions.

\begin{lem}\label{lem:T-comp-inv} Let $X$ and $Y$ be objects in $\cat M$, and let $\tot \Om ^\bullet _{\T}\widehat X$ and $\tot \Om ^\bullet _{\T}\widehat Y$ be models of their derived $\T$-completions.  If $T$ preserves weak equivalences between fibrant objects, then any weak equivalence $X\xrightarrow \sim Y$ induces a weak equivalence $\tot \Om ^\bullet _{\T}\widehat X\xrightarrow \sim\tot \Om ^\bullet _{\T}\widehat Y$.
\end{lem}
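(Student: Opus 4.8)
The plan is to exploit the fact that a weak equivalence $g : X \xrightarrow{\sim} Y$ between (not necessarily fibrant) objects of $\cat M$ lifts, up to homotopy, to a weak equivalence $\widehat g : \widehat X \xrightarrow{\sim} \widehat Y$ between the chosen fibrant replacements, by the usual lifting properties of a model category. (Strictly, one chooses a factorization $X \to \widehat X$ into an acyclic cofibration followed by a fibration to a terminal object; any two fibrant replacements of weakly equivalent objects are connected by a weak equivalence, and one can arrange a specific map $\widehat g$ by lifting $g$ against the acyclic cofibration $Y \to \widehat Y$ after composing with $X \to \widehat X$.) So without loss of generality I may assume I am given a weak equivalence $\widehat g : \widehat X \xrightarrow{\sim} \widehat Y$ between fibrant objects.

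Next I would apply the cosimplicial $\T$-cobar construction levelwise. In cosimplicial degree $n$ the map induced by $\widehat g$ is $T^{n+1}(\widehat g) : T^{n+1}\widehat X \to T^{n+1}\widehat Y$. Since $T$ preserves weak equivalences between fibrant objects by hypothesis, and since each $T^k \widehat X$, $T^k \widehat Y$ is fibrant (these are precisely the objects $\Om^{k-1}_\T\widehat X$, $\Om^{k-1}_\T\widehat Y$, which are fibrant because the respective $\T$-cobar constructions are Reedy fibrant by the hypothesis that the stated objects are models of the derived $\T$-completions), an easy induction on $n$ shows that $T^{n+1}(\widehat g)$ is a weak equivalence for every $n$. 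Hence the induced map of cosimplicial objects $\Om^\bullet_\T(\widehat g) : \Om^\bullet_\T\widehat X \to \Om^\bullet_\T\widehat Y$ is a levelwise weak equivalence between two Reedy fibrant cosimplicial objects of $\cat M$.

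Finally I would invoke the standard fact about $\tot$: a levelwise weak equivalence of Reedy fibrant cosimplicial objects in a simplicial model category is sent by $\tot$ to a weak equivalence (this is the dual of the statement that $\hocolim$, or realization, preserves levelwise equivalences of Reedy cofibrant objects; it follows from the fact that $\tot$ is a right Quillen functor from the Reedy model structure on $\cat M^{\bold\Delta}$ to $\cat M$, cf. the Reedy material in appendix \ref{sec:reedy} and \cite[Ch.~15]{hirschhorn}). Applying this to $\Om^\bullet_\T(\widehat g)$ yields the desired weak equivalence $\tot\Om^\bullet_\T\widehat X \xrightarrow{\sim} \tot\Om^\bullet_\T\widehat Y$.

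The main obstacle — really the only non-formal point — is the bookkeeping in the first step: producing a genuine map $\widehat g$ between the prescribed fibrant replacements that is compatible with the given $g$ and is a weak equivalence, rather than merely asserting that $\widehat X$ and $\widehat Y$ are weakly equivalent. Once that map is in hand, the rest is a routine combination of "$T$ preserves equivalences of fibrant objects" with "$\tot$ is right Quillen for the Reedy structure." One should also take a moment to note that the conclusion does not depend on the choice of fibrant replacements or of the lift $\widehat g$, since any two such choices are connected by weak equivalences and $\tot\Om^\bullet_\T(-)$ then carries those to weak equivalences by the same argument.
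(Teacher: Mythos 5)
Your proposal is correct and follows essentially the same route as the paper's proof: lift the weak equivalence to a map $\hat w:\widehat X\xrightarrow{\sim}\widehat Y$ of fibrant replacements, use the hypothesis on $T$ to get a levelwise weak equivalence $\Om^\bullet_{\T}\widehat X\to\Om^\bullet_{\T}\widehat Y$ of Reedy fibrant cosimplicial objects, and apply $\tot$. Your extra remark that the Reedy fibrancy of the cobar constructions guarantees each $\Om^n_{\T}\widehat X$ is fibrant, so that the hypothesis on $T$ can be iterated, is a point the paper passes over silently, but it is the same argument.
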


\begin{proof} An elementary argument using the lifting axiom of a model category shows that a weak equivalence $w:X\xrightarrow\sim Y$ induces a weak equivalence $\hat w:\widehat X\xrightarrow \sim \widehat Y$ between the fibrant replacements of $X$ and $Y$. The hypotheses on $T$ imply that $T^n\hat w$ is a weak equivalence for all $n$ and therefore that 
$$T^{\bullet+1} \hat w:\Om ^\bullet_{\T}\widehat X\to \Om ^\bullet _{\T}\widehat Y$$
is a levelwise weak equivalence of Reedy fibrant cosimplicial objects.  Consequently, 
$$\tot T^{\bullet+1} \hat w:\tot \Om ^\bullet_{\T}\widehat X\to \tot\Om ^\bullet _{\T}\widehat Y$$
is a weak equivalence in $\cat M$.
\end{proof}

Lemmas \ref{lem:T-comp} and \ref{lem:T-comp-inv} imply  that all models of the derived $\T$-completion of an object of $\cat M$ are indeed $\T$-complete, as well as weakly equivalent to each other if $T$ preserves weak equivalences between fibrant objects, whence the notation introduced below.

\begin{notn}\label{notn:Tcomp}   Let $X$ be an object in $\cat M$.  If  $\tot \Om ^\bullet _{\T}\widehat X$  is a model of its derived $\T$-completion, then, abusing notation slightly,  we write
$$X^\wedge_{\T}:= \tot \Om ^\bullet _{\T}\widehat X.$$ 
\end{notn}

\begin{rmk}  If $\mathbb R$ is the monad on $\cat {sSet}$ of \cite[I.2]{bousfield-kan}, then for all simplicial sets $X$, our notion of the derived $\mathbb R$-completion of $X$ agrees with that of Bousfield and Kan.
\end{rmk}

Two stronger notions of $\T$-completeness, one of which requires no model category structure, arise naturally in our discussion of homotopic descent. We refer the reader to section \ref{sec:external} for an introduction to external cosimplicial structure and to section \ref{sec:ext} for a discussion of cohomological fibrant resolutions.  

\begin{defn}\label{defn:strong-T}  Let $\T$ be a monad on a category $\cat C$ that admits all finite limits and colimits.  Let $Z$ be an object in $\cat C$, and let $\eta^\bullet : cc^\bullet Z\to \Om^\bullet_{\mathbb T}(Z)$ denote the natural coaugmentation.
\begin{enumerate}
\item The object $Z$  is \emph{strictly $\T$-complete} if there is an external cosimplicial SDR
$$\xymatrix{cc^\bullet Z \ar @<.7ex>[r]^-{\eta^\bullet}&\Om ^{\bullet}_{\T}(Z)\circlearrowright h\ar @<.7ex>[l]^-{\rho^\bullet}}$$(cf. Definition \ref{defn:extSDR}).
\item If $\cat C$ is  actually a simplicial model category, then $Z$ is \emph{strongly $\T$-complete} if the natural coaugmentation $\eta^\bullet$ is a cohomological fibrant resolution of $Z$ (cf., Definition \ref{defn:homres}).
\end{enumerate} 
\end{defn}

\begin{rmk} The terminology used in the definition above was chosen by analogy with that frequently used in category theory for describing the hierarchy of monoidal functors:  any strictly monoidal functor is strongly monoidal, and any  strongly monoidal functor is monoidal.  It follows from Remark \ref{rmk:homres} that any strictly $\T$-complete object in a simplicial model category is strongly $\T$-complete.  Moreover Remark \ref{rmk:conseqcohomfibres} implies that a fibrant object is $\T$-complete if it is strongly $\T$-complete, at least when the associated extended homotopy spectral sequences converge.
\end{rmk}

Strict $\T$-completeness is not an overly restrictive condition.

\begin{lem}\label{lem:exist-strict}  Suppose that $\T$ is the monad associated to an adjunction $$F:\cat C\adjunct{}{} \cat D:U,$$
where $\cat C$ admits all finite limits and colimits.  If $Y$ is any object of $\cat D$, then $UY$ is strictly $\T$-complete.
\end{lem}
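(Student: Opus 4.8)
The plan is to construct an explicit external cosimplicial strong deformation retraction (SDR) of $\Om^\bullet_\T(UY)$ onto the constant cosimplicial object $cc^\bullet UY$, using only the triangle identities of the $(F,U)$-adjunction. The key point is that when $Z = UY$ for an object $Y \in \cat D$, the counit $\ve_Y : FUY \to Y$ provides extra structure — essentially, $UY$ is already (the underlying object of) a $\T$-algebra, with structure map $U\ve_Y : TUY = UFUY \to UY$ — and this algebra structure supplies the contracting homotopy. This is the standard "extra codegeneracy" phenomenon for the cobar resolution of a cofree/underlying object.

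First I would record the retraction map. Set $\rho^n := U(\ve_Y) \circ UF(U\ve_Y) \circ \cdots : T^{n+1}UY \to UY$, iterating the algebra structure map $m := U\ve_Y$; equivalently $\rho^n = m \circ T\rho^{n-1}$ with $\rho^0 = m$. One checks $\rho^\bullet$ is a map of cosimplicial objects $\Om^\bullet_\T(UY) \to cc^\bullet UY$ using the algebra axioms $m \circ Tm = m \circ \mu_{UY}$ (compatibility with codegeneracies $s^j = T^j \mu_{T^{n-j}}$) and $m \circ \eta_{UY} = \mathrm{Id}$ (compatibility with cofaces $d^i = T^i \eta_{T^{n-i}}$); in particular $\rho^\bullet \circ \eta^\bullet = \mathrm{Id}_{cc^\bullet UY}$. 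Second, I would write down the contracting homotopy $h$ witnessing $\eta^\bullet \circ \rho^\bullet \simeq \mathrm{Id}$. In the external cosimplicial framework of Definition~\ref{defn:extSDR}, $h$ amounts to a family of maps built from the "extra codegeneracy" $s^{-1} := \mu_{T^{n-1}UY}$ at the bottom (the map $T^{n+1}UY \to T^n UY$ available because $T^{n+1}UY$ has the form $TW$), suitably combined with the higher codegeneracies; the relevant identities are exactly the cosimplicial identities together with the fact that $\rho^\bullet$ is built from $m$. Then I would verify the SDR side conditions.

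The main obstacle — really the only one requiring care — is unwinding Definition~\ref{defn:extSDR} of "external cosimplicial SDR" and checking that the homotopy $h$ one writes down satisfies all the requisite identities; this is a purely formal but somewhat intricate diagram chase in $\cat C$, relying on nothing beyond the adjunction triangle identities $U\ve_Y \circ \eta_{UY} = \mathrm{Id}$ and $\ve_{FY} \circ F\eta_Y = \mathrm{Id}$ (equivalently, the monad/algebra axioms for $(UY, U\ve_Y)$) and the simplicial identities defining $\Om^\bullet_\T$. Once the homotopy is correctly normalized, the SDR conditions reduce to these identities. Since $\cat C$ is assumed to admit all finite limits and colimits, the external cosimplicial structure on $\Om^\bullet_\T(UY)$ is defined, so the statement makes sense; no model-categorical input is needed, consistent with the remark that strict $\T$-completeness "requires no model category structure."
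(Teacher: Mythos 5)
Your overall strategy coincides with the paper's: exploit the $\T$-algebra structure $m:=U\ve_{Y}$ on $UY$ (equivalently the triangle identity $U\ve_{Y}\circ\eta_{UY}=\mathrm{Id}$) to contract the coaugmented cobar construction, and your retraction $\rho^{n}=m\circ Tm\circ\cdots\circ T^{n}m$ is exactly what that contraction produces. The paper packages this more efficiently: it exhibits the extra codegeneracy $s^{n}=T^{n}(U\ve_{Y}):T^{n+1}(UY)\to T^{n}(UY)$, notes that the identities of Definition \ref{defn:contractible} hold (unit and associativity of the algebra structure plus naturality), and then invokes Proposition \ref{prop:barr}, which converts a contractible coaugmented cosimplicial object directly into the external cosimplicial SDR demanded by Definition \ref{defn:strong-T}; this spares the intricate by-hand construction of $h$ that you anticipate.

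There is, however, a genuine error in your description of the homotopy: you propose to build $h$ from ``the extra codegeneracy $s^{-1}:=\mu_{T^{n-1}UY}$ at the bottom.'' That map is not extra data at all --- with the paper's conventions it is literally the codegeneracy $s^{0}$ already present in $\Om^{\bullet}_{\T}(UY)$, and it is normalized against the outer coface (via $\mu\circ\eta_{T^{\bullet}UY}=\mathrm{Id}$), not against the coaugmentation; moreover at the bottom level $\mu$ provides no map $TUY\to UY$ at all, so the required identity $s^{0}\circ\eta_{UY}=\mathrm{Id}_{UY}$ cannot be met from $\mu$ alone. If the structural codegeneracies could serve as the contraction, every object of $\cat C$ would be strictly $\T$-complete, trivializing the notion. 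What $\mu$ at the outermost position does contract is the cobar construction \emph{after} applying $T$ (resp.\ $F^{\T}$); that is the mechanism of the paper's subsequent lemma on cosimplicially faithful functors, not of Lemma \ref{lem:exist-strict}. The repair is exactly the datum you already used for $\rho^{\bullet}$: take the extra codegeneracy $T^{n}(U\ve_{Y})$, i.e.\ the algebra structure applied at the innermost position, check the identities of Definition \ref{defn:contractible} (only $m\circ\eta_{UY}=\mathrm{Id}$, $m\circ Tm=m\circ\mu_{UY}$ and naturality are needed), and then either cite Proposition \ref{prop:barr} or extract $h$ from its proof.
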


\begin{proof} The argument applied here follows a well-trodden path. Let $\ve$ denote the counit of the $(F,U)$-adjunction.  The coaugmented cosimplicial object
$$UY\xrightarrow {\eta_{UY}} \Om^\bullet_{\T}(UY)$$
admits an ``extra codegeneracy''  at each level, i.e., is contractible.  More explicitly, from level $n$ to level $n-1$ we have
$$s^n=T^n(U\ve_{Y}): T^{n+1} (UY) \to T^n(UY),$$
commuting appropriately with the cofaces and the other codegeneracies of the $\T$-cobar construction.  We leave this straightforward calculation to the reader.  

By Proposition \ref{prop:barr}, there is thus an external cosimplicial SDR 
$$\xymatrix{cc^\bullet UY \ar @<.7ex>[r]^-{\eta^\bullet}&\Om ^{\bullet}_{\T}(UY)\circlearrowright h\ar @<.7ex>[l]^-{\rho^\bullet},}$$
i.e., $\eta^\bullet$ is an external homotopy equivalence, and $UY$ is strongly $\T$-complete.
\end{proof}

\begin{rmk}\label{rmk:ex-Tcomp} When applied to  the adjunction $F^\T:\cat C\adjunct{}{} \cat C^\T:U^\T$, for some monad $\T$ on $\cat C$, Lemma \ref{lem:exist-strict} implies that  $U^\T (A,m)$ is strictly $\T$-complete for all $\T$-algebras $(A,m)$.  In particular, $TX$ is strictly $\T$-complete for all objects $X$ in $\cat C$, since $TX=U^\T F^\T X$.
\end{rmk}

A result analogous to Lemma \ref{lem:faithful} holds when the endofunctor underlying the monad satisfies the following variant of faithfulness.

\begin{defn}\label{defn:cosimpfaith}  Let $\cat C$ and $\cat D$ be categories admitting all finite limits and colimits.  A functor $F: \cat C \to \cat D$ is \emph{cosimplicially faithful} if $f^\bullet \in \cat C^{\bold \Delta}(X^\bullet, Y^\bullet)$ is an external homotopy equivalence whenever $F^{\bold \Delta}(f^\bullet)$ is. In other words, the cosimplicial prolongation of $F$ reflects external homotopy equivalences.
\end{defn}

\begin{lem} Let $\T=(T,\mu, \eta)$ be a  monad on a category $\cat C$ admitting all finite limits and colimits. If $T$ is cosimplicially faithful, then every object in $\cat C$ is strictly $\T$-complete.
\end{lem}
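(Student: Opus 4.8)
The plan is to use the cosimplicial faithfulness of $T$ to reduce the strict $\T$-completeness of an arbitrary $Z$ to the external contractibility of a single, explicitly described cosimplicial object, which will then follow by exhibiting an extra codegeneracy — along the lines of the proof of Lemma \ref{lem:exist-strict}, but with the extra degeneracy supplied by the multiplication of $\T$ rather than by the counit of an adjunction.

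Fix $Z\in\ob\cat C$. The goal is to show that the natural coaugmentation $\eta^\bullet_Z\colon cc^\bullet Z\to\Om^\bullet_{\T}(Z)$ underlies an external cosimplicial SDR, i.e. is an external homotopy equivalence (cf. Definition \ref{defn:strong-T}(1) and the proof of Lemma \ref{lem:exist-strict}). Since $T$ is cosimplicially faithful (Definition \ref{defn:cosimpfaith}), it is enough to prove that the cosimplicial prolongation $T^{\bold\Delta}(\eta^\bullet_Z)$ is an external homotopy equivalence. I would first record that $T^{\bold\Delta}(cc^\bullet Z)=cc^\bullet TZ$, that $T^{\bold\Delta}\Om^\bullet_{\T}(Z)$ is the cosimplicial object with $n$-th term $T(T^{n+1}Z)=T^{n+2}Z$ and with cofaces $T(d^i)$ and codegeneracies $T(s^j)$, and that $T^{\bold\Delta}(\eta^\bullet_Z)$ is the coaugmentation which in degree $0$ is $T\eta_Z\colon TZ\to T^2Z$.

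The main step is to show that this coaugmented cosimplicial object admits an extra codegeneracy. Reading each term $T^{n+2}Z$ as a string of $n+2$ copies of $T$ applied to $Z$, note that none of the maps $T(d^i)$ or $T(s^j)$ involves the \emph{outermost} copy of $T$ — the one introduced by applying the functor $T$ levelwise — so that slot is free; I would therefore define, from level $n$ to level $n-1$, the map $\mu_{T^{n}Z}\colon T^{n+2}Z\to T^{n+1}Z$ given by the multiplication of $\T$ on the two outermost copies of $T$, together with $\mu_Z\colon T^2Z\to TZ$ from level $0$ down to the coaugmenting term. Checking that these maps constitute an extra codegeneracy compatible with $T^{\bold\Delta}(\eta^\bullet_Z)$ amounts to a short list of identities: all but the contraction identity itself are formal consequences of the naturality and associativity of $\mu$, while the contraction identity unwinds to $\mu_Z\circ T\eta_Z=\operatorname{id}_{TZ}$ (and its translates up the tower), which is the unit axiom of the monad. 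By Proposition \ref{prop:barr}, $T^{\bold\Delta}(\eta^\bullet_Z)$ then extends to an external cosimplicial SDR, hence is an external homotopy equivalence.

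Finally, cosimplicial faithfulness of $T$ yields that $\eta^\bullet_Z$ is an external homotopy equivalence as well, hence (as in the proof of Lemma \ref{lem:exist-strict}) underlies an external cosimplicial SDR $cc^\bullet Z\to\Om^\bullet_{\T}(Z)$, so $Z$ is strictly $\T$-complete; as $Z$ is arbitrary, this will complete the proof. The part I expect to be the main obstacle is the bookkeeping behind the claim that the outermost-$\mu$ maps genuinely form an extra codegeneracy — together with checking that its orientation (it contracts onto the bottom coaugmentation through $\mu_Z$) matches the hypothesis of Proposition \ref{prop:barr}; this should be routine but tedious, parallel to the calculation ``left to the reader'' in the proof of Lemma \ref{lem:exist-strict}. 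It is worth noting that, unlike there, no algebra structure on $Z$ is required: the prolongation $T^{\bold\Delta}$ produces precisely the extra copy of $T$ whose absorption by $\mu$ provides the contraction that $Z$ by itself lacks.
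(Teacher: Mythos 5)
Your proposal is correct and is essentially the paper's proof: the paper also reduces, via cosimplicial faithfulness, to showing that $T^{\bold\Delta}\eta^\bullet$ is an external homotopy equivalence, producing the extra codegeneracy by applying $(U^\T)^{\bold\Delta}$ to the contractible coaugmented object $cc^\bullet F^\T Z\to (F^\T)^{\bold\Delta}\Om^\bullet_{\T}Z$ with extra codegeneracy $\ve_{F^\T T^nZ}$, and then invoking Proposition \ref{prop:barr}. Since $U^\T\ve_{F^\T T^nZ}=\mu_{T^nZ}$, your ``outermost $\mu$'' maps are literally the same extra codegeneracy, and your direct verification from the monad axioms (unit for the contraction, naturality and associativity of $\mu$ for the rest) is just the calculation the paper routes through the $(F^\T,U^\T)$-adjunction and leaves to the reader.
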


\begin{proof}  This proof is similar to that of Lemma \ref{lem:exist-strict}.  Let $\ve$ denote the counit of the $(F^\T,U^\T)$-adjunction, and let $Z$ be any object of $\cat C$.

The coaugmented cosimplicial object
$$cc^\bullet F^\T(Z) \xrightarrow {F^\T\eta_{Z}} (F^\T)^{\bold \Delta}\Om _{\T}^\bullet Z$$
admits an ``extra codegeneracy'' at each level,  i.e., is contractible.  More explicitly, from level $n$ to level $n-1$ we have
$$s^n=\ve_{F^\T T^nZ}: F^\T T^{n+1} Z \to F^\T T^n Z,$$
commuting appropriately with the cofaces and the other codegeneracies.  We leave this straightforward calculation to the reader.  

Applying $(U^\T)^{\bold \Delta}$ to this contractible coaugmented cosimplicial object and then calling on Proposition \ref{prop:barr}, we obtain an external cosimplicial SDR
$$\xymatrix{T^{\bold \Delta} cc^\bullet Z \ar @<.7ex>[r]^-{T^{\bold \Delta}\eta^\bullet}&T^{\bold \Delta}\Om ^{\bullet}_{\T}Z\circlearrowright h\ar @<.7ex>[l]^-{\rho^\bullet}.}$$
Since $T$ is cosimplicially faithful, we can conclude that $\eta^\bullet :cc^\bullet Z \to \Om_{\T}^\bullet Z$ is an external homotopy equivalence, i.e., that $Z$ is strictly $\T$-complete.
\end{proof}

\begin{rmk}\label{rmk:inftyTalg}  Let $\T$ be a monad on a category $\cat C$ admitting all finite limits and colimits. A careful analysis of Definition \ref{defn:strong-T} leads one naturally to view strictly $\T$-complete objects as ``$\infty\text{-}\T$-algebras''.  As follows from Definition \ref{defn:contractible}, fitting the coaugmentation $\eta^\bullet : cc^\bullet Z \to \Om ^{\bullet}_{\T}(Z)$ into an external SDR
$$\xymatrix{cc^\bullet Z \ar @<.7ex>[r]^-{\eta^\bullet}&\Om ^{\bullet}_{\T}(Z)\circlearrowright h\ar @<.7ex>[l]^-{\rho^\bullet}}$$ is equivalent to constructing a family 
$$\{m_{n}:T^nZ\to T^{n-1}Z\mid n\geq 1\}\subset \mor \cat C$$
such that for all $n\geq 1$ and all $0\leq i\leq k-1$,

$$m_{n}\circ T^{i}\eta_{T^{n-i-1}Z}=\begin{cases} T^{i}\eta_{T^{n-i-2}Z}\circ m_{n-1}&:0\leq i\leq n-2\\ Id_{T^{n-1}Z}&:i=n-1,
\end{cases}$$
while
$$m_{n}\circ T^{i}\mu_{T^{n-i-1}Z}=\begin{cases}T^{i}\mu_{T^{n-i-2}Z}\circ m_{n+1}&: 0\leq i\leq n-2\\ m_{n}\circ m_{n+1}&:i=n-1\end{cases}$$

The first two identities are the transcription of the relationship between the ``extra codegeneracy'' and the cofaces of the $\T$-cobar construction, while the last two specify the relationship of the ``extra codegeneracy'' to the other codegeneracies.

It is evident from the definition of a $\T$-algebra that if $(Z,m)$ is a $\T$-algebra, then we can set
$$m_{n}=T^{n-1}m:T^n Z\to T^{n-1} Z$$
for all $n\geq 1$, and the identities above will be satisfied.  This alternate proof that an object  of $\cat C$ underlying a $\T$-algebra is necessarily strictly $\T$-complete (cf. Remark \ref{rmk:ex-Tcomp}) inspires our vision of strictly $\T$-complete objects as $\infty\text{-}\T$-algebras.
\end{rmk}

\subsection{Derived cocompletion along a comonad}\label{sec:cocompletion}

\begin{convention}\label{conv:codescss} Unless otherwise specified, $\K=(K,\Delta, \ve)$ denotes a comonad on a simplicial model category $\cat M$ such that $K$ is a simplicial functor.  We assume moreover that $\cat M_{\K}$ is endowed with the simplicial enrichment of Lemma \ref{lem:simplenr} and that  $U_{\K}:\cat M_{\K} \to \cat M$ left-induces a model category structure on $\cat M_{\K}$ (Definition \ref{defn:induced}).  
\end{convention}

 All of the  proofs in this section are formally dual to those in the previous section, so they are omitted.

Dualizing the construction of the previous section, we formulate the following definitions.  We begin by enlarging the class of morphisms considered to be equivalences.

\begin{defn}  Let $X$ and $Y$ be fibrant objects in $\cat M$. A morphism $f:X\to Y$ of in $\cat M$ is a \emph{$\K$-equivalence} if the induced morphism of simplicial sets
 $$\map_{\cat M} ( U_{\K}C, X )\xrightarrow {f_{*}}\map_{\cat M} (U_{\K}C,Y)$$
 is a weak equivalence for all cofibrant $\K$-coalgebras $C$. 
 \end{defn}
 
 It is again not difficult to characterize $\K$-equivalences, at least under reasonable conditions.
 
 \begin{lem}\label{lem:char-Kequiv}  Let $f\in \cat M(X,Y)$, where $X$ and $Y$ are fibrant. If $Kf$ is a weak equivalence, then $f$ is a $\K$-equivalence.  If $\cat M_{\K}$ admits a simplicial model category structure, then the converse holds as well.
\end{lem}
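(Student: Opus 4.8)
The plan is to dualize the proof of Lemma~\ref{lem:char-Tequiv} essentially line by line. The one structural change is that the cofree $\K$-coalgebra functor $F_{\K}\colon \cat M \to \cat M_{\K}$ takes over the role played by $F^{\T}$: by Convention~\ref{conv:codescss} the model structure on $\cat M_{\K}$ is left-induced by its left adjoint $U_{\K}$, so $U_{\K}$ is left Quillen and therefore $F_{\K}$ is right Quillen. In particular $F_{\K}$ sends fibrant objects of $\cat M$ to fibrant $\K$-coalgebras, and a morphism of $\cat M_{\K}$ is a weak equivalence precisely when $U_{\K}$ carries it to a weak equivalence of $\cat M$. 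Throughout, Lemma~\ref{lem:simpl-adj}(1) is to be replaced by Lemma~\ref{lem:simpl-adj}(2).

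For the forward implication, suppose $Kf$ is a weak equivalence. Then $F_{\K}X$ and $F_{\K}Y$ are fibrant $\K$-coalgebras, and $F_{\K}f$ is a weak equivalence in $\cat M_{\K}$, since $U_{\K}(F_{\K}f)=Kf$. Hence for every cofibrant $\K$-coalgebra $C$ the map $(F_{\K}f)_{*}\colon \map_{\cat M_{\K}}(C,F_{\K}X)\to \map_{\cat M_{\K}}(C,F_{\K}Y)$ is a weak equivalence of simplicial sets; transporting it along the natural isomorphism $\map_{\cat M}(U_{\K}C,X)\cong \map_{\cat M_{\K}}(C,F_{\K}X)$ of Lemma~\ref{lem:simpl-adj}(2) shows that $f_{*}\colon \map_{\cat M}(U_{\K}C,X)\to \map_{\cat M}(U_{\K}C,Y)$ is a weak equivalence, i.e., that $f$ is a $\K$-equivalence. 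As a special case, every weak equivalence between fibrant objects of $\cat M$ is a $\K$-equivalence, because $F_{\K}$ is right Quillen.

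For the converse, assume that $\cat M_{\K}$ admits a simplicial model category structure and that $f$ is a $\K$-equivalence. Reading Lemma~\ref{lem:simpl-adj}(2) in the other direction, $(F_{\K}f)_{*}\colon \map_{\cat M_{\K}}(C,F_{\K}X)\to \map_{\cat M_{\K}}(C,F_{\K}Y)$ is then a weak equivalence of simplicial sets for every cofibrant $\K$-coalgebra $C$. Since $F_{\K}X$ and $F_{\K}Y$ are fibrant, the dual of \cite[Proposition~9.7.1]{hirschhorn} forces $F_{\K}f$ to be a weak equivalence in $\cat M_{\K}$, whence $Kf=U_{\K}F_{\K}f$ is a weak equivalence in $\cat M$.

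The main obstacle, such as it is, is purely bookkeeping: one must keep straight which adjoint is Quillen on which side and match the fibrant/cofibrant hypotheses correctly, since here every object in sight is fibrant rather than cofibrant. The one point genuinely worth checking (just as in the $\T$-case) is that the simplicial enrichment of $\cat M_{\K}$ produced by Lemma~\ref{lem:simplenr} is the enrichment with respect to which $\cat M_{\K}$ becomes, under the hypothesis of the converse, a simplicial model category, so that $\map_{\cat M_{\K}}(C,F_{\K}X)$ genuinely detects weak equivalences between fibrant $\K$-coalgebras.
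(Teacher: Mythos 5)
Your proposal is correct and is essentially the paper's own argument: the paper omits this proof precisely because it is the formal dual of the proof of Lemma \ref{lem:char-Tequiv}, and your line-by-line dualization (with $F_{\K}$ right Quillen since $U_{\K}$ left-induces the model structure, Lemma \ref{lem:simpl-adj}(2) in place of (1), and the dual of \cite[Proposition 9.7.1]{hirschhorn} for the converse) is exactly that dual proof.
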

 
 In particular, since $F_{\K}$ is a right Quillen functor, if a morphism with fibrant source and target is a weak equivalence, then it is a $\K$-equivalence.

\begin{defn}
A cofibrant object $Z$ in $\cat M$ is \emph{$\K$-cocomplete} if for every $\K$-equivalence $f:X\to Y$, the induced morphism of simplicial sets 
 $$\map_{\cat M} (Z,X )\xrightarrow {f_{*}}\map_{\cat M} (Z,Y)$$
 is a weak equivalence.
 \end{defn} 
 
\begin{rmk} Obviously, if $C$ is a cofibrant $\K$-algebra, then $U_{\K}C$ is a $\K$-cocomplete object of $\cat M$.  In particular, if $KX$ is a cofibrant object of $\cat M$, then it is $\K$-cocomplete, since $F_{\K} X$ is then cofibrant $\K$-coalgebra.
\end{rmk}

\begin{rmk} It is clear that $\K$-cocompleteness is a homotopy-invariant notion:  if two cofibrant objects $Z$ and $Z'$ are weakly equivalent, then $W$ is $\K$-cocomplete if and only if $W'$ is $\K$-cocomplete.
\end{rmk}

The next lemma is an immediate consequence of Lemma \ref{lem:char-Kequiv}, once we introduce a slight variation on the notion of homotopic faithfulness formulated in Definition \ref{defn:cofibhtpicfaith}.

\begin{defn} \label{defn:fibhtpicfaith} Let $\cat M$ and $\cat {M'}$ be model categories. A functor $F:\cat M\to \cat {M'}$ is \emph{fibrantly homotopically faithful} if a morphism $f$ in $\cat M$ with fibrant source and target  is a weak equivalence only if $F(f)$ is a weak equivalence in $\cat {M'}$. 
\end{defn}

\begin{lem}\label{lem:cofaithful} Suppose that  $\cat M_{\K}$ admits a simplicial model category structure. If  $K$ is fibrantly homotopically faithful, then a morphism in $\cat M$ between fibrant objects is a $\K$-equivalence if and only if it is a weak equivalence. Moreover, all cofibrant objects in $\cat M$ are then $\K$-cocomplete.
\end{lem}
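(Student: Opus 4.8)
The proof is formally dual to that of Lemma~\ref{lem:faithful}, so the plan is simply to run that argument with every notion replaced by its comonadic counterpart; concretely, the whole statement reduces to Lemma~\ref{lem:char-Kequiv}. I would split it into the biconditional for morphisms between fibrant objects and the ensuing cocompleteness claim, treating them in that order.

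First I would dispatch the biconditional. One direction is already recorded: by the remark following Lemma~\ref{lem:char-Kequiv}, the fact that $F_{\K}$ is a right Quillen functor forces every weak equivalence with fibrant source and target to be a $\K$-equivalence, and this uses nothing about $K$ beyond that. For the other direction, suppose $f\colon X\to Y$ is a $\K$-equivalence between fibrant objects. Since $\cat M_{\K}$ is assumed to carry a simplicial model structure, the converse half of Lemma~\ref{lem:char-Kequiv} applies and shows that $Kf$ is a weak equivalence in $\cat M$; as $X$ and $Y$ are fibrant, the defining property of fibrant homotopic faithfulness of $K$ (Definition~\ref{defn:fibhtpicfaith}) then yields that $f$ itself is a weak equivalence.

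Then I would deduce the cocompleteness assertion. Let $Z$ be a cofibrant object of $\cat M$ and let $f\colon X\to Y$ be an arbitrary $\K$-equivalence; by definition its source and target are fibrant, so by the biconditional just proved $f$ is a weak equivalence between fibrant objects. In a simplicial model category the functor $\map_{\cat M}(Z,-)$ is right Quillen, being right adjoint to $Z\otimes-$, which is left Quillen precisely because $Z$ is cofibrant; hence it carries the weak equivalence $f$ between fibrant objects to a weak equivalence
$$f_{*}\colon\map_{\cat M}(Z,X)\xrightarrow{\ \sim\ }\map_{\cat M}(Z,Y),$$
so that $Z$ is $\K$-cocomplete. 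I anticipate no serious obstacle: the only point requiring care is keeping the variances straight (that $F_{\K}$ is the right adjoint, $U_{\K}$ the left adjoint, and the model structure on $\cat M_{\K}$ left-induced), which is precisely the bookkeeping that makes the relevant halves of Lemma~\ref{lem:char-Kequiv} available in the direction needed here.
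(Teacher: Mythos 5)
Your argument is correct and matches the paper's intent: the paper omits the proof precisely because, as you do, it reduces the biconditional to Lemma \ref{lem:char-Kequiv} (plus fibrant homotopic faithfulness, read as reflection of weak equivalences) and then gets cocompleteness from the standard fact that $\map_{\cat M}(Z,-)$ preserves weak equivalences between fibrant objects when $Z$ is cofibrant. No gaps; this is essentially the same (dual) route the paper has in mind.
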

 
Without assuming homotopic faithfulness, we can form an interesting class of $\K$-cocomplete objects as follows,  using the $\K$-bar construction.

\begin{lem}\label{lem:K-cocomp}  Let $Z$ be an object of $\cat M$.  If the $\K$-bar construction $\Bar^\K_{\bullet}Z$ is Reedy cofibrant, then $|\Bar^\K_{\bullet}Z|$ is $\K$-cocomplete.
\end{lem}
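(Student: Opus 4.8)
The plan is to run the proof of Lemma~\ref{lem:T-comp} with every arrow reversed, using the $(U_{\K},F_{\K})$-adjunction in place of the $(F^{\T},U^{\T})$-adjunction. First I would observe that, since $\Bar^\K_{\bullet}Z$ is Reedy cofibrant, its geometric realization $|\Bar^\K_{\bullet}Z|$ is cofibrant in $\cat M$ and, moreover, each $\Bar^\K_{n}Z = K^{n+1}Z$ is cofibrant. Because $\Bar^\K_{n}Z = K^{n+1}Z = U_{\K}F_{\K}(K^{n}Z)$ and $U_{\K}$ left-induces the model structure on $\cat M_{\K}$ (Convention~\ref{conv:codescss}), it follows that $F_{\K}(K^{n}Z)$ is a cofibrant $\K$-coalgebra for all $n$.

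Next, let $f\colon X\to Y$ be any $\K$-equivalence between fibrant objects. Applying the defining property of a $\K$-equivalence to the cofibrant $\K$-coalgebras $F_{\K}(K^{n}Z)$, whose underlying objects are the $\Bar^\K_{n}Z$, shows that
$$\map_{\cat M}(\Bar^\K_{n}Z, X)\xrightarrow{f_{*}}\map_{\cat M}(\Bar^\K_{n}Z, Y)$$
is a weak equivalence of simplicial sets for every $n\geq 0$. Since $X$ and $Y$ are fibrant and $\Bar^\K_{\bullet}Z$ is Reedy cofibrant, the cosimplicial simplicial sets $\map_{\cat M}(\Bar^\K_{\bullet}Z, X)$ and $\map_{\cat M}(\Bar^\K_{\bullet}Z, Y)$ are both Reedy fibrant, so the morphism $f_{\bullet}$ between them induced by $f$ is a levelwise weak equivalence of Reedy fibrant cosimplicial objects. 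Applying $\tot$ (which preserves weak equivalences between Reedy fibrant cosimplicial objects) then yields a weak equivalence $\tot f_{\bullet}$ of simplicial sets.

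Finally, I would invoke the dual of the limit-exchange identity used in the proof of Lemma~\ref{lem:T-comp}: geometric realization is computed as a coend $\int^{[n]}\Bar^\K_{n}Z\otimes\Delta[n]$, and for any object $W$ of the simplicial model category $\cat M$ the contravariant functor $\map_{\cat M}(-,W)$ carries this coend into the end computing $\tot$, so that
$$\map_{\cat M}(|\Bar^\K_{\bullet}Z|, W)\cong\tot \map_{\cat M}(\Bar^\K_{\bullet}Z, W)$$
naturally in $W$. Taking $W=X$ and $W=Y$ and comparing with the weak equivalence $\tot f_{\bullet}$ of the previous step, we obtain that $f_{*}\colon\map_{\cat M}(|\Bar^\K_{\bullet}Z|, X)\to\map_{\cat M}(|\Bar^\K_{\bullet}Z|, Y)$ is a weak equivalence for every $\K$-equivalence $f$, i.e., $|\Bar^\K_{\bullet}Z|$ is $\K$-cocomplete.

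I expect the only points requiring genuine care — both routine — to be the bookkeeping that promotes Reedy cofibrancy of the simplicial object $\Bar^\K_{\bullet}Z$ to Reedy fibrancy of the two cosimplicial mapping-space objects, together with cofibrancy of $|\Bar^\K_{\bullet}Z|$, and the precise identification of $|-|$ with $\tot$ under $\map_{\cat M}(-,W)$; once these are in place the argument is the formal dual of that for Lemma~\ref{lem:T-comp}.
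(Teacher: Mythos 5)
Your argument is correct and is precisely the formal dual of the paper's proof of Lemma \ref{lem:T-comp}, which is exactly how the paper intends Lemma \ref{lem:K-cocomp} to be proved (the proofs in that section are omitted as strictly dual). All the key steps match: cofibrancy of $F_{\K}(K^nZ)$ via the left-induced structure, levelwise weak equivalence of Reedy fibrant cosimplicial mapping spaces, and the identification $\map_{\cat M}(|\Bar^\K_{\bullet}Z|,W)\cong\tot\map_{\cat M}(\Bar^\K_{\bullet}Z,W)$ dualizing the limit-exchange used there.
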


Lemma \ref{lem:K-cocomp} justifies the next definition.

\begin{defn}\label{defn:modelKcocomp}  Let $\widetilde X$ be a cofibrant replacement of an object $X$ in $\cat M$. If $\Bar^\K_{\bullet}\widetilde X$ is Reedy cofibrant, then $|\Bar^\K_{\bullet}\widetilde X|$ is a \emph{model of the derived $\K$-cocompletion of $X$}.
\end{defn}

Homotopy invariance of derived $\K$-cocompletion is guaranteed under the following conditions.

\begin{lem}\label{lem:K-cocomp-inv} Let $X$ and $Y$ be objects in $\cat M$, and let $\Bar^\K_{\bullet}\widetilde X$ and $\Bar^\K_{\bullet}\widetilde Y$ be models of their derived $\K$-cocompletions.  If $K$ preserves weak equivalences between cofibrant objects, then any weak equivalence $X\xrightarrow \sim Y$ induces a weak equivalence $|\Bar^\K_{\bullet}\widetilde X|\xrightarrow \sim |\Bar^\K_{\bullet}\widetilde Y|$.
\end{lem}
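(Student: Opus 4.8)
The plan is to dualize, step by step, the proof of Lemma~\ref{lem:T-comp-inv}. First I would replace the given weak equivalence $w:X\xrightarrow{\sim}Y$ by a weak equivalence between cofibrant replacements, by the argument dual to the lifting argument used there: let $\widetilde X\xrightarrow{\sim}X$ and $\widetilde Y\xrightarrow{\sim}Y$ be the chosen acyclic fibrations from the cofibrant replacements; since $\widetilde X$ is cofibrant, $\varnothing\to\widetilde X$ is a cofibration, so the composite $\widetilde X\xrightarrow{\sim}X\xrightarrow{\sim}Y$ lifts through the acyclic fibration $\widetilde Y\xrightarrow{\sim}Y$, producing a morphism $\widetilde w:\widetilde X\to\widetilde Y$ that, by two-out-of-three, is a weak equivalence between cofibrant objects. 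Since a model of the derived $\K$-cocompletion is computed from a cofibrant replacement, this is all that is needed.

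Next I would propagate $\widetilde w$ through the powers of $K$ by an induction. The base case is that $\widetilde w$ is a weak equivalence between the cofibrant objects $\widetilde X$ and $\widetilde Y$. For the inductive step, assuming $K^{n}\widetilde w:K^{n}\widetilde X\to K^{n}\widetilde Y$ is a weak equivalence between cofibrant objects, the hypothesis that $K$ preserves weak equivalences between cofibrant objects yields that $K^{n+1}\widetilde w$ is a weak equivalence, while $K^{n+1}\widetilde X=\Bar^\K_{n}\widetilde X$ and $K^{n+1}\widetilde Y=\Bar^\K_{n}\widetilde Y$ are cofibrant because the bar constructions are Reedy cofibrant (every level of a Reedy cofibrant simplicial object is cofibrant). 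Hence $K^{n+1}\widetilde w$ is again a weak equivalence between cofibrant objects, completing the induction. Consequently
$$K^{\bullet+1}\widetilde w:\Bar^\K_{\bullet}\widetilde X\to\Bar^\K_{\bullet}\widetilde Y$$
is a levelwise weak equivalence between Reedy cofibrant objects of $\cat M^{\bold \Delta^{op}}$, hence a weak equivalence in the Reedy model structure.

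Finally I would apply geometric realization: in a simplicial model category the realization functor $|-|:\cat M^{\bold \Delta^{op}}\to\cat M$ is left Quillen for the Reedy model structure, so it sends weak equivalences between Reedy cofibrant objects to weak equivalences; applying it to $K^{\bullet+1}\widetilde w$ produces the desired weak equivalence $|\Bar^\K_{\bullet}\widetilde X|\xrightarrow{\sim}|\Bar^\K_{\bullet}\widetilde Y|$. I do not foresee any genuine difficulty; the only point requiring a little care is the bookkeeping in the induction, which is exactly where the Reedy cofibrancy of the bar construction---part of the definition of a model of the derived $\K$-cocompletion---is used, precisely dual to the role of Reedy fibrancy of $\Om^\bullet_{\T}$ in the proof of Lemma~\ref{lem:T-comp-inv}.
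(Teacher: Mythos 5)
Your argument is exactly the paper's: the paper omits this proof as formally dual to Lemma \ref{lem:T-comp-inv}, whose proof is precisely your lifting argument between (co)fibrant replacements, the levelwise propagation through powers of the endofunctor, and the application of $|-|$ (dually, $\tot$) to a levelwise weak equivalence of Reedy cofibrant (dually, fibrant) objects. Your extra care in noting that Reedy cofibrancy of $\Bar^\K_{\bullet}\widetilde X$ supplies the levelwise cofibrancy needed to iterate the hypothesis on $K$ is a welcome refinement, but not a different approach.
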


Lemmas \ref{lem:K-cocomp} and \ref{lem:K-cocomp-inv} imply  that all models of the derived $\K$-cocompletion of an object of $\cat M$ are indeed $\K$-cocomplete, as well as weakly equivalent to each other if $K$ preserves weak equivalences between cofibrant objects, and thus motivate the notation introduced below.

\begin{notn}\label{notn:Kcocomp}  Let $X$ be an object in $\cat M$.  If  $\Bar^\K_{\bullet}\widetilde X$  is a model of its derived $\K$-cocompletion, then, abusing notation slightly,  we write
$$X^\vee_{\K}:= |\Bar^\K_{\bullet}\widetilde X|.$$ 
\end{notn}

Two stronger notions of $\K$-cocompleteness, one of which requires no model category structure, arise naturally in our discussion of homotopic codescent. We refer the reader to section \ref{sec:external} for an introduction to external simplicial structure and to section \ref{sec:ext} for a discussion of homological cofibrant resolutions.  

\begin{defn}\label{defn:strong-K}  Let $\K$ be a comonad on a category $\cat D$ that admits all finite limits and colimits.  Let $Z$ be an object in $\cat D$, and let $\ve_{\bullet} : \Bar_{\bullet}^\K Z\to cs_{\bullet }Z$ denote the natural augmentation.
\begin{enumerate}
\item The object $Z$  is \emph{strictly $\K$-cocomplete} if there is an external simplicial SDR
$$\xymatrix{cs_{\bullet}Z \ar @<.7ex>[r]^-{\sigma_{\bullet}}&\Bar_{\bullet}^\K Z\circlearrowright h\ar @<.7ex>[l]^-{\ve_{\bullet}}}$$(cf. Definition \ref{defn:extSDR}).
\item If $\cat D$ is  actually a simplicial model category, then $Z$ is \emph{strongly $\K$-cocomplete} if the natural augmentation $\ve_{\bullet}$ is a homological cofibrant resolution of $Z$ (cf., Definition \ref{defn:homres}).
\end{enumerate} 
\end{defn}

\begin{rmk} It follows from Remark \ref{rmk:homres} that any strictly $\K$-cocomplete object in a simplicial  model category is strongly $\K$-complete.  Moreover Remark \ref{rmk:conseqcohomfibres} implies that a cofibrant object is $\K$-cocomplete if it is strongly $\K$-cocomplete, at least when the associated extended homotopy spectral sequences converge.
\end{rmk}

Strict $\K$-cocompleteness is also not an overly restrictive condition.

\begin{lem}  Suppose that $\K$ is the comonad associated to an adjunction $$F:\cat C\adjunct{}{} \cat D:U.$$  If $X$ is any object of $\cat C$, then  $FX$  is strictly $\K$-cocomplete.
\end{lem}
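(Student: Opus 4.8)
The plan is to run, in dual form, the argument of Lemma~\ref{lem:exist-strict}. Write $\eta\colon Id_{\cat C}\to UF$ and $\ve\colon FU\to Id_{\cat D}$ for the unit and counit of the adjunction, so that $\K=(FU,\,F\eta U,\,\ve)$, and fix an object $X$ of $\cat C$. The heart of the matter is to equip the augmented simplicial object
$$\ve_{\bullet}\colon \Bar^{\K}_{\bullet}(FX)\longrightarrow cs_{\bullet}(FX)$$
with an ``extra degeneracy'' at each level, thereby making it contractible in the sense of Definition~\ref{defn:contractible}. Here $\Bar^{\K}_{n}(FX)=K^{n+1}(FX)=(FU)^{n+1}(FX)$, with faces $d_{i}=K^{i}\ve_{K^{n-i}(FX)}$ and degeneracies $s_{j}=K^{j}\Delta_{K^{n-j}(FX)}$; the extra degeneracy, from level $n$ to level $n+1$, is
$$K^{n+1}(F\eta_{X})\colon K^{n+1}(FX)=\Bar^{\K}_{n}(FX)\longrightarrow K^{n+1}(FUFX)=K^{n+2}(FX)=\Bar^{\K}_{n+1}(FX),$$
valid for $n\ge-1$ with the conventions $K^{0}=Id_{\cat D}$ and $\Bar^{\K}_{-1}(FX)=FX$. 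This is exactly the mirror image of the extra codegeneracy $s^{n}=T^{n}(U\ve_{Y})$ appearing in the proof of Lemma~\ref{lem:exist-strict}.

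The next step is the (routine) verification that the maps $K^{n+1}(F\eta_{X})$ satisfy the simplicial identities relating an extra degeneracy to the faces $d_{i}$ and degeneracies $s_{j}$ of the $\K$-bar construction. Each such identity follows by a direct diagram chase from the naturality of $\ve$ and $\eta$ and from the triangle identities for the adjunction $(F,U)$ — equivalently, from the counit and coassociativity axioms of $\K$ — so the whole computation is the strict dual of the ``straightforward calculation'' left to the reader in the proof of Lemma~\ref{lem:exist-strict}. Having done this, I would invoke Proposition~\ref{prop:barr}, in its simplicial incarnation, to convert the contractible augmented simplicial object above into an external simplicial SDR
$$\xymatrix{cs_{\bullet}(FX)\ar@<.7ex>[r]^-{\sigma_{\bullet}}&\Bar^{\K}_{\bullet}(FX)\circlearrowright h\ar@<.7ex>[l]^-{\ve_{\bullet}},}$$
in which $\sigma_{\bullet}$ is assembled out of the extra degeneracies. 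By Definition~\ref{defn:strong-K}(1), the existence of this SDR is precisely the assertion that $FX$ is strictly $\K$-cocomplete.

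I do not expect any genuine difficulty here; the only thing requiring care is bookkeeping, namely getting the indexing and the directions of all the structure maps right in passing from the cosimplicial setting of Lemma~\ref{lem:exist-strict} to the simplicial one, and making sure that Proposition~\ref{prop:barr} has been stated (or is immediately dualizable) for augmented \emph{simplicial} objects rather than only for coaugmented cosimplicial ones. Finally, it is worth recording the dual of Remark~\ref{rmk:ex-Tcomp}: applying this lemma to the adjunction $U_{\K}\colon \cat D_{\K}\adjunct{}{}\cat D\colon F_{\K}$, whose associated comonad on $\cat D$ is $\K$ (Remark~\ref{rmk:K-adjunct}), shows that $U_{\K}(C,\delta)$ is strictly $\K$-cocomplete for every $\K$-coalgebra $(C,\delta)$, and in particular that $KY=U_{\K}F_{\K}Y$ is strictly $\K$-cocomplete for all $Y$ in $\cat D$.
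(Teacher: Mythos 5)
Your proposal is correct and is exactly the argument the paper intends: the proof of this lemma is omitted there as being formally dual to Lemma \ref{lem:exist-strict}, and your extra degeneracies $K^{n+1}(F\eta_{X})\colon \Bar^{\K}_{n}(FX)\to \Bar^{\K}_{n+1}(FX)$ are the correct duals of the extra codegeneracies $T^{n}(U\ve_{Y})$ used there, with Proposition \ref{prop:barr}(2) supplying the simplicial SDR. Your concluding remark likewise matches Remark \ref{rmk:ex-Kcocomp}.
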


\begin{rmk}\label{rmk:ex-Kcocomp} When applied to  the adjunction $U_{\K}:\cat D_{\K}\adjunct{}{} \cat D:F_{\K}$, for some comonad $\K$ on $\cat D$, this lemma implies that $U_{\K}(C,\delta)$ is strictly $\K$-cocomplete for all $\K$-coalgebras $(C,\delta)$.  In particular, $KY$ is strictly $\K$-cocomplete for all objects $Y$ in $\cat D$, since $KY=U_{\K} F_{\K} Y$.
\end{rmk}

A result analogous to Lemma \ref{lem:cofaithful} holds when the endofunctor underlying the comonad satisfies the following variant of faithfulness.

\begin{defn}\label{defn:simpfaith}  Let $\cat C$ and $\cat D$ be categories admitting all finite limits and colimits.  A functor $F: \cat C \to \cat D$ is \emph{simplicially faithful} if $f_\bullet \in \cat C^{\bold \Delta^{op}}(X_\bullet, Y_\bullet)$ is an external homotopy equivalence whenever $F^{\bold \Delta^{op}}(f_\bullet)$ is. In other words, the simplicial prolongation of $F$ reflects external homotopy equivalences.
\end{defn}

\begin{lem} Let $\K=(K,\Delta, \ve)$ be a  comonad on a category $\cat D$ admitting all finite limits and colimits. If $K$ is simplicially faithful, then every object in $\cat D$ is strictly $\K$-cocomplete.
\end{lem}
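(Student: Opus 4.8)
My plan is to dualize the proof of the monad analogue of this lemma --- the statement that cosimplicial faithfulness of $T$ forces every object of $\cat C$ to be strictly $\T$-complete --- which itself follows the well-trodden ``extra (co)degeneracy'' argument already used for Lemma~\ref{lem:exist-strict}. Fix an arbitrary object $Z$ of $\cat D$. The crux will be that the $\K$-bar construction $\Bar^{\K}_{\bullet}Z$, together with its natural augmentation $\ve_{\bullet}:\Bar^{\K}_{\bullet}Z\to cs_{\bullet}Z$, becomes a \emph{contractible} augmented simplicial object once one prolongs the cofree $\K$-coalgebra functor and applies it levelwise: the augmented simplicial object $(F_{\K})^{\bold\Delta^{op}}\Bar^{\K}_{\bullet}Z\to cs_{\bullet}F_{\K}Z$ in $\cat D_{\K}$ will admit an ``extra degeneracy'' in each dimension.

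First I would write down that extra degeneracy. It is supplied, exactly as in the classical resolution of an adjunction, by the unit $\eta_{\K}$ of the $(U_{\K},F_{\K})$-adjunction of Remark~\ref{rmk:K-adjunct}: from level $n$ to level $n+1$ it is $(\eta_{\K})_{F_{\K}K^{n+1}Z}:F_{\K}K^{n+1}Z\to F_{\K}K^{n+2}Z$, where one uses $U_{\K}F_{\K}=K$ to identify the target. Checking that these extra degeneracies interact correctly with the faces $d_{i}=K^{i}\ve_{K^{n-i}Z}$ and with the remaining degeneracies $s_{j}=K^{j}\Delta_{K^{n-j}Z}$ of the $\K$-bar construction is a routine diagram chase, strictly dual to the ``extra codegeneracy'' verification deferred to the reader in the monad case, so I would only sketch it.

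Next I would bring everything down to $\cat D$: applying the prolonged forgetful functor $(U_{\K})^{\bold\Delta^{op}}$ to this contractible augmented simplicial object and using $U_{\K}F_{\K}=K$ yields a contractible augmented simplicial object $K^{\bold\Delta^{op}}\Bar^{\K}_{\bullet}Z\to K^{\bold\Delta^{op}}cs_{\bullet}Z$ in $\cat D$, whose augmentation is $K^{\bold\Delta^{op}}(\ve_{\bullet})$. Invoking Proposition~\ref{prop:barr} --- for which the hypothesis that $\cat D$ admits all finite limits and colimits is used --- this underlies an external simplicial SDR, so in particular $K^{\bold\Delta^{op}}(\ve_{\bullet})$ is an external homotopy equivalence; here I would note that a prolonged functor automatically carries an external simplicial SDR to an external simplicial SDR, since such an SDR is a purely diagrammatic gadget. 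Finally, since $K$ is simplicially faithful, i.e., $K^{\bold\Delta^{op}}$ reflects external homotopy equivalences, it follows that $\ve_{\bullet}:\Bar^{\K}_{\bullet}Z\to cs_{\bullet}Z$ is itself an external homotopy equivalence, which by Definition~\ref{defn:strong-K}(1) is precisely the assertion that $Z$ is strictly $\K$-cocomplete.

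I do not expect a genuine obstacle here: the argument is entirely formal, and the single real computation --- the identities satisfied by the extra degeneracies --- is literally the dual of one already set aside for the reader earlier in this section. The only thing requiring a little care is the bookkeeping of the dualization: keeping straight the passage from codegeneracies and the extra codegeneracy to degeneracies and the extra degeneracy, from coaugmentation to augmentation, and making sure the direction of the resulting SDR matches the convention in Definition~\ref{defn:strong-K} (with $cs_{\bullet}Z$ a retract of $\Bar^{\K}_{\bullet}Z$ via $\ve_{\bullet}$).
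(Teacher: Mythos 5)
Your proposal is correct and is essentially the paper's own argument: the paper omits the comonad proof as formally dual to the monad case, whose proof is exactly your route — an extra degeneracy on $(F_{\K})^{\bold\Delta^{op}}\Bar^{\K}_{\bullet}Z$ supplied by the unit of the $(U_{\K},F_{\K})$-adjunction, followed by applying $(U_{\K})^{\bold\Delta^{op}}$, invoking Proposition \ref{prop:barr} to get an external SDR, and using simplicial faithfulness of $K$ to reflect the equivalence back to $\ve_{\bullet}$. Even your deferral of the degeneracy identities and the identification of strict cocompleteness with $\ve_{\bullet}$ being an external homotopy equivalence match the paper's treatment.
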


\begin{rmk}\label{rmk:inftyKcoalg}  Let $\K$ be a comonad on a category $\cat D$ admitting all finite limits and colimits. A careful analysis of Definition \ref{defn:strong-K} leads one naturally to view strictly $\K$-complete objects as ``$\infty\text{-}\K$-coalgebras''.   The formulas involved are dual to those in Remark \ref{rmk:inftyTalg}, and lead easily to the conclusion that if an object of $\cat D$ underlies a $\K$-coalgebra, then it is strictly $\K$-cocomplete, whence our vision of strictly $\K$-cocomplete objects as $\infty\text{-}\K$-coalgebras.
\end{rmk}

\subsection{Assembly, derived cocompletion and isomorphism conjectures}\label{sec:assembly}
 
 In this section, as an indication of the importance of the theory developed above, we describe the intriguing relationship between assembly maps and derived cocompletion.
 
 Let $\Phi: \cat C\to \cat D$ be a functor between small categories, and let $\cat M$ be a simplicial model category that is combinatorial or cofibrantly generated.  The functor categories $\cat M^{\cat C}$ and $\cat M^{\cat D}$ are then also simplicial model categories that are combinatorial or cofibrantly generated, where fibrations and weak equivalences are defined objectwise.
 
The functor $\Phi$ induces a Quillen pair of adjoint functors
$$\Phi_{*}:\cat M^{\cat C}\adjunct{}{} \cat M^{\cat D}:\Phi^*,$$
where, if $Y:\cat C\to \cat M$ is any functor, then $\Phi_{*}(Y):\cat D\to \cat M$ denotes its left Kan extension, and $\Phi^*$ is given by precomposition with $\Phi$.
We let $\K_{\Phi}=(K_{\Phi}, \Delta_{\Phi},\ve_{\Phi})$ denote the comonad associated to this adjunction, i.e., 
 $$K_{\Phi}:=\Phi_{*}\Phi^*: \cat M^{\cat D}\to \cat M^{\cat D}.$$

Recall that for all $X\in \cat M^{\cat D}$ and all $D\in \ob \cat D$, 
$$K_{\Phi}(X)(D)=\Phi_{*}\Phi^*(X)(D)=\colim_{\cat {Simp}_{\Phi}(D)} (X\circ \operatorname{dom}),$$
where $\cat {Simp}_{\Phi}(D)$ is the subcategory of the overcategory $\cat D/D$ with object set 
$$\{ f:\Phi (C)\to D \in \mor \cat D\mid C\in \ob \cat C\},$$
while for $f:\Phi(C)\to D$ and $f':\Phi (C')\to D$,
$$\cat {Simp}_{\Phi}(D)(f,f')=\big\{g\in \cat C(C,C')\mid f'\circ \Phi (g)=f\big\},$$
and 
$$\operatorname{dom}:\cat {Simp}_{\Phi}(D)\to \cat D: (\Phi (C)\xrightarrow f D)\mapsto  \Phi(C)$$
is the ``domain'' functor.
Since $\cat M$ is a simplicial model category, it is natural to replace the colimit in the formula above for $K_{\Phi}(X)(D)$ by a homotopy colimit, obtaining a homotopy invariant replacement 
$$K^{ho}_{\Phi}(X)(D)=\operatorname{hocolim}_{\cat {Simp}_{\Phi}(D)} (X\circ \operatorname{dom}),$$ 
of the comonad $K_{\Phi}$, which is the \emph{homotopy left Kan extension} of $\Phi^*(X)$.  The \emph{assembly map} of the functor $X: \cat D \to \cat M$ evaluated at an object $D$ is then the canonical map
$$A_{X, D}: K^{ho}_{\Phi}(X)(D)\to X(D).$$

On the other hand, for all $X\in \cat M^{\cat D}$ and all $D\in \ob \cat D$,
$$|\Bar ^{\K_{\Phi}}_{\bullet}(X)(D)|\in \cat M,$$
which is just $X^\vee_{\K_{\Phi}}(D)$ if $\Bar ^{\K_{\Phi}}_{\bullet}(X)$ is Reedy cofibrant, is another natural ``homotopy replacement'' of $K_{\Phi}(X)(D)$, and the natural map
$$B_{X,D}:|\Bar ^{\K_{\Phi}}_{\bullet}(X)(D)|\to X(D)$$
could also reasonably be termed an ``assembly map.''  We describe below the relationship between these two notions of assembly map.

The article \cite{balmer-matthey}  of Balmer and Matthey on model-theoretic formulations of the Baum-Connes and Farrell-Jones conjectures motivates our examination the relationship between $A_{X,D}$ and $B_{X,D}$.  Let $G$ be a discrete group, and let $\cat D$ be the orbit category of $G$, while $\cat C$ is its full subcategory with as objects those quotients $G/H$ where $H$ is virtually cyclic.  Let $\Phi$ denote the inclusion of $\cat C$ into $\cat D$.  Let $X_{G}$ denote a functor from $\cat D$ into some nice simplicial model category of spectra $\cat M$ such that $\pi_{*}\big(X_{G}(G/H)\big)$ is canonically isomorphic to $K_{*}(H)$ for every subgroup $H$ of $G$.  Here, $K_{*}(-)$ can denote any of a number of relevant $K$- or $L$-theories, for which we refer the reader to the extensive literature, but in particular to \cite{balmer-matthey}.

The Isomorphism Conjecture for $G$ at a subgroup $H$ with respect to the chosen $K$- or $L$-theory states that
$$A_{X_{G},G/H}:K^{ho}_{\Phi}(X_{G})(G/H)\to X_{G}(G/H)$$
should be a weak equivalence of spectra.  If $A_{X_{G}, G/H}$ is weakly equivalent to $B_{X_{G},G/H}$ for all $H\leq G$, then the Isomorphism Conjecture holds at all $H\leq G$ if and only if $X_{G}$  is derived $\K_{\Phi}$-cocomplete.    One could apply such an equivalence in two directions:  to obtain examples of $\K_{\Phi}$-cocomplete objects in those cases where the Isomorphism Conjectures are known to hold and to gain potentially useful insight into the meaning of the Isomorphism Conjectures in still-unresolved cases.

Motivated by this discussion of the Baum-Connes and Farrell-Jones conjectures, we formulate the following definition. 

\begin{defn}  Let $\Phi: \cat C\to \cat D$ be a functor between small categories, let $\cat M$ be a simplicial model category that is combinatorial or cofibrantly generated, and let $D$ be an object of $\cat D$.  A functor $X:\cat D\to \cat M$ \emph{satisfies the $\Phi$-Isomorphism Conjecture at $D$} if $A_{X,D}:K^{ho}_{\Phi}(X)(D) \to X(D)$ is a weak equivalence.  If $A_{X,D}$ is a weak equivalence for all $D$, then $X$ \emph{satisfies the $\Phi$-Isomorphism Conjecture}
\end{defn}

The comparison result below is a first step towards characterizing in terms of derived cocompletion those functors that satisfy Isomorphism Conjectures.
 
\begin{thm}\label{thm:assembly}  Let $\Phi: \cat C\to \cat D$ be a full functor between small categories, and let $\cat M$ be a simplicial model category that is combinatorial or cofibrantly generated.  

For all functors $X:\cat D\to \cat M$  and for all $D\in \cat D$, there is a commutative diagram in $\cat M$:
$$\xymatrix {K^{ho}_{\Phi}(X)(D)\ar [dr]^{A_{X,D}}\\ \bullet \ar [u]^{a_{X,D}} \ar [d]_{b_{X,D}}&X(D).\\ |\Bar ^{\K_{\Phi}}_{\bullet}(X)(D)|\ar [ur]_{B_{X,D}}}$$
\end{thm}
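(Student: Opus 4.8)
The plan is to present both upper corners of the claimed diagram as realizations of augmented simplicial objects in $\cat M$ and to produce $\bullet$ together with $a_{X,D}$, $b_{X,D}$ from a single natural morphism of such simplicial objects. \emph{First, the two corners as realizations.} By definition $K^{ho}_{\Phi}(X)(D)=\hocolim_{\cat{Simp}_{\Phi}(D)}(X\circ\dom)$ is the realization $|S_{\bullet}|$ of the Bousfield--Kan simplicial replacement $S_{\bullet}$ of the diagram $X\circ\dom\colon\cat{Simp}_{\Phi}(D)\to\cat M$: its $n$-th level is the coproduct, over all $n$-chains $\xi_{0}\to\cdots\to\xi_{n}$ of $\cat{Simp}_{\Phi}(D)$, of copies of $X(\dom\xi_{0})$, with faces given by composition of consecutive arrows and deletion of an extremal vertex, degeneracies by insertion of identities; its canonical augmentation to the constant simplicial object on $X(D)$ --- which factors through the ordinary colimit $K_{\Phi}(X)(D)=\colim_{\cat{Simp}_{\Phi}(D)}(X\circ\dom)$ followed by $\ve_{\Phi}$ --- realizes to $A_{X,D}$. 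On the other side, $|\Bar^{\K_{\Phi}}_{\bullet}(X)(D)|$ is the realization of the simplicial object $n\mapsto K_{\Phi}^{n+1}(X)(D)$, whose augmentation $(\ve_{\Phi})_{\bullet}$ to the constant simplicial object on $X(D)$ realizes to $B_{X,D}$. I take $\bullet:=|S_{\bullet}|$, a standard model for $K^{ho}_{\Phi}(X)(D)$, and $a_{X,D}$ to be the identity (or, should one prefer a projectively cofibrant model of the homotopy colimit, the canonical comparison map $|S_{\bullet}|\to K^{ho}_{\Phi}(X)(D)$).

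\emph{Next, the comparison map.} I would construct a morphism of simplicial objects $\beta_{\bullet}\colon S_{\bullet}\to\Bar^{\K_{\Phi}}_{\bullet}(X)(D)$. Iterating $K_{\Phi}(Y)(D)=\colim_{\cat{Simp}_{\Phi}(D)}(Y\circ\dom)$ identifies the coproduct whose colimit is $K_{\Phi}^{n+1}(X)(D)$: its summands are indexed by tuples consisting of an object $C_{n}$ of $\cat C$, a morphism $\Phi C_{n}\to D$ of $\cat D$, and morphisms $\Phi C_{i}\to\Phi C_{i+1}$ of $\cat D$ for $0\le i\le n-1$, with $X(\Phi C_{0})$ over each. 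An $n$-chain of $\cat{Simp}_{\Phi}(D)$ is the same data with the intermediate morphisms replaced by morphisms $g_{i}\colon C_{i}\to C_{i+1}$ of $\cat C$. Because $\Phi$ is \emph{full}, sending each $g_{i}$ to $\Phi(g_{i})$ defines a surjection of the chain index set onto the summand index set (a bijection if $\Phi$ is moreover faithful); carrying $X(\Phi C_{0})$ along the identity and then mapping the coproduct of summands to its colimit defines $\beta_{n}$. Fullness is exactly the point at which the hypothesis on $\Phi$ enters.

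\emph{Then, the simplicial identities, augmentations, and realization.} It remains to check that $\beta_{\bullet}$ respects faces and degeneracies and is compatible with the two augmentations to the constant simplicial object on $X(D)$; applying $|\,\cdot\,|$ then yields $b_{X,D}:=|\beta_{\bullet}|$ and $B_{X,D}\circ b_{X,D}=A_{X,D}\circ a_{X,D}$ as the realization of the common augmentation. Compatibility with faces and degeneracies is the technical heart: at the level of colimits the faces $d_{i}=K_{\Phi}^{i}\ve_{K_{\Phi}^{n-i}X}$ of $\Bar^{\K_{\Phi}}_{\bullet}(X)(D)$ amount to post-composing with, or omitting, the appropriate structure morphism, and the degeneracies $s_{j}=K_{\Phi}^{j}\Delta_{K_{\Phi}^{n-j}X}$ to inserting an identity morphism; under $\beta_{\bullet}$, using fullness once more so that composing consecutive morphisms $\Phi C_{i-1}\to\Phi C_{i}\to\Phi C_{i+1}$ is $\Phi$ applied to $g_{i}\circ g_{i-1}$, these become precisely the chain operations defining the faces and degeneracies of $S_{\bullet}$. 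Compatibility with the augmentations reduces to the observation that the total composite of the structure morphisms of a chain with bottom vertex $(C_{0},h_{0}\colon\Phi C_{0}\to D)$ equals $h_{0}$, so both composites $\bullet\to X(D)$ act on the summand $X(\Phi C_{0})$ as $X(h_{0})$.

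I expect the main obstacle to be this last verification --- not because it is deep, but because it is bookkeeping-heavy: the $\ve_{\Phi}$-faces of $\Bar^{\K_{\Phi}}_{\bullet}(X)(D)$ act on the ``outermost'' copy of $K_{\Phi}$ while the chain faces of $S_{\bullet}$ act on the ``innermost'' arrow, so the identification above must be set up with a suitable reversal of indexing, after which every simplicial identity has to be matched by hand; this is presumably why the argument is relegated to an appendix. A secondary point requiring care is the choice of model for $K^{ho}_{\Phi}(X)(D)$: if one does not simply take it to be $|S_{\bullet}|$, then $a_{X,D}$ is the canonical comparison map (a weak equivalence when $X$ is objectwise cofibrant), and nothing else in the argument changes.
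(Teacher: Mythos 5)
Your route is genuinely different from the paper's, and in outline it works. The paper never maps a simplicial replacement into the bar construction; instead it introduces the auxiliary category $\cat {Simp}_{K_\Phi}(X)$ of morphisms $K_{\Phi}(Y)\to X$, shows by a cofinality argument that $|\Bar^{\K_{\Phi}}_{\bullet}X|$ is weakly equivalent to $\hocolim_{\cat {Simp}_{K_\Phi}(X)}\operatorname{Dom}$, identifies $K_{\Phi}(X)(D)$ with the colimit of a two-variable functor $E_{X,D}$ on $\cat {Simp}_{K_\Phi}(X)\times\cat {Simp}_{\Phi}(D)$ (this is exactly where fullness enters, via the evaluation $K_{\Phi}(Y)(\Phi(C))=Y(\Phi(C))$), and then takes $\bullet$ to be the double homotopy colimit of $E_{X,D}$, with $a_{X,D}$ and $b_{X,D}$ the two ``collapse one variable to a colimit'' maps; in particular neither leg is an equivalence in general. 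Your $\bullet$ is a model of $K^{ho}_{\Phi}(X)(D)$ itself, with all the content pushed into $b_{X,D}=|\beta_{\bullet}|$: that is a legitimate, arguably more economical, way to produce the asserted diagram, and I verified that your face/degeneracy/augmentation matchings are the right ones. Two remarks on the comparison: your argument never actually needs fullness (surjectivity of chains onto index tuples is irrelevant for constructing a map, and $\Phi(g_{i})\circ\Phi(g_{i-1})=\Phi(g_{i}g_{i-1})$ is just functoriality), whereas the paper's proof uses it essentially; and your $a_{X,D}$, $b_{X,D}$ are different maps from the paper's, which changes how the hypothesis of Corollary \ref{cor:assembly} reads (your $a_{X,D}$ is automatic, so everything is concentrated in $b_{X,D}$).

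The one point you underestimate is the ``reversal of indexing.'' With the only workable convention for the simplicial replacement of a covariant diagram ($F(\xi_{0})$ placed over a chain $\xi_{0}\to\cdots\to\xi_{n}$, so that $d_{0}$ is the face that applies $F$), your $\beta_{\bullet}$ intertwines $d_{j}$ on $S_{\bullet}$ with $d_{n-j}$ on $\Bar^{\K_{\Phi}}_{\bullet}(X)(D)$, because in the bar construction the functor-applying face is $d_{n}=K_{\Phi}^{n}\ve_{X}$, acting on the innermost arrow, while $d_{0}=\ve_{K_{\Phi}^{n}X}$ merely re-indexes the outermost layer. So $\beta_{\bullet}$ is a simplicial map only out of $S_{\bullet}$ precomposed with the involution of the simplex category, and this is not mere relabelling: in a general simplicial model category the realization of a simplicial object and of its reversal are coends against two non-isomorphic cosimplicial frames, hence not canonically isomorphic, so you cannot literally take $a_{X,D}$ to be the identity on a previously fixed model of $\hocolim_{\cat {Simp}_{\Phi}(D)}(X\circ\dom)$. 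The honest fix is to commit from the start to the correspondingly oriented bar-construction model as your definition of $K^{ho}_{\Phi}(X)(D)$ (harmless, since the assembly map and the Isomorphism Conjecture are insensitive to the choice of model), in which case $a_{X,D}$ is the identity and the augmentation compatibility gives $B_{X,D}\circ b_{X,D}=A_{X,D}\circ a_{X,D}$ exactly as you say; if instead you insist on another preferred model, you only obtain a comparison zigzag rather than a single map $a_{X,D}$, and that weaker statement is what your argument proves. This looseness is of the same order as the paper's own final step (which replaces $|\Bar^{\K_{\Phi}}_{\bullet}X|$ by a weakly equivalent homotopy colimit), but it should be addressed explicitly for the proof to be complete.
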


In order not to interrupt further the general flow of this article, we refer the reader to appendix \ref{sec:proof} for the proof of this theorem.

The following characterization is an immediate consequence of Theorem \ref{thm:assembly}. 

\begin{cor}\label{cor:assembly} Let $\Phi: \cat C\to \cat D$ be a full functor between small categories, and let $\cat M$ be a simplicial model category that is combinatorial or cofibrantly generated.   A functor $X:\cat D\to \cat M$ such that $a_{X,D}$ and $b_{X,D}$ are weak equivalences for all objects $D$ satisfies the $\Phi$-Isomorphism Conjecture if and only if it is derived $\K_{\Phi}$-cocomplete.
\end{cor}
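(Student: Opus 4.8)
The plan is to deduce the corollary formally from Theorem \ref{thm:assembly} together with the two-out-of-three axiom for weak equivalences. First I would translate each of the two conditions into a statement about objectwise weak equivalences in $\cat M$. By definition, $X$ satisfies the $\Phi$-Isomorphism Conjecture if and only if the assembly map $A_{X,D}\colon K^{ho}_{\Phi}(X)(D)\to X(D)$ is a weak equivalence for every $D\in\ob\cat D$. On the other hand, $X$ is derived $\K_{\Phi}$-cocomplete if and only if the natural comparison map $X^\vee_{\K_{\Phi}}\to X$ in $\cat M^{\cat D}$ is a weak equivalence; since weak equivalences in the objectwise model structure on $\cat M^{\cat D}$ are detected objectwise and $X^\vee_{\K_{\Phi}}(D)=|\Bar^{\K_{\Phi}}_{\bullet}(\widetilde X)(D)|$, this is equivalent to asking that $B_{X,D}\colon|\Bar^{\K_{\Phi}}_{\bullet}(X)(D)|\to X(D)$ be a weak equivalence for every $D$. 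Identifying $|\Bar^{\K_{\Phi}}_{\bullet}(X)(D)|$ with $X^\vee_{\K_{\Phi}}(D)$ up to weak equivalence requires the usual cofibrant-replacement bookkeeping, using that $K_{\Phi}$ preserves objectwise weak equivalences between cofibrant diagrams (cf.\ Lemma \ref{lem:K-cocomp-inv}); this is the one point in the argument that needs genuine care.

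Next I would invoke Theorem \ref{thm:assembly}: for each $D\in\ob\cat D$ it provides an object $P_{X,D}$ of $\cat M$, namely the apex of the displayed triangle, together with maps $a_{X,D}\colon P_{X,D}\to K^{ho}_{\Phi}(X)(D)$ and $b_{X,D}\colon P_{X,D}\to|\Bar^{\K_{\Phi}}_{\bullet}(X)(D)|$ such that $A_{X,D}\circ a_{X,D}=B_{X,D}\circ b_{X,D}$. By hypothesis $a_{X,D}$ and $b_{X,D}$ are weak equivalences for every $D$. Applying two-out-of-three twice, $A_{X,D}$ is a weak equivalence if and only if $A_{X,D}\circ a_{X,D}$ is (because $a_{X,D}$ is one), if and only if $B_{X,D}\circ b_{X,D}$ is (it is the same morphism), if and only if $B_{X,D}$ is (because $b_{X,D}$ is one). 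Thus, for each fixed $D$, the map $A_{X,D}$ is a weak equivalence precisely when $B_{X,D}$ is; quantifying over all $D\in\ob\cat D$ and combining with the two translations above, $X$ satisfies the $\Phi$-Isomorphism Conjecture if and only if $X$ is derived $\K_{\Phi}$-cocomplete.

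The main obstacle is not this deduction, which is pure two-out-of-three, but lies upstream in Theorem \ref{thm:assembly} itself: constructing a natural mediating object $P_{X,D}$ between the homotopy left Kan extension $\hocolim_{\cat{Simp}_{\Phi}(D)}(X\circ\dom)$ and the realization $|\Bar^{\K_{\Phi}}_{\bullet}(X)(D)|$ of the comonadic bar construction, compatibly with the two structure maps down to $X(D)$, and checking that the requisite fullness of $\Phi$ makes the comparison go through. Granting that theorem (proved in Appendix \ref{sec:proof}), the corollary is formal; the only care needed here is the cofibrant-replacement identification mentioned in the first paragraph.
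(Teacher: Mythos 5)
Your argument is exactly the one the paper intends: the corollary is stated as an immediate consequence of Theorem \ref{thm:assembly}, and the intended reasoning is precisely your two-out-of-three deduction from the commutative triangle, with derived $\K_{\Phi}$-cocompleteness read objectwise as $B_{X,D}$ being a weak equivalence for all $D$. Your proposal is correct and takes essentially the same approach as the paper.
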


\begin{rmk}\label{rmk:calculus} It is, of course, possible to dualize all of the discussion above to the case of (homotopy) right Kan extensions and derived completion.  Homotopy right Kan extensions and their associated coassembly maps play an important role, for example, in the embedding calculus, in constructing the stages of the Taylor tower of a contravariant functor $F$ from the category $\cat O(M)$ of open subsets of a manifold $M$ to the category $\cat {Top}$ of topological spaces.  Under reasonable conditions on $F$, the $k^{\text{th}}$-stage $T_{k}F$ of the Taylor tower of $F$ is its homotopy right Kan extension with respect to the inclusion functor $\Phi_{k}:\cat O_{k}(M)^{op}\to \cat O(M)^{op}$, where $\cat O_{k}(M)$ is the full subcategory of $\cat O(M)$ with as objects the disjoint unions of at most $k$ open balls.  The functor $F$ is said to be  \emph{$k$-excisive} if the coassembly map $F\to T_{k}F$ is a weak equivalence.

It would be interesting to determine the relationship between $k$-excision and derived completeness with respect to the monad arising from right Kan extension over $\Phi_{k}$. 
\end{rmk}

\section{Homotopic descent and codescent and associated spectral sequences}\label{sec:htpicdesc}

We are now ready to introduce our theory of homotopic (co)descent for simplicially enriched categories.  Since simplicially enriched categories are a model of $(\infty, 1)$-categories (also called $\infty$-categories by homotopy theorists), our theory can also be considered to be a model for the theory of $\infty$-(co)descent.

In the case of a (co)monad acting on simplicial model category, we provide criteria for  homotopic (co)descent (Theorems \ref{thm:equiv-monad} and \ref{thm:equiv-comonad}), which are  strongly analogous to the Main Theorem in Mandell's paper \cite{mandell}.  The criteria are formulated in terms of strong (co)completion, providing another important motivation for the study of these notions in the previous section. 

In the last part of this section we consider spectral sequences that arise naturally from the action of a (co)monad on a simplicial model category.  The notions of both derived (co)completion and homotopic (co)descent play a role in interpreting these spectral sequences.

Many well-known spectral sequences, such as the Adams and Adams-Novikov spectral sequences and various ``descent'' spectral sequences for bundles, arise as (co)descent spectral sequences in the way we describe here.  In the section \ref{sec:htpicGroth} we sketch certain of these examples. 

\subsection{Homotopic descent}

The definition of homotopic descent requires only simplicial enrichment and not the full structure of a simplicial model category.  We suggest that the reader compare the definition below to Definition \ref{defn:desccat} and Proposition \ref{prop:adj-iso}.

\begin{defn}\label{defn:htpicdesc} Let $\cat S$ be a simplicially enriched category.  Let $\T=(T,\mu,\eta)$   be a monad on $\cat S$ such that $T$ is a simplicial functor, and $\mu$ and $\eta$ are simplicial natural transformations.

If each component
$$(\can_{\K^{\T}})_{X,Y}:\map_{\cat S}(X,Y) \to \map_{\op D(\T)}\big(\can_{\K^{\T}}(X), \can_{\K^{\T}}(Y)\big)$$
of the simplicial functor $\can_{\K^{\T}} :\cat S\to \op D(\T)$ is a weak equivalence of simplicial sets, then $\T$ satisfies \emph{homotopic descent}.  If, in addition, every descent datum  is isomorphic in the path-component category $\pi_{0}\op D(\T)$ (cf. Definition \ref{defn:pathcomp}) to an object of the form $\can_{\K^\T}(X)$, then $\T$ satisfies \emph{effective homotopic descent}. 
\end{defn}

Our criterion for homotopic descent does require model category structure and is expressed in terms of the following important subcategory of the underlying model category.

\begin{notn} If $\cat M$ is a simplicial model category, and $\T=(T,\mu,\eta)$   is a monad on $\cat M$, we let $\cat M^\wedge_\T$ denote the full simplicial subcategory of $\cat M$ determined by the bifibrant, strictly $\T$-complete objects.

The observation in Remark \ref{rmk:ex-Tcomp} implies that if $T$ preserves bifibrant objects, then $T$ fixes $\cat M^\wedge_{\T}$, i.e., $\T$ restricts and corestricts to a monad on $\cat M^\wedge_{\T}$, which we denote $\widehat\T$.
\end{notn}

\begin{thm}\label{thm:equiv-monad}  Let $\cat M$ be a simplicial model category. Let $\T=(T,\mu,\eta)$   be a monad on $\cat M$ such that $T$ is a simplicial functor preserving bifibrant objects, and $\mu$ and $\eta$ are simplicial natural transformations. Suppose that 
$U^\T:\cat M^\T\to \cat M$ right-induces a model category structure on $\cat M^\T$, that $U_{\K^\T}:\op D(\T)\to \cat M^\T$ then left-induces a model category structure on $\op D(\T)$, and that the usual simplicial enrichment of $\op D(\T)$ (Lemma \ref{lem:simplenr}) satisfies the additional axiom of Definition \ref{defn:simplmodel}. 

If $\Om ^\bullet_{\T}$ preserves fibrant objects, then $\widehat \T$ satisfies homotopic descent.
\end{thm}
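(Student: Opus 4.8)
The plan is to verify Definition \ref{defn:htpicdesc} directly. Fix $X,Y\in\ob\cat M^\wedge_\T$; we must show that
$$(\can_{\K^{\widehat\T}})_{X,Y}\colon\map_{\cat M^\wedge_\T}(X,Y)\longrightarrow\map_{\op D(\widehat\T)}\big(\can_{\K^{\widehat\T}}(X),\can_{\K^{\widehat\T}}(Y)\big)$$
is a weak equivalence of simplicial sets. Since $\cat M^\wedge_\T$ is a full simplicial subcategory of $\cat M$ that is stable under $T$ (the very content of the notation $\widehat\T$, which combines Remark \ref{rmk:ex-Tcomp} with the hypothesis that $T$ preserves bifibrant objects), the equalizer formulas of Lemma \ref{lem:simplenr} identify this map with the restriction of
$$(\can_{\K^{\T}})_{X,Y}\colon\map_{\cat M}(X,Y)\longrightarrow\map_{\op D(\T)}\big(\can_{\K^{\T}}(X),\can_{\K^{\T}}(Y)\big)$$
to objects lying in $\cat M^\wedge_\T$. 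So it is enough to prove that this last map is a weak equivalence whenever $X$ and $Y$ are bifibrant and strictly $\T$-complete.

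I would first resolve the target cosimplicially. Since $Y$ is fibrant and $\Om^\bullet_\T$ preserves fibrant objects, $\Om^\bullet_\T Y$ is Reedy fibrant in $\cat M^{\bold\Delta}$, so Lemma \ref{lem:Reedyfib} (whose hypotheses on $U^\T$ and $U_{\K^\T}$ are among those of the theorem) shows that $\can^{\bold\Delta}_{\K^\T}\Om^\bullet_\T Y$ is Reedy fibrant in $\op D(\T)^{\bold\Delta}$. On the other hand $\Om^n_\T Y=T^{n+1}Y=T(T^nY)$, so Proposition \ref{prop:adj-iso} provides, for each $n$, an isomorphism $\map_{\cat M}(X,\Om^n_\T Y)\cong\map_{\op D(\T)}\big(\can_{\K^\T}X,\can_{\K^\T}\Om^n_\T Y\big)$, natural in both variables. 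Assembling these yields an isomorphism of cosimplicial simplicial sets
$$\map_{\cat M}\big(X,\Om^\bullet_\T Y\big)\ \cong\ \map_{\op D(\T)}\big(\can_{\K^\T}X,\can^{\bold\Delta}_{\K^\T}\Om^\bullet_\T Y\big),$$
which, by functoriality of $\can_{\K^\T}$, carries the coaugmentation $\eta^\bullet$ of $\Om^\bullet_\T Y$ to $\can^{\bold\Delta}_{\K^\T}\eta^\bullet$. Applying $\tot$ gives an isomorphism $\tot\map_{\cat M}(X,\Om^\bullet_\T Y)\cong\tot\map_{\op D(\T)}(\can_{\K^\T}X,\can^{\bold\Delta}_{\K^\T}\Om^\bullet_\T Y)$.

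Next I would exploit strict $\T$-completeness. Because $Y\in\ob\cat M^\wedge_\T$, the coaugmentation $\eta^\bullet$ underlies an external cosimplicial SDR $cc^\bullet Y\rightleftarrows\Om^\bullet_\T Y$. Such an SDR is preserved by the cosimplicial prolongation of any functor, in particular by $\map_{\cat M}(X,-)$, by $\can_{\K^\T}$, and by $\map_{\op D(\T)}(\can_{\K^\T}X,-)$; and $\tot$ sends an external cosimplicial SDR to a simplicial homotopy equivalence (see section \ref{sec:external}). Consequently both
$$\map_{\cat M}(X,Y)=\tot cc^\bullet\map_{\cat M}(X,Y)\ \xrightarrow{\ \tot\map_{\cat M}(X,\eta^\bullet)\ }\ \tot\map_{\cat M}\big(X,\Om^\bullet_\T Y\big)$$
and
$$\map_{\op D(\T)}\big(\can_{\K^\T}X,\can_{\K^\T}Y\big)\ \xrightarrow{\ \tot\map(\can_{\K^\T}X,\,\can^{\bold\Delta}_{\K^\T}\eta^\bullet)\ }\ \tot\map_{\op D(\T)}\big(\can_{\K^\T}X,\can^{\bold\Delta}_{\K^\T}\Om^\bullet_\T Y\big)$$
are weak equivalences. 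These two maps, the isomorphism of the previous paragraph, and the component $(\can_{\K^\T})_{X,Y}$ fit into a square that commutes by functoriality of $\can_{\K^\T}$ and naturality of the isomorphisms of Proposition \ref{prop:adj-iso}; since three of its four edges are weak equivalences, so is $(\can_{\K^\T})_{X,Y}$. By the first paragraph, $\widehat\T$ satisfies homotopic descent.

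The crux is the combination made in the second paragraph: the cosimplicial object $\Om^\bullet_\T Y$ is simultaneously a device that, via Proposition \ref{prop:adj-iso}, reproduces the target mapping space levelwise inside $\op D(\T)$ (exactly why that proposition isolates arguments of the form $TZ$) and, via Lemma \ref{lem:Reedyfib}, a Reedy fibrant cosimplicial object of $\op D(\T)^{\bold\Delta}$, so that totalizing it is homotopically meaningful. The remaining bookkeeping is formal, but one must route the homotopical input on the $\op D(\T)$ side through $\tot$ of an external SDR rather than through an internal simplicial-model-category argument, since $\op D(\T)$ is only assumed tensored — not cotensored — over $\cat{sSet}$.
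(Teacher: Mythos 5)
Your proposal is correct and follows essentially the same route as the paper: the paper proves the theorem via Proposition \ref{prop:criterion}, whose proof likewise combines the external cosimplicial SDR coming from strict $\T$-completeness, the levelwise isomorphisms of Proposition \ref{prop:adj-iso}, Reedy fibrancy of $\can_{\K^\T}^{\bold\Delta}\Om^\bullet_{\T}Y$ from Lemma \ref{lem:Reedyfib}, Corollary \ref{cor:extequiv-weq} to pass through $\tot$, and the identification $\tot cc^\bullet \cong \operatorname{id}$ of Remark \ref{rmk:tot-cst}. The only cosmetic difference is that the paper runs the comparison as a commutative square of coaugmented cosimplicial mapping spaces with the Proposition \ref{prop:adj-iso} isomorphism as one vertical edge (noting that $cc^\bullet\can_{\K^\T}(Y)$ is Reedy fibrant as a retract), whereas you assemble the levelwise isomorphisms first and then totalize.
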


This theorem implies that a form of ``faithfully flat descent'' holds. Recall the notion of cosimplicial faithfulness from Definition \label{defn:cosimpfaith}.

\begin{cor}\label{cor:ffd} Under the hypotheses of Theorem \ref{thm:equiv-monad}, if $\Om ^\bullet_{\T}$ preserves fibrant objects and $T$ is cosimplicially faithful, then the restriction of $\T$ to the full subcategory of bifibrant objects satisfies homotopic descent. 
\end{cor}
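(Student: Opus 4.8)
\emph{Proof proposal.} The plan is to reduce the statement directly to Theorem \ref{thm:equiv-monad} by showing that, once $T$ is assumed cosimplicially faithful, the subcategory $\cat M^{\wedge}_{\T}$ of bifibrant, strictly $\T$-complete objects appearing in that theorem is nothing other than the full simplicial subcategory of $\cat M$ spanned by \emph{all} bifibrant objects. Given that identification, the monad $\widehat\T$ produced by Theorem \ref{thm:equiv-monad} is literally the restriction of $\T$ named in the corollary, and there is nothing left to prove.

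First I would recall that a model category has all finite limits and colimits, so the hypotheses of the last lemma of Section \ref{sec:completion} (on cosimplicially faithful monads) are satisfied by $\T$ acting on $\cat M$. Applying that lemma, the cosimplicial faithfulness of $T$ forces \emph{every} object of $\cat M$ — and in particular every bifibrant object — to be strictly $\T$-complete. Consequently $\cat M^{\wedge}_{\T}$, which by definition is the full simplicial subcategory of $\cat M$ on the bifibrant objects that are strictly $\T$-complete, coincides with the full simplicial subcategory on the bifibrant objects.

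Next I would invoke the standing hypothesis of Theorem \ref{thm:equiv-monad} that $T$ preserves bifibrant objects: this guarantees that $\T$ restricts and corestricts along the inclusion $\cat M^{\wedge}_{\T}\hookrightarrow\cat M$, and by the previous paragraph the resulting monad $\widehat\T$ is exactly the restriction of $\T$ to the full subcategory of bifibrant objects. All the remaining hypotheses of Theorem \ref{thm:equiv-monad} are in force by assumption, and its crucial hypothesis that $\Om^{\bullet}_{\T}$ preserve fibrant objects is part of the hypothesis of the corollary; the theorem therefore tells us that $\widehat\T$, i.e.\ the restriction of $\T$ to the bifibrant objects, satisfies homotopic descent, which is the assertion to be proved.

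There is essentially no obstacle here beyond this bookkeeping: the only step that deserves a moment's care is the identification $\cat M^{\wedge}_{\T}=\{\text{bifibrant objects}\}$, which hinges precisely on recognizing that the relevant input is the lemma ``cosimplicial faithfulness of $T$ $\Rightarrow$ every object is strictly $\T$-complete,'' together with the verification that the monad $\widehat\T$ of Theorem \ref{thm:equiv-monad} is genuinely the same data as the restricted monad referred to in the statement of the corollary.
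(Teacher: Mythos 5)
Your proposal is correct and is exactly the argument the paper intends (it leaves the corollary as an immediate consequence): the lemma in Section \ref{sec:completion} on cosimplicially faithful monads makes every object of $\cat M$ strictly $\T$-complete, so $\cat M^\wedge_{\T}$ is the full subcategory of bifibrant objects and Theorem \ref{thm:equiv-monad} applies verbatim to give the conclusion.
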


Theorem \ref{thm:equiv-monad} is an immediate consequence of the following proposition, which generalizes Proposition \ref{prop:adj-iso}, in the case where the underlying category is a simplicial model category.

\begin{prop}\label{prop:criterion} Under the hypotheses of Theorem \ref{thm:equiv-monad}, if $Y$ is  fibrant and strictly $\T$-complete, and $\Om^\bullet_{\T}Y$ is Reedy fibrant, then 
$$(\can _{\K^\T})_{X,Y}:\map _{\cat M}(X, Y)\to \map_{\op D(\T)}\big( \can_{\K^\T}(X), \can _{\K^\T}(Y)\big)$$
is a weak equivalence of simplicial sets, for all cofibrant objects $X$.
\end{prop}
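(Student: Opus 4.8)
The plan is to reduce the statement to an already-proved isomorphism by interpolating through the $\T$-cobar construction. Recall from Proposition \ref{prop:adj-iso} that for every object $W$ in $\cat S$, the component $(\can_{\K^\T})_{X,TW}$ is an \emph{isomorphism}; applying this levelwise along $\Om^\bullet_\T Y$ (whose $n$-th level is $T^{n+1}Y$, so in particular of the form $T(T^n Y)$) will give, for each cofibrant $X$, an isomorphism of cosimplicial simplicial sets
$$\map_{\cat M}\bigl(X, \Om^\bullet_\T Y\bigr) \xrightarrow{\ \cong\ } \map_{\op D(\T)}\bigl(\can_{\K^\T}(X),\ \can_{\K^\T}^{\bold\Delta}\Om^\bullet_\T Y\bigr).$$
First I would verify that $(\can_{\K^\T})_{-,-}$ is simplicially natural (this follows from Proposition \ref{prop:simplfunct}, since $\can_{\K^\T}$ is a simplicial functor), so that these levelwise isomorphisms assemble into an isomorphism of cosimplicial objects, and hence remain an isomorphism after applying $\tot$.

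Next I would identify the two totalizations with the mapping spaces appearing in the statement. Since $Y$ is fibrant and \emph{strictly} $\T$-complete, the coaugmentation $\eta^\bullet_Y : cc^\bullet Y \to \Om^\bullet_\T Y$ fits into an external cosimplicial SDR (Definition \ref{defn:strong-T}(1)), hence is in particular a (strong) cohomological fibrant resolution; applying $\map_{\cat M}(X,-)$ for cofibrant $X$ and totalizing, the external homotopy equivalence is preserved and one gets a weak equivalence $\map_{\cat M}(X,Y)\xrightarrow{\sim}\tot\map_{\cat M}(X,\Om^\bullet_\T Y)$ — here one also uses, as in the proof of Lemma \ref{lem:T-comp}, that $\map_{\cat M}(X,-)$ commutes with the limits computing $\tot$, that $\Om^\bullet_\T Y$ is Reedy fibrant by hypothesis so $\map_{\cat M}(X,\Om^\bullet_\T Y)$ is Reedy fibrant, and that $\tot$ of a Reedy-fibrant cosimplicial object is homotopically well-behaved. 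Dually, on the descent side: applying $\can_{\K^\T}$ to the external SDR above produces, via the simplicial functoriality of $\can_{\K^\T}$, an external SDR relating $cc^\bullet\can_{\K^\T}(Y)$ with $\can_{\K^\T}^{\bold\Delta}\Om^\bullet_\T Y$, so that after applying $\map_{\op D(\T)}(\can_{\K^\T}(X),-)$ and totalizing we get $\map_{\op D(\T)}(\can_{\K^\T}(X),\can_{\K^\T}(Y))\xrightarrow{\sim}\tot\map_{\op D(\T)}(\can_{\K^\T}(X),\can_{\K^\T}^{\bold\Delta}\Om^\bullet_\T Y)$. For this last equivalence to be legitimate one needs the target cosimplicial object to be Reedy fibrant in a context where $\tot$ behaves well, which is exactly what Lemma \ref{lem:Reedyfib} supplies — together with the extra simplicial-model axiom (Definition \ref{defn:simplmodel}) assumed on $\op D(\T)$, which guarantees that $\map_{\op D(\T)}(\can_{\K^\T}(X),-)$ of a Reedy-fibrant cosimplicial object is Reedy fibrant when $\can_{\K^\T}(X)$ is cofibrant. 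I would need to check that $\can_{\K^\T}(X)$ is indeed cofibrant in $\op D(\T)$ when $X$ is cofibrant in $\cat M$ — plausibly because $\can_{\K^\T}=F_{\K^\T}F^\T$ composed appropriately, and $F^\T$ is left Quillen while the induced structure on $\op D(\T)$ is compatible — or else argue that cofibrancy of $X$ already suffices to make $\map_{\op D(\T)}(\can_{\K^\T}(X),-)$ send Reedy fibrations to Reedy fibrations via the adjunction isomorphisms of Corollary \ref{cor:adj-can}.

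Finally I would assemble the square: the vertical maps $\map_{\cat M}(X,Y)\xrightarrow{\sim}\tot(\cdots)$ and $\map_{\op D(\T)}(\cdots)\xrightarrow{\sim}\tot(\cdots)$ are weak equivalences by the previous paragraph, the bottom horizontal map $\tot\map_{\cat M}(X,\Om^\bullet_\T Y)\xrightarrow{\cong}\tot\map_{\op D(\T)}(\can_{\K^\T}(X),\can_{\K^\T}^{\bold\Delta}\Om^\bullet_\T Y)$ is an isomorphism by the first paragraph, and the top horizontal map is $(\can_{\K^\T})_{X,Y}$ by naturality of the coaugmentations; two-out-of-three then forces $(\can_{\K^\T})_{X,Y}$ to be a weak equivalence.

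The main obstacle I anticipate is not the diagram-chase but the bookkeeping around $\tot$ on the descent side: establishing that $\map_{\op D(\T)}(\can_{\K^\T}(X),-)$ converts the Reedy-fibrant cosimplicial object $\can_{\K^\T}^{\bold\Delta}\Om^\bullet_\T Y$ (fibrancy of which is exactly the content of Lemma \ref{lem:Reedyfib}) into something whose totalization computes the correct homotopy limit, and that the external SDR coming from strict $\T$-completeness survives this application. This is where the hypotheses about left/right induced model structures on $\cat M^\T$ and $\op D(\T)$ and the extra simplicial-model axiom all get used, and getting the cofibrancy of $\can_{\K^\T}(X)$ (or a workaround via Corollary \ref{cor:adj-can}) nailed down cleanly is the delicate point.
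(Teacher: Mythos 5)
Your proposal is correct and follows essentially the same route as the paper's own proof: levelwise application of Proposition \ref{prop:adj-iso} along the cobar construction, the external SDR coming from strict $\T$-completeness transported through $\can_{\K^\T}$, Lemma \ref{lem:Reedyfib} plus Corollary \ref{cor:extequiv-weq} to totalize, and Remark \ref{rmk:tot-cst} to identify the totalizations of the constant cosimplicial objects. The cofibrancy point you flag resolves cleanly: $U_{\K^\T}\circ\can_{\K^\T}=F^\T$ and cofibrations in $\op D(\T)$ are left-induced by $U_{\K^\T}$, so $\can_{\K^\T}(X)$ is cofibrant whenever $X$ is.
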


\begin{proof} Since $Y$ is strictly $\T$-complete, there is an external cosimplicial SDR
$$\xymatrix{cc^\bullet Y \ar @<.7ex>[r]^-{\eta^\bullet}&\Om ^{\bullet}_{\T}(Y)\circlearrowright h\ar @<.7ex>[l]^-{\rho^\bullet}.}$$
Applying $\can_{\K^\T}$ levelwise, we obtain a new SDR
$$\xymatrix{cc^\bullet \can_{\K^\T}(Y) \ar @<.7ex>[r]^-{\can_{\K^\T}\eta^\bullet}&\can_{\K^\T}^{\bold\Delta}\Om ^{\bullet}_{\T}(Y)\circlearrowright h\ar @<.7ex>[l]^-{\can_{\K^\T}\rho^\bullet}.}$$
By Lemma \ref{lem:Reedyfib} $\can_{\K^\T}^{\bold\Delta}\Om ^{\bullet}_{\T}(Y)$ is Reedy fibrant, which implies that $cc^\bullet \can_{\K^\T}(Y)$ is as well, since a retract of a fibrant object is always fibrant, in any model category.

Consider the following commutative diagram of maps of fibrant cosimplicial simplicial sets, in which the righthand vertical arrow is an isomorphism by Proposition  \ref{prop:adj-iso}.
$$\xymatrix{\map_{\cat M}(X,cc^\bullet Y) \ar @<.7ex>[r]^-{\eta^\bullet_{*}}\ar [d]_{(\can_{\K^\T})_{X, cc^\bullet Y}}&\map_{\cat M}\big(X,\Om ^{\bullet}_{\T}(Y)\big)\circlearrowright h\ar @<.7ex>[l]^-{\rho^\bullet_{*}}\ar [d]_{\cong}^{(\can_{\K^\T})_{X,\Om^\bullet_{\T}Y}}\\
\map_{\op D(\T)}(\can_{\K^\T}(X),cc^\bullet \can_{\K^\T}(Y)\big) \ar @<.7ex>[r]^-{\can_{\K^\T}\eta^\bullet_{*}}&\map_{\op D(\T)}(\can_{\K^\T}(X),\can_{\K^\T}^{\bold\Delta}\Om ^{\bullet}_{\T}(Y)\big)\circlearrowright h\ar @<.7ex>[l]^-{\can_{\K^\T}\rho^\bullet_{*}}}$$
The top and the bottom of this diagram consist of  external cosimplicial SDR's of simplicial sets. 

By Corollary \ref {cor:extequiv-weq}, applying $\tot$ to the diagram above leads to the commutative diagram of fibrant simplicial sets
$$\xymatrix{\tot\map_{\cat M}(X,cc^\bullet Y) \ar @<.7ex>[r]^-{\sim}\ar [d]_{\tot(\can_{\K^\T})_{X, cc^\bullet Y}}&\tot\map_{\cat M}\big(X,\Om ^{\bullet}_{\T}(Y)\big)\ar @<.7ex>[l]^-{\sim}\ar [d]_{\cong}^{\tot(\can_{\K^\T})_{X,\Om^\bullet_{\T}Y}}\\
\tot\map_{\op D(\T)}(\can_{\K^\T}(X),cc^\bullet \can_{\K^\T}(Y)\big) \ar @<.7ex>[r]^-{\sim}&\tot\map_{\op D(\T)}(\can_{\K^\T}(X),\can_{\K^\T}^{\bold\Delta}\Om ^{\bullet}_{\T}(Y)\big)\ar @<.7ex>[l]^-{\sim},}$$
whence 
$$\tot(\can_{\K^\T})_{X, cc^\bullet Y}:\tot\map_{\cat M}(X,cc^\bullet Y)\xrightarrow \sim \tot\map_{\op D(\T)}(\can_{\K^\T}(X),cc^\bullet \can_{\K^\T}(Y)\big)$$
is a weak equivalence.  Since
$$\xymatrix{\tot\map_{\cat M}(X,cc^\bullet Y)\ar [rrr]^-{\tot(\can_{\K^\T})_{X, cc^\bullet Y}}_-{\sim}&&&\tot\map_{\op D(\T)}(\can_{\K^\T}(X),cc^\bullet \can_{\K^\T}(Y)\big)\\
\map_{\cat M}(X,Y)\ar[rrr]^-{(\can_{\K^\T})_{X, Y}}\ar[u]^\cong&&&\map_{\op D(\T)}\big (\can_{\K^\T}(X),\can_{\K^\T}(Y)\big)\ar[u]_{\cong}}$$
commutes, and the vertical arrows are isomorphisms by Remark \ref{rmk:tot-cst}, we conclude that 
$$(\can_{\K^\T})_{X, Y}:\map_{\cat M}(X,Y)\to \map_{\op D(\T)}\big (\can_{\K^\T}(X),\can_{\K^\T}(Y)\big)$$
is a weak equivalence as desired.
\end{proof}

\begin{rmk}\label{rmk:effective}  Let $\cat M$ be a simplicial model category. Let $\T=(T,\mu,\eta)$  be a monad on $\cat M$ such that $T$ is a simplicial functor, and $\mu$ and $\eta$ are simplicial natural transformations.  Let $\mathbb{DT}$ denote the monad on $\op D(\T)$ with underlying endofunctor $F_{\K^\T}U_{\K^\T}$ (cf. Remarks \ref{rmk:T-adjunct}  and \ref {rmk:T-adjunct}).

According to Remark \ref{rmk:monad-diagram}
$$\can _{\K^\T}T=\can _{\K^\T}(U^{\T}F^\T)=F_{\K^\T}F^\T=(F_{\K^\T}U_{\K^\T})\can _{\K^\T}.$$
In fact, careful checking of the definitions of the cofaces and codegeneracies shows that for all objects $Y$ in $\cat M$,
$$\can _{\K^\T}^{\bold \Delta}\Om^\bullet_{\T}(Y)=\Om^\bullet _{\mathbb{DT}}\big(\can _{\K^\T}(Y)\big).$$
Thus, if $Y$ is strictly $\T$-complete, i.e.,  there is an external cosimplicial SDR
$$\xymatrix{cc^\bullet Y \ar @<.7ex>[r]^-{\eta^\bullet}&\Om ^{\bullet}_{\T}(Y)\circlearrowright h\ar @<.7ex>[l]^-{\rho^\bullet},}$$
then $\can _{\K^\T}(Y)$ is strictly $\mathbb{DT}$-complete, as there is an induced 
external cosimplicial SDR
$$\xymatrix{cc^\bullet \can _{\K^\T}(Y) \ar @<.7ex>[r]^-{\can _{\K^\T}^{\bold \Delta}\eta^\bullet}&\Om ^{\bullet}_{\mathbb{DT}}\can _{\K^\T}(Y)\circlearrowright h\ar @<.7ex>[l]^-{\can _{\K^\T}^{\bold \Delta}\rho^\bullet}.}$$ The image of the restriction of $\can _{\K^\T}$ to $\cat M^\wedge_{\T}$ therefore consists of strictly $\mathbb{DT}$-complete descent data.
 
 Observe that since $U_{\K^\T}\can_{\K^\T}=F^\T$,
 $$\xymatrix{U_{\K^\T}^{\bold \Delta}\can_{\K^\T}^{\bold \Delta}\eta^\bullet=(F^\T)^{\bold \Delta}\eta^\bullet: cc^\bullet F^\T(Y) \ar [r]&(F^\T)^{\bold \Delta}\Om ^{\bullet}_{\mathbb {T}}(Y),}$$
 which can easily seen to be an external homotopy equivalence in $(M^\T)^{\bold \Delta}$, since 
 $$\mu_{Y}:F^\T U^\T  F^\T Y \to F^\T Y$$
 is a morphism of $\T$-algebras. (Note that $\mu_{Y}$ does \textbf{not} underlie a morphism of descent data from $\can_{\K^\T}(TY)$ to $\can _{\K^\T}(Y)$, in general.) Thus, if $U_{\K^\T}$ is cosimplicially faithful, then every descent datum in the image of $\can_{\K^\T}$ is strictly $\mathbb{DT}$-complete, whence $\widehat \T$ satisfies effective descent, if $\Om^\bullet_{\T}$ preserves fibrant objects.

It is natural to wonder more generally under what conditions any strictly $\mathbb{DT}$-complete descent datum is weakly equivalent to the canonical desent datum of a strictly $\T$-complete object of $\cat M$.  We intend to investigate this question in future work, i.e., to determine when the homotopic descent of Theorem \ref{thm:equiv-monad} is effective.
\end{rmk}

To conclude this section, we prove a homotopic version of the classical characterization of descent (Theorem \ref{thm:coBeck}). This ``homotopic Beck criterion'' has a somewhat different flavor from our previous criterion for homotopic descent, as it is formulated in terms of just the bottom two stages of the $\T$-cobar construction.

\begin{thm}\label{thm:htpic-coBeck}  Let $\cat M$ be a simplicial model category. Let $\T=(T,\mu,\eta)$   be a monad on $\cat M$ such that $T$ is a simplicial functor preserving bifibrant objects, and $\mu$ and $\eta$ are simplicial natural transformations. Suppose that 
$U^\T:\cat M^\T\to \cat M$ right-induces a model category structure on $\cat M^\T$, that $U_{\K^\T}:\op D(\T)\to \cat M^\T$ then left-induces a model category structure on $\op D(\T)$,  and that  $\op D(\T)$ is endowed with its usual simplicial enrichment (Lemma \ref{lem:simplenr}). 

If $\can_{\K^\T}$ preserves fibrant objects,  and the natural morphism $$\hat\eta_{Y}:Y\to \operatorname{Prim}_{\K^\T}\circ \can _{\K^\T}(Y)$$ is a weak equivalence for all fibrant objects $Y$, then the restriction and corestriction of $\T$ to the full subcategory of $\cat M$ determined by the bifibrant objects satisfies homotopic descent.
\end{thm}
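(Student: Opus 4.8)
The strategy is to push the entire question into the mapping spaces of $\cat M$, where the simplicial model structure is at our disposal, so that the barely‑assumed model structure on $\op D(\T)$ is never exploited directly. Write $\widehat\T$ for the restriction and corestriction of $\T$ to the full subcategory of $\cat M$ determined by the bifibrant objects; this makes sense because $T$ preserves bifibrant objects. Since the mapping spaces of $\op D(\widehat\T)$ are computed by the same equalizers as those of $\op D(\T)$ (Lemma \ref{lem:simplenr}), evaluated at objects of the form $\can_{\K^\T}(X)$ with $X$ bifibrant, homotopic descent for $\widehat\T$ is exactly the statement that
$$(\can_{\K^\T})_{X,Y}\colon\ \map_{\cat M}(X,Y)\ \longrightarrow\ \map_{\op D(\T)}\bigl(\can_{\K^\T}(X),\can_{\K^\T}(Y)\bigr)$$
is a weak equivalence of simplicial sets for every pair of bifibrant objects $X$ and $Y$.

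The first move is to invoke Corollary \ref{cor:adj-can}(1)---legitimate because $\cat M$, being a simplicial model category, is cotensored over $\cat{sSet}$ and $T$, $\mu$, $\eta$ are simplicial---to obtain the natural isomorphism
$$\map_{\op D(\T)}\bigl(\can_{\K^\T}(X),\can_{\K^\T}(Y)\bigr)\ \cong\ \map_{\cat M}\bigl(X,\,\operatorname{Prim}_{\K^\T}\circ\can_{\K^\T}(Y)\bigr).$$
The one genuinely computational point is to check that under this isomorphism $(\can_{\K^\T})_{X,Y}$ is carried to $(\hat\eta_Y)_{*}$, postcomposition with the unit $\hat\eta_Y\colon Y\to\operatorname{Prim}_{\K^\T}\can_{\K^\T}(Y)$ of the $(\can_{\K^\T},\operatorname{Prim}_{\K^\T})$-adjunction. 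On zero-simplices this is immediate: $(\can_{\K^\T})_{X,Y}$ sends $f$ to $\can_{\K^\T}(f)=Tf$, which the isomorphisms in the proof of Corollary \ref{cor:adj-can} (an instance of the simplicial $(F^\T,U^\T)$-adjunction of Lemma \ref{lem:simpl-adj}(1) followed by the universal property of an equalizer, using naturality of $\eta$) carry to $\eta_Y\circ f$, viewed as a map $X\to\operatorname{Prim}_{\K^\T}\can_{\K^\T}(Y)$ via the factorization of $\eta_Y$ through the defining equalizer, i.e.\ to $\hat\eta_Y\circ f$; since every map involved is simplicial and the computation is natural in the data, the identification propagates to all simplices, using again that $\eta$ is a simplicial natural transformation.

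Next I would record that $\operatorname{Prim}_{\K^\T}$ preserves fibrant objects. It is the right adjoint of $\can_{\K^\T}\colon\cat M\to\op D(\T)$ (the equalizers defining it exist because $\cat M$ is complete), and $\can_{\K^\T}$ is left Quillen: by Remark \ref{rmk:comonad-diagram} one has $U_{\K^\T}\circ\can_{\K^\T}=F^\T$, and $F^\T$ is left Quillen (it is the left adjoint of $U^\T$, which right-induces the structure on $\cat M^\T$), while $U_{\K^\T}$ reflects cofibrations and weak equivalences since the structure on $\op D(\T)$ is left-induced along $U_{\K^\T}$; hence $\can_{\K^\T}$ carries cofibrations and trivial cofibrations of $\cat M$ into cofibrations and trivial cofibrations of $\op D(\T)$. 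Consequently $\operatorname{Prim}_{\K^\T}$ is right Quillen, and in particular takes fibrant objects to fibrant objects.

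Finally, assemble the pieces: let $X$ and $Y$ be bifibrant. Then $Y$ is fibrant, so $\can_{\K^\T}(Y)$ is fibrant in $\op D(\T)$ by hypothesis, whence $\operatorname{Prim}_{\K^\T}\can_{\K^\T}(Y)$ is fibrant in $\cat M$ by the previous step. By the remaining hypothesis the unit $\hat\eta_Y\colon Y\to\operatorname{Prim}_{\K^\T}\can_{\K^\T}(Y)$ is a weak equivalence between fibrant objects, and $X$ is cofibrant, so---by the standard consequence of the simplicial model category axioms that $\map_{\cat M}(X,-)$ sends weak equivalences between fibrant objects to weak equivalences---the map $(\hat\eta_Y)_{*}$ is a weak equivalence of simplicial sets. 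Combining this with the identification of the second paragraph shows that $(\can_{\K^\T})_{X,Y}$ is a weak equivalence for all bifibrant $X$ and $Y$, which is precisely homotopic descent for $\widehat\T$. The only real friction in the argument is the compatibility check of the second paragraph---verifying that the isomorphism delivered by Corollary \ref{cor:adj-can} is exactly the one intertwining $(\can_{\K^\T})_{X,Y}$ with $(\hat\eta_Y)_{*}$; everything else is a routine application of the calculus of Quillen adjunctions.
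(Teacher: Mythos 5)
Your proof is correct and follows essentially the same route as the paper's: identify $\map_{\op D(\T)}\bigl(\can_{\K^\T}(X),\can_{\K^\T}(Y)\bigr)$ with $\map_{\cat M}\bigl(X,\operatorname{Prim}_{\K^\T}\can_{\K^\T}(Y)\bigr)$ via Corollary \ref{cor:adj-can}(1) and observe that the hypotheses make $(\hat\eta_{Y})_{*}$ a weak equivalence for $X$ cofibrant and $Y$ fibrant. The only difference is that you spell out two steps the paper leaves implicit (that the isomorphism intertwines $(\can_{\K^\T})_{X,Y}$ with $(\hat\eta_{Y})_{*}$, and that $\operatorname{Prim}_{\K^\T}$ is right Quillen so the target of $\hat\eta_{Y}$ is fibrant), both of which are verified correctly.
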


\begin{proof} If $X$ is cofibrant and $Y$ is fibrant, then the hypotheses above imply that 
$$(\hat\eta_{Y})_{*}:\map _{\cat M}(X,Y)\to \map _{\cat M}\big(X,  \operatorname{Prim}_{\K^\T}\circ \can _{\K^\T}(Y)\big)$$
is a weak equivalence of fibrant simplicial sets.  On the other hand, by Corollary \ref{cor:adj-can} 
$$\map _{\cat M}\big(X,  \operatorname{Prim}_{\K^\T}\circ \can _{\K^\T}(Y)\big)\cong\map _{\op D(\T)}\big(\can _{\K^\T}(X), \can _{\K^\T}(Y)\big),$$
and we can conclude.
\end{proof}

\subsection{Homotopic codescent}

We now dualize the definitions and results of the previous section.  Since the proofs dualize as well, we omit them.

We suggest this time that the reader compare the definition below to Definition \ref{defn:codesccat} and Proposition \ref{prop:adj-iso}.

\begin{defn}\label{defn:htpiccodesc} Let $\cat S$ be a simplicially enriched category.  Let $\K=(K, \Delta, \ve)$   be a comonad on $\cat S$ such that $K$ is a simplicial functor, and $\Delta$ and $\ve$ are simplicial natural transformations.

If each component
$$(\can^{\T_{\K}})_{X,Y}:\map_{\cat S}(X,Y) \to \map_{\op D^{co}(\K)}\big(\can^{\T_{\K}}(X), \can^{\T_{\K}}(Y)\big)$$
of the simplicial functor $\can^{\T_{\K}}:\cat S\to \op D^{co}(\K)$ is a weak equivalence of simplicial sets, then $\K$ satisfies \emph{homotopic codescent}.  If, in addition, every codescent datum is isomorphic in the path-component category $\pi_{0}\op D^{co}(\K)$ to an object of the form $\can^{\T_{\K}}(X)$, then $\K$ satisfies \emph{effective homotopic codescent}. 
\end{defn}

The criterion for homotopic codescent for comonads on simplicial model categories is formulated in terms of the following subcategory of the underlying model category.

\begin{notn} If $\cat M$ is a simplicial model category, and $\K=(K, \Delta, \ve)$   is a comonad on $\cat M$, we let $\cat M^\vee_\K$ denote the full simplicial subcategory of $\cat M$ determined by the bifibrant, strictly $\K$-cocomplete objects.

The observation in Remark \ref{rmk:ex-Kcocomp} implies that if $K$ preserves bifibrant objects, then $K$ fixes $\cat M^\vee_{\K}$, i.e., $\K$ restricts and corestricts to a comonad on $\cat M^\vee_{\K}$, which we denote $\check{\K}$.
\end{notn}

\begin{thm}\label{thm:equiv-comonad}  Let $\cat M$ be a simplicial model category. Let $\K=(K, \Delta, \ve)$   be a comonad on $\cat M$ such that $K$ is a simplicial functor, and $\Delta$ and $\ve$ are simplicial natural transformations. Suppose that 
$U_{\K}:\cat M_{\K}\to \cat M$ left-induces a model category structure on $\cat M_{\K}$, that $U^{\T_{\K}}:\op D^{co}(\K)\to \cat M_{\K}$ then right-induces a model category structure on $\op D(\T)$, and that the usual simplicial enrichment of $\op D^{co}(\K)$ (Lemma \ref{lem:simplenr}) satisfies the additional axiom of Definition \ref{defn:simplmodel}. 

If $\Bar^\K_{\bullet}$ preserves cofibrant objects, then $\check\K$ satisfies homotopic codescent.
\end{thm}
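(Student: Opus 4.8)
The argument is a dualization of the proof of Theorem~\ref{thm:equiv-monad}, and I would organize it identically. The first step is to isolate the analogue of Proposition~\ref{prop:criterion}: \emph{under the hypotheses above, if $W$ is a cofibrant, strictly $\K$-cocomplete object of $\cat M$ for which $\Bar^\K_\bullet W$ is Reedy cofibrant, then}
$$(\can^{\T_{\K}})_{W,X}:\map_{\cat M}(W,X)\longrightarrow\map_{\op D^{co}(\K)}\big(\can^{\T_{\K}}(W),\can^{\T_{\K}}(X)\big)$$
\emph{is a weak equivalence of simplicial sets for every fibrant $X$.} Granting this, the theorem is immediate: every object of $\cat M^\vee_{\K}$ is bifibrant and strictly $\K$-cocomplete, $\Bar^\K_\bullet$ sends cofibrant objects to Reedy cofibrant ones by hypothesis, and since $\cat M^\vee_{\K}$ is a full simplicial subcategory the structure maps of $\can^{\T_{\check{\K}}}$ are precisely the displayed maps with $W,X\in\ob\cat M^\vee_{\K}$, hence weak equivalences; thus $\check{\K}$ satisfies homotopic codescent.

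To prove the criterion I would proceed as in Proposition~\ref{prop:criterion}. Strict $\K$-cocompleteness of $W$ provides an external simplicial SDR
$$\xymatrix{cs_\bullet W \ar @<.7ex>[r]^-{\sigma_\bullet}&\Bar^\K_\bullet W\circlearrowright h\ar @<.7ex>[l]^-{\ve_\bullet}.}$$
Applying $\can^{\T_{\K}}$ levelwise gives an external simplicial SDR relating $cs_\bullet\can^{\T_{\K}}(W)$ to $(\can^{\T_{\K}})^{\bold\Delta^{op}}\Bar^\K_\bullet W$; by Lemma~\ref{lem:Reedycofib} the latter is Reedy cofibrant, so $cs_\bullet\can^{\T_{\K}}(W)$ — a retract of it — is Reedy cofibrant as well, whence $\can^{\T_{\K}}(W)$ is cofibrant. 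Moreover $\can^{\T_{\K}}(X)$ is fibrant, since $U^{\T_{\K}}$ right-induces the model structure on $\op D^{co}(\K)$ and $U^{\T_{\K}}\can^{\T_{\K}}(X)=F_{\K}X$ is fibrant, $F_{\K}$ being a right Quillen functor. I would then apply the contravariant mapping-space functors $\map_{\cat M}(-,X)$ and $\map_{\op D^{co}(\K)}\big(-,\can^{\T_{\K}}(X)\big)$ to the two SDR's; each such functor preserves the external SDR structure and interchanges the simplicial and cosimplicial directions, producing a commutative square of \emph{cosimplicial} simplicial sets whose rows are external cosimplicial SDR's, whose left vertical arrow equals $cc^\bullet(\can^{\T_{\K}})_{W,X}$, and whose right vertical arrow $(\can^{\T_{\K}})_{\Bar^\K_\bullet W,X}$ is a levelwise isomorphism by the comonad half of Proposition~\ref{prop:adj-iso} (each $\Bar^\K_n W=K^{n+1}W$ has the form $K(-)$). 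All four corners are Reedy fibrant, because $\Bar^\K_\bullet W$ and $(\can^{\T_{\K}})^{\bold\Delta^{op}}\Bar^\K_\bullet W$ are Reedy cofibrant, $X$ and $\can^{\T_{\K}}(X)$ are fibrant, and $\op D^{co}(\K)$ is a simplicial model category by the extra axiom of Definition~\ref{defn:simplmodel}. Applying $\tot$ and invoking Corollary~\ref{cor:extequiv-weq} turns both horizontal arrows into weak equivalences, forcing $\tot$ of the left vertical arrow to be a weak equivalence; since $\tot$ of a constant cosimplicial object returns that object (Remark~\ref{rmk:tot-cst}), this arrow is $(\can^{\T_{\K}})_{W,X}$, and the criterion follows.

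No new idea enters beyond the proof of Theorem~\ref{thm:equiv-monad}; the delicate part is purely the bookkeeping of variances, which I expect to be the main obstacle. On the one hand, every pertinent model structure is \emph{induced} — $U_{\K}$ left-induces, $U^{\T_{\K}}$ then right-induces — so the Quillen functors transporting (co)fibrancy, above all $F_{\K}$ (now a right Quillen functor) in Lemma~\ref{lem:Reedycofib}, are mirror images of those in the monad argument, and one must check at each stage that the dualized hypotheses genuinely deliver the required Reedy (co)fibrancy in $\cat M^{\bold\Delta^{op}}$, $\cat M_{\K}^{\bold\Delta^{op}}$ and $\op D^{co}(\K)^{\bold\Delta^{op}}$. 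On the other hand — the point most easily mishandled — although the $\K$-bar construction is a \emph{simplicial} object, realized ``from below'' in the definition of derived cocompletion, in this criterion it occupies the \emph{first}, contravariant slot of the mapping spaces, as dictated by the comonad form of Proposition~\ref{prop:adj-iso}; the functors $\map_{\cat M}(-,X)$ and $\map_{\op D^{co}(\K)}(-,\can^{\T_{\K}}(X))$ therefore convert it back into a cosimplicial object, so the homotopy limit applied at the end remains $\tot$ and the convergence inputs are Corollary~\ref{cor:extequiv-weq} and Remark~\ref{rmk:tot-cst} used verbatim, not their duals. Keeping these two variance flips straight is essentially the whole of the work.
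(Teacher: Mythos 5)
Your proposal is correct and follows the paper's own route exactly: the paper deduces Theorem \ref{thm:equiv-comonad} immediately from Proposition \ref{prop:cocriterion}, whose proof it omits as being strictly dual to that of Proposition \ref{prop:criterion}, and your write-up is precisely that dualization (including the correct observations that the contravariant mapping spaces turn the simplicial SDR coming from strict $\K$-cocompleteness into cosimplicial SDR's of Reedy fibrant cosimplicial simplicial sets, that the right-hand vertical map is a levelwise isomorphism by the comonad half of Proposition \ref{prop:adj-iso} since $\Bar^\K_nW=K(K^nW)$, and that $\tot$, Corollary \ref{cor:extequiv-weq} and Remark \ref{rmk:tot-cst} are then applied as in the monad case). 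No gaps.
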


This theorem implies that a form of ``faithfully flat codescent'' holds. Recall the notion of simplicial faithfulness from Definition \ref{defn:simpfaith}.

\begin{cor}\label{cor:ffcod} Under the hypotheses of Theorem \ref{thm:equiv-comonad}, if $\Bar^\K_{\bullet}$ preserves cofibrant objects and  $K$ is simplicially faithful, then the restriction of $\K$ to the full subcategory of bifibrant objects satisfies homotopic codescent. 
\end{cor}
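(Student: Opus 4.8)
The plan is to deduce Corollary \ref{cor:ffcod} with essentially no new work, by recognizing that under the simplicial faithfulness hypothesis the subcategory $\cat M^\vee_\K$ of bifibrant, strictly $\K$-cocomplete objects \emph{is} the full subcategory of all bifibrant objects, so that Theorem \ref{thm:equiv-comonad} applies verbatim.

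First I would invoke the Lemma stated just before Remark \ref{rmk:inftyKcoalg}: since $\cat M$, being a model category, admits all finite limits and colimits, and $K$ is simplicially faithful, every object of $\cat M$ is strictly $\K$-cocomplete. In particular every bifibrant object of $\cat M$ is strictly $\K$-cocomplete, so that $\cat M^\vee_\K$ coincides with the full simplicial subcategory of $\cat M$ spanned by the bifibrant objects. Since $K$ preserves bifibrant objects (as in the setting of Theorem \ref{thm:equiv-comonad}), the comonad $\check\K$ obtained by restricting and corestricting $\K$ to $\cat M^\vee_\K$ is then precisely the restriction of $\K$ to the full subcategory of bifibrant objects.

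Next, since $\Bar^\K_{\bullet}$ is assumed to preserve cofibrant objects, all the hypotheses of Theorem \ref{thm:equiv-comonad} are in force, and that theorem yields that $\check\K$ satisfies homotopic codescent. Combining this with the identification of the previous paragraph gives exactly the assertion that the restriction of $\K$ to the full subcategory of bifibrant objects satisfies homotopic codescent, which is the claim. The argument is strictly dual to that for Corollary \ref{cor:ffd}, where one instead uses that $\Om^\bullet_{\T}$ preserves fibrant objects, that $T$ is cosimplicially faithful, and the Lemma before Remark \ref{rmk:inftyTalg} to identify $\cat M^\wedge_{\T}$ with the full subcategory of bifibrant objects.

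There is no real obstacle here; the only point requiring a moment of care is to confirm that the ambient data feeding Theorem \ref{thm:equiv-comonad}---the simplicial model structure on $\cat M$, the left-induced structure on $\cat M_{\K}$, the right-induced structure on $\op D^{co}(\K)$, and the extra axiom of Definition \ref{defn:simplmodel}---are unchanged, since we are not altering $\cat M$ or $\K$ but merely observing which objects qualify as strictly $\K$-cocomplete. Once that is noted, the corollary is immediate.
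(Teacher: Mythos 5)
Your proposal is correct and matches the paper's (implicit) argument exactly: the paper treats Corollary \ref{cor:ffcod} as an immediate consequence of Theorem \ref{thm:equiv-comonad} together with the lemma that simplicial faithfulness of $K$ makes every object of $\cat M$ strictly $\K$-cocomplete, so that $\cat M^\vee_{\K}$ is the full subcategory of bifibrant objects and $\check\K$ is the restriction of $\K$ to it. Your aside that $K$ preserves bifibrant objects is indeed needed (it is stated in the notation defining $\check\K$, dual to the monad case) and is the right point of care.
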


Theorem \ref{thm:equiv-comonad} is an immediate consequence of the following proposition, which generalizes Proposition \ref{prop:adj-iso}, in the case where the underlying category is a simplicial model category.

\begin{prop}\label{prop:cocriterion} Under the hypotheses of Theorem \ref{thm:equiv-comonad}, if $X$ is cofibrant and strictly $\K$-cocomplete, and $\Bar^\K_{\bullet }X$ is Reedy cofibrant, then 
$$(\can^{\T_{\K}})_{X,Y}:\map_{\cat M}(X,Y) \to \map_{\op D^{co}(\K)}\big(\can^{\T_{\K}}(X), \can^{\T_{\K}}(Y)\big)$$
is a weak equivalence of simplicial sets, for all fibrant objects $Y$.
\end{prop}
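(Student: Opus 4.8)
The plan is to run, \emph{mutatis mutandis}, the proof of Proposition~\ref{prop:criterion}, exchanging throughout the roles of cofibrant and fibrant objects, of the $\T$-cobar and the $\K$-bar constructions, of $cc^\bullet$ and $cs_{\bullet}$, and of $\can_{\K^\T}$ and $\can^{\T_{\K}}$. The one genuinely non-symmetric point is that $\map_{\cat M}(-,Y)$ is contravariant in its first variable, so applying it to the simplicial object $\Bar^\K_{\bullet}X$ still produces a \emph{co}simplicial simplicial set, and one still applies $\tot$ — rather than geometric realization — to the mapping spaces.

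First I would use that $X$ is strictly $\K$-cocomplete to fix an external simplicial SDR
$$\xymatrix{cs_{\bullet}X \ar @<.7ex>[r]^-{\sigma_{\bullet}}&\Bar_{\bullet}^\K X\circlearrowright h\ar @<.7ex>[l]^-{\ve_{\bullet}}}$$
and apply $\can^{\T_{\K}}$ levelwise, obtaining an external simplicial SDR with small object $cs_{\bullet}\can^{\T_{\K}}(X)$ and large object $(\can^{\T_{\K}})^{\bold \Delta^{op}}\Bar_{\bullet}^\K X$. By Lemma~\ref{lem:Reedycofib} the latter is Reedy cofibrant in $\op D^{co}(\K)^{\bold \Delta^{op}}$, hence so is its retract $cs_{\bullet}\can^{\T_{\K}}(X)$, cofibrant objects being closed under retracts. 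I would also record that $\can^{\T_{\K}}(Y)$ is fibrant in $\op D^{co}(\K)$: the model structure there is right-induced along $U^{\T_{\K}}$, and $U^{\T_{\K}}\can^{\T_{\K}}(Y)=F_{\K}(Y)$ is fibrant in $\cat M_{\K}$ because $F_{\K}$ is a right Quillen functor and $Y$ is fibrant.

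Next I would apply $\map_{\cat M}(-,Y)$ to the first SDR and $\map_{\op D^{co}(\K)}(-,\can^{\T_{\K}}(Y))$ to the second. Just as in the proof of Proposition~\ref{prop:criterion}, the entries of the resulting square are Reedy fibrant cosimplicial simplicial sets (using that $X$ is cofibrant, $Y$ and $\can^{\T_{\K}}(Y)$ are fibrant, $\Bar^\K_{\bullet}X$ and $(\can^{\T_{\K}})^{\bold \Delta^{op}}\Bar^\K_{\bullet}X$ are Reedy cofibrant, and that $\cat M$ and $\op D^{co}(\K)$ satisfy the additional axiom of Definition~\ref{defn:simplmodel}), the two rows are external cosimplicial SDR's, and $\map_{\cat M}(cs_{\bullet}X,Y)=cc^\bullet\map_{\cat M}(X,Y)$ (and likewise on the bottom). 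The maps $(\can^{\T_{\K}})_{\Bar^\K_{\bullet}X,Y}$ assemble into a morphism of cosimplicial simplicial sets that is, in each level, the isomorphism of Proposition~\ref{prop:adj-iso} — applicable because $\Bar^\K_{n}X=K(K^{n}X)$ has the required form $KZ$ — so in the commutative square
$$\xymatrix{cc^\bullet\map_{\cat M}(X,Y) \ar @<.7ex>[r]\ar[d]_{(\can^{\T_{\K}})_{cs_{\bullet}X,Y}}&\map_{\cat M}\big(\Bar^\K_{\bullet}X,Y\big)\circlearrowright h\ar @<.7ex>[l]\ar[d]^-{\cong}\\ \map_{\op D^{co}(\K)}\big(cs_{\bullet}\can^{\T_{\K}}(X),\can^{\T_{\K}}(Y)\big) \ar @<.7ex>[r]&\map_{\op D^{co}(\K)}\big((\can^{\T_{\K}})^{\bold \Delta^{op}}\Bar^\K_{\bullet}X,\can^{\T_{\K}}(Y)\big)\circlearrowright h\ar @<.7ex>[l]}$$
the right-hand vertical arrow is an isomorphism. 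Applying $\tot$ and invoking Corollary~\ref{cor:extequiv-weq} turns every horizontal arrow into a weak equivalence, so chasing the square yields that $\tot(\can^{\T_{\K}})_{cs_{\bullet}X,Y}$ is a weak equivalence; since $\tot cc^\bullet(-)$ is naturally the identity (Remark~\ref{rmk:tot-cst}) and $\can^{\T_{\K}}$ is a simplicial functor, this last map is exactly $(\can^{\T_{\K}})_{X,Y}$, which is therefore a weak equivalence, as desired.

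Everything here is bookkeeping, and there is no real obstacle beyond the checks above; the one part that is \emph{not} symbol-for-symbol dual to the proof of Proposition~\ref{prop:criterion} is precisely the variance remark in the first paragraph: one maps \emph{out of} $\Bar^\K_{\bullet}X$ into the fixed fibrant $Y$, so the limiting construction on mapping spaces remains $\tot$, with $\tot\map_{\cat M}(\Bar^\K_{\bullet}X,Y)\cong\map_{\cat M}(|\Bar^\K_{\bullet}X|,Y)=\map_{\cat M}(X^\vee_{\K},Y)$ by the coend/end adjunction if one wishes to phrase the statement in terms of the derived cocompletion, and one must take care that the section and retraction of the SDR swap roles under $\map_{\cat M}(-,Y)$.
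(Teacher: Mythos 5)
Your proposal is correct and is essentially the paper's own argument: the paper omits this proof as strictly dual to that of Proposition \ref{prop:criterion}, and your dualization (SDR from strict $\K$-cocompleteness, Lemma \ref{lem:Reedycofib}, levelwise use of Proposition \ref{prop:adj-iso} via $\Bar^\K_n X=K(K^nX)$, then $\tot$, Corollary \ref{cor:extequiv-weq} and Remark \ref{rmk:tot-cst}) is exactly that proof. Your observation about variance — that $\map_{\cat M}(-,Y)$ turns the simplicial SDR into a cosimplicial one, so $\tot$ is still the relevant construction — is the correct and only non-symmetric point.
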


\begin{rmk}\label{rmk:coeffective}  Let $\cat M$ be a simplicial model category. Let $\K=(K, \Delta, \ve)$   be a comonad on $\cat S$ such that $K$ is a simplicial functor, and $\Delta$ and $\ve$ are simplicial natural transformations.  Let $\mathbb{DK}$ denote the comonad on $\op D^{co}(\K)$ with underlying endofunctor $F^{\T_{\K}}U^{\T_{\K}}$.

An analysis dual to that in Remark \ref{rmk:effective} shows that   the image of the restriction of $\can^{\T_{\K}}$ to $\cat M^\vee_{\K}$ consists of strictly $\mathbb{DK}$-cocomplete codescent data.  Moreover, if $U^{\T_{\K}}$ is simplicially faithful, then $\check\K$ satisfies effective homotopic codescent, if  $\Bar^\K_{\bullet}$ preserves cofibrant objects.  We intend to determine in future work under what conditions any strictly $\mathbb{DK}$-cocomplete codescent datum is weakly equivalent to the canonical codesent datum of a strictly $\K$-cocomplete object of $\cat M$, i.e., to determine when the homotopic codescent of Theorem \ref{thm:equiv-comonad} is effective.  
\end{rmk}

To conclude this section, we state a homotopic version of the classical characterization of codescent (Theorem \ref{thm:Beck}). This ``homotopic Beck criterion'' has a somewhat different flavor from our previous criterion for homotopic codescent, as it is formulated in terms of just the bottom two stages of the $\K$-bar construction. The proof is strictly dual to that of Theorem \ref{thm:htpic-coBeck}, so we omit it.

\begin{thm}\label{thm:htpic-Beck}  Let $\cat M$ be a simplicial model category. Let $\K=(K, \Delta, \ve)$   be a comonad on $\cat M$ such that $K$ is a simplicial functor, and $\Delta$ and $\ve$ are simplicial natural transformations. Suppose that 
$U_{\K}:\cat M_{\K}\to \cat M$ left-induces a model category structure on $\cat M_{\K}$, that $U^{\T_{\K}}:\op D^{co}(\K)\to \cat M_{\K}$ then right-induces a model category structure on $\op D(\T)$, and that  $\op D^{co}(\K)$ is endowed with its usual simplicial enrichement (Lemma \ref{lem:simplenr}). 

If $\can^{\T_{\K}}$ preserves cofibrant objects,  and the natural morphism $$\hat\ve_{X}:Q^{\T_{\K}}\circ \can^{\T_{\K}} (X)\to X$$ is a weak equivalence for all cofibrant objects $X$, then the restriction of $\K$ to the full subcategory of $\cat M$ determined by the bifibrant objects satisfies homotopic codescent.
\end{thm}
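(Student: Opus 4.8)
The plan is to dualize the proof of Theorem~\ref{thm:htpic-coBeck}, reducing the homotopic codescent of $\K$ on the full subcategory of bifibrant objects to the hypothesis that the counit $\hat\ve_{X}$ is a weak equivalence, together with the simplicial adjunction isomorphism of Corollary~\ref{cor:adj-can}(2). Concretely, I would fix bifibrant objects $X$ and $Y$ of $\cat M$ and show that
$$(\can^{\T_{\K}})_{X,Y}:\map_{\cat M}(X,Y)\to \map_{\op D^{co}(\K)}\big(\can^{\T_{\K}}(X),\can^{\T_{\K}}(Y)\big)$$
is a weak equivalence of simplicial sets, which is precisely the condition of Definition~\ref{defn:htpiccodesc}.

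First I would note that, since $\can^{\T_{\K}}$ preserves cofibrant objects, $\can^{\T_{\K}}(X)$ is cofibrant in $\op D^{co}(\K)$, so its image $Q^{\T_{\K}}\circ\can^{\T_{\K}}(X)$ under the left adjoint $Q^{\T_{\K}}$ is a cofibrant object of $\cat M$; consequently the counit $\hat\ve_{X}:Q^{\T_{\K}}\circ\can^{\T_{\K}}(X)\to X$ of the $(Q^{\T_{\K}},\can^{\T_{\K}})$-adjunction is, by hypothesis, a weak equivalence between cofibrant objects. Since $Y$ is fibrant, applying the (contravariant, right Quillen) functor $\map_{\cat M}(-,Y)$ then yields a weak equivalence of fibrant simplicial sets
$$(\hat\ve_{X})^{*}:\map_{\cat M}(X,Y)\xrightarrow{\sim}\map_{\cat M}\big(Q^{\T_{\K}}\circ\can^{\T_{\K}}(X),Y\big).$$
Next I would invoke Corollary~\ref{cor:adj-can}(2), which provides a natural isomorphism between $\map_{\cat M}\big(Q^{\T_{\K}}\circ\can^{\T_{\K}}(X),Y\big)$ and $\map_{\op D^{co}(\K)}\big(\can^{\T_{\K}}(X),\can^{\T_{\K}}(Y)\big)$, and I would check that this isomorphism intertwines $(\hat\ve_{X})^{*}$ with $(\can^{\T_{\K}})_{X,Y}$ --- a verification that reduces, on zero-simplices and hence (by simpliciality) in general, to the triangle identities for the $(Q^{\T_{\K}},\can^{\T_{\K}})$-adjunction. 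Composing the two displays then shows that $(\can^{\T_{\K}})_{X,Y}$ is a weak equivalence, completing the proof.

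The step I expect to require the most care is the (co)fibrancy bookkeeping at the very start: one must ensure that $Q^{\T_{\K}}\circ\can^{\T_{\K}}$ carries cofibrant objects to cofibrant objects, so that feeding the weak equivalence $\hat\ve_{X}$ into $\map_{\cat M}(-,Y)$ is legitimate; this rests on $Q^{\T_{\K}}$ being left Quillen with respect to the induced model structures of the statement, which should be checked explicitly (or, failing a clean statement, one passes to a cofibrant replacement of $Q^{\T_{\K}}\circ\can^{\T_{\K}}(X)$ and uses that $X$ is already its own cofibrant replacement). The only other non-formal point is the compatibility of the isomorphism of Corollary~\ref{cor:adj-can}(2) with the comparison maps; everything else is strictly dual to the argument for Theorem~\ref{thm:htpic-coBeck} and poses no difficulty.
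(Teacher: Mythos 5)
Your proposal is correct and follows essentially the same route as the paper, which simply dualizes the proof of Theorem \ref{thm:htpic-coBeck}: the hypothesis that $\hat\ve_{X}$ is a weak equivalence gives, upon applying $\map_{\cat M}(-,Y)$ for $Y$ fibrant, a weak equivalence of fibrant simplicial sets, and Corollary \ref{cor:adj-can}(2) identifies its target with $\map_{\op D^{co}(\K)}\big(\can^{\T_{\K}}(X),\can^{\T_{\K}}(Y)\big)$ compatibly with $(\can^{\T_{\K}})_{X,Y}$ (by naturality of the counit of the $(Q^{\T_{\K}},\can^{\T_{\K}})$-adjunction). The cofibrancy point you flag does go through: since the model structure on $\op D^{co}(\K)$ is right-induced from the left-induced structure on $\cat M_{\K}$ and $U^{\T_{\K}}\circ\can^{\T_{\K}}=F_{\K}$ is right Quillen, $\can^{\T_{\K}}$ is right Quillen, so $Q^{\T_{\K}}$ is left Quillen and $Q^{\T_{\K}}\circ\can^{\T_{\K}}(X)$ is cofibrant as needed.
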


\subsection{Descent and codescent spectral sequences}

\subsubsection{The descent spectral sequence}\label{sec:descSS}

Let $\T=(T,\mu, \eta)$ be a monad on a simplicial model category $\cat M$ such that Convention \ref{conv:descss} holds. We require in addition that $\mu$ and $\eta$ be simplicial natural transformations.

\begin{defn}\label{defn:descass} Let $X, Y\in \cat M$, and let $f\in \cat M(X,Y)$.  Let $$\xymatrix@1{p:\widetilde X\wefib &\; X}$$ be a cofibrant replacement of $X$, and let 
$$\xymatrix@1{j^\bullet: \Om^\bullet_{\T}Y\;\cof^-\sim&\;\widehat\Om^\bullet_{\T}Y}$$
 denote a fibrant replacement of $\Om^\bullet_{\T}Y$ in $\cat M^{\mathbf \Delta}$.

The extended homotopy spectral sequence \cite[X.6]{bousfield-kan} of the fibrant cosimplicial simplicial set
$$ \map_{\cat M} (\widetilde X,\widehat\Om^\bullet_{\T}Y),$$
with basepoint 
$$j^n\circ \eta_{T^nY}\circ \cdots \circ \eta_{TY}\circ \eta_{Y}\circ f\circ p:\widetilde X\to \widehat \Om_{\T} ^n Y$$
in level $n$, denoted $E^\T_{f}$,
is the \emph{$\T$-descent spectral sequence} for the pair $(X,Y)$ with respect to $f$.
\end{defn}

\begin{rmk}  The $\T$-descent spectral sequence is clearly independent, up to isomorphism, of the choices of cofibrant and fibrant replacements made in the definition above, which justifies our use of the definite article in the definition.
\end{rmk}

\begin{rmk}  If $\mathbb R$ is the monad on $\cat {sSet}$ of \cite[I.2]{bousfield-kan}, then the $\mathbb R$-descent spectral sequence is exactly the unstable Adams spectral sequence studied there.
\end{rmk}
 
\begin{rmk}  The analysis in sections X.6 and X.7 of \cite {bousfield-kan} shows that
for all $t\geq s\geq 0$,
$$(E^\T_{f})_{2}^{s,t}=\pi^s\pi_{t}\map_{\cat M} (\widetilde X,\widehat\Om^\bullet_{\T}Y)_{f},$$
where, for $t\geq 2$, the cosimplicial cohomotopy $\pi^*\big(\pi_{t}\map (\widetilde X,\widehat\Om^\bullet_{\T}Y)\big)_{f}$ is the usual cohomology of the normalized cochain complex of the cosimplicial abelian group $\pi_{t}\map_{\cat M} (\widetilde X,\widehat\Om^\bullet_{\T}Y)_{f}$.  

Moreover, under conditions described in section IX.5 of \cite {bousfield-kan}, the spectral sequence $E^\T_{f}$ abuts to $\pi_{*}\tot \map_{\cat M}(\widetilde X,\widehat\Om^\bullet_{\T}Y)$.  The convergence issue is subtle, especially since the spectral sequence is fringed, and we do not claim to say anything new about convergence here.
\end{rmk} 

We can now easily identify the term to which a $\T$-descent spectral sequence abuts, at least if $\Om _{\T}^\bullet$ preserves fibrant objects.

\begin{lem}\label{lem:einfty} Suppose that $\Om _{\T}^\bullet$ preserves fibrant objects. If $X$ is cofibrant and $Y$ is fibrant, then the $\T$-descent spectral sequence for $(X,Y)$ abuts to 
$$\pi_{*} \map_{\cat M}(X,Y _{\T}^\wedge )_{f},$$
for any choice of basepoint $f:X\to Y$, if it indeed converges.
\end{lem}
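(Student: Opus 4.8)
The plan is to reduce the statement to the computation of $\tot$ performed in the proof of Lemma \ref{lem:T-comp}. First I would use the fibrancy hypotheses to eliminate the replacements appearing in Definition \ref{defn:descass}: since $X$ is cofibrant we may take $\widetilde X = X$, and since $\Om^\bullet_\T$ preserves fibrant objects and $Y$ is fibrant, $\Om^\bullet_\T Y$ is already Reedy fibrant in $\cat M^{\bold \Delta}$, so we may take $\widehat\Om^\bullet_\T Y = \Om^\bullet_\T Y$. Hence $E^\T_f$ is simply the extended homotopy spectral sequence of the Reedy fibrant cosimplicial simplicial set $\map_{\cat M}(X, \Om^\bullet_\T Y)$, based in level $n$ at $\eta_{T^nY}\circ\cdots\circ\eta_Y\circ f$.

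Next I would invoke the convergence statement of \cite[IX.5]{bousfield-kan}, recalled in the remark preceding the lemma: when it converges, $E^\T_f$ abuts to $\pi_*\tot\map_{\cat M}(X, \Om^\bullet_\T Y)$. Now, exactly as in the proof of Lemma \ref{lem:T-comp}, $\tot$ of a cosimplicial object may be computed as an equalizer of two maps between products of cotensors by standard simplices, and $\map_{\cat M}(X,-)$ commutes with such limits for any object $X$ of the simplicial model category $\cat M$; therefore
$$\tot\map_{\cat M}(X, \Om^\bullet_\T Y)\ \cong\ \map_{\cat M}\big(X,\ \tot\Om^\bullet_\T Y\big).$$
Since $Y$ is fibrant, $\widehat Y = Y$ is a fibrant replacement of $Y$, and $\Om^\bullet_\T Y$ is Reedy fibrant, so by Definition \ref{defn:modelTcomp} and Notation \ref{notn:Tcomp} the object $\tot\Om^\bullet_\T Y$ is a model of the derived $\T$-completion $Y^\wedge_\T$. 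Thus $E^\T_f$ abuts to $\pi_*\map_{\cat M}(X, Y^\wedge_\T)$.

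It remains only to track basepoints. The coaugmentation $\eta^\bullet:cc^\bullet Y\to\Om^\bullet_\T Y$ is carried by $\tot$ to the canonical completion map $Y = \tot cc^\bullet Y\to\tot\Om^\bullet_\T Y = Y^\wedge_\T$ (using Remark \ref{rmk:tot-cst}), so under the displayed isomorphism the spectral-sequence basepoint, which is the compatible family $\{\eta_{T^nY}\circ\cdots\circ\eta_Y\circ f\}_n = \eta^\bullet\circ cc^\bullet f$, corresponds to the composite of $f$ with that completion map; this is the basepoint denoted $f$ of $\map_{\cat M}(X, Y^\wedge_\T)$, giving the claimed abutment $\pi_*\map_{\cat M}(X, Y^\wedge_\T)_f$. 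The argument is otherwise formal; the one point requiring care — and the nearest thing to an obstacle — is reading ``$\Om^\bullet_\T$ preserves fibrant objects'' in the Reedy sense of Remark \ref{rmk:Reedyfib}, so that no further fibrant replacement in $\cat M^{\bold \Delta}$ is needed and $\tot\Om^\bullet_\T Y$ genuinely computes the derived completion. As the lemma itself signals, convergence of this (fringed) spectral sequence is assumed rather than proved here.
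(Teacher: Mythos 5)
Your proposal is correct and follows essentially the same route as the paper's own (very brief) proof: use the Reedy fibrancy of $\Om^\bullet_{\T}Y$ to identify $\tot\map_{\cat M}(X,\Om^\bullet_{\T}Y)\cong\map_{\cat M}(X,\tot\Om^\bullet_{\T}Y)=\map_{\cat M}(X,Y^\wedge_{\T})$, exactly as in Lemma \ref{lem:T-comp}. Your additional care with the replacements in Definition \ref{defn:descass} and with the basepoint is consistent with, and merely more explicit than, what the paper leaves implicit.
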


\begin{proof}   Since $\Om^\bullet _{\T}Y$ is Reedy fibrant by hypothesis, 
$$\tot \map_{\cat M}(X,\Om^\bullet_{\T}Y)\cong\map_{\cat M} (X, \tot \Om^\bullet_{\T}Y)=\map_{\cat M}(X,Y _{\T}^\wedge ).$$
\end{proof}

Under reasonable conditions, the $E_{2}$-term of the $\T$-descent spectral sequence admits an interesting interpretation.  We refer the reader to Definition \ref{defn:ext} for the meaning of $\ext$ in simplicial model categories, to Remark \ref{rmk:effective} for the definition of the monad $\mathbb{DT}$ and to Definition \ref{defn:strong-T} for the definition of strong completeness.

\begin{thm}\label{thm:e2-desc}  Suppose that 
$U^\T:\cat M^\T\to \cat M$ right-induces a model category structure on $\cat M^\T$, that $U_{\K^\T}:\op D(\T)\to \cat M^\T$ then left-induces a model category structure on $\op D(\T)$, and that the usual simplicial enrichment of $\op D(\T)$ (Lemma \ref{lem:simplenr}) extends to a simplicial model category structure. 

Let $X$ and $Y$ be a cofibrant object  and a fibrant  object in $\cat M$, respectively, and let $f\in \cat M(X,Y)$.  If $\Om ^\bullet_{\T}Y$ is Reedy fibrant and $\can_{\K^\T}(Y)$ is strongly $\mathbb{DT}$-complete, then 
$$(E^\T_{f})_{2}^{s,t}\cong \ext^{s,t}_{\op D(\T)}\big( \can_{\K^\T} (X), \can_{\K^\T} (Y)\big)_{\can_{\K^\T}f}$$
for any choice of basepoint $f$.
\end{thm}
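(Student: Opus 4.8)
The plan is to realise the $\T$-descent spectral sequence as the extended homotopy spectral sequence of a cosimplicial simplicial set that, under Proposition \ref{prop:adj-iso} and Remark \ref{rmk:effective}, is isomorphic to the one computing $\ext_{\op D(\T)}$, and then to quote the standard $E_{2}$-identification. First I would carry out the obvious reductions in Definition \ref{defn:descass}: since $X$ is cofibrant we may take $\widetilde X=X$, and since $\Om^{\bullet}_{\T}Y$ is Reedy fibrant by hypothesis we may take $\widehat\Om^{\bullet}_{\T}Y=\Om^{\bullet}_{\T}Y$. Thus $E^{\T}_{f}$ is the extended homotopy spectral sequence of the cosimplicial simplicial set $\map_{\cat M}(X,\Om^{\bullet}_{\T}Y)$, which is Reedy fibrant because $X$ is cofibrant and $\Om^{\bullet}_{\T}Y$ is Reedy fibrant (exactly as in the proof of Lemma \ref{lem:T-comp}), pointed in level $n$ by the image of $f$ under the iterated coaugmentation. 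By the analysis cited after Definition \ref{defn:descass} (\cite[X.6--X.7]{bousfield-kan}) this gives $(E^{\T}_{f})_{2}^{s,t}=\pi^{s}\pi_{t}\map_{\cat M}(X,\Om^{\bullet}_{\T}Y)_{f}$.

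Next I would assemble the isomorphisms of Proposition \ref{prop:adj-iso}. Applying $(\can_{\K^{\T}})_{X,T^{n}Y}$ in each cosimplicial degree $n$ and using the identity $\can_{\K^{\T}}^{\bold\Delta}\Om^{\bullet}_{\T}Y=\Om^{\bullet}_{\mathbb{DT}}\big(\can_{\K^{\T}}(Y)\big)$ from Remark \ref{rmk:effective}, one obtains degreewise isomorphisms
$$\map_{\cat M}\big(X,\Om^{n}_{\T}Y\big)\xrightarrow{\ \cong\ }\map_{\op D(\T)}\big(\can_{\K^{\T}}(X),\Om^{n}_{\mathbb{DT}}\can_{\K^{\T}}(Y)\big).$$
Because $(\can_{\K^{\T}})_{-,-}$ is natural (Proposition \ref{prop:simplfunct}) and the cofaces and codegeneracies of both cobar constructions are built degreewise from the units and multiplications, to which $\can_{\K^{\T}}$ is simply applied, these isomorphisms commute with the cosimplicial operators and so assemble into an isomorphism of cosimplicial simplicial sets; a direct check of the coaugmentations, entirely parallel to the identifications in Remark \ref{rmk:effective}, shows that it carries the chosen basepoint to the basepoint of $\map_{\op D(\T)}\big(\can_{\K^{\T}}(X),\Om^{\bullet}_{\mathbb{DT}}\can_{\K^{\T}}(Y)\big)$ obtained from $\can_{\K^{\T}}(f)$ by iterating the coaugmentation of $\Om^{\bullet}_{\mathbb{DT}}$. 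In particular the target cosimplicial simplicial set is Reedy fibrant (even though $\can_{\K^{\T}}(X)$ need not be cofibrant in $\op D(\T)$), and $(E^{\T}_{f})_{2}^{s,t}\cong\pi^{s}\pi_{t}\map_{\op D(\T)}\big(\can_{\K^{\T}}(X),\Om^{\bullet}_{\mathbb{DT}}\can_{\K^{\T}}(Y)\big)_{\can_{\K^{\T}}f}$.

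It then remains to identify the right-hand side with $\ext_{\op D(\T)}$. By Lemma \ref{lem:Reedyfib}, $\Om^{\bullet}_{\mathbb{DT}}\can_{\K^{\T}}(Y)=\can_{\K^{\T}}^{\bold\Delta}\Om^{\bullet}_{\T}Y$ is Reedy fibrant in $\op D(\T)^{\bold\Delta}$, and since $\can_{\K^{\T}}(Y)$ is strongly $\mathbb{DT}$-complete the natural coaugmentation of $\Om^{\bullet}_{\mathbb{DT}}\can_{\K^{\T}}(Y)$ is, by Definition \ref{defn:strong-T}, a cohomological fibrant resolution of $\can_{\K^{\T}}(Y)$. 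Hence, by Definition \ref{defn:ext} together with the independence of $\ext$ from the choice of cohomological fibrant resolution (and, if the definition requires it, after replacing $\can_{\K^{\T}}(X)$ by a cofibrant object, which changes neither side since the $\Om^{n}_{\mathbb{DT}}\can_{\K^{\T}}(Y)$ are fibrant), we get
$$\pi^{s}\pi_{t}\map_{\op D(\T)}\big(\can_{\K^{\T}}(X),\Om^{\bullet}_{\mathbb{DT}}\can_{\K^{\T}}(Y)\big)_{\can_{\K^{\T}}f}=\ext^{s,t}_{\op D(\T)}\big(\can_{\K^{\T}}(X),\can_{\K^{\T}}(Y)\big)_{\can_{\K^{\T}}f},$$
which combined with the previous paragraph yields the theorem.

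I expect the only genuine bookkeeping to be the verification in the second paragraph that the degreewise isomorphisms of Proposition \ref{prop:adj-iso} respect both the cosimplicial structure and the basepoints — this is precisely the sort of ``careful checking of the cofaces and codegeneracies'' already invoked in Remark \ref{rmk:effective} — and, on the conceptual side, making sure that Definition \ref{defn:ext} is applied with all of its fibrancy hypotheses in force; the hypothesis that $\can_{\K^{\T}}(Y)$ be strongly $\mathbb{DT}$-complete is used exactly at this last step, to guarantee that the $\mathbb{DT}$-cobar construction is a cohomological fibrant resolution and hence computes $\ext$.
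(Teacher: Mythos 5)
Your proposal is correct and follows essentially the same route as the paper: degreewise application of Proposition \ref{prop:adj-iso} assembled into an isomorphism of cosimplicial simplicial sets $\map_{\cat M}(X,\Om^\bullet_{\T}Y)\cong\map_{\op D(\T)}\big(\can_{\K^\T}(X),\can_{\K^\T}^{\bold\Delta}\Om^\bullet_{\T}Y\big)$, then Lemma \ref{lem:Reedyfib} plus strong $\mathbb{DT}$-completeness to see that $\can_{\K^\T}^{\bold\Delta}\Om^\bullet_{\T}Y$ is a cohomological fibrant resolution of $\can_{\K^\T}(Y)$, and finally the definition of $\ext$ in $\op D(\T)$. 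Your hedge about replacing $\can_{\K^\T}(X)$ by a cofibrant object is unnecessary (and, as phrased, not automatic): since $U_{\K^\T}$ left-induces the model structure and $U_{\K^\T}\can_{\K^\T}(X)=F^\T X$ with $F^\T$ left Quillen and $X$ cofibrant, $\can_{\K^\T}(X)$ is already cofibrant in $\op D(\T)$, so Definition \ref{defn:ext} applies directly.
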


\begin{rmk}  We can interpret Theorem \ref{thm:e2-desc} to mean that the $\T$-descent spectral sequence interpolates from $$\pi_{*}\map_{\op D(\T)}\big ( \can_{\K^\T} (X), \can_{\K^\T} (Y)\big)_{\can_{\K^\T}f}$$ to $$\pi_{*}\map_{\cat M}(X,Y^\wedge_\T)_{f},$$ at least when $\can_{\K^\T}(Y)$ is strongly $\mathbb{DT}$-complete.  

Since we are assuming that $U_{\K^\T}$ left-induces the model structure on $\op D(\T)$, it follows that $U_{\K^\T}$ is homotopically faithful, whence all fibrant objects in $\op D(\T)$ are $\mathbb{DT}$-complete.  This may be an indication that the requirement of strong $\mathbb {DT}$-completeness on $\can_{\K^\T}(Y)$ is not a great constraint.

When $Y$ is strictly $\T$-complete, $\can_{\K^\T}(Y)$ is strictly $\mathbb{DT}$-complete, but also $\map_{\cat M}(X,Y)$ and $\map_{\op D(\T)}\big ( \can_{\K^\T} (X), \can_{\K^\T} (Y)\big)$ are weakly equivalent by Theorem \ref{thm:equiv-monad}.  The spectral sequence almost certainly collapses in this case.  
\end{rmk}

\begin{proof}[Proof of Theorem \ref{thm:e2-desc}]   By Proposition \ref{prop:adj-iso}, the simplicial functor $\can _{\K^\T}$ induces an isomorphism
$$\map _{\cat M}(X, \Om ^n_{\T}Y)\cong \map_{\op D(\T)}\big(\can _{\K^\T}(X), \can _{\K^\T}(\Om^n_{\T}Y)\big)$$
for all $n\geq 0$. 
There is therefore an isomorphism of cosimplicial simplicial sets
$$\map_{\cat M} (X, \Om ^\bullet_{\T}Y)\cong \map _{\op D(\T)}\big( \can_{\K^\T}(X), \can_{\K^\T}^{\bold\Delta}(\Om ^\bullet_{\T}Y)\big),$$
where the functor $\can_{\K^\T}$ is applied levelwise in the second component on the righthand side.

Observe that $\can_{\K^\T}^{\bold \Delta}(\Om ^\bullet_{\T}Y)$ is a cohomological fibrant resolution of $\can_{\K^\T} (Y)$ in $\op D(\T)^{\bold \Delta}$, since $\can_{\K^\T}(Y)$ is strongly $\mathbb{DT}$-complete, and Lemma \ref{lem:Reedyfib} implies that $\can_{\K^\T}^{\bold\Delta}\Om ^{\bullet}_{\T}(Y)$ is Reedy fibrant.  By definition of Ext in $\op D(\T)$, we can therefore conclude.
\end{proof}

\subsubsection{The codescent spectral sequence}

Let $\K=(K, \Delta, \ve)$ be a comonad on a simplicial model category $\cat M$ such that Convention \ref{conv:codescss} holds and such that $\Delta$ and $\ve$ are simplicial natural transformations.

\begin{defn}\label{defn:codescass} Let $X, Y\in \cat M$, and let $f\in \cat M(X,Y)$.  Let $$\xymatrix@1{j:Y\;\wecof &\; \widehat Y}$$ be a fibrant replacement of $Y$, and let 
$$\xymatrix@1{p_\bullet: \widetilde\Bar^\K_{\bullet }X \;\wefib&\;\Bar^\K_{\bullet }X}$$
 denote a cofibrant replacement of $\Bar^\K_{\bullet }X$ in $\cat M^{\mathbf \Delta^{op}}$.

The extended homotopy spectral sequence \cite[X.6]{bousfield-kan} of the fibrant cosimplicial simplicial set
$$ \map_{\cat M} (\widetilde\Bar^\K_{\bullet }X,\widehat Y),$$
with basepoint 
$$j\circ f\circ \ve _{X}\circ \cdots \circ \ve_{K^{n}X}\circ p_{n}:\widetilde\Bar^\K_{n }X\to \widehat Y$$
in level $n$, denoted $E^\K_{f}$, is the \emph{$\K$-codescent spectral sequence} for the pair $(X,Y)$ with respect to $f$.
\end{defn}

\begin{rmk}  The $\K$-codescent spectral sequence is clearly independent, up to isomorphism, of the choices of cofibrant and fibrant replacements made in the definition above.
\end{rmk}

\begin{rmk}  The analysis in sections X.6 and X.7 of \cite {bousfield-kan} shows that
for all $t\geq s\geq 0$,
$$(E^\K_{f})_{2}^{s,t}=\pi^s\pi_{t}\map_{\cat M} (\widetilde\Bar^\K_{\bullet }X,\widehat Y),$$
where, for $t\geq 2$, the cosimplicial cohomotopy $\pi^*\big(\pi_{t}\map (\widetilde\Bar^\K_{\bullet }X,\widehat Y)\big)$ is the usual cohomology of the normalized cochain complex of the cosimplicial abelian group $\pi_{t}\map_{\cat M} (\widetilde\Bar^\K_{\bullet }X,\widehat Y)$.  

Moreover, under conditions described in section IX.5 of \cite {bousfield-kan}, the spectral sequence $E^\K_{f}$ abuts to $\pi_{*}\tot \map_{\cat M}(\widetilde\Bar^\K_{\bullet }X,\widehat Y)$.  Again, we do not claim to say anything new about convergence here.
\end{rmk}

In the next two results, we identify both the term to which a $\K$-codescent spectral sequence abuts and its $E_{2}$-term, under reasonable cofibrancy hypotheses.  The proofs are dual to those of Lemma \ref{lem:einfty} and Theorem \ref{thm:e2-desc}.

\begin{lem}\label{lem:co-einfty} Suppose that $\Bar^\K_{\bullet}$ preserves cofibrant objects. If $X$ is cofibrant and $Y$ is fibrant, then the $\K$-descent spectral sequence for $(X,Y)$ abuts to 
$$\pi_{*} \map_{\cat M}(X^\vee_{\K},Y )_{f},$$
for any choice of basepoint $f:X\to Y$, if it indeed converges.
\end{lem}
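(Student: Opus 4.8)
The plan is to dualize the short proof of Lemma \ref{lem:einfty}, trading Reedy fibrant cosimplicial objects and $\tot$ for Reedy cofibrant simplicial objects and geometric realization.

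First I would exploit the hypotheses to make the replacements appearing in Definition \ref{defn:codescass} trivial. Since $X$ is cofibrant and $\Bar^\K_{\bullet}$ preserves cofibrant objects, $\Bar^\K_{\bullet}X$ is already Reedy cofibrant in $\cat M^{\bold\Delta^{op}}$, so the cofibrant replacement $p_{\bullet}:\widetilde\Bar^\K_{\bullet}X\to \Bar^\K_{\bullet}X$ may be taken to be the identity; since $Y$ is fibrant, the fibrant replacement $j:Y\to\widehat Y$ may likewise be taken to be the identity. As the $\K$-codescent spectral sequence is independent of these choices up to isomorphism (the remark following Definition \ref{defn:codescass}), $E^\K_{f}$ is then identified with the extended homotopy spectral sequence of the cosimplicial simplicial set $\map_{\cat M}(\Bar^\K_{\bullet}X,Y)$ based at $f\circ\ve_{X}\circ\cdots\circ\ve_{K^{n}X}$ in level $n$. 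By the convergence criteria recalled just above (from \cite[IX.5]{bousfield-kan}), when it converges it abuts to $\pi_{*}\tot\map_{\cat M}(\Bar^\K_{\bullet}X,Y)_{f}$.

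It then remains to identify this totalization. The key input is the dual of the interchange fact used in the proof of Lemma \ref{lem:T-comp}: whereas there one had $\map_{\cat M}(W,\lim_{\cat D}G)\cong\lim_{\cat D}\map_{\cat M}(W,G)$, here the contravariant enriched hom out of a colimit in a simplicial model category is a limit, and unwinding the coend formula for geometric realization against the tensor--cotensor adjunction yields $\map_{\cat M}\big(|\Bar^\K_{\bullet}X|,Y\big)\cong\tot\map_{\cat M}(\Bar^\K_{\bullet}X,Y)$, the right-hand side being the totalization of the cosimplicial simplicial set obtained by applying $\map_{\cat M}(-,Y)$ levelwise to $\Bar^\K_{\bullet}X$. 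Finally, because $X$ is cofibrant and $\Bar^\K_{\bullet}X$ is Reedy cofibrant, $|\Bar^\K_{\bullet}X|$ is a model of the derived $\K$-cocompletion of $X$ in the sense of Definition \ref{defn:modelKcocomp}, i.e.\ $X^\vee_{\K}=|\Bar^\K_{\bullet}X|$ (Notation \ref{notn:Kcocomp}). Stringing these identifications together shows the spectral sequence abuts to $\pi_{*}\map_{\cat M}(X^\vee_{\K},Y)_{f}$.

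I do not anticipate any genuine obstacle, since everything is formally dual to Lemma \ref{lem:einfty}; the only points needing care are the compatibility of the simplicial enrichment with geometric realization --- the dual of the colimit/hom interchange invoked for $\tot$ in Lemma \ref{lem:T-comp} --- and checking that the chosen basepoint $f\circ\ve_{X}\circ\cdots\circ\ve_{K^{n}X}$ is carried, under these isomorphisms, to the component $\map_{\cat M}(X^\vee_{\K},Y)_{f}$ determined by $f$, both of which are routine.
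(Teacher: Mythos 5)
Your proposal is correct and matches the paper's intended argument: the paper omits this proof as strictly dual to Lemma \ref{lem:einfty}, whose one-line proof is exactly the interchange $\tot\map_{\cat M}(\Bar^\K_{\bullet}X,Y)\cong\map_{\cat M}\big(|\Bar^\K_{\bullet}X|,Y\big)=\map_{\cat M}(X^\vee_{\K},Y)$ that you establish, together with the observation that the Reedy cofibrancy of $\Bar^\K_{\bullet}X$ and fibrancy of $Y$ let the replacements in Definition \ref{defn:codescass} be taken to be identities.
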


We refer the reader to Definition \ref{defn:ext} for the meaning of $\ext$ in simplicial model categories, to Remark \ref{rmk:coeffective} for the definition of the comonad $\mathbb{DK}$ and to Definition \ref{defn:strong-T} for the definition of strong completeness.

\begin{thm}\label{thm:e2-codesc} Suppose that 
$U_{\K}:\cat M_{\K}\to \cat M$ left-induces a model category structure on $\cat M_{\K}$ and that $U^{\T_{\K}}:\op D^{co}(\K)\to \cat M_{\K}$ then right-induces a model category structure on $\op D(\T)$.
 and that the usual simplicial enrichment of $\op D^{co}(\K)$ (Lemma \ref{lem:simplenr}) extends to a simplicial model structure. 

Let $X$ and $Y$ be a cofibrant object  and a fibrant  object in $\cat M$, respectively.  If $\Bar^\K_{\bullet}X$ is Reedy cofibrant and $ \can^{\T_{\K}} (X)$ is strongly $\mathbb{DK}$-cocomplete, then 
$$(E^\K_{f})_{2}^{s,t}\cong \ext^{s,t}_{\op D^{co}(\K)}\big( \can^{\T_{\K}} (X), \can^{\T_{\K}} (Y)\big)_{\can^{\T_{\K}}f}$$
for any choice of basepoint $f$.
\end{thm}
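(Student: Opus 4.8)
The proof is strictly dual to that of Theorem~\ref{thm:e2-desc}: the plan is to transcribe that argument with all arrows reversed, the $\T$-cobar construction replaced by the $\K$-bar construction, and ``fibrant'' replaced by ``cofibrant''. Since $Y$ is already fibrant and $\Bar^\K_{\bullet}X$ is already Reedy cofibrant by hypothesis, I would first observe that the fibrant replacement $\widehat Y$ and the cofibrant replacement $\widetilde\Bar^\K_{\bullet}X$ appearing in Definition~\ref{defn:codescass} may be taken to be identities, so that $E^\K_{f}$ is (up to the isomorphism of spectral sequences noted just after that definition) the extended homotopy spectral sequence of the cosimplicial simplicial set $\map_{\cat M}(\Bar^\K_{\bullet}X, Y)$, based in level $n$ at the map determined by $f$ and the iterated counit $\ve$. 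As recalled in the remarks preceding this theorem, its $E_{2}$-term is then $(E^\K_{f})_{2}^{s,t}=\pi^{s}\pi_{t}\,\map_{\cat M}(\Bar^\K_{\bullet}X, Y)_{f}$.

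Next I would invoke the comonad half of Proposition~\ref{prop:adj-iso}. Since $\Bar^\K_{n}X=K^{n+1}X=K(K^{n}X)$, that proposition provides, for every $n\ge 0$, an isomorphism
$$\map_{\cat M}(\Bar^\K_{n}X, Y)\;\xrightarrow{\ \cong\ }\;\map_{\op D^{co}(\K)}\big(\can^{\T_{\K}}(\Bar^\K_{n}X),\,\can^{\T_{\K}}(Y)\big),$$
namely the component $(\can^{\T_{\K}})_{K(K^{n}X),\,Y}$. These isomorphisms are natural in the cosimplicial variable, so they assemble into an isomorphism of cosimplicial simplicial sets
$$\map_{\cat M}(\Bar^\K_{\bullet}X, Y)\;\cong\;\map_{\op D^{co}(\K)}\big((\can^{\T_{\K}})^{\bold\Delta^{op}}(\Bar^\K_{\bullet}X),\,\can^{\T_{\K}}(Y)\big),$$
where $\can^{\T_{\K}}$ is applied levelwise to the simplicial object $\Bar^\K_{\bullet}X$, and under which the chosen basepoint corresponds to the basepoint determined by $\can^{\T_{\K}}f$.

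Then I would identify $(\can^{\T_{\K}})^{\bold\Delta^{op}}(\Bar^\K_{\bullet}X)$, with its natural augmentation to $cs_{\bullet}\can^{\T_{\K}}(X)$, as a homological cofibrant resolution of $\can^{\T_{\K}}(X)$ in $\op D^{co}(\K)^{\bold\Delta^{op}}$. Dually to the identity recorded in Remark~\ref{rmk:effective}, inspection of the faces and degeneracies gives $(\can^{\T_{\K}})^{\bold\Delta^{op}}(\Bar^\K_{\bullet}X)=\Bar^{\mathbb{DK}}_{\bullet}\big(\can^{\T_{\K}}(X)\big)$ compatibly with augmentations; this augmented object is a homological cofibrant resolution of $\can^{\T_{\K}}(X)$ because $\can^{\T_{\K}}(X)$ is strongly $\mathbb{DK}$-cocomplete, and because Lemma~\ref{lem:Reedycofib} — whose hypotheses are among those of the theorem, and which applies since $\Bar^\K_{\bullet}X$ is Reedy cofibrant by assumption — shows it is Reedy cofibrant. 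By the definition of $\ext$ in $\op D^{co}(\K)$ through homological cofibrant resolutions, applying $\pi^{s}\pi_{t}$ to the displayed isomorphism of cosimplicial simplicial sets then yields
$$(E^\K_{f})_{2}^{s,t}\;\cong\;\ext^{s,t}_{\op D^{co}(\K)}\big(\can^{\T_{\K}}(X),\,\can^{\T_{\K}}(Y)\big)_{\can^{\T_{\K}}f},$$
as claimed.

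The step I expect to be the only genuinely delicate one — exactly as in the descent case — is the levelwise bookkeeping behind the identity $(\can^{\T_{\K}})^{\bold\Delta^{op}}(\Bar^\K_{\bullet}X)=\Bar^{\mathbb{DK}}_{\bullet}(\can^{\T_{\K}}X)$: one must check that $\can^{\T_{\K}}$ applied levelwise carries the faces, degeneracies and augmentation of the $\K$-bar construction on $X$ to those of the $\mathbb{DK}$-bar construction on $\can^{\T_{\K}}(X)$, and that under the resulting isomorphism the spectral sequence's basepoint matches the one used to define the Ext group. Everything else is a formal dualization of the proof of Theorem~\ref{thm:e2-desc}; I would also confirm that the simplicial model structure hypothesis on $\op D^{co}(\K)$ is precisely what makes $\ext$ in $\op D^{co}(\K)$ and the relevant extended homotopy spectral sequence available.
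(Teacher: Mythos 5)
Your proposal is correct and follows exactly the route the paper intends: the paper's proof of this theorem is simply the statement that it is dual to Theorem \ref{thm:e2-desc}, and your argument is precisely that dualization — levelwise application of Proposition \ref{prop:adj-iso} to identify $\map_{\cat M}(\Bar^\K_{\bullet}X,Y)$ with $\map_{\op D^{co}(\K)}\big((\can^{\T_{\K}})^{\bold\Delta^{op}}\Bar^\K_{\bullet}X,\can^{\T_{\K}}(Y)\big)$, then recognizing the latter simplicial object as $\Bar^{\mathbb{DK}}_{\bullet}(\can^{\T_{\K}}X)$, which is a homological cofibrant resolution by the strong $\mathbb{DK}$-cocompleteness hypothesis and Lemma \ref{lem:Reedycofib}, so the $E_2$-term is the stated $\ext$ by definition. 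Nothing is missing.
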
 

\begin{rmk}  We can interpret Theorem \ref{thm:e2-codesc} to mean that the $\K$-codescent spectral sequence interpolates from 
$$\pi_{*}\map_{\op D^{co}(\K)}\big ( \can^{\T_{\K}}(X), \can^{\T_{\K}} (Y)\big)_{\can^{\T_{\K}}f}$$
 to 
 $$\pi_{*}\map_{\cat M}(X^\vee_{\K},Y)_{f},$$ when $ \can^{\T_{\K}} (X)$ is strongly $\mathbb{DK}$-cocomplete. 
 
 Since we are assuming that $U^{\T_{\K}}$ right-induces the model structure on $\op D^{co}(\K)$, it follows that $U^{\T_{\K}}$ is homotopically faithful, whence all cofibrant objects in $\op D^{co}(\K)$ are $\mathbb{DK}$-cocomplete.  This may be an indication that the requirement of strong $\mathbb {DK}$-cocompleteness on $\can_{\K^\T}(X)$ is not a great constraint. 

When $X$ is strictly $\K$-cocomplete, $\can^{\T_{\K}}(X)$ is strictly $\mathbb{DK}$-cocomplete, but also $\map_{\cat M}(X,Y)$ and $\map_{\op D^{co}(\K)}\big ( \can^{\T_{\K}}(X), \can^{\T_{\K}} (Y)\big)$ are weakly equivalent by Theorem \ref{thm:equiv-comonad}.  The spectral sequence almost certainly collapses in this case.
\end{rmk}

\section{Homotopic Grothendieck descent and its dual}\label{sec:htpicGroth}

We return in this section to the monads and comonads introduced in sections \ref{sec:grothendieck} and \ref{sec:dualGroth} and apply to them the theories of derived (co)completion and of homotopic (co)descent.  We also examine the associated (co)descent spectral sequences and see that certain of them are already very familiar.

\subsection{The Grothendieck framework}

Throughout this section, let 
$$\vp:B\to A$$ 
be a monoid morphism in a simplicial model category $\cat M$ that is also endowed with a monoidal structure $(\wedge, I)$.  We study here the theories of derived completion and of homotopic descent for the monad $\T_{\vp}$ associated to the adjunction
$$-\underset B\wedge A:\cat {Mod}_{B}\adjunct{}{} \cat {Mod}_{A}:\vp^*.$$
We first need to determine conditions under which $\cat {Mod}_{B}$ and $\cat {Mod}_{A}$ are equipped with a simplicial model category structure.

\begin{defn}\label{defn:monsimpl}  A simplicial model category $\cat M$, which is also endowed with a monoidal structure $(\wedge, I)$, is \emph{monoidally simplicial} if 
\begin{enumerate}
\item the simplicial-tensoring functor
$-\otimes -:\cat M \times \cat {sSet} \to \cat S$ 
is monoidal and op-monoidal, i.e., there are appropriately ``associative'' natural transformations
$$\tau:(X\otimes L)\wedge (X'\otimes L')\to (X\wedge X')\otimes (L\times L')$$
and
$$\upsilon: (X\wedge X')\otimes (L\times L')\to (X\otimes L)\wedge (X'\otimes L'),$$
and 
\item the monoidal product in $\cat M$ is itself a simplicial functor, i.e.,
the mapping space functor $\map_{\cat M}(-,-):\cat M^{op}\times \cat M\to \cat {sSet}$ is monoidal, and the associated natural transformation
$$\xi:\map_{\cat M} (X,Y)\times \map_{\cat M} (X',Y') \to \map_{\cat M} (X\wedge X', Y\wedge Y')$$
is appropriately compatible with the composition
\end{enumerate}
\end{defn}

\begin{rmk}  If $\cat M$ is  monoidally simplicial, then for all objects $X$ and $X'$ in $\cat M$ and all simplicial sets $L$ and $L'$, there is morphism in $\cat M$
$$(X^L \wedge (X')^{L'})\otimes (L\times L')\xrightarrow \upsilon (X^L\otimes L)\wedge ((X')^{L'}\otimes L') \xrightarrow {ev_{X}\wedge ev_{Y}}X\wedge X',$$
where $ev$ is the counit of the tensoring/cotensoring-adjunction, i.e., the ``evaluation'' map.  Taking the transpose of this composite, we obtain a natural map
$$\zeta: X^{L}\otimes (X')^{L'}\to (X\wedge X')^{L\times L'}.$$
\end{rmk}

\begin{ex}  Let $\cat M$ be a simplicial model category that is also endowed with a monoidal structure. As S. Schwede observed to the author, it is not difficult to show that if the simplicial tensoring and cotensoring on $\cat M$ is induced by a strong symmetric monoidal functor from simplicial sets  to  $\cat M$, then $\cat M$ is monoidally simplicial.  For example, the  geometric realization functor gives rise to the simplicial structure on the category of topological spaces, while the simplicial suspension spectrum functor induces the simplicial structure on the category of symmetric spectra.
\end{ex} 

\begin{rmk} If $A$ is any monoid in $\cat M$, then there is a monad $\T_{A}$ on $\cat M$ with underlying endofunctor $-\wedge A$, which is a simplicial functor if $\cat M$ is monoidally simplicial.  Applying Lemmas \ref{lem:simplenr} and \ref{lem:tens-cotens} , we see that $\cat M^{\T_{A}}$, which is isomorphic to $\cat {Mod}_{A}$, is simplicially enriched and cotensored over $\cat {sSet}$.  As stated in the next lemma,  we can actually complete this structure on $\cat {Mod}_{A}$ to that of a simplicial model category.  
\end{rmk}

\begin{convention}\label{conv:groth}  Henceforth, we assume that $\cat M$ is a cofibrantly generated, mon\-oid\-ally simplicial model category such that all objects are small relative to the entire category and the monoid axiom  is satisfied.  We can therefore apply Theorem 4.1 (1) in \cite{schwede-shipley} to obtain a cofibrantly generated model category structure on $\cat {Mod}_{A}$ for all monoids $A$.
\end{convention}
 
 Because we are assuming that $\cat M$ is monoidally simplicial, its module categories actually admit more structure than just that of a model category. 
 
\begin{lem}  Let $\cat M$ satisfy the criteria of Convention \ref{conv:groth}.  If $A$ is any monoid in $\cat M$, then $\cat {Mod}_{A}$ inherits a simplicial model category structure from $\cat M$, where the cotensoring, denoted $(-)^{(-)}_{A}$, and the simplicial enrichment, denoted $\map_{A}$, are defined as in Lemmas \ref{lem:tens-cotens} and \ref{lem:simplenr}, respectively, and
 $$-\overset A\otimes -:\cat {Mod}_{A}\times \cat{sSet} \to \cat {Mod}_{A}$$ 
 is defined for all $A$-modules $(M,\rho)$ and simplicial sets $K$ by
$$M\overset A\otimes K=(M\otimes K, \rho'),$$
where $\rho'$ is the composite
{\smaller $$(M\otimes K)\wedge A\cong (M\otimes K)\wedge (A\otimes \Delta [0])\xrightarrow\tau (M\wedge A)\otimes (K\times \Delta [0])\cong (M\wedge A)\otimes K \xrightarrow {\rho\otimes K} M\otimes K.$$} 
\end{lem}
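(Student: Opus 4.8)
The plan is to assemble the simplicial structure on $\cat{Mod}_A$ from the general results of Section~\ref{sec:simplicial}, and then to verify the pushout-product axiom by reducing it to the generating cofibrations, where every module in sight is free and the axiom is inherited from $\cat M$.

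\textbf{Step 1 (enrichment, tensoring, cotensoring).} Because $\cat M$ is monoidally simplicial, the endofunctor $-\wedge A$ underlying the monad $\T_A$ is a simplicial functor, and its multiplication and unit are simplicial natural transformations, as one reads off Definition~\ref{defn:monsimpl}. Hence Lemma~\ref{lem:simplenr} gives $\cat{Mod}_A\cong\cat M^{\T_A}$ the enrichment $\map_A$, and Lemma~\ref{lem:tens-cotens}(1) gives it the cotensoring $(-)^{(-)}_A$ together with the natural isomorphism $\map_A(M,N^K_A)\cong\map_{\cat{sSet}}(K,\map_A(M,N))$ recorded after that lemma. One then checks that the displayed formula for $M\overset A\otimes K$ does define an $A$-module --- associativity and unitality of $\rho'$ are a routine diagram chase using naturality and coherence of the (op-)monoidal transformations $\tau,\upsilon$ --- and that $-\overset A\otimes K$ is left adjoint to $(-)^K_A$ for each $K$, so that $\cat{Mod}_A(M\overset A\otimes K,N)\cong\cat{sSet}(K,\map_A(M,N))\cong\cat{Mod}_A(M,N^K_A)$; this exhibits $\cat{Mod}_A$ as tensored and cotensored over $\cat{sSet}$ compatibly with its enrichment. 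Completeness and cocompleteness of $\cat{Mod}_A$ are inherited from $\cat M$ and are in any case implicit in Convention~\ref{conv:groth}.

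\textbf{Step 2 (free modules and the tensoring).} Write $F=-\wedge A\colon\cat M\to\cat{Mod}_A$. The key identity is the natural isomorphism
$$F(N)\overset A\otimes K\;\cong\;F(N\otimes K)\qquad(N\in\cat M,\ K\in\cat{sSet}).$$
Indeed, the object of $\cat M$ underlying $F(N)\overset A\otimes K$ is $(N\wedge A)\otimes K$, which the op-monoidal transformation $\upsilon$ identifies with $(N\otimes K)\wedge A$; tracing $\rho'$ through this identification shows it becomes the free right $A$-action, so the result is $F(N\otimes K)$. Since $-\overset A\otimes K$ is a left adjoint it commutes with colimits, so this isomorphism is compatible with pushouts.

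\textbf{Step 3 (pushout-product axiom).} By Convention~\ref{conv:groth} the model structure on $\cat{Mod}_A$ is the one transferred along $U\colon\cat{Mod}_A\to\cat M$, and is cofibrantly generated with generating cofibrations (resp.\ generating acyclic cofibrations) the images under $F$ of those of $\cat M$. Since $-\overset A\otimes K$ preserves colimits for every $K$, the standard reduction shows that the pushout-product axiom for $\cat{Mod}_A$ follows once it is checked on these generators in the module variable and on generating cofibrations in the simplicial variable. For such a (possibly acyclic) generating cofibration $i$ of $\cat M$ and a (possibly acyclic) cofibration $j$ of $\cat{sSet}$, Step~2 identifies the $\overset A\otimes$-pushout-product of $F(i)$ with $j$ with the image under $F$ of the $\otimes$-pushout-product of $i$ with $j$ formed in $\cat M$. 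As $\cat M$ is a simplicial model category, the latter is a cofibration of $\cat M$, acyclic whenever $i$ or $j$ is acyclic, and $F$, being left Quillen, preserves (acyclic) cofibrations. This is precisely the pushout-product axiom of Definition~\ref{defn:simplmodel}, so $\cat{Mod}_A$ is a simplicial model category.

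\textbf{Main obstacle.} The one genuinely delicate point is the identity in Step~2: verifying that, under the isomorphism $(N\wedge A)\otimes K\cong(N\otimes K)\wedge A$ induced by $\upsilon$, the twisted action $\rho'$ becomes the free $A$-action on $N\otimes K$. This is a coherence computation relating $\tau$, $\upsilon$, the monoid multiplication of $A$, and the tensoring/cotensoring adjunction of $\cat M$; everything else in the argument is formal or a routine verification.
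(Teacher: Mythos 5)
The paper does not actually record a proof of this lemma (it is ``straightforward and left to the reader''), so there is no official argument to compare against; judged on its own terms, your outline follows the standard transfer-plus-generating-cofibrations strategy, but it has one genuine gap, and it is exactly at the point you yourself flag as the ``main obstacle.'' In Step~2 you assert that $\upsilon$ \emph{identifies} $(N\wedge A)\otimes K$ with $(N\otimes K)\wedge A$, i.e.\ that $F(N)\overset A\otimes K\cong F(N\otimes K)$ as $A$-modules. But Definition~\ref{defn:monsimpl} only provides natural transformations $\tau$ and $\upsilon$ in the two directions, subject to associativity and compatibility constraints; it does not say they are mutually inverse, nor that either is an isomorphism. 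So under the stated hypotheses the comparison map you need is merely a natural map, and your Step~3 collapses: the pushout-product of $F(i)$ with $j$ need not be $F(i\square j)$, only receive/admit a comparison map from it, which is not enough to conclude it is an (acyclic) cofibration. The identification is fine in the motivating examples (tensoring induced by a strong monoidal functor $\cat{sSet}\to\cat M$, as in the paper's example), but a proof of the lemma as stated cannot invoke it without adding that hypothesis.

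The gap is avoidable, and the likely intended ``straightforward'' argument sidesteps the tensor side entirely. Once you have Step~1 --- the simplicial enrichment, the cotensor $(-)^{(-)}_A$, the tensor $-\overset A\otimes-$, and the two-variable adjunction relating them (which does need the enriched, not just hom-set, isomorphisms, but is routine) --- the axiom of Definition~\ref{defn:simplmodel} is equivalent, by adjunction, to its cotensor form: for every fibration $p\colon N\to N'$ in $\cat{Mod}_A$ and every (acyclic) cofibration $j\colon K\to L$ of simplicial sets, the corner map $N^L_A\to N^K_A\times_{(N')^K_A}(N')^L_A$ is an (acyclic) fibration. Since the model structure on $\cat{Mod}_A$ is right-induced along $U$, which creates fibrations and weak equivalences, preserves limits, and by the very construction of Lemma~\ref{lem:tens-cotens} satisfies $U\big((N,\rho)^K_A\big)=N^K$, this corner map is sent by $U$ to the corresponding corner map in $\cat M$, which is an (acyclic) fibration because $\cat M$ is a simplicial model category. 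No analysis of free modules, generating cofibrations, or invertibility of $\tau$ and $\upsilon$ is needed; the only nontrivial input is the Step~1 adjunction, whose verification is the coherence check involving $\tau$, $\xi$ and the monoid structure of $A$ that you correctly identify as the real content.
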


The proof is straightforward and left to the reader.

Recall from section \ref{sec:grothendieck} that if $A$ is augmented, then $(\cat {Mod}_{A})^{\T_{\vp}}\simeq \cat {Mod}_{B}$ and that $\op D(\T_{\vp})\cong \cat M_{A}^{W_{\vp}}$, the category of comodules over the descent co-ring $W_{\vp}$ in the category of right $A$-modules.  Finally, 
$$\can _{\vp}:=\can _{\K_{\vp}}:\cat{Mod}_{B}\to \cat M_{A}^{W_{\vp}}: M\mapsto (M\underset B\wedge A, M\underset B\wedge \vp\underset B\wedge A).$$
We write $\map_{W_{\vp},A}(-,-)$ for the induced mapping space functor on $(\cat M_{A}^{W_{\vp}})^{op}\times \cat M_{A}^{W_{\vp}}$.

For the theory of derived completion from section \ref{sec:completion} to be applicable, the functor $\vp^*(-\underset B\wedge A): \cat{Mod}_{B}\to \cat {Mod}_{B}$ must be simplicial.  It is clear that this is always true, since $\vp^*(-\underset B\wedge A)$ is a composite of simplicial functors, by definition of the mapping spaces in the module categories.  Note that it is also evident that the multiplication $\mu_{\vp}$ and the unit $\eta_{\vp}$ of the monad $\T_{\vp}$ are simplicial natural transformations.

In this Grothendieck framework, Definition \ref{defn:modelTcomp} and Notation \ref{notn:Tcomp} translate into the following definition.

\begin{defn}  Let $M$ be a $B$-module.  Let $\widehat M$ be a fibrant replacement of $M$ such that $\Om ^\bullet_{\T_{\vp}}\widehat M$ is Reedy fibrant.  The \emph{derived completion of $M$ along $\vp$} is
$$M^\wedge_{\vp}:=M^\wedge_{\T_{\vp}}=\tot \Om ^\bullet_{\T_{\vp}}\widehat M.$$
\end{defn}

\begin{rmk}\label{rmk:amitsur} When $\cat M$ is a monoidal model category of spectra, the definition above agrees with Carlsson's definition of derived completion of module spectra \cite{carlsson}, when $A$ is cofibrant as a $B$-algebra.  Moreover $\Om^\bullet_{\T_{\vp}}M$ is exactly the Amitsur complex of $M$ with respect to $\vp$ (cf., e.g,. Definition 8.2.1 in \cite{rognes}). 
\end{rmk}

\begin{rmk} Since $\cat {Mod}_{A}$ is a simplicial model category when $\cat M$ is monoidally simplicial, Lemma \ref{lem:char-Tequiv} implies that a morphism $f:M\to N$ of cofibrant $B$-modules is a $\T_{\vp}$-equivalence if and only if $f\wedge_{B}A:M\wedge_{B}A\to N\wedge_{B}A$ is a weak equivalence. 
\end{rmk}

\begin{rmk}  Recall from Remark \ref{rmk:inftyTalg} that a strictly $\T_{\vp}$-complete $B$-module $M$ can be seen as an $\infty$-$\T_{\vp}$-algebra.  In other words, $M$ is a sort of $\infty$-$A$-module, with respect to the monoidal product $-\underset B\wedge -$.
\end{rmk}

Specializing Definition \ref{defn:htpicdesc} to the Grothendieck framework, we obtain the following notion of homotopic descent.  Note that this definition makes sense even if the module categories admit only a simplicial enrichment, rather than a full simplicial model category structure.

\begin{defn}\label{defn:htpicGrothdesc} The monad $\T_{\vp}$ satisfies \emph{homotopic descent} if each component
$$(\can _{\vp})_{M,M'}:\map_{B}(M,M')\to \map_{W_{\vp},A}\big(\can_{\vp}(M),\can_{\vp}(M')\big)$$
of the simplicial functor $\can_{\vp}$ is a weak equivalence of simplicial sets.
\end{defn}

\begin{rmk}\label{rmk:lhT}  Since the descent category $\op D(\T _{\vp})$ is isomorphic to a category of comodules, $\cat M_{A}^{W_{\vp}}$,  this definition implies that if $\T_{\vp}$ satisfies homotopic descent, then $\cat {Mod}_{B}$ is locally homotopy Tannakian.
\end{rmk}

The criterion for homotopic descent in the Grothendieck framework is a special case of Theorem \ref{thm:equiv-monad}.

\begin{thm}\label{thm:Grothdesc-crit} Let $(\cat {Mod}_{B})^\wedge_{\T_{\vp}}$ denote the full simplicial subcategory of $\cat {Mod}_{B}$ determined by the bifibrant, strictly $\T_{\vp}$-complete objects. Suppose that the forgetful functor $U_{\K^{\T_{\vp}}}:\cat M_{A}^{W_{\vp}}\to \cat {Mod}_{A}$ left-induces a model category structure on $M_{A}^{W_{\vp}}$ that is compatible with its usual simplicial enrichment.

If $\vp^*(-\underset B\wedge A):\cat{Mod}_{B}\to \cat {Mod}_{B}$ preserves bifibrant objects and $\Om^\bullet_{\T_{\vp}}$ preserves fibrant objects, then $\T_{\vp}$ restricts and corestricts to a monad $\widehat \T_{\vp}$ on $(\cat {Mod}_{B})^\wedge_{\T_{\vp}}$ that satisfies homotopic descent.
\end{thm}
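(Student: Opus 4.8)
The plan is to obtain the statement as a direct application of Theorem \ref{thm:equiv-monad}, taking the ambient simplicial model category there to be $\cat{Mod}_{B}$ and the monad there to be $\T_{\vp}$, so that the work reduces to checking the hypotheses of that theorem in the present situation. The category $\cat{Mod}_{B}$ is a simplicial model category by Convention \ref{conv:groth} and the lemma following it. The underlying endofunctor $\vp^{*}(-\underset B\wedge A)$ of $\T_{\vp}$ is a composite of simplicial functors, hence simplicial; it preserves bifibrant objects by hypothesis; and, as observed in the discussion preceding the theorem, $\mu_{\vp}$ and $\eta_{\vp}$ are simplicial natural transformations. That $\Om^{\bullet}_{\T_{\vp}}$ preserves fibrant objects is a hypothesis, and the assumption that $U_{\K^{\T_{\vp}}}\colon \op D(\T_{\vp})\to(\cat{Mod}_{B})^{\T_{\vp}}$ left-induces a model structure on $\op D(\T_{\vp})\cong\cat M_{A}^{W_{\vp}}$ (the isomorphism being valid when $A$ is augmented, cf.\ Example \ref{ex:co-ring} and the examples following it) compatibly with the usual simplicial enrichment is precisely the standing assumption of the theorem.

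The one remaining hypothesis that requires an argument is that $U^{\T_{\vp}}\colon(\cat{Mod}_{B})^{\T_{\vp}}\to\cat{Mod}_{B}$ right-induces a model structure. Here I would use the standard identification of Eilenberg--Moore categories: since $\vp^{*}$ is monadic (recalled in section \ref{sec:grothendieck}), there is an equivalence $(\cat{Mod}_{B})^{\T_{\vp}}\simeq\cat{Mod}_{A}$ under which $U^{\T_{\vp}}$ corresponds to the restriction-of-scalars functor $\vp^{*}\colon\cat{Mod}_{A}\to\cat{Mod}_{B}$. Now in the cofibrantly generated model structure of \cite{schwede-shipley} a morphism of $A$-modules is a fibration (resp.\ a weak equivalence) exactly when its image in $\cat M$ is such; the same holds for $\cat{Mod}_{B}$; and the forgetful functor $\cat{Mod}_{A}\to\cat M$ factors through $\vp^{*}$ followed by the forgetful functor $\cat{Mod}_{B}\to\cat M$. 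Hence a morphism of $A$-modules is a fibration (resp.\ a weak equivalence) if and only if its underlying morphism of $B$-modules is, which says exactly that the Schwede--Shipley structure on $\cat{Mod}_{A}\simeq(\cat{Mod}_{B})^{\T_{\vp}}$ is right-induced along $U^{\T_{\vp}}$. One also checks, which is immediate since equalizers in $\cat{Mod}_{B}$ are computed in $\cat M$, that the simplicial enrichment of $(\cat{Mod}_{B})^{\T_{\vp}}$ supplied by Lemma \ref{lem:simplenr} coincides with the intrinsic simplicial enrichment of $\cat{Mod}_{A}$ from the lemma following Convention \ref{conv:groth}.

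With all hypotheses of Theorem \ref{thm:equiv-monad} verified, that theorem gives that $\widehat\T_{\vp}$ satisfies homotopic descent, where $\widehat\T_{\vp}$ is the restriction and corestriction of $\T_{\vp}$ to $(\cat{Mod}_{B})^{\wedge}_{\T_{\vp}}$; this restriction exists as a monad precisely because $\vp^{*}(-\underset B\wedge A)$ preserves bifibrant objects, by the observation recorded just after the notation $\cat M^{\wedge}_{\T}$ is introduced (via Remark \ref{rmk:ex-Tcomp}). The main obstacle is thus essentially bookkeeping: arranging the identifications $(\cat{Mod}_{B})^{\T_{\vp}}\cong\cat{Mod}_{A}$ and $\op D(\T_{\vp})\cong\cat M_{A}^{W_{\vp}}$ so that they are compatible with all the model and simplicial structures in play, after which Theorem \ref{thm:equiv-monad} applies with no further homotopy-theoretic input.
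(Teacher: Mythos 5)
Your proposal is correct and follows essentially the same route as the paper, which derives Theorem \ref{thm:Grothdesc-crit} as a direct specialization of Theorem \ref{thm:equiv-monad} using the identifications $(\cat {Mod}_{B})^{\T_{\vp}}\simeq \cat {Mod}_{A}$ and $\op D(\T_{\vp})\cong \cat M_{A}^{W_{\vp}}$, with the simpliciality of $\T_{\vp}$ checked exactly as you do. Your explicit verification that the Schwede--Shipley structure on $\cat {Mod}_{A}$ is right-induced along $\vp^{*}$ is a detail the paper leaves implicit, but it is the intended argument.
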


The translation of Corollary \ref{cor:ffd} in this case gives a homotopical version of the usual faithfully flat descent.  Note that $\vp^*(-\underset B\wedge A)$ is cosimplicially faithful if a morphism $f^\bullet :M^\bullet \to N^\bullet$ of cosimplicial $B$-modules is an external homotopy equivalence whenever $f^\bullet\underset B\wedge A$ is an external homotopy equivalence of cosimplicial $A$-modules.

\begin{cor}\label{cor:ffd-groth} Under the hypotheses of Theorem \ref{thm:Grothdesc-crit}, if 
$\Om ^\bullet_{\T_{\vp}}$ preserves fibrant objects, and  $\vp^*(-\underset B\wedge A)$ is cosimplicially faithful, then the restriction of $\T_{\vp}$ to the full subcategory of bifibrant $B$-modules satisfies homotopic descent. 
\end{cor}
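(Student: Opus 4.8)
The plan is to deduce this corollary directly from Theorem~\ref{thm:Grothdesc-crit} by showing that the hypothesis of cosimplicial faithfulness forces the subcategory $(\cat{Mod}_{B})^\wedge_{\T_{\vp}}$ appearing in that theorem to coincide with the full subcategory of \emph{all} bifibrant $B$-modules. The argument is simply the Grothendieck-framework specialization of the passage from Theorem~\ref{thm:equiv-monad} to Corollary~\ref{cor:ffd}.

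First I would observe that $\cat{Mod}_{B}$ admits all finite limits and colimits, since $\cat M$ does (being a cofibrantly generated model category) and limits and colimits of $B$-modules are computed in $\cat M$ in the usual way. The endofunctor underlying the monad $\T_{\vp}$ is $\vp^*(-\underset B\wedge A)$, which is cosimplicially faithful by hypothesis. Applying the lemma immediately following Definition~\ref{defn:cosimpfaith} — which asserts that if the endofunctor underlying a monad on a category admitting all finite limits and colimits is cosimplicially faithful, then every object of that category is strictly complete for the monad — we conclude that every $B$-module, and in particular every bifibrant $B$-module, is strictly $\T_{\vp}$-complete.

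Next, since $\vp^*(-\underset B\wedge A)$ also preserves bifibrant objects (a hypothesis inherited from Theorem~\ref{thm:Grothdesc-crit}), $\T_{\vp}$ restricts and corestricts to a monad $\widehat\T_{\vp}$ on the full subcategory of bifibrant $B$-modules; by the previous step this subcategory is precisely $(\cat{Mod}_{B})^\wedge_{\T_{\vp}}$. The remaining hypotheses of Theorem~\ref{thm:Grothdesc-crit} hold by assumption: the forgetful functor $U_{\K^{\T_{\vp}}}$ left-induces a model structure on $\cat M_{A}^{W_{\vp}}$ compatible with its simplicial enrichment, and $\Om^\bullet_{\T_{\vp}}$ preserves fibrant objects. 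Theorem~\ref{thm:Grothdesc-crit} therefore applies verbatim and yields that $\widehat\T_{\vp}$ satisfies homotopic descent, which is exactly the assertion.

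There is no substantial obstacle here; the only points requiring care are bookkeeping ones — namely confirming that the notion of strict $\T_{\vp}$-completeness used in defining $(\cat{Mod}_{B})^\wedge_{\T_{\vp}}$ (Definition~\ref{defn:strong-T}(1)) is literally the same one appearing in the cosimplicial-faithfulness lemma, and that $\cat{Mod}_{B}$ genuinely possesses the finite limits and colimits that lemma requires. Both are immediate from the hypotheses collected in Convention~\ref{conv:groth}.
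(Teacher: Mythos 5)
Your proposal is correct and follows essentially the same route as the paper, which presents Corollary \ref{cor:ffd-groth} as the direct translation of Corollary \ref{cor:ffd}: one combines Theorem \ref{thm:Grothdesc-crit} with the lemma following Definition \ref{defn:cosimpfaith}, so that cosimplicial faithfulness of $\vp^*(-\underset B\wedge A)$ makes every (bifibrant) $B$-module strictly $\T_{\vp}$-complete and hence identifies $(\cat{Mod}_{B})^\wedge_{\T_{\vp}}$ with the full subcategory of bifibrant $B$-modules. No further argument is needed.
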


The relevant spectral sequence in this case is defined as follows.  The choice of terminology is justified by the examples sketched below.

\begin{defn}  Let $M$ and $N$ be right $B$-modules, and let $f:M\to N$ be a morphism of $B$-modules.  The \emph{$\vp$-Adams spectral sequence for $(M,N)$ with respect to $f$}, denoted $E^\vp_{f}$, is the $\T_{\vp}$-descent spectral sequence for $(M,N)$ with respect to $f$.
\end{defn}

The next result is an immediate specialization of Lemma \ref{lem:einfty} and Theorem \ref{thm:e2-desc}.

\begin{prop}Let $M$ and $N$ be a cofibrant and a fibrant right $B$-module, respectively, and let $f:M\to N$ be a morphism of $B$-modules. 
\begin{enumerate}
\item  If $\Om^\bullet_{\T_{\vp}}$ preserves fibrant objects, then the $\vp$-Adams spectral sequence for $(M,N)$ with respect to $f$ abuts to $\pi_{*}\map_{B}(M, N^\wedge_\vp)_{f}$, if it indeed converges.
\item Suppose that the forgetful functor $U_{\K^{\T_{\vp}}}:\cat M_{A}^{W_{\vp}}\to \cat {Mod}_{A}$ left-induces a model category structure on $M_{A}^{W_{\vp}}$ that is compatible with its usual simplicial enrichment. If $\Om^\bullet_{\T_{\vp}}N$ is Reedy fibrant and $\can_{\vp}(N)$ is strongly $\mathbb{DT}_{\vp}$-complete, then 
$$(E^\vp_{f})_{2}^{s,t}\cong \ext^{s,t}_{W_{\vp}, A}\big( \can_{\vp} (M), \can_{\vp} (N)\big)_{\can_{\vp}f}$$
for any choice of basepoint $f$.
\end{enumerate}
\end{prop}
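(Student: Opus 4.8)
The plan is to obtain both statements as direct specializations of the general results of Section~\ref{sec:htpicdesc}, once the structures appearing there have been matched up with those of the Grothendieck framework. By Convention~\ref{conv:groth} and the preceding lemma on module categories, $\cat{Mod}_{B}$ is a cofibrantly generated simplicial model category; moreover, as observed just before Definition~\ref{defn:htpicGrothdesc}, the endofunctor $\vp^{*}(-\underset{B}\wedge A)$ underlying $\T_{\vp}$ is a simplicial functor and its multiplication $\mu_{\vp}$ and unit $\eta_{\vp}$ are simplicial natural transformations. Hence Convention~\ref{conv:descss} holds with $\cat M$ taken to be $\cat{Mod}_{B}$ and $\T$ taken to be $\T_{\vp}$, so the $\T_{\vp}$-descent spectral sequence of Definition~\ref{defn:descass}---which is exactly $E^{\vp}_{f}$---is defined, and Lemma~\ref{lem:einfty} and Theorem~\ref{thm:e2-desc} apply verbatim.

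For part~(1), I would simply invoke Lemma~\ref{lem:einfty} for $\T_{\vp}$ on $\cat{Mod}_{B}$: the hypothesis that $\Om^{\bullet}_{\T_{\vp}}$ preserves fibrant objects, together with $M$ cofibrant and $N$ fibrant, is exactly what that lemma requires, and it tells us that $E^{\vp}_{f}$ abuts to $\pi_{*}\map_{\cat{Mod}_{B}}(M, N^{\wedge}_{\T_{\vp}})_{f}$, provided it converges. Since the mapping spaces of $\cat{Mod}_{B}$ are by construction $\map_{B}(-,-)$ and $N^{\wedge}_{\T_{\vp}} = N^{\wedge}_{\vp}$ by the very definition of derived completion along $\vp$, this is the assertion.

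For part~(2), I would verify the hypotheses of Theorem~\ref{thm:e2-desc} and then translate its conclusion. The Eilenberg--Moore category $(\cat{Mod}_{B})^{\T_{\vp}}$ is (via $\can^{\T_{\vp}}$) equivalent to $\cat{Mod}_{A}$, and the restriction-of-scalars functor $\vp^{*}$ creates fibrations and weak equivalences (both being detected in $\cat M$ on either side), so the model structure on $\cat{Mod}_{A}$ furnished by Convention~\ref{conv:groth} is right-induced by $U^{\T_{\vp}}$; the remaining hypotheses of Theorem~\ref{thm:e2-desc}---that $U_{\K^{\T_{\vp}}}\colon\op D(\T_{\vp})\cong\cat M_{A}^{W_{\vp}}\to\cat{Mod}_{A}$ left-induces a simplicially compatible model structure, that $\Om^{\bullet}_{\T_{\vp}}N$ is Reedy fibrant, and that $\can_{\vp}(N)=\can_{\K^{\T_{\vp}}}(N)$ is strongly $\mathbb{DT}_{\vp}$-complete---are precisely those imposed in the statement. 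Theorem~\ref{thm:e2-desc} then gives $(E^{\vp}_{f})^{s,t}_{2}\cong\ext^{s,t}_{\op D(\T_{\vp})}\big(\can_{\K^{\T_{\vp}}}(M),\can_{\K^{\T_{\vp}}}(N)\big)_{\can_{\K^{\T_{\vp}}}f}$, and carrying this across the isomorphism $\op D(\T_{\vp})\cong\cat M_{A}^{W_{\vp}}$ of Section~\ref{sec:grothendieck}---under which $\can_{\K^{\T_{\vp}}}$ becomes $\can_{\vp}$ and $\map_{\op D(\T_{\vp})}$, hence $\ext_{\op D(\T_{\vp})}$, becomes $\map_{W_{\vp},A}$, hence $\ext_{W_{\vp},A}$---yields the displayed formula.

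The substance of the argument is thus entirely bookkeeping: matching module categories with Eilenberg--Moore categories of $\T_{\vp}$ and $\mathbb{DT}_{\vp}$, matching $\map_{B}$ and $\map_{W_{\vp},A}$ with the enriched hom-spaces of Lemma~\ref{lem:simplenr}, and matching $N^{\wedge}_{\vp}$ with $N^{\wedge}_{\T_{\vp}}$ and $\op D(\T_{\vp})$ with $\cat M_{A}^{W_{\vp}}$. I expect the only genuinely delicate points to be the latter identification---which tacitly requires $A$ to be augmented, so that the isomorphism $\op D(\T_{\vp})\cong\cat M_{A}^{W_{\vp}}$ of Section~\ref{sec:grothendieck} is available---and the verification that $U^{\T_{\vp}}$ right-induces the Schwede--Shipley model structure on $\cat{Mod}_{A}$; everything else follows immediately from the definitions recalled in Sections~\ref{sec:grothendieck} and~\ref{sec:completion} and in Convention~\ref{conv:groth}.
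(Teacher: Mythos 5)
Your proposal is correct and follows the same route as the paper, which simply records this Proposition as an immediate specialization of Lemma \ref{lem:einfty} and Theorem \ref{thm:e2-desc} once $\cat M=\cat{Mod}_{B}$, $\T=\T_{\vp}$, and the identifications $\op D(\T_{\vp})\cong\cat M_{A}^{W_{\vp}}$, $N^{\wedge}_{\T_{\vp}}=N^{\wedge}_{\vp}$ are made. The bookkeeping you spell out (Convention \ref{conv:descss} via Convention \ref{conv:groth}, the augmentation hypothesis behind $\op D(\T_{\vp})\cong\cat M_{A}^{W_{\vp}}$, and the right-induced structure on $\cat{Mod}_{A}$) is exactly what the paper leaves implicit.
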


In a future article we will treat in detail the following special, very important cases of $\vp$-Adams spectral sequences.  Here we sketch only the proposed applications.

\subsubsection{The stable Adams spectral sequence}\label{sec:ass} Let $H\mathbb F_{p}$ denote the Eilenberg-MacLane spectrum of $\mathbb F_{p}$, in some appropriate monoidal model category $\cat M$ of spectra, and let $\eta_{H\mathbb F_{p}}:S\to H\mathbb F_{p}$ denote its unit.  As we sketch below, it is possible to show that the $\eta_{H\mathbb F_{p}}$-Adams spectral sequence is the usual stable Adams spectral sequence.  

Note that in this case
$$W_{\eta_{H\mathbb F_{p}}}=H\mathbb F_{p} \wedge H\mathbb F_{p},$$
which is actually a comonoid in the category of commutative $S$-algebras and not just an $S$-co-ring.  
The stable homotopy groups of $W_{\eta_{H\mathbb F_{p}}}$ form the dual mod $p$ Steenrod algebra $\mathcal A_{p}^\sharp$, while for any spectrum $X$, the stable homotopy groups of $\can_{\eta_{H\mathbb F_{p}}}(X)=X\wedge H\mathbb F_{p}$ are its singular homology groups with coefficients in $\mathbb F_{p}$. 

Applying Proposition \ref{prop:criterion} and assuming that all the necessary model category conditions hold, we see that  if $Y$ is  fibrant and strictly $\T_{\eta_{H\mathbb F_{p}}}$-complete, $\Om^\bullet_{\T_{\eta_{H\mathbb F_{p}}}}(Y)$ is Reedy fibrant, and $X$ is cofibrant, then the simplicial map
$$(\can _{\eta_{H\mathbb F_{p}}})_{X,Y}:\map _{\cat M}(X, Y)\to \map_{W_{\eta_{H\mathbb F_{p}}},H\mathbb F_{p}}\big( X\wedge H\mathbb F_{p}, Y\wedge H\mathbb F_{p}\big)$$
is a weak equivalence, where we have suppressed the coactions from the notation for $\can_{\eta_{H\mathbb F_{p}}}(X)$ and $\can_{\eta_{H\mathbb F_{p}}}(Y)$.
\medskip

\subsubsection{The Adams-Novikov spectral sequence}\label{sec:anss} Let $MU$ denote the complex cobordism spectrum, in some appropriate monoidal model category $\cat M$ of spectra, and let $\eta_{MU}:S\to MU$ denote its unit. We again sketch the argument to show that the $\eta_{MU}$-Adams spectral sequence is the  Adams-Novikov spectral sequence.  

Note that in this case
$$W_{\eta_{MU}}=MU \wedge MU,$$
which again is actually a comonoid in the category of commutative $S$-algebras and not just an $S$-co-ring.  
The stable homotopy groups of $W_{\eta_{MU}}$ form the Hopf algebra of cooperations on $MU_{*}$-homology, while for any spectrum $X$, the stable homotopy groups of $\can_{\eta_{MU}}(X)=X\wedge MU$ are its $MU_{*}$-homology groups. 

As in the previous case, Proposition \ref{prop:criterion} tells us that  if $Y$ is  fibrant and strictly $\T_{\eta_{MU}}$-complete, $\Om^\bullet_{\T_{\eta_{MU}}}(Y)$ is Reedy fibrant,  and $X$ is cofibrant, then the simplicial map 
$$(\can _{\eta_{MU}})_{X,Y}:\map _{\cat M}(X, Y)\to \map_{W_{\eta_{MU}},MU}\big( X\wedge MU, Y\wedge MU\big)$$
is a weak equivalence, where we have suppressed the coactions from the notation for $\can_{\eta_{MU}}(X)$ and $\can_{\eta_{MU}}(Y)$.
\medskip

\subsubsection{Algebraic vs. \'etale K-theory}\label{sec:lq} Let $R=\mathbb Z[\frac 1\ell]$, where $\ell$ is a fixed prime, and let $A$ be a noetherian $R$-algebra. In \cite{dwyer-friedlander} Dwyer and Friedlander constructed a natural map of ring spectra
$$\vp^{DF}_{A}:K_{A}\to \widehat K^{\text{\'et}}_{A},$$
where $K_{A}$ is the algebraic K-theory spectrum of $A$, and $\widehat K^{\text{\'et}}_{A}$ is its \'etale K-theory spectrum, i.e.,
$$\pi_{*}K_{A}\cong K_{*}(A)\quad\text{and}\quad \pi_{*}\widehat K^{\text{\'et}}_{A}\cong \widehat K^{\text{\'et}}_{*}(A).$$
More generally, if $v\geq 1$ and $M(v)$ denotes the mod $\ell^v$-Moore spectrum with bottom cell in stable dimension $0$,  then
$$\pi_{*}(M(v)\wedge K_{A})\cong K_{*}(A;\mathbb Z/\ell^v\mathbb Z)\quad\text{and}\quad \pi_{*} (M(v)\wedge \widehat K^{\text{\'et}}_{A})\cong \widehat K^{\text{\'et}}_{*}(A;\mathbb Z/\ell^v\mathbb Z).$$
The famous Lichtenbaum-Quillen conjecture lists conditions under which the homomorphism
$$(\vp^{DF}_{A})_{*}:K_{*}(A;\mathbb Z/\ell^v\mathbb Z) \to \widehat K^{\text{\'et}}_{*}(A;\mathbb Z/\ell^v\mathbb Z)$$
induced by the Dwyer-Friedlander map should be an isomorphism.  We refer the reader to the extensive literature on this conjecture for cases in which it is known to hold.

Note that the associated canonical descent co-ring is the $\widehat K^{\text{\'et}}_{A}$-bimodule
$$W_{\vp^{DF}_{A}}=\widehat K^{\text{\'et}}_{A}\underset {K_{A}}\wedge \widehat K^{\text{\'et}}_{A},$$
endowed with the usual canonical comultiplication and counit.

Suppose that we can choose a model for the Dwyer-Friedlander map such that the necessary conditions (e.g., Convention \ref{conv:descss}) hold  and so that $\Om^\bullet _{\T_{\vp^{DF}_{A}}}$ preserves fibrant objects. The $\vp^{DF}_{A}$-Adams spectral sequence for a cofibrant $K_{A}$-module $M$ and a fibrant  $K_{A}$-module $N$  such that $N\underset {K_{A}}\wedge \widehat K^{\text{\'et}}_{A}$ is strongly $\mathbb {DT}_{\vp^{DF}_{A}}$-complete then satisfies
$$(E^{\vp^{DF}_{A}}_{f})_{2}^{s,t}\cong \ext^{s,t}_{W_{\vp^{DF}_{A}}, \widehat K^{\text{\'et}}_{A}}\big( M\underset {K_{A}}\wedge \widehat K^{\text{\'et}}_{A}, N\underset {K_{A}}\wedge \widehat K^{\text{\'et}}_{A}\big)_{f\underset {K_{A}}\wedge \widehat K^{\text{\'et}}_{A}}$$
and abuts to 
$$\pi_{*}\map_{K_{A}}(M, N^\wedge_{\vp^{DF}_{A}})_{f}$$
for any choice of basepoint $f:M\to N$, if it indeed converges.  Moreover, Proposition \ref{prop:criterion} implies that the simplicial map 
$$(\can _{\vp^{DF}_{A}})_{M,N}:\map _{K_{A}}(M, N)\to \map_{W_{\vp^{DF}_{A}},\widehat K^{\text{\'et}}_{A}}\big( M\underset {K_{A}}\wedge \widehat K^{\text{\'et}}_{A}, N\underset {K_{A}}\wedge \widehat K^{\text{\'et}}_{A}\big)$$
is a weak equivalence if $N$ is actually strongly $\T_{\vp^{DF}_{A}}$-complete, where we have suppressed the coactions from the notation for $\can_{\vp^{DF}_{A}}(M)$ and $\can_{\vp^{DF}_{A}}(N)$. 

Let $S$ denote the sphere spectrum, i.e., the unit object in $\cat M$. In the case where $M=K_{A}$ and $N=M(v)\wedge K_{A}$, and $f$ is determined by the inclusion of the basepoint into $M(v)$, 
$$\map _{K_{A}}(M, N)\cong\map_{\cat M}(S,M(v)\wedge K_{A}),$$
while
$$\map_{W_{\vp^{DF}_{A}},\widehat K^{\text{\'et}}_{A}}\big( M\underset {K_{A}}\wedge \widehat K^{\text{\'et}}_{A}, N\underset {K_{A}}\wedge \widehat K^{\text{\'et}}_{A}\big)\cong \map_{W_{\vp^{DF}_{A}},\widehat K^{\text{\'et}}_{A}}\big(  \widehat K^{\text{\'et}}_{A}, M(v)\wedge \widehat K^{\text{\'et}}_{A}\big),$$
which is a simplicial subset of 
$$\map_{\widehat K^{\text{\'et}}_{A}}\big(  \widehat K^{\text{\'et}}_{A},M(v)\wedge \widehat K^{\text{\'et}}_{A}\big)\cong  \map_{\cat M}\big(  S, M(v)\wedge\widehat K^{\text{\'et}}_{A}\big).$$

Let $\Psi_{A,v}=\pi_{*}(\can _{\vp^{DF}_{A}})_{K_{A},M(v)\wedge K_{A}}$.  It follows from the analysis above that there is a commutative diagram of group homomorphisms
$$\xymatrix{K_{*}(A;\mathbb Z/\ell^v\mathbb Z)\ar [dr]_{\Psi_{A,v}}\ar[rr]^{(\vp^{DF}_{A})_{*}}&& \widehat K^{\text{\'et}}_{*}(A;\mathbb Z/\ell^v\mathbb Z),\\
	&\pi_{*}\map_{W_{\vp^{DF}_{A}},\widehat K^{\text{\'et}}_{A}}\big(  \widehat K^{\text{\'et}}_{A}, \widehat K^{\text{\'et}}_{A}\big)\ar [ur]_{\iota_{*}}}$$
where $\iota_{*}$ is induced by the inclusion.
There is a thus a close relationship, mediated by $\iota_{*}$, between the Lichtenbaum-Quillen conjecture for $A$ and the question of when $\T_{\vp^{DF}_{A}}$ satisfies homotopic descent.
In particular, if $M(v)\wedge K_{A}$ is bifibrant as a $K_{A}$-module and strictly $\T_{\vp^{DF}_{A}}$-complete, then $\Psi_{A,v}$ is an isomorphism, so that the Lichtenbaum-Quillen conjecture holds for $A$ and $v$ if and only if  $\iota_{*}$ is an isomorphism.
\medskip

\subsubsection{Quillen homology} Let $\cat M$ be a particularly nice, simplicially monoidal model category.  Let $A$ be a monoid in $\cat M$ that is endowed with an augmentation $\ve: A\to I$.  Let 
$$\xymatrix@1{A\cof^\vp &\; Q\wefib &\; I}$$
be a factorization of $\ve$ in the category of $A$-algebras (i.e., monoids in the category of $A$-bimodules) into a cofibration followed by an acyclic fibration.  The descent co-ring associated to the cofibration $\vp$ is then
$$W_{\vp}=Q\underset A\wedge Q.$$
If $M$ is a cofibrant $A$-module and $N$ is a fibrant $A$-module, then the $\vp$-Adams spectral sequence  abuts to 
$$\pi_{*} \map_{A} (M, N^\wedge_{\vp}),$$
if it indeed converges.
If, in addition, $\can_{\vp}(N)$ is strongly $\mathbb {DT}_{\vp}$-complete, then the $E_{2}$-term is isomorphic to 
$$E_{2}^{s,t}=\ext_{A, W_{\vp}}\big(\can_{\vp} (M), \can_{\vp} (N)\big).$$
When, as is often the case, the forgetful functor $A\text{-}\cat{Alg}\to {}_{A}\cat {Mod}$ preserves cofibrations, and $A$ is cofibrant in ${}_{A}\cat {Mod}$, then $Q$ is a cofibrant replacement of $I$ in the category left $A$-modules. Under reasonable conditions, therefore, the $A$-module underlying $\can_{\vp} (M)$ is a model of the homotopy orbits (or homotopy indecomposables) of the $A$-action on $M$:
$$\can_{\vp} (M)=M\underset A \wedge Q = M_{hA}.$$
 Thus, the $\vp$-Adams spectral sequence in this case can be seen as converging from Quillen homology to homotopy.
 
The elaboration of this example is joint work in progress with J. E. Harper.

\subsection{The dual Grothendieck framework}\label{sec:dualhtpicGroth}

We now specialize our definitions of derived cocompletion and of the codescent spectral sequence to the dual Grothen\-dieck framework (cf. section \ref{sec:dualGroth}).  Though establishing the theoretical foundations of this special form of codescent consists essentially of a formal game of dualizing the previous case, it is a game worth playing as the spectral sequences we obtain are both important and of a rather different nature from those we studied in the previous section. A helpful reference for the material in this section is \cite{janelidze-tholen}.

To simplify the discussion somewhat, particularly in terms of the model category structures involved, we consider only the cartesian case here:  we fix a simplicial model category $\cat M$, endowed with the monoidal structure given by the categorical product $\times$.  Recall from section \ref{sec:dualGroth} that any object $X$ of $\cat M$ is a comonoid, where the comultiplication is the canonical diagonal map $\delta_{X}:X\to X\times X$ and the counit is the unique map $\epsilon_{X}:X\to e$ to the terminal object.  Moreover, the category  of right $(X,\delta_{X},  \epsilon_{X})$-comodules is isomorphic to the overcategory $\cat M/X$.

For any object $X$, the category $\cat M/X$ is endowed with a simplicial model category structure inherited from $\cat M$ \cite[Proposition II.2.6]{quillen}. The cofibrant objects in $\cat M/X$ are those morphisms $f:X\to B$ in $\cat M$ such that $X$ is cofibrant, while the fibrant objects are the fibrations $\xymatrix@1{p: X\ar @{->>}[r]&\; B}$.  If $\cat M$ is cellular (\cite[Definition 12.1.1]{hirschhorn}) and proper (\cite[Definition 13.1.1]{hirschhorn}), then $\cat M/X$ is also cellular and proper, for all objects $X$ \cite[Proposition 12.1.6]{hirschhorn}.  Examples of proper, cellular cartesian categories include $\cat {sSet}$ and $\cat{Top}$, the category of topological spaces.

Throughout this section, let 
$$\vp:E\to B$$ 
be any  morphism in the simplicial model category $\cat M$, which we view as a morphism of comonoids.  We study here the theories of derived cocompletion and of homotopic codescent for the comonad $\K_{\vp}$ associated to the adjunction
$$\vp_{!}:\cat M/E\adjunct {}{} \cat M/B:\vp^*,$$
where $\vp_{!}$ is given by postcomposing with $\vp$, while $\vp^*$ is given by pullback, where a specific representative of each pullback has been fixed.   In other words, we develop the theories of derived cocompletion and of homotopic codescent along a morphism in $\cat M$ for bundles in $\cat M$.

Let $\T_{\vp}$ denote the monad associated to the $(\vp_{!},\vp^*)$-adjunction. Recall from section \ref{sec:dualGroth} that $\cat M/E \simeq (\cat M/B)_{\K_{\vp}}$ and that 
$$\op D^{co}(\K_{\vp})\cong (\cat M/E)^{\T_{\vp}}\cong \cat M^{E}_{V^\vp},$$ 
the category of modules in $\cat M/E$ over the codescent ring 
$$V^\vp=(E\underset B\times E, \gamma_{can},\eta_{can}),$$
where $E\underset B\times E \to E$ is given by projection onto the second factor, and $\gamma_{can}$ is the composite
$$(E\underset B\times E)\underset E\times (E\underset B\times E) \cong E\underset B\times E\underset B\times E \xrightarrow {E\underset B\times \vp\underset B\times E} E\underset B\times B\underset B\times E\cong E\underset B\times E.$$
Furthermore, 
$$\can ^{\vp}:=\can ^{\T_{\vp}}:\cat M/B\to \cat M_{V^\vp}^{E}: (X\xrightarrow f B)\mapsto \big(\vp^*(f), X\underset B\times \vp\underset B\times E\big),$$
where 
$$\xymatrix{X\underset B\times E \ar [d]_{\vp^*(f)}\ar [r]&X\ar [d]^f\\ E\ar[r]^\vp& B}$$
is a pullback diagram. Finally
$$F^\T=\can ^{\vp}\circ \vp_{!}:\cat M/E\to \cat M^{E}_{V^\vp}.$$
We let $\map_{V^{\vp},E}(-,-)$ denote the induced mapping space functor on $(\cat M_{V^{\vp}}^{E})^{op}\times \cat  M_{V^{\vp}}^{E}$.

\begin{rmk}\label{rmk:cocycle} A careful analysis of the definition of $\T_{\vp}$ shows that the existence of a $\T_{\vp}$-algebra structure on a map $f:X\to E$ is equivalent to the existence of a map $\tau:X\underset B\times E \to X\underset B\times E$ satisfying both a cocycle condition (corresponding to the associativity of the multiplication) and a normalization condition (corresponding to the unitality of the multiplication).  If $\can^\vp$ is an equivalence, i.e., if $\K_{\vp}$ satisfies effective codescent, then any bundle over $E$ endowed with such a map $\tau$ can be pushed forward to a bundle over $B$, and every bundle over $B$ arises in this way, up to isomorphism. 
\end{rmk}

For the theory of derived cocompletion from section \ref{sec:cocompletion} to be applicable to this situation, 
the functor $\vp_{!}\vp^*: \cat M/B\to \cat M/B$ must be simplicial.
It is clear that this condition  always holds, since $\vp_{!}\vp^*$ is a composite of simplicial functors, by definition of the mapping spaces in the overcategories.  Note that it is also evident that the comultiplication $\Delta_{\vp}$ and the counit $\ve_{\vp}$ of the comonad $\K_{\vp}$ are simplicial natural transformations.

In this dual Grothendieck framework, Definition \ref{defn:modelKcocomp} and Notation \ref{notn:Kcocomp}  translate into the following definition.

\begin{defn}  Let $f:X\to B$ be a morphism in $\cat M$.  Let $p:\widetilde X\xrightarrow \sim X$ be a cofibrant replacement of $X$ such that $\Bar^{\K_{\vp}}_{\bullet}(fp)$ is Reedy cofibrant.  The \emph{derived cocompletion of $f$ along $\vp$} is
$$(f)^\vee_{\vp}:=(f)^\vee_{\K_{\vp}}=|\Bar^{\K_{\vp}}_{\bullet}(fp)|.$$
\end{defn}

\begin{rmk} If $\cat M=\cat {sSet}$, then all objects are in $\cat M$ are cofibrant, whence the same is true of $\cat M/X$, for all $X$.  Moreover, $\Bar^{\K_{\vp}}_{\bullet}(f)$  is cofibrant for all $f:X\to B$, since all objects in $\cat {sSet}^{\bold\Delta^{op}}$ are Reedy cofibrant.  It makes sense therefore to write
$$(f)^\vee_{\vp}=|\Bar^{\K_{\vp}}_{\bullet}(f)|$$
for all simplicial maps $f:X\to B$.
\end{rmk}

\begin{rmk}\label{rmk:cech}  Expanding the formula for the functor $\Bar^{\K_{\vp}}_{\bullet}$, we note that if $f:X\to B$ is a morphism in $\cat M$, then 
$$\Bar^{\K_{\vp}}_{\bullet}(f)=\Bigg(\xymatrix{X\underset B\times E\ar[dr]_{\vp_{!}\vp^*(f)}\ar@<0.7ex> [r]& X\underset B\times E \underset B\times E\ar[d]^{(\vp_{!}\vp^*)^2(f)}\ar @<0.3ex>[l]\ar @<-1.7ex>[l]\ar @<1.7ex>[r]\ar @<-0.3ex>[r]&X\underset B\times E \underset B\times E\underset B\times E\cdots\ar [dl]^{(\vp_{!}\vp^*)^3(f)} \ar @<1.3ex>[l]\ar@<-0.7ex>[l]\ar @<-2.7ex>[l]\\ &B}\Bigg).$$
In particular, if $\cat M=\cat {sSet}$, then $\Bar^{\K_{\vp}}_{\bullet}(\vp)$ is the bisimplicial set, often called the \v Cech nerve, underlying the ``cohomological descent spectral sequence of a map'' (cf. e.g., \cite {pirashvili} and \cite[Section 5.3]{deligne}). 
\end{rmk}

\begin{rmk}\label{rmk:charcoalg}  From the formula for $\K_{\vp}$, one sees that a $\K_{\vp}$-coalgebra structure on $f:X\to B$ is equivalent to a  section 
$$\sigma: X\to X\underset B\times E: x\mapsto \big(x,e(x)\big) $$ 
of the projection $\ve_{f}:X\underset B\times E\to X$, which is the counit of the adjunction, such that $e(x)\in \vp^{-1}\big(f(x)\big)$, since any such section automatically satisfies the coassociativity and counit conditions of a coalgebra.

It follows that the strictly $\K_{\vp}$-cocomplete objects in $\cat M/B$ are exactly the $\K_{\vp}$-coalgebras: for all $f:X\to B$, a simplicial section $\sigma _{\bullet}:cs_{\bullet}(f) \to \Bar^{\K_{\vp}}_{\bullet}(f)$ of the simplicial augmentation $\ve_{\bullet}$ begins with a section $\sigma_{0}:X\to X\underset B\times E$ of $\ve_{f}$ as maps over $B$.  This bottom stage of the simplicial section therefore defines a $\K_{\vp}$-coalgebra structure on $f$.
\end{rmk}

Specializing Definition \ref{defn:htpiccodesc} to the dual Grothendieck framework, we obtain the following notion of homotopic codescent.  Note that this definition makes sense even if the  overcategories admit only a simplicial enrichment, rather than a full simplicial model category structure.

\begin{defn} The comonad $\K_{\vp}$ satisfies \emph{homotopic codescent} if each component
$$(\can ^{\vp})_{f,g}:\map_{\cat M/B}(f,g)\to \map_{V^\vp,E}\big(\can^\vp(f),\can^\vp(g)\big)$$
of the simplicial functor $\can^\vp$ is a weak equivalence of simplicial sets.
\end{defn}

\begin{rmk}  Since the codescent category $\op D^{co}(\K_{\vp})$ is isomorphic to a category of modules, $\cat M_{V^\vp}^{E}$, this definition implies that if $\K_{\vp}$ satisfies homotopic codescent, then $\cat M/B$ is locally homotopy coTannakian.
\end{rmk}

The criterion for homotopic codescent in the dual Grothendieck framework is a special case of Theorem \ref{thm:equiv-comonad}, where Remark \ref{rmk:charcoalg} comes clearly into play.

\begin{thm} Suppose that the forgetful functor $U^{\T_{\vp}}:\cat M_{V^{\vp}}^{E}\to \cat M/E$ right-induces a model category structure on $\cat M_{V^{\vp}}^{E}$. Let $(\cat M/B)^\vee_{\K_{\vp}}$ denote the full simplicial subcategory of $\cat M/B$ determined by the bifibrant $\K_{\vp}$-coalgebras. If $\vp_{!}\vp^*:\cat M/B\to \cat M/B$ preserves bifibrant objects, and $\Bar^{\K_{\vp}}_{\bullet}$ preserves cofibrant objects, then $\K_{\vp}$ restricts and corestricts to a comonad $\check \K_{\vp}$ on $(\cat M/B)^\vee_{\K_{\vp}}$ that satisfies homotopic codescent.
\end{thm}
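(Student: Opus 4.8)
The plan is to deduce this statement directly from Theorem \ref{thm:equiv-comonad}, applied to the comonad $\K_{\vp}$ acting on the simplicial model category $\cat M/B$. We have already observed that $\cat M/B$ inherits a simplicial model category structure from $\cat M$, that $K_{\vp}=\vp_{!}\vp^*$ is a simplicial functor, and that $\Delta_{\vp}$ and $\ve_{\vp}$ are simplicial natural transformations; moreover, by hypothesis $\vp_{!}\vp^*$ preserves bifibrant objects and $\Bar^{\K_{\vp}}_{\bullet}$ preserves cofibrant objects. So the work reduces to supplying the two induced-model-structure hypotheses and the simplicial-enrichment compatibility demanded by Theorem \ref{thm:equiv-comonad}, and then reading off the conclusion.

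First I would check the hypothesis on $U_{\K_{\vp}}\colon(\cat M/B)_{\K_{\vp}}\to\cat M/B$. Under the equivalence $(\cat M/B)_{\K_{\vp}}\simeq\cat M/E$ of section \ref{sec:dualGroth}, this functor is identified with $\vp_{!}$, postcomposition with $\vp$. Since a morphism of $\cat M/E$ (respectively of $\cat M/B$) is a fibration or weak equivalence exactly when its underlying morphism of $\cat M$ is, the model structure on $\cat M/E$ inherited from $\cat M$ coincides with the one left-induced by $\vp_{!}$ along $U_{\K_{\vp}}$; in particular the left-induced structure exists, so this hypothesis costs nothing here. Next, under the further identifications $(\cat M/B)_{\K_{\vp}}\simeq\cat M/E$ and $\op D^{co}(\K_{\vp})\cong\cat M^{E}_{V^{\vp}}$, the functor $U^{\T_{\K_{\vp}}}\colon\op D^{co}(\K_{\vp})\to(\cat M/B)_{\K_{\vp}}$ becomes the forgetful functor $U^{\T_{\vp}}\colon\cat M^{E}_{V^{\vp}}\to\cat M/E$, which right-induces a model structure by hypothesis; it then remains to confirm that this structure is compatible with the usual simplicial enrichment of $\cat M^{E}_{V^{\vp}}$ from Lemma \ref{lem:simplenr} in the sense of Definition \ref{defn:simplmodel}, for which one uses that the relevant cartesian model categories (for instance $\cat{sSet}$ and $\cat{Top}$) are proper and cellular, so that the slice constructions and the Reedy structures of the appendix behave well over each slice.

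With these hypotheses in place, Theorem \ref{thm:equiv-comonad} applies to $\K_{\vp}$ on $\cat M/B$ and yields that the restriction and corestriction $\check\K_{\vp}$ of $\K_{\vp}$ to $(\cat M/B)^{\vee}_{\K_{\vp}}$, the full simplicial subcategory of bifibrant, strictly $\K_{\vp}$-cocomplete objects, satisfies homotopic codescent; here $\check\K_{\vp}$ is well defined precisely because $\vp_{!}\vp^*$ preserves bifibrant objects and fixes strictly $\K_{\vp}$-cocomplete objects, by Remark \ref{rmk:ex-Kcocomp}. Finally I would invoke Remark \ref{rmk:charcoalg}, which identifies the strictly $\K_{\vp}$-cocomplete objects of $\cat M/B$ with the $\K_{\vp}$-coalgebras, so that $(\cat M/B)^{\vee}_{\K_{\vp}}$ is exactly the full simplicial subcategory of bifibrant $\K_{\vp}$-coalgebras appearing in the statement. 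I expect the main obstacle to be not the homotopy theory, which is already packaged inside Theorem \ref{thm:equiv-comonad} via the SDR argument of Proposition \ref{prop:cocriterion} together with Lemma \ref{lem:Reedycofib}, but the bookkeeping: making the dictionary between the abstract comonadic data and the concrete bundle-theoretic data completely precise, in particular verifying that ``left-induced'' imposes no real restriction on $\cat M/E$ and that the right-induced structure on $\cat M^{E}_{V^{\vp}}$ is genuinely simplicial.
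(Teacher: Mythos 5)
Your proposal is correct and follows essentially the same route as the paper, which states this result as a direct specialization of Theorem \ref{thm:equiv-comonad} under the identifications $(\cat M/B)_{\K_{\vp}}\simeq\cat M/E$ and $\op D^{co}(\K_{\vp})\cong\cat M^{E}_{V^{\vp}}$, with Remark \ref{rmk:charcoalg} supplying the identification of the strictly $\K_{\vp}$-cocomplete objects with the $\K_{\vp}$-coalgebras. Your observation that the slice model structure on $\cat M/E$ is exactly the left-induced structure along $U_{\K_{\vp}}$, so that this hypothesis of Theorem \ref{thm:equiv-comonad} is automatic, is precisely the bookkeeping the paper leaves implicit.
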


\begin{rmk} The forgetful functor $U^{\T_{\vp}}:\cat M_{V^{\vp}}^{E}\to \cat M/E$ right-induces a model category structure on $\cat M_{V^{\vp}}^{E}$ (cf. appendix \ref{sec:indmodcat}) if, for example, the model category structure on $\cat M$ is cellular (and therefore cofibrantly generated), all objects are small and the so-called monoid axiom is satisfied \cite[Theorem 4.1(1)]{schwede-shipley}, e.g.,  if $\cat M=\cat {sSet}$. Moreover, by Proposition \ref{prop:schwede-shipley}, $U^{\T_{\vp}}$ also right-induces a model category structure on $\cat M_{V^{\vp}}^{E}$ if $\cat M$ is cellular and proper and $\vp$ is a fibration, since $F^{\T_{\vp}}$ will then preserve all weak equivalences.
\end{rmk}

\begin{rmk}  If $\vp: E\to B$ is a fibration, then $\vp_{!}\vp^*:\cat M/B\to \cat M/B$ preserves fibrant objects.  Thus, if $\cat M=\cat {sSet}$, and $\vp$ is a fibration, then $\vp_{!}\vp^*:\cat M/B\to \cat M/B$ preserves bifibrant objects.   The criterion for homotopic codescent therefore applies to all Kan fibrations.

If a Kan fibration $\vp:E\to B$ admits a section, then for all $f:X\to B$, the counit map $\ve_{f}:X\underset B\times E\to X$ also admits a section, i.e., every $f$ is a $\K_{\vp}$-coalgebra.  It follows that $\K_{\vp}$ itself then satisfies homotopic codescent.
\end{rmk}

The relevant spectral sequence in this case is defined as follows.

\begin{defn}  Let $i:X\to B$ and $p:Y\to B$ be  morphisms in $\cat M$, and let $f:X\to Y$ be a morphism in $\cat M$ satisfying $pf=i$.  The \emph{$\vp$-bundle spectral sequence for $(i,p)$ with respect to $f$}, denoted $E^\vp_{f}$, is the $\K_{\vp}$-codescent spectral sequence for $(i,p)$ with respect to $f$.
\end{defn}

The next result is an immediate specialization of Lemma \ref{lem:co-einfty} and Theorem \ref{thm:e2-codesc}.

\begin{prop}Let $i:X\to B$ and $p:Y\to B$ be morphisms in $\cat M$ such that $X$ is cofibrant and $p$ is a fibration. 
\begin{enumerate}
\item  If $\Bar^{\K_{\vp}}_{\bullet}$ preserves cofibrant objects in $\cat M/B$, then the $\vp$-bundle spectral sequence for $(i,p)$ with respect to $f$ abuts to $\pi_{*}\map_{\cat M/B}(i^\vee_{\vp}, p)$, if it indeed converges.
\item Suppose that $U^{\T_{\vp}}:\op D^{co}(\K)\to \cat M/E$ right-induces a model category structure on $\op D^{co}(\K)$
and that the usual simplicial enrichment of $\op D^{co}(\K)$ (Lemma \ref{lem:simplenr}) extends to a simplicial model category structure.  

If $\Bar^{\K_{\vp}}_{\bullet}(i)$ is Reedy cofibrant and $\can^\vp(i)$ is strongly $\mathbb{DK}_{\vp}$-cocomplete, then 
$$(E^\vp_{f})_{2}^{s,t}\cong \ext^{s,t}_{V^\vp, E}\big( \can^\vp (i), \can^{\vp} (p)\big)_{\can ^\vp f}$$
for any choice of basepoint $f$.
\end{enumerate}
\end{prop}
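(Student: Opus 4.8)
The plan is to deduce both parts directly from Lemma~\ref{lem:co-einfty} and Theorem~\ref{thm:e2-codesc}, applied to the comonad $\K_{\vp}$ on the simplicial model category $\cat M/B$, once we have checked that the hypotheses of those general results are met here. The preliminary task is to record the translations of the relevant data under the identifications of section~\ref{sec:dualGroth}. Under $\cat M/E\simeq(\cat M/B)_{\K_{\vp}}$, the forgetful functor $U_{\K_{\vp}}$ corresponds (via Remark~\ref{rmk:comonad-diagram}) to $\vp_{!}$, i.e.\ postcomposition with $\vp$; since a map over $E$ is a cofibration (respectively, a weak equivalence) in $\cat M/E$ exactly when it is one in $\cat M$, and likewise in $\cat M/B$, the standard overcategory model structure on $\cat M/E$ is precisely the one left-induced by $U_{\K_{\vp}}$, so Convention~\ref{conv:codescss} is automatic. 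Similarly $\op D^{co}(\K_{\vp})\cong\cat M^{E}_{V^\vp}$, the comonad $\mathbb{DK}$ of Remark~\ref{rmk:coeffective} is $\mathbb{DK}_{\vp}$, and $\can^{\T_{\K_{\vp}}}=\can^{\vp}$. Under these identifications the $\ext$-groups computed in $\op D^{co}(\K_{\vp})$ are exactly the $\ext_{V^\vp,E}$ of the statement, $(-)^\vee_{\K_{\vp}}=(-)^\vee_{\vp}$, and the $\K_{\vp}$-codescent spectral sequence of Definition~\ref{defn:codescass} is, by definition, the $\vp$-bundle spectral sequence $E^\vp_{f}$.

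\emph{Part (1).} Since $X$ is cofibrant, $i$ is a cofibrant object of $\cat M/B$, and since $p$ is a fibration, $p$ is a fibrant object of $\cat M/B$; together with the hypothesis that $\Bar^{\K_{\vp}}_{\bullet}$ preserves cofibrant objects, this is exactly the input required by Lemma~\ref{lem:co-einfty}. Its conclusion yields that the $\K_{\vp}$-codescent spectral sequence for $(i,p)$ abuts to $\pi_{*}\map_{\cat M/B}\big(i^\vee_{\K_{\vp}},p\big)_{f}$, provided it converges; rewriting $i^\vee_{\K_{\vp}}=i^\vee_{\vp}$ gives the claim.

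\emph{Part (2).} The two displayed model-structure hypotheses of the statement say that $U^{\T_{\K_{\vp}}}:\op D^{co}(\K_{\vp})\to\cat M/E$ right-induces a model structure and that the induced simplicial enrichment of $\op D^{co}(\K_{\vp})$ (Lemma~\ref{lem:simplenr}) is compatible with it; combined with the automatic fact from the first paragraph that $U_{\K_{\vp}}$ left-induces its model structure, these are precisely the standing hypotheses of Theorem~\ref{thm:e2-codesc}. The remaining hypotheses of that theorem --- $i$ cofibrant, $p$ fibrant, $\Bar^{\K_{\vp}}_{\bullet}(i)$ Reedy cofibrant, and $\can^{\vp}(i)=\can^{\T_{\K_{\vp}}}(i)$ strongly $\mathbb{DK}_{\vp}$-cocomplete --- hold by our assumptions and the observations above. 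Theorem~\ref{thm:e2-codesc} then gives
$$(E^\vp_{f})_{2}^{s,t}\cong\ext^{s,t}_{\op D^{co}(\K_{\vp})}\big(\can^{\vp}(i),\can^{\vp}(p)\big)_{\can^{\vp}f},$$
and translating the right-hand side along $\op D^{co}(\K_{\vp})\cong\cat M^{E}_{V^\vp}$ produces the stated formula.

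The only genuine content, and hence the expected obstacle, lies in the bookkeeping of the first paragraph: one must verify carefully that all the ambient identifications of section~\ref{sec:dualGroth} --- of categories, of (co)monads, of the functors $U_{\K_{\vp}}$, $\can^{\vp}$, $F^{\T_{\K_{\vp}}}$, $U^{\T_{\K_{\vp}}}$ and their adjoints --- respect the simplicial enrichments of Lemma~\ref{lem:simplenr} and the relevant model structures, so that the hypotheses of the abstract Lemma~\ref{lem:co-einfty} and Theorem~\ref{thm:e2-codesc} transfer verbatim, and in particular that $\ext_{V^\vp,E}$ agrees with $\ext$ computed in the abstract category $\op D^{co}(\K_{\vp})$. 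Granting this, everything else is a direct reading off of the general results.
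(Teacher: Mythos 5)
Your proposal is correct and follows exactly the paper's route: the paper derives this proposition as an immediate specialization of Lemma \ref{lem:co-einfty} and Theorem \ref{thm:e2-codesc} to the comonad $\K_{\vp}$ on $\cat M/B$, using the identifications of section \ref{sec:dualhtpicGroth} just as you do. Your extra verification that the overcategory structure on $\cat M/E$ is the one left-induced by $U_{\K_{\vp}}$ is a reasonable piece of bookkeeping the paper leaves implicit, but it does not change the argument.
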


As a concrete example of the theory developed here, we present the following example.

\begin{ex}  Let $B$ be any reduced simplicial set, and let $\vp:PB\to B$ be the Kan fibration with $PB=B\underset \tau \times GB$, the twisted cartesian product of $B$ and its Kan loop group $GB$ over the universal twisting function $\tau: B\to GB$  \cite{may}. The associated codescent ring is
$$V^\vp=PB\underset B\times PB,$$
which is multiplicatively homotopy equivalent to $GB$, via the simplicial map
$$PB\underset B\times PB\to GB: \big((\lambda_{1}, b), (\lambda_{2},b)\big)\mapsto \lambda_{1}\lambda _{2}^{-1}.$$
Moreover, for all $f:X\to B$,  the canonical codescent datum $\can^\vp(f)$ is the natural projection from homotopy fiber of $f$ down to $PB$, endowed with the usual action of $GB$.  

In this case, since $\vp$ itself admits a section, the counit (projection) $\ve_{f}:X\underset B\times E\to X$ admits a canonical section for all $f:X\to B$. In other words, by Remark \ref{rmk:charcoalg}, every object in $\cat{sSet}/B$ is strongly $\K_{\vp}$-cocomplete and therefore $\K_{\vp}$-cocomplete, since all objects are cofibrant.   

Abusing notation a little, we see that for all simplicial maps $f:X\to B$ and all Kan fibrations $p:Y\to B$,
Proposition \ref{prop:cocriterion} implies that
$$\can^{\vp}_{f,p}: \map_{\cat M/B}(f,p)\to \map_{PB,GB}\big(\operatorname{hfib}(f), \operatorname{hfib}(p)\big)$$
is a weak equivalence.
\end{ex}

\appendix

\section {The proof of Theorem \ref{thm:assembly}}\label{sec:proof}

Since the proof of the theorem relating assembly to cocompletion is of a rather different nature from the rest of the paper, we  have reserved it for this appendix.

\begin{proof}[Proof of Theorem \ref{thm:assembly}]
We begin by establishing,  via a dual version of the argument in \cite[XI.10]{bousfield-kan}, a formula for $|\Bar^{\K_{\Phi}}_{\bullet}(X)(D)|$ that is analogous to the definition of $K^{ho}_{\Phi}(X)(D)$.    Let $\cat {Simp}_{K_\Phi}(X)$ be the full subcategory of the overcategory $(\cat M^{\cat D}) /X$ with object set 
$$\{ \tau :K_{\Phi}(Y)\to X \in \mor \cat M^{\cat D}\mid Y:\cat D\to \cat M \text{ such that $K_{\Phi}(Y)$ is cofibrant } \},$$
and let
$$\operatorname{Dom}:\cat {Simp}_{K_\Phi}(X)\to \cat M^\cat D: (\tau :K_{\Phi}(Y)\to X)\mapsto  K_{\Phi}(Y)$$
be the ``domain'' functor. Note that $(\ve_{\Phi})_{X}:K_{\Phi}(X) \to X$ is an object of $\cat {Simp}_{K_\Phi}(X)$.

Since $\Bar^{\K_{\Phi}}_{\bullet}X$ comes equipped with a coaugmentation down to $X$, via the counit of the comonad $\K_{\Phi}$, the functor $\Bar^{\K_{\Phi}}_{\bullet}X:\bold \Delta ^{op}\to \cat M^{\cat D}$ factors through $\cat {Simp}_{K_\Phi}(X)$, as
$$\Bar^{\K_{\Phi}}_{\bullet}X=\operatorname{Dom}\circ \widetilde\Bar^{\K_{\Phi}}_{\bullet}X.$$
The dual of Proposition XI.9.3 in \cite {bousfield-kan} implies that $\widetilde\Bar^{\K_{\Phi}}_{\bullet}X$ is homotopy right cofinal, as 
\begin{enumerate}
\item $\bold\Delta ^{op}$ is obviously right filtered, 
\medskip

\item for all objects $\tau: K_{\Phi}(Y) \to X$ in $\cat {Simp}_{K_\Phi}(X)$
$$\xymatrix{ K_{\Phi}(Y) \ar [rr]^{K_{\Phi}(\tau)\circ (\Delta_{\Phi})_{Y}}\ar [dr]_{\tau}&&K_{\Phi}(X)\ar [dl]^{(\ve_{\Phi})_{X}}\\
&X}$$
commutes, and
\medskip
  
\item for every commuting diagram
$$\xymatrix{ K_{\Phi}(Y) \ar [rr]^{\sigma}\ar [dr]_{\tau}&&K_{\Phi}(X)\ar [dl]^{(\ve_{\Phi})_{X}}\\
&X},$$
the morphism $(\Delta_{\Phi})_{X}:K_{\Phi}(X)\to K_{\Phi}^2(X)$ coequalizes $\sigma $ and $K_{\Phi}(\tau)\circ (\Delta_{\Phi})_{Y}$:
\begin{align*}
(\Delta_{\Phi})_{X}K_{\Phi}(\tau)(\Delta_{\Phi})_{Y}=&(\Delta_{\Phi})_{X}K_{\Phi}\big((\ve_{\Phi})_{X}\big)K_{\Phi}(\sigma)(\Delta_{\Phi})_{Y}\\
=&(\Delta_{\Phi})_{X}K_{\Phi}\big((\ve_{\Phi})_{X}\big)(\Delta_{\Phi})_{X}\sigma\\
=& (\Delta_{\Phi})_{X}\sigma.
\end{align*}
\end{enumerate}
Theorem 19.6.7 in \cite{hirschhorn} then implies that the natural map
$$\operatorname{hocolim}_{\bold \Delta ^{op}}\Bar^{\K_{\Phi}}_{\bullet}X  \to \operatorname{hocolim}_{\cat {Simp}_{K_\Phi}(X)} \operatorname{Dom}$$
is a weak equivalence, whence 
$$|\Bar^{\K_{\Phi}}_{\bullet}X|\sim \operatorname{hocolim}_{\cat {Simp}_{K_\Phi}(X)} \operatorname{Dom},$$
since the Reedy cofibrancy of $\Bar^{\K_{\Phi}}_{\bullet}X$ implies that the Bousfield-Kan map
$$\operatorname{hocolim}_{\bold \Delta ^{op}}\Bar^{\K_{\Phi}}_{\bullet}X  \to |\Bar^{\K_{\Phi}}_{\bullet}X|$$
is a weak equivalence \cite[Theorem 18.7.4]{hirschhorn}.  The derived $\K_{\Phi}$-cocompletion of  $X$ therefore admits a description highly analogous to that of $K^{ho}_{\Phi}(X)(D)$.

The next step in the proof consists in showing that for all $X\in \ob {\cat M}^{\cat D} $ and all $D\in \ob \cat D$,
\begin{equation}\label{eqn:goal}(\colim _{\cat {Simp}_{K_\Phi}(X)}\operatorname{Dom})(D)\cong K_{\Phi}(X)(D),\end{equation}
by expressing both sides, up to isomorphism, as the same colimit in two variables.
Let 
$$E_{X,D}:\cat {Simp}_{K_\Phi}(X)\times \cat {Simp}_{\Phi}(D) \to \cat M$$
denote the functor specified on objects by 
$$E_{X,D}\big(\tau:K_{\Phi}(Y)\to X, \alpha:\Phi(C)\to D\big)= K_{\Phi}(Y)\big(\Phi (C)\big).$$
Note that for all $\tau:K_{\Phi}(Y)\to X$,
\begin{equation}\label{eqn:Kphi}K_{\Phi}(Y)\big(\Phi (C)\big)=\colim_{\cat {Simp}_{\Phi}\big(\Phi(C)\big)} (Y\circ \operatorname{dom})=Y\big(\Phi(C)\big),\end{equation}
since the fact that $\Phi$ is full implies that  the identity map on $\Phi(C)$ is a terminal object in $\cat {Simp}_{\Phi}\big(\Phi(C)\big)$.  Thus, 
$$(\ve_{\Phi})_{X, \Phi(C)}=Id_{X(\Phi (C))},$$
which implies that if
$$\xymatrix{K_{\Phi}(Y) \ar [dr]_{\tau}\ar [rr]^\sigma &&K_{\Phi}(X)\ar [dl]^{(\ve_{\Phi})_{X}}\\&X}$$
is a morphism in $\cat {Simp}_{K_\Phi}(X)$, then $\tau_{\Phi(C)}=\sigma_{\Phi(C)}$ for all objects $C$ in $\cat C$.   In particular, observation (2) above implies that
\begin{equation}\label{eqn:tau}\tau_{\Phi (C)}=\big(K_{\Phi}(\tau)\circ (\Delta_{\Phi})_{Y}\big)_{\Phi(C)},\end{equation}
for all $\tau:K_{\Phi}(Y)\to X$.

The key to establishing (\ref{eqn:goal}) is the fact that for all objects $C$ in $\cat C$, 
\begin{equation}\label{eqn:colim}\colim_{\cat {Simp}_{K_\Phi}(X)} Y\big(\Phi (C)\big) \cong X\big(\Phi(C)\big),\end{equation}
which we prove as follows. If $M$ is an object in $\cat M$, let $\operatorname{Cst}_{M}:\cat {Simp}_{K_\Phi}(X)\to \cat M$ denote the constant functor at $M$.  

If 
$$\xymatrix{K_{\Phi}(Y) \ar [dr]_{\tau}\ar [rr]^\sigma &&K_{\Phi}(Y')\ar [dl]^{\tau'}\\&X}$$
is any morphism in $\cat {Simp}_{K_\Phi}(X)$, then evaluating at $\Phi(C)$ gives rise, by (\ref{eqn:Kphi}), to a commutative diagram in $\cat M$:
$$\xymatrix{Y\big(\Phi(C)\big) \ar [dr]_{\tau_{\Phi(C)}}\ar [rr]^{\sigma_{\Phi(C)} }&&Y'\big(\Phi(C)\big).\ar [dl]^{\tau_{\Phi(C)}'}\\&X\big(\Phi(C)\big)}$$
There is therefore a natural transformation
$$\Gamma:  (-)\big(\Phi(C)\big) \to \operatorname{Cst}_{X(\Phi(C))}:\cat {Simp}_{K_\Phi}(X) \to \cat M,$$
where $\Gamma_{\tau}=\tau_{\Phi(C)}: Y\big(\Phi(C)\big)\to X\big(\Phi(C)\big)$ for all $\tau: K_{\Phi}(Y) \to X$.
Moreover, if $M$ is any object in $\cat M$, and
$$\Psi:  (-)\big(\Phi(C)\big) \to \operatorname{Cst}_{M}:\cat {Simp}_{K_\Phi}(X) \to \cat M$$
is any natural transformation, then
$$\Psi_{(\ve_{\Phi})_{X}}: X\big(\Phi(C)\big) \to M,$$
and for all objects $\tau:K_{\Phi}(Y)\to X$ in $\cat {Simp}_{K_\Phi}(X)$
$$\Psi_{\tau}=\Psi_{(\ve_{\Phi})_{X}} \circ \Gamma_{\tau}: Y\big(\Phi(C)\big) \to M.$$
To prove this last equality, we observe that since $\Psi$ is a natural transformation, and  $K_{\Phi}(\tau)\circ (\Delta_{\Phi})_{Y}$ is a morphism from $\tau $ to $(\ve_{\Phi})_{X}$ for all $\tau:K_{\Phi}(Y)\to X$, we can deduce from (\ref{eqn:tau}) that
$$\Psi_{\tau}=\Psi_{(\ve_{\Phi})_{X}}\circ \big(K_{\Phi}(\tau)\circ (\Delta_{\Phi})_{Y}\big)_{\Phi (C)}= \Psi_{(\ve_{\Phi})_{X}}\circ \tau_{\Phi(C)}.$$

It follows from equation (\ref{eqn:colim}) that
\begin{align*}
K_{\Phi}(X)(D)&=\colim_{\cat {Simp}_{\Phi}(D)} (X\circ \operatorname{dom})\\
&=\colim_{\cat {Simp}_{\Phi}(D)} X\big(\Phi (C)\big)\\
&\cong \colim_{\cat {Simp}_{\Phi}(D)}\colim_{\cat {Simp}_{K_\Phi}(X)}   Y\big(\Phi (C)\big)\\
&\cong \colim _{\cat {Simp}_{K_\Phi}(X)\times\cat {Simp}_{\Phi}(D)} E_{X,D}.
\end{align*}
On the other hand,
\begin{align*}
(\colim _{\cat {Simp}_{K_\Phi}(X)}\operatorname{Dom})(D)&=\colim _{\cat {Simp}_{K_\Phi}(X)}\big(K_{\Phi}(Y)(D)\big)\\
&=\colim _{\cat {Simp}_{K_\Phi}(X)}\colim_{\cat {Simp}_{\Phi}(D)} (Y\circ \operatorname{dom})\\
&=\colim _{\cat {Simp}_{K_\Phi}(X)}\colim_{\cat {Simp}_{\Phi}(D)} Y\big(\Phi(C)\big)\\
&\cong \colim _{\cat {Simp}_{K_\Phi}(X)\times\cat {Simp}_{\Phi}(D)} E_{X,D},
\end{align*}
since colimits in $\cat M^{\cat D}$ are calculated objectwise.
We can conclude that 
$$(\colim _{\cat {Simp}_{K_\Phi}(X)}\operatorname{Dom})(D)\cong \colim _{\cat {Simp}_{K_\Phi}(X)\times\cat {Simp}_{\Phi}(D)} E_{X,D}\cong K_{\Phi}(X)(D).$$

To complete the proof of the theorem, we exhibit morphisms from 
$$\hocolim _{\cat {Simp}_{K_\Phi}(X)\times\cat {Simp}_{\Phi}(D)} E_{X,D}$$ to  $(\hocolim_{\cat {Simp}_{K_\Phi}(X)}\operatorname{Dom})(D)$ and to $K^{ho}_{\Phi}(X)(D)$.  Since we have already shown that $|\Bar^{\K_{\Phi}}_{\bullet}X|$ is weakly equivalent to $\hocolim_{\cat {Simp}_{K_\Phi}(X)}\operatorname{Dom}$, we can then conclude.

Since there is always a natural map from a homotopy colimit to the corresponding colimit, and a homotopy colimit over a product of categories is weakly equivalent to double homotopy colimit calculated over each factor category consecutively,
there are natural maps
$$\xymatrix{
\hocolim_{\cat {Simp}_{K_\Phi}(X)}\colim_{\cat {Simp}_{\Phi}(D)} E_{X,D}& \hocolim _{\cat {Simp}_{K_\Phi}(X)\times\cat {Simp}_{\Phi}(D)} E_{X,D}\ar [l]_-{}\ar[d]\\
&\hocolim_{\cat {Simp}_{\Phi}(D)}\colim_{\cat {Simp}_{K_\Phi}(X)} E_{X,D}.}$$

Because 
$$\colim_{\cat {Simp}_{\Phi}(D)} E_{X,D}(\tau,-)=\colim_{\cat {Simp}_{\Phi}(D)}Y\big(\Phi (C)\big)= K_{\Phi}(Y)(D)$$
for all $\tau:K_{\Phi}(Y)\to X$ by definition of $K_{\Phi}$, and
$$\colim_{\cat {Simp}_{K_\Phi}(X)} E_{X,D}(-,\alpha)=\colim_{\cat {Simp}_{K_\Phi}(X)} Y\big(\Phi (X)\big)=X\big(\Phi(C)\big)$$
for all $\alpha:\Phi (C) \to D$ by  (\ref{eqn:colim}), it follows that
$$\hocolim_{\cat {Simp}_{K_\Phi}(X)}\colim_{\cat {Simp}_{\Phi}(D)} E_{X,D}\cong (\hocolim_{\cat {Simp}_{K_\Phi}(X)}\operatorname{Dom})(D)$$
and
$$ \hocolim_{\cat {Simp}_{\Phi}(D)}\colim_{\cat {Simp}_{K_\Phi}(X)} E_{X,D}\cong K^{ho}_{\Phi}(X)(D).$$
\end{proof}

\section{Useful simplicial structures}

In this appendix we recall various simplicial structures that are necessary to this paper, in order to fix notation and terminology and to make this paper as self-contained as possible and reasonable.

\subsection{Reedy model category structure}\label{sec:reedy}

Let $\cat M$ be any model category. We describe here the fibrant objects in the Reedy model category structure on $\cat M^{\bold\Delta}$ and the cofibrant objects in the Reedy model category structure on $\cat M^{\bold\Delta^{op}}$.  We refer the reader to Chapter 15 of \cite{hirschhorn} for a complete description of these model category structures.

By Corollary 15.10.5 in \cite{hirschhorn}, if $X$ is a fibrant object of $\cat M$, then $cc^\bullet X$ is Reedy fibrant in $\cat M^{\bold\Delta}$ .  Dually, if $X$ is cofibrant, then $cs_{\bullet}X$ is Reedy cofibrant in $\cat M^{\bold\Delta^{op}}$ .

More generally, if $X^\bullet$ is any cosimplicial object in $\cat M$, then the \emph{$n^{\text{th}}$-matching object }Êof $X^\bullet$ is 
$$M_{n}X^\bullet= \operatorname{equal} \Big( \prod_{0\leq i\leq n-1}X^{n-1} \egal{\psi_{1}}{\psi_{2}} \prod_{0\leq i<j\leq n-2} X^{n-2}\Big),$$
where, if  $p_{i}:\prod_{0\leq i\leq n}X^{n-1}\to X^{n-1}$ and $p_{i,j}:\prod_{0\leq i<j\leq n-1} X^{n-2}\to X^{n-2}$ are the obvious projections, then 
$$p_{i,j}\psi_{1}=s^{i}p_{j}\quad \text{and}\quad p_{i,j}\psi_{2}=s^{j-1} p_{i}.$$
The cosimplicial object $X^\bullet$ is Reedy fibrant if the natural map 
$$\sigma_{n}:X^n\to M_{n}X^\bullet,$$
which is induced by the codegeneracies $s^0,...,s^{n-1}:X^n\to X^{n-1}$, is a fibration for all $n$.

Dually, if $X_{0}$ is any simplicial object in $\cat M$, then the \emph{$n^{\text{th}}$-latching object}Ê of $X_\bullet$ is  
$$L^nX_\bullet= \operatorname{coequal} \Big( \coprod_{0\leq i<j\leq n-2}X_{n-2} \egal{\psi_{1}}{\psi_{2}} \coprod_{0\leq i\leq n-1} X_{n-1}\Big),$$
where, if  $\iota_{i}:X_{n-1}\to \coprod_{0\leq i\leq n}X_{n-1}$ and $\iota_{i,j}:X_{n-2}\to\coprod_{0\leq i<j\leq n-1} X_{n-2}$ are the obvious summand ``inclusions,''  then 
$$\psi_{1}\iota_{i,j}=\iota_{j}s_{i}\quad \text{and}\quad \psi_{2}\iota_{i,j}=\iota_{i}s_{j-1}.$$
The simplicial object $X_\bullet$ is Reedy cofibrant if the natural map 
$$\sigma^n:L^nX_\bullet\to X_{n},$$
which is induced by the degeneracies $s_0,...,s_{n-1}:X_{n-1}\to X_{n}$, is a cofibration for all $n$.

\subsection{Simplicial enrichments, tensoring and cotensoring}

For a detailed introduction to the theory of enriched categories, we refer the reader to \cite{kelly}.  Here we recall only those elements of the theory of enrichments over $\cat{sSet}$ that we use in this paper, from the point of view usual to homotopy theorists: we think of simplicially enriched categories as ordinary categories with extra structure (cf. \cite[Ch 9]{hirschhorn}).

\begin{defn}\label{defn:enrich} Let $\cat S$ be a category.  A \emph{simplicial enrichment} of $\cat S$ consists of a function
$$\map_{\cat S}: \ob\cat S \times \ob \cat S \to \ob\cat {sSet}$$ 
together with two collections of set maps
$$\big\{c_{X,Y,Z}:\map_{\cat S}(Y,Z)\times \map_{\cat S}(X,Y) \to \map_{\cat S}(X,Z)\mid X,Y,Z\in \ob \cat S\big\}$$
called \emph{simplicial composition}, and
$$\big\{ i_{X}:*\to \map_{\cat S}(X,X)\mid X\in \ob \cat S\big\},$$
called \emph{units}, such that simplicial composition is associative and unital, in the obvious sense.  Moreover, for all $X,Y\in \ob \cat S$, there is a bijection
$$\map_{\cat S}(X,Y)_{0}\cong \cat S(X,Y)$$
that is compatible with composition.
\end{defn}

We usually drop the subscripts on simplicial composition, in the interest of simplifying notation.

\begin{rmk}\label{rmk:inducedsimplmap}  Suppose that $\cat S$ is simplicially enriched.  A morphism $f:Y\to Y'$ in $\cat S$ induces morphisms of simplicial sets 
$$f_{*}:\map_{\cat S}(X,Y)\to \map _{\cat S}(X,Y')\quad\text{and}\quad f^*:\map_{\cat S}(Y',X)\to \map _{\cat S}(Y,X)$$
for all objects $X$ in $\cat S$, which are defined as follows.  If $g \in\map_{\cat S}(X,Y)_{n}$, then
$$f_{*} (g)=c(s_{0}^nf, g),$$
which makes sense since we can view $f$ as a $0$-simplex of $\map_{\cat S}(Y,Y')$.  Similarly, if $g \in\map_{\cat S}(Y',X)_{n}$, then
$$f^*(g)=c(g, s_{0}^nf).$$
 \end{rmk}
 
Recall that associativity of composition in a category implies that if both small squares of a diagram
 $$\xymatrix{X\ar [d]_{f}\ar [r]^{a}&Y\ar [d]_{g}\ar [r]^b& Z\ar[d] _{h}\\ X'\ar [r]^{a'}&Y'\ar [r]^{b'}& Z'}$$
 commute, then so does the outer rectangle.  The next lemma, which is a simplicial analog of this fact that proves useful in section \ref{sec:simplenr}, 
 is an easy consequence of the associativity of the simplicial composition maps.  
 
 \begin{lem}\label{lem:simplcomp} Let $\cat S$ be a simplicially enriched category.  Let $f:X\to X'$, $g: Y\to Y'$ and $h:Z\to Z'$ be morphisms in $\cat S$.  Let $a\in  \map_{\cat S}(X,Y)_{n}$, $b\in \map_{\cat S}(Y,Z)_{n}$, $a'\in \map_{\cat S}(X',Y')_{n}$, and $b'\in \map_{\cat S}(Y',Z')_{n}$.
 
 If $g_{*}(a)=f^*(a')$ and $h_{*}(b)=g^*(b')$, then $h_{*}\big(c(b,a)\big)=f^*\big(c(b',a')\big)$.
 \end{lem}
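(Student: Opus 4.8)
The plan is to deduce the statement from the associativity of simplicial composition alone, once the induced maps have been rewritten in terms of $c$. First I would recall from Remark~\ref{rmk:inducedsimplmap} that for any morphism $\phi:V\to V'$ in $\cat S$, any object $W$, and any $n$-simplex $u$ in the relevant mapping space, one has $\phi_{*}(u)=c(s_{0}^n\phi,u)$ and $\phi^{*}(u)=c(u,s_{0}^n\phi)$, where $s_{0}^n\phi$ denotes the $n$-fold degeneracy of $\phi$ regarded as a $0$-simplex of $\map_{\cat S}(V,V')$. With this, the hypotheses become $c(s_{0}^ng,a)=c(a',s_{0}^nf)$ and $c(s_{0}^nh,b)=c(b',s_{0}^ng)$, while the conclusion to be proved is the equality $c(s_{0}^nh,c(b,a))=c(c(b',a'),s_{0}^nf)$.

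The main step is then a short chain of rewrites in which each equality is either an instance of the associativity axiom for $c$ from Definition~\ref{defn:enrich} or a substitution of one of the two hypotheses:
\begin{align*}
c(s_{0}^nh,c(b,a))&=c\big(c(s_{0}^nh,b),a\big)=c\big(c(b',s_{0}^ng),a\big)\\
&=c\big(b',c(s_{0}^ng,a)\big)=c\big(b',c(a',s_{0}^nf)\big)=c\big(c(b',a'),s_{0}^nf\big).
\end{align*}
Reading the two ends of this chain back through the definitions recalled above yields $h_{*}\big(c(b,a)\big)=f^{*}\big(c(b',a')\big)$, which is exactly the assertion.

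There is essentially no obstacle here: the lemma is a purely formal consequence of the enriched-category axioms, being the simplicial incarnation of the elementary observation that pasting two commuting squares along a shared edge produces a commuting rectangle. The only points requiring a modicum of care are the bookkeeping of the degenerate $0$-simplices $s_{0}^nf$, $s_{0}^ng$, $s_{0}^nh$ and the remark that simplicial composition, being a morphism of simplicial sets, restricts on $n$-simplices to the maps $c$ used above (so that all the expressions appearing in the chain live in the correct mapping spaces in degree $n$); both are immediate from Definition~\ref{defn:enrich}.
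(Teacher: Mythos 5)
Your argument is correct and is exactly the route the paper intends: the paper states the lemma as ``an easy consequence of the associativity of the simplicial composition maps,'' and your chain of rewrites, after expressing $\phi_{*}$ and $\phi^{*}$ via Remark \ref{rmk:inducedsimplmap} as composition with $s_{0}^{n}\phi$, is just that associativity argument made explicit. Nothing is missing, and the degree bookkeeping you flag is handled correctly.
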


\begin{defn} Let $\cat S$ and $\cat S'$ be simplicially enriched categories.  A \emph{simplicial functor} $F$ from $\cat S$ to $\cat S'$ consists of 
\begin{enumerate}
\item a function $F:\ob S\to \ob S'$,
 and
 \item a collection of simplicial maps 
 $$\big\{ F_{X,Y}:\map_{\cat S}(X,Y)\to \map _{\cat S'}(FX, FY)\mid X,Y\in \ob \cat S\big\},$$
 called the \emph{components} of the simplicial functor, 
 \end{enumerate}
 such that 
 $$c\circ (F_{Y,Z}\times F_{X,Y})=F_{X,Z}\circ c\quad\text{and}\quad F_{X,X}\circ i_{X}=i_{FX}$$
 for all $X,Y,Z\in \ob \cat S$.
\end{defn}

\begin{defn} Let $F,G:\cat S\to \cat S'$ be two simplicial functors.  A \emph{simplicial natural transformation} $\alpha$ from $F$ to $G$ consists of a collection
$$\{\alpha _{X}:*\to \map _{\cat S'}(FX, GX)\mid X\in \ob \cat S\}$$
of simplicial maps such that 
$$(\alpha_{Y})_{*}\circ F_{X,Y}=(\alpha_{X})^*\circ G_{X,Y}: \map_{\cat S}(X,Y)\to \map _{\cat S'}(FX, GY)$$
for all $X,Y\in \ob \cat S$.
\end{defn}

\begin{defn} Let $\map_{\cat{sSet}}$ denote the usual mapping space functor in $\cat{sSet}$. A simplicially enriched category $\cat S$ is \emph{tensored over $\cat{sSet}$} if there is a functor
$$-\otimes -:\cat S \times \cat {sSet} \to \cat S$$
together with natural isomorphisms 
$$\map _{\cat S}(X\otimes L, Y)\cong\map_{\cat{sSet}} \big(L, \map _{\cat S}(X,Y)\big)$$
for all $X,Y\in \ob \cat S$ and $L\in \ob \cat {sSet}$.
Dually, $\cat S$ is \emph{cotensored over $\cat{sSet}$} if there is a functor
$$(-)^{(-)}:\cat S\times \cat {sSet}^{op}\to \cat S,$$ 
 together with natural isomorphisms
$$\map_{\cat S} (X, Y^L)\cong \map_{\cat{sSet}} \big(L, \map _{\cat S}(X,Y)\big)$$
for all $X,Y\in \ob \cat S$ and $L\in \ob \cat {sSet}$.
\end{defn}

\begin{rmk} If $\cat S$ is tensored and cotensored over $\cat {sSet}$, then, for all simplicial sets $L$, the functors $-\otimes L$ and $(-)^L$ form an adjoint pair,  with counit  $ev: X^L\otimes L\to X$ and unit $coev: X\to (X\otimes L)^L$.  It follows as well that 
$$\map_{\cat S}(X,Y)_{n}\cong\cat S(X\otimes \Delta[n], Y)\cong \cat S(X, Y^{\Delta[n]})$$
for all $X,Y\in \ob \cat S$ and $n\geq 0$.
\end{rmk}

In a simplicially enriched category, one can define a notion of homotopy between morphisms as follows.

\begin{defn} \label{defn:simphop} Let $\cat S$ be a simplicially enriched category, and let $f,f':X\to Y$ be morphisms in $\cat S$, considered as 0-simplices of $\map_{\cat S}(X,Y)$.  If there is a 1-simplex $h$ of $\map_{\cat S}(X,Y)$ such that $d_{1}h=f$ and $d_{0}h=f'$, then $f$ is \emph{strictly simplicially homotopic} to $f'$.  If $f$ and $f'$ are equivalent under the equivalence relation generated by the relation of strict simplicial homotopy, then they are \emph{simplicially homotopic}, denoted $f\underset s\sim g$.

A morphism $f:X\to Y$ is a \emph{simplicial homotopy equivalence} if there is a morphism $g:Y\to X$ such that $gf\underset s\sim X$ and $fg\underset s \sim Y$.
\end{defn}

\begin{rmk}  There are alternative characterizations of simplicial homotopy, if $\cat S$ is tensored or cotensored over $\cat {sSet}$, expressed in terms of \emph{generalized intervals}, which are formed by gluing standard 1-simplices $\Delta[1]$ along their vertices, so that the geometric realization is homeomorphic to the unit interval.   If $J$ is a generalized interval, let $j_{0}$ and $j_{1}$ denote the inclusions of $\Delta[0]$ as the two endpoints of $J$.

If $\cat S$ is tensored over $\cat {sSet}$, then morphisms $f,f':X\to Y$ in $\cat S$ are simplicially homotopic if and only if there is a generalized interval $J$ and a morphism $h:X\otimes J \to Y$ in $\cat S$ such that $h(X\otimes j_{0})=f$ and $h(X\otimes j_{1})=f'$.  Dually, if $\cat S$ is cotensored over $\cat {sSet}$, then morphisms $f,f':X\to Y$ in $\cat S$ are simplicially homotopic if and only if there is a generalized interval $J$ and a morphism $h:X \to Y^J$ in $\cat S$ such that $Y^{j_{0}}\circ h=f$ and $Y^{j_{1}}\circ h=f'$.
\end{rmk}

We can now define a useful quotient of any simplicially enriched category.

\begin{defn}\label{defn:pathcomp} 
If $\cat S$ is a simplicially enriched category, then its \emph{path-component category} $\pi_{0}\cat S$ is the ordinary category with $\ob \pi_{0}\cat S=\ob\cat S$ and $\pi_{0}\cat S(X,Y)=\map_{\cat S}(X,Y)_{0}/\underset s\sim$.
\end{defn}

\subsection {Simplicial model categories}\label{sec:simplmodelcat}

We sketch here those elements of the theory of simplicial model categories that we need in this paper.  We refer the reader to, e.g., \cite[Ch 9]{hirschhorn}, for a thorough exposition of the subject, including a detailed discussion of homotopy (co)limits, as well as of  the Tot and  realization functors.

\begin{defn}\label{defn:simplmodel}  A model category that is simplicially enriched, as well as tensored and cotensored over $\cat {sSet}$, is a \emph{simplicial model category} if for every cofibration $\xymatrix@1{i:X\;\cof& \; Y}$ and every fibration $\xymatrix@1{p:E\;\fib& \;B}$ in $\cat M$, the induced map of simplicial sets
$$\map_{\cat M}(Y,E)\xrightarrow {(p_{*},i^*)} \map_{\cat M}(Y,B)\underset {\map_{\cat M}(X,B)}\times \map_{\cat M}(X,E)$$
is a fibration that is acyclic if $i$ or $p$ is a weak equivalence.
\end{defn}

\begin{thm}\cite [Theorem 9.8.5]{hirschhorn}\label{thm:simplfunctor}  Let $\cat M$ and $\cat M'$ be simplicial model categories. A functor $F:\cat M\to \cat M'$ can be extended to a simplicial functor if and only if there is a natural transformation 
$$\theta: F(-)\otimes - \Rightarrow F(-\otimes -)$$
of functors from $\cat M\times \cat{sSet}$ into $\cat M'$ such that  
\begin{enumerate}
\item a unit condition holds: $\theta_{X,\Delta_{0}}:F(X)\otimes \Delta [0] \to F(X\otimes \Delta[0])$ is an isomorphism for all $X$, compatible with the natural isomorphisms $X\otimes \Delta [0]\cong X$ and $F(X)\otimes \Delta [0]\cong F(X)$, and
\item an associativity condition holds: the two possible natural composites $$F(X)\otimes (L\times L')\to F\big((X\otimes L)\otimes L'\big)$$ built from $\theta$ and the tensoring isomorphisms are equal, for all $X$, $L$ and $L'$.
\end{enumerate}
\end{thm}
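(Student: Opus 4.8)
The plan is to prove both implications by transporting data across the tensor--cotensor adjunctions in $\cat M$ and $\cat M'$. These identify an $n$-simplex of a mapping space $\map_{\cat M}(X,Y)$ with a morphism $X\otimes\Delta[n]\to Y$, and dually identify a morphism $FX\otimes L\to F(X\otimes L)$ in $\cat M'$ with a simplicial map $L\to\map_{\cat M'}\big(FX,F(X\otimes L)\big)$; all of the data appearing in the statement lives most naturally on one side or the other of these identifications.

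For the ``only if'' direction, suppose $F$ is equipped with simplicial-functor components $F_{X,Y}:\map_{\cat M}(X,Y)\to\map_{\cat M'}(FX,FY)$. I would let $u_{X,L}:L\to\map_{\cat M}(X,X\otimes L)$ be the adjunct of $Id_{X\otimes L}$, and define $\theta_{X,L}:FX\otimes L\to F(X\otimes L)$ to be the morphism of $\cat M'$ adjoint to $L\xrightarrow{u_{X,L}}\map_{\cat M}(X,X\otimes L)\xrightarrow{F_{X,X\otimes L}}\map_{\cat M'}\big(FX,F(X\otimes L)\big)$. Naturality of $\theta$ in both $X$ and $L$ is then forced by naturality of $u_{-,-}$, of the $F_{-,-}$, and of the adjunction isomorphisms; the unit condition holds because $u_{X,\Delta[0]}$ is the adjunct of $Id_X$ under $X\otimes\Delta[0]\cong X$, so its image under $F_{X,X\otimes\Delta[0]}$ is $Id_{FX}$, whose adjunct is the canonical isomorphism; and the associativity condition is read off from the composition axiom $c\circ(F_{Y,Z}\times F_{X,Y})=F_{X,Z}\circ c$ applied to the simplex of $\map_{\cat M}\big(X,(X\otimes L)\otimes L'\big)$ adjoint to $Id_{(X\otimes L)\otimes L'}$.

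For the ``if'' direction, given $\theta$ satisfying the unit and associativity conditions, I would define $F_{X,Y}$ on $n$-simplices by sending $g\in\map_{\cat M}(X,Y)_n$, viewed via the tensoring as $\tilde g:X\otimes\Delta[n]\to Y$, to the $n$-simplex of $\map_{\cat M'}(FX,FY)$ corresponding to $FX\otimes\Delta[n]\xrightarrow{\theta_{X,\Delta[n]}}F(X\otimes\Delta[n])\xrightarrow{F(\tilde g)}FY$. Naturality of $\theta$ in the $\cat{sSet}$-variable along the coface and codegeneracy maps of $\Delta[\bullet]$ makes this a well-defined simplicial map; the unit condition yields $F_{X,X}\circ i_X=i_{FX}$ and shows that $F_{X,Y}$ restricts on $0$-simplices to the given action of $F$ on morphisms, so that the resulting simplicial functor genuinely extends $F$; naturality of $F_{-,-}$ in each variable is routine; and compatibility with simplicial composition is the one remaining point.

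The hard part will be that last compatibility with $c$. It requires first expressing $c:\map_{\cat M}(Y,Z)\times\map_{\cat M}(X,Y)\to\map_{\cat M}(X,Z)$ through the tensoring --- on $n$-simplices, via the diagonal $\Delta[n]\to\Delta[n]\times\Delta[n]$, the composite of $\tilde h:Y\otimes\Delta[n]\to Z$ with $\tilde g:X\otimes\Delta[n]\to Y$ is $X\otimes\Delta[n]\to X\otimes(\Delta[n]\times\Delta[n])\cong(X\otimes\Delta[n])\otimes\Delta[n]\xrightarrow{\tilde g\otimes\Delta[n]}Y\otimes\Delta[n]\xrightarrow{\tilde h}Z$ --- and then checking that the two resulting maps out of $FX\otimes(\Delta[n]\times\Delta[n])$ agree. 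They agree precisely by the associativity hypothesis on $\theta$ (taking $L=L'=\Delta[n]$), together with naturality of $\theta$ in the $\cat M$-variable (to absorb $\tilde g$) and in the $\cat{sSet}$-variable (to absorb the diagonal). Once this is established, all that remains is routine adjunction and naturality bookkeeping.
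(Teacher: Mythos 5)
Your proposal is correct: both directions are the standard adjunction-transport argument, and your key computation (expressing simplicial composition via the diagonal $\Delta[n]\to\Delta[n]\times\Delta[n]$, then using naturality of $\theta$ in each variable plus the associativity condition with $L=L'=\Delta[n]$) is exactly what makes the "if" direction work, while the unit condition correctly handles identities and the extension property on $0$-simplices. Note that the paper gives no proof of this statement---it is quoted as Theorem 9.8.5 of Hirschhorn---and your argument is essentially the proof found there.
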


\begin{rmk}\label{rmk:tau}  The functor $F:\cat M\to \cat M'$ of the previous theorem is simplicial if and only if there is  a natural transformation
$$\tau: F\big( (-)^{(-)}\big) \Rightarrow F(-)^{(-)}$$
of functors from $\cat M\times \cat{sSet}^{op}$ into $\cat M'$, which is associative and unital in a sense similar to that above.  Indeed, given $\theta$, let $\tau_{X,L}:F(X^L)\to F(X)^L$ be the transpose, with respect to the $\big(-\otimes L, (-)^L\big)$-adjunction, of the composite
$$F(X^L)\otimes L \xrightarrow {\theta_{X^L,L}} F(X^L\otimes L)\xrightarrow{F(ev)} F(X).$$
Similarly, given $\tau$, let $\theta_{X,L}$ be the transpose, with respect to the $\big(-\otimes L, (-)^L\big)$-adjunction, of the composite
$$F(X) \xrightarrow {F(coev)} F\big( (X\otimes L)^L\big) \xrightarrow {\tau_{X\otimes L, L}} F(X\otimes L)^L.$$
\end{rmk}

\begin{defn}  Let $\cat M$ be a simplicial model category.  The \emph{totalization functor}
$$\tot: \cat M^{\bold\Delta} \to \cat M$$
is defined using the cotensoring on $\cat M$ to be
$$\tot X^\bullet= \operatorname{equal}\Big( \prod_{n\geq 0} (X^n)^{\Delta[n]} \egal{}{} \prod_{\bold\Delta(n,k); k,n\geq0} (X^k)^{\Delta[n]}\Big),$$
where the maps are given in the obvious way by projection onto different factors, either in the simplicial component or in the $\cat M$-component.
The \emph{realization functor}
$$|-|: \cat M^{\bold\Delta^{op}} \to \cat M$$
is defined using the tensoring on $\cat M$ to be
$$|X_{\bullet}|= \operatorname{coeq}\Big( \coprod_{\bold\Delta(k,n); k,n\geq0} X_{n}\otimes \Delta [k] \egal{}{} \coprod_ {n\geq 0}X_{n}\otimes \Delta [n]\Big),$$
where the maps are given in the obvious way by injection into different summands, either in the simplicial component or in the $\cat M$-component.
\end{defn}

\begin{rmk}   We recall that if $X^\bullet$ is Reedy fibrant, then $\tot X^\bullet$ is fibrant in $\cat M$.  If $f^\bullet: X^\bullet \to Y^\bullet$ is a levelwise weak equivalence between Reedy fibrant objects, then $\tot f^\bullet$ is a weak equivalence in $\cat M$.

Dually, if $X_\bullet$ is Reedy cofibrant, then $|X_{\bullet}|$ is cofibrant in $\cat M$.  If $f_{\bullet}: X_{\bullet} \to Y_\bullet$ is a levelwise weak equivalence between Reedy cofibrant objects, then $|f_{\bullet}|$ is a weak equivalence in $\cat M$.
\end{rmk}

\begin{rmk}\label{rmk:tot-cst}  In \cite [Section 1]{dwyer-miller-neisendorfer} the authors mention that if $\cat M=\cat {sSet}$ and $X$ is any simplicial set, then $\tot cc^\bullet X\cong X$.  As Daniel Davis pointed out to the author, it is easy to generalize this isomorphism to any simplicial model category. 

If $X$ is an object in a simplicial model category $\cat M$, then $X^{(-)}:\cat {sSet}^{op}\to \cat M$ is a right adjoint and therefore sends limits to limits. Moreover, in $\cat {sSet}$, 
$$\Delta[0]= \operatorname{coeq}\Big( \coprod_{\bold\Delta(n,k); k,n\geq0}  \Delta [k] \egal{}{} \coprod_ {n\geq 0}\Delta [n]\Big).$$
We conclude that 
$$X\cong X^{\Delta[0]} \xrightarrow \cong \operatorname{equal}\Big( \prod_{n\geq 0} X^{\Delta[n]} \egal{}{} \prod_{\bold\Delta(n,k); k,n\geq0} X^{\Delta[n]}\Big)=\tot cc^\bullet X.$$
\end{rmk}

\subsection{External (co)simplicial structure}\label{sec:external}

Let $\cat C$ be a category admitting all finite limits and colimits.  Quillen showed in \cite[\S II.1]{quillen} (cf. also \cite[\S 2.10]{bousfield}) that $\cat C^{\bold\Delta^{op}}$ and $\cat C^{\bold\Delta}$ both admit a simplicial enrichment that is tensored and cotensored over $\cat {sSet}$, called the \emph{external simplicial structure}.   

Recall Definition \ref{defn:simphop}.

\begin{defn}  Two (co)simplicial morphisms in $\cat C$ are \emph{externally homotopic} if they are simplicially homotopic in the external simplicial structure.  Similarly, a (co)simplicial morphism in $\cat C$ is an \emph{external homotopy equivalence} if it is a simplicial homotopy equivalence in the external simplicial structure.
\end{defn}

Bousfield proved the cosimplicial half of the following useful result in \cite[\S 2.12]{bousfield}.  To prove the simplicial half, one can simply dualize his proof.

\begin{prop}\label{prop:bousfield}\cite{bousfield} Let $\cat M$ be a simplicial model category.
\begin{enumerate}
\item Let $f^\bullet, g^\bullet: X^\bullet \to Y^\bullet$ be morphisms of Reedy fibrant cosimplicial objects in $\cat M^{\bold \Delta}$.  If $f^\bullet$ and $g^\bullet$ are externally homotopic, then $\tot f^\bullet $ and $\tot g^\bullet$ are simplicially homotopic.
\item Let $f_{\bullet}, g_{\bullet}:X_{\bullet}\to Y_{\bullet}$ be morphisms of Reedy cofibrant simplicial objects in $\cat M^{\bold \Delta ^{op}}$.  If $f_{\bullet}$ and $g_{\bullet}$ are externally homotopic, then $|f_{\bullet}|$ and $|g_{\bullet}|$ are simplicially homotopic.
\end{enumerate}
\end{prop}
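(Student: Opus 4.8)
The plan is to follow Bousfield's argument \cite[\S 2.12]{bousfield}, whose essential content is that $\tot$ (resp.\ the realization functor $|-|$) is compatible with the external cotensoring (resp.\ tensoring) by a finite simplicial set. Part (2) is strictly dual to part (1), so I would treat (1) in detail and then dualize. For (1), first I would unwind the hypothesis using the cotensored structure: since the external simplicial structure makes $\cat M^{\bold\Delta}$ cotensored over $\cat{sSet}$, the characterization of simplicial homotopy by generalized intervals recalled in this appendix shows that $f^\bullet$ and $g^\bullet$ are externally homotopic precisely when there exist a generalized interval $J$ and a morphism of cosimplicial objects $h^\bullet\colon X^\bullet\to (Y^\bullet)^J$, with $(-)^J$ the external cotensor, such that $(Y^\bullet)^{j_0}\circ h^\bullet=f^\bullet$ and $(Y^\bullet)^{j_1}\circ h^\bullet=g^\bullet$. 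The goal is then to produce, out of $\tot h^\bullet$, a morphism $\tot X^\bullet\to(\tot Y^\bullet)^J$ exhibiting $\tot f^\bullet$ and $\tot g^\bullet$ as simplicially homotopic in $\cat M$.

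The key step, and the one where I expect the real work to lie, is the lemma that for $Y^\bullet$ Reedy fibrant and $J$ a finite simplicial set the external cotensor $(Y^\bullet)^J$ is again Reedy fibrant, and there is a natural isomorphism in $\cat M$, compatible with the maps induced by $j_0,j_1\colon\Delta[0]\to J$,
\[ \tot\big((Y^\bullet)^J\big)\;\cong\;(\tot Y^\bullet)^J. \]
Reedy fibrancy of $(Y^\bullet)^J$ is the easy half: it is immediate from the fact, due to Quillen and Bousfield, that $\cat M^{\bold\Delta}$ with its Reedy structure and its external simplicial structure is a simplicial model category, so that cotensoring by the cofibrant object $J$ preserves Reedy fibrant objects. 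For the isomorphism, I would observe that both $\tot(-)$ — via the equalizer presentation $\operatorname{equal}\big(\prod_n(-)^{\Delta[n]}\rightrightarrows\prod(-)^{\Delta[n]}\big)$ given above — and the external cotensor $(-)^J$ are built purely from limits in $\cat M$ and from the cotensoring $(-)^{(-)}\colon\cat M\times\cat{sSet}^{op}\to\cat M$; since the latter is a right adjoint in each variable it preserves all limits, and limits commute with limits, so a Fubini-type interchange yields the natural isomorphism.

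The bookkeeping here is the main obstacle: one must chase through the explicit finite-limit formula for $(Y^\bullet)^J$ (in terms of the objects $Y^n$ and the matching objects of $Y^\bullet$) to see that the interchange of limits is legitimate and respects the endpoint inclusions, and one must check that the Reedy fibrancy hypothesis is exactly what keeps all the intermediate objects fibrant, so that the point-set $\tot$ we are manipulating is the homotopically meaningful one rather than an accidental strict limit. Everything else is formal. Granting the lemma, applying $\tot$ to $h^\bullet$ and postcomposing with the isomorphism gives $\tot h^\bullet\colon\tot X^\bullet\to(\tot Y^\bullet)^J$ with $(\tot Y^\bullet)^{j_0}\circ\tot h^\bullet=\tot f^\bullet$ and $(\tot Y^\bullet)^{j_1}\circ\tot h^\bullet=\tot g^\bullet$, so by the generalized-interval characterization of simplicial homotopy, $\tot f^\bullet$ and $\tot g^\bullet$ are simplicially homotopic in $\cat M$, which is (1).

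For part (2) I would dualize throughout: use the external \emph{tensoring} on $\cat M^{\bold\Delta^{op}}$ to realize the external homotopy between $f_\bullet$ and $g_\bullet$ as a morphism $h_\bullet\colon X_\bullet\otimes J\to Y_\bullet$ restricting to $f_\bullet$ and $g_\bullet$ along $j_0,j_1$, and invoke the dual lemma that for $X_\bullet$ Reedy cofibrant and $J$ finite, $X_\bullet\otimes J$ is Reedy cofibrant and there is a natural isomorphism $|X_\bullet\otimes J|\cong|X_\bullet|\otimes J$ compatible with $j_0,j_1$. Its proof is the mirror image of the above: $|-|$ and the external tensor are assembled from colimits in $\cat M$ and from the tensoring $-\otimes L$, which is a left adjoint and hence preserves colimits, so the two are interchangeable, with Reedy cofibrancy of $X_\bullet$ ensuring the latching maps of $X_\bullet\otimes J$ remain cofibrations. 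Composing $|X_\bullet|\otimes J\cong|X_\bullet\otimes J|\xrightarrow{|h_\bullet|}|Y_\bullet|$ then exhibits $|f_\bullet|$ and $|g_\bullet|$ as simplicially homotopic in $\cat M$.
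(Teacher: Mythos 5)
The paper itself offers no argument for this proposition: part (1) is quoted from Bousfield \cite[\S 2.12]{bousfield} and part (2) is declared dual, so your proposal is really being compared with Bousfield's proof. Your overall route is the intended one, and the statement you isolate as the crux is in fact true: for a finite simplicial set $J$ (in particular a generalized interval) there is a natural isomorphism $\tot\big((Y^\bullet)^J\big)\cong(\tot Y^\bullet)^J$, compatible with the maps induced by $j_0,j_1\colon\Delta[0]\to J$, where the left-hand cotensor is the external one; combined with the generalized-interval characterization of simplicial homotopy this gives (1), and the dual statement for $|-|$ gives (2). Note also that no fibrancy enters this purely enriched conclusion, so the portion of your argument devoted to Reedy fibrancy of $(Y^\bullet)^J$ -- and in particular the unsupported assertion that the Reedy model structure together with the \emph{external} simplicial structure is a simplicial model category (the simplicial structure usually paired with the Reedy structure is the levelwise one) -- can simply be deleted.

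The genuine gap is your justification of the key lemma. The external cotensor is not levelwise: $\big((Y^\bullet)^J\big)^n=\prod_{J_n}Y^n$, with cosimplicial structure maps that twist the simplicial operators of $J$ into the cooperators of $Y^\bullet$. Hence, after pushing $(-)^J$ inside the equalizer presentation of $\tot$, the two limit diagrams you want to identify are \emph{not} termwise isomorphic: in cosimplicial level $n$ one side has $(Y^n)^{\Delta[n]\times J}$ and the other $(Y^n)^{\coprod_{J_n}\Delta[n]}$ (already at $n=0$ this is $(Y^0)^J$ versus $\prod_{J_0}Y^0$). So ``the cotensor preserves limits and limits commute with limits'' cannot produce the isomorphism; it exists only after passing to the end over $\bold\Delta$, and constructing it requires a co-Yoneda/density argument rather than a Fubini interchange: one matches a compatible family of maps $\Delta[n]\to\map_{\cat M}\big(A,\prod_{J_n}Y^n\big)$ with a compatible family $J\times\Delta[n]\to\map_{\cat M}(A,Y^n)$ using the characteristic maps $\chi_j\colon\Delta[n]\to J$ of simplices $j\in J_n$ together with the cosimplicial identities, checks that the two assignments are mutually inverse, and then concludes by Yoneda in $A$ (naturality in $J$ gives the endpoint compatibility). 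With the lemma proved this way, the rest of your argument, and its dual for realizations, does go through.
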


In this paper, we apply the following consequence of Bousfield's result.

\begin{cor}\label{cor:extequiv-weq} Let $\cat M$ be a simplicial model category.
\begin{enumerate}
\item If $f^\bullet: X^\bullet \to Y^\bullet$ is an external homotopy equivalence of Reedy fibrant objects in $\cat M^{\bold \Delta}$, then $\tot f^\bullet:\tot X^\bullet \to \tot Y^\bullet$ is a weak equivalence in $\cat M$.
\item If $f_\bullet: X_\bullet \to Y_\bullet$ is an external homotopy equivalence of Reedy cofibrant objects in $\cat M^{\bold \Delta^{op}}$, then $|f_\bullet|: |X_\bullet| \to |Y_\bullet|$ is a weak equivalence in $\cat M$.
\end{enumerate}
\end{cor}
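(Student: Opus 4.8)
The plan is to deduce both statements directly from Proposition \ref{prop:bousfield}, combined with the standard fact that a simplicial homotopy equivalence between fibrant (resp. cofibrant) objects of a simplicial model category is a weak equivalence.

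For (1), I would first unwind Definition \ref{defn:simphop}: an external homotopy equivalence $f^\bullet\colon X^\bullet\to Y^\bullet$ in $\cat M^{\bold \Delta}$ comes equipped with a morphism $g^\bullet\colon Y^\bullet\to X^\bullet$ in $\cat M^{\bold \Delta}$ such that $g^\bullet\circ f^\bullet$ is externally homotopic to $\mathrm{Id}_{X^\bullet}$ and $f^\bullet\circ g^\bullet$ is externally homotopic to $\mathrm{Id}_{Y^\bullet}$. Since $X^\bullet$ and $Y^\bullet$ are Reedy fibrant, these are pairs of externally homotopic morphisms of Reedy fibrant cosimplicial objects, so Proposition \ref{prop:bousfield}(1) applies to each. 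Using that $\tot\colon\cat M^{\bold \Delta}\to\cat M$ is a functor, one gets that $\tot(g^\bullet)\circ\tot(f^\bullet)=\tot(g^\bullet\circ f^\bullet)$ is simplicially homotopic to $\tot(\mathrm{Id}_{X^\bullet})=\mathrm{Id}_{\tot X^\bullet}$, and symmetrically $\tot(f^\bullet)\circ\tot(g^\bullet)$ is simplicially homotopic to $\mathrm{Id}_{\tot Y^\bullet}$. Hence $\tot f^\bullet$ is a simplicial homotopy equivalence in $\cat M$, with inverse $\tot g^\bullet$.

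Next I would observe that $\tot X^\bullet$ and $\tot Y^\bullet$ are fibrant objects of $\cat M$, since the totalization of a Reedy fibrant cosimplicial object is fibrant (recorded in section \ref{sec:simplmodelcat}). It then remains only to invoke the standard fact that a simplicial homotopy equivalence between fibrant objects of a simplicial model category is a weak equivalence; if one wishes to spell this out, one notes that a simplicial homotopy between morphisms into a fibrant object $Y$ is a right homotopy via the path object $Y^{\Delta[1]}$, so that $\tot f^\bullet$ becomes an isomorphism in $\Ho(\cat M)$, and then passes to (co)fibrant replacements and applies the two-out-of-three axiom. Part (2) is handled by the strictly dual argument: Proposition \ref{prop:bousfield}(2) shows $|f_\bullet|$ is a simplicial homotopy equivalence, the realization of a Reedy cofibrant simplicial object is cofibrant, and a simplicial homotopy equivalence between cofibrant objects is a weak equivalence.

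The only step requiring any care — and it is entirely routine simplicial model category theory — is the last one, converting the simplicial homotopy equivalence $\tot f^\bullet$ (resp. $|f_\bullet|$) into a weak equivalence, which depends on knowing the fibrancy of $\tot X^\bullet,\tot Y^\bullet$ (resp. the cofibrancy of $|X_\bullet|,|Y_\bullet|$). Everything preceding it is a formal consequence of the functoriality of $\tot$ (resp. $|-|$) together with Proposition \ref{prop:bousfield}.
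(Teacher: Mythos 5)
Your proposal is correct and follows essentially the same route as the paper: take the external homotopy inverse, apply Proposition \ref{prop:bousfield} to both composites, conclude that $\tot f^\bullet$ (resp. $|f_\bullet|$) is a simplicial homotopy equivalence, and then pass to a weak equivalence. The only difference is cosmetic: the paper simply cites Proposition 9.5.16 of \cite{hirschhorn}, which says that \emph{any} simplicial homotopy equivalence in a simplicial model category is a weak equivalence, so your extra appeal to the fibrancy of $\tot X^\bullet$ and $\tot Y^\bullet$ (and the replacement/two-out-of-three detour) is not needed, though it is harmless.
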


\begin{proof} We do the cosimplicial case and the leave the strictly dual proof of the simplicial case to the reader.

Let $g^\bullet: Y^\bullet \to X^\bullet$ be the external homotopy inverse to $f^\bullet$.  By Proposition \ref{prop:bousfield} (1), $\tot (g^\bullet)\tot( f^\bullet)=\tot (g^\bullet f^\bullet)$ is simplicially homotopic to the identity morphism of $\tot X^\bullet$ and $\tot (f^\bullet)\tot( g^\bullet)=\tot (f^\bullet g^\bullet)$ is simplicially homotopic to the identity morphism of $\tot Y^\bullet$, i.e., $\tot (f^\bullet)$ and $\tot (g^\bullet)$ are simplicial homotopy equivalences.  By Proposition 9.5.16 in \cite{hirschhorn}, we can conclude that $\tot (f^\bullet)$ and $\tot (g^\bullet)$ are weak equivalences in $\cat M$.
\end{proof}

We are particularly interested here in the following  excellent, classical source of external homotopy equivalences of cosimplicial (respectively, simplicial) objects.

\begin{defn}\label{defn:contractible} Let $\cat C$ be any category.
\begin{enumerate}
\item  A \emph{contractible cosimplicial object} in $\cat C$ consists of an object $Y^\bullet $ in $\cat C^{\bold\Delta}$ and an object $X$ in $\cat C$, together with morphisms in $\cat C$
 $$\eta: X\to Y^0\quad\text{and}\quad s^n:Y^n\to Y^{n-1}, \forall n\geq 0,$$
 where $Y^{-1}:=X$, satisfying
 $$d^0\eta=d^1\eta: X\to Y^1,$$
 $$s^0\eta=Id_{X}\quad\text{and}\quad s^nd^n=Id_{Y^{n-1}}\; \forall n\geq 0,$$
 $$s^1d^0=\eta s^0\quad\text{and}\quad s^nd^{i}=d^{i}s^{n-1}\;\forall  0\leq i\leq n-1$$
 and 
 $$s^ns^{i}=s^{i}s^{n+1}\;\forall 0\leq i\leq n.$$
 \item A \emph{contractible simplicial object} in $\cat C$ consists of an object $Y_\bullet $ in $\cat C^{\bold\Delta^{op}}$ and an object $X$ in $\cat C$, together with morphisms in $\cat C$
 $$\ve: Y_{0}\to X\quad\text{and}\quad s_{n}:Y_{{n-1}}\to Y_{n}, \forall n\geq 0,$$
 where $Y_{-1}:=X$, satisfying
 $$\ve d_{0}=\ve d_{1},$$
 $$\ve s_{0}=Id_{X}\quad\text{and}\quad d_{n}s_{n}=Id_{Y_{n-1}}\;\forall n\geq0, $$
 $$d_{0}s_{1}=s_{0}\ve\quad\text{and}\quad d_{i}s_{n}=s_{n-1}d_{i}\;\forall 0\leq i\leq n-1, $$
 and
 $$s_{i}s_{n}=s_{n+1}s_{i}\;\forall 0\leq i\leq n.$$
 \end{enumerate}
 \end{defn}
 
 \begin{rmk}  In the literature one can find definitions of contractible (co)simplicial objects that do not impose any conditions on the relation between the ``extra (co)degeneracy'' and the other (co)degeneracies.  If the (co)simplicial objects live in an abelian category, so that one can form the associated (co)chain complex, then the condition on (co)degeneracies is indeed superfluous: the other conditions suffice to prove that the (co)chain complex is acyclic. In this paper, however, we do not assume that the underlying category is abelian or that there is any sort of associated (co)chain complex, whence our need of the condition on (co)degeneracies.  In particular, the proof of the next proposition relies explicitly on this condition.
 \end{rmk}

As proved by Barr in \cite{barr}, contractible (co)simplicial objects give rise to external homotopy equivalences of the following special sort.

\begin{prop}\cite[Prop. 3.3]{barr}\label{prop:barr}  Let $\cat C$ be a category admitting all finite limits and colimits.
\begin{enumerate}
\item If $X\xrightarrow{\eta} Y^\bullet$ is a contractible cosimplicial object in $\cat C$, then there are cosimplicial maps
$$\eta^\bullet : cc^\bullet X\to Y^\bullet\quad \text{and}\quad \rho^\bullet: Y^\bullet \to cc^\bullet X$$
such that $\rho^\bullet \eta^\bullet = Id_{cc^\bullet X}$ and $\eta^\bullet \rho^\bullet$ is externally homotopic to $Id_{Y^\bullet}$.
\item If $Y_\bullet\xrightarrow \ve X$ is a contractible simplicial object in $\cat C$, then there are simplicial maps
$$\ve_{\bullet}: Y_\bullet\to cs_{\bullet}X \quad \text{and}\quad \sigma_\bullet: cs_\bullet X\to Y_{\bullet}$$
such that $\ve_{\bullet}\sigma_{\bullet} = Id_{cs_\bullet X}$ and $\sigma_\bullet \ve_{\bullet}$ is externally homotopic to $Id_{Y_\bullet}$.
\end{enumerate}
\end{prop}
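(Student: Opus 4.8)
The plan is to prove part (1) in full and to deduce (2) by passing to $\cat C^{op}$, under which a contractible simplicial object in $\cat C$ becomes a contractible cosimplicial object and Quillen's external simplicial structures on $\cat C^{\bold\Delta}$ and on $\cat C^{\bold\Delta^{op}}$ are exchanged. Throughout, note that $\Delta[1]_n$ is finite for every $n$, so every (co)limit that intervenes below is finite; the hypothesis that $\cat C$ have only \emph{finite} limits and colimits therefore suffices for the external tensoring and cotensoring we need (cf.\ \cite[\S II.1]{quillen}, \cite[\S 2.10]{bousfield}).

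First I would build the two cosimplicial maps. Since $cc^\bullet X$ has identity cofaces and codegeneracies, a cosimplicial morphism $cc^\bullet X\to Y^\bullet$ is the same datum as a single morphism $\eta^0:X\to Y^0$ equalized by the two cofaces $d^0,d^1:Y^0\to Y^1$; one then takes $\eta^n$ to be any iterate of cofaces applied to $\eta^0$, which is well defined by the coface identities together with $d^0\eta^0=d^1\eta^0$. Taking $\eta^0=\eta$ --- whose equalizing property is built into Definition \ref{defn:contractible} --- yields $\eta^\bullet$. Dually, now using the \emph{extra} codegeneracies in place of the cofaces, I would set $\rho^n:Y^n\to X$ to be the composite $Y^n\xrightarrow{\,s^n\,}Y^{n-1}\xrightarrow{\,s^{n-1}\,}\cdots\xrightarrow{\,s^1\,}Y^0\xrightarrow{\,s^0\,}X$ of the extra codegeneracy of each level, and check that $\rho^\bullet$ is a cosimplicial morphism. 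That check is a diagram chase using exactly the relations of Definition \ref{defn:contractible} that involve the extra codegeneracy, namely $s^{n+1}d^{n+1}=Id$, $s^{n+1}d^{i}=d^{i}s^{n}$ for $i\le n$, $s^{1}d^{0}=\eta s^{0}$, and $s^{i}s^{n}=s^{n}s^{i-1}$. The identity $\rho^\bullet\eta^\bullet=Id_{cc^\bullet X}$ then collapses, level by level and by the same relations, to the single equation $s^{0}\eta=Id_X$.

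Next comes the external self-homotopy $h$ exhibiting $\eta^\bullet\rho^\bullet\simeq Id_{Y^\bullet}$. Because $\cat C^{\bold\Delta}$ is tensored and cotensored over $\cat{sSet}$ in the external structure, such a homotopy is a cosimplicial morphism $Y^\bullet\to(Y^\bullet)^{\Delta[1]}$ whose two composites along the endpoints $j_0,j_1$ of $\Delta[1]$ recover $\eta^\bullet\rho^\bullet$ and $Id_{Y^\bullet}$ (cf.\ the remark on generalized intervals preceding Definition \ref{defn:pathcomp}). Since the external cotensor is computed levelwise as a finite product indexed by $\Delta[1]_n$, this datum is precisely a ``prism operator'' assembled from the extra codegeneracies --- the cosimplicial dual of the classical contracting homotopy on a simplicial set carrying an extra degeneracy (see e.g.\ \cite[Ch.\ IV]{goerss-jardine}). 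I would transcribe Barr's explicit formulas from \cite[Prop.\ 3.3]{barr} and verify the prism identities: the two boundary identities and compatibility of $h$ with the cofaces and with the codegeneracies of $Y^\bullet$. Each of these follows formally from the cosimplicial identities for $Y^\bullet$ together with the relations of Definition \ref{defn:contractible}; compatibility of $h$ with the codegeneracies of $Y^\bullet$ is where the interchange relations $s^{i}s^{n}=s^{n}s^{i-1}$ between the extra codegeneracy and the ordinary ones are indispensable --- exactly the relations flagged as ``not superfluous'' in the remark following Definition \ref{defn:contractible}. Part (2) is then obtained by the dualization above: reverse all arrows, replace iterated cofaces by iterated faces and the extra codegeneracy by the extra degeneracy, and replace the external cotensor by the external tensor (a levelwise finite coproduct indexed by $\Delta[1]_n$).

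The constructions of $\eta^\bullet$ and $\rho^\bullet$ and the verification of $\rho^\bullet\eta^\bullet=Id$ are routine chases with the (co)simplicial identities. The real work, and the main obstacle, is making the homotopy honest: assembling the extra-codegeneracy formulas into a genuine cosimplicial morphism $Y^\bullet\to(Y^\bullet)^{\Delta[1]}$ --- i.e.\ checking at each level that the constructed map lands in the equalizer defining the external cotensor --- and then pushing the prism identities through. One must also be a little careful that the generalized-interval description of external homotopy matches the levelwise prism-operator description and that no infinite (co)limits creep in; finiteness of $\Delta[1]_n$ is what keeps everything within the stated hypotheses on $\cat C$.
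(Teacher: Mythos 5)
The paper itself offers no proof of this proposition: it is quoted from Barr, and the surrounding remark only explains why the relations between the extra codegeneracy and the ordinary ones are part of the definition. Your outline is exactly the argument that citation stands for, and its routine parts are sound: a coaugmentation $\eta$ with $d^0\eta=d^1\eta$ does determine $\eta^\bullet:cc^\bullet X\to Y^\bullet$ (maps out of $cc^\bullet X$ correspond to maps into the equalizer of $d^0,d^1:Y^0\to Y^1$), the composite of the extra codegeneracies is the right candidate for $\rho^\bullet$ (checking it is cosimplicial uses precisely the listed relations), $\rho^\bullet\eta^\bullet=Id$ collapses to $s^0\eta=Id_X$, finiteness of $\Delta[1]_n$ keeps the external (co)tensors within the stated hypotheses, and part (2) follows by passing to $\cat C^{op}$.

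The gap is at the one step that carries the content of the statement. You correctly identify that the heart of the matter is assembling the extra codegeneracies into a genuine external homotopy $Y^\bullet\to (Y^\bullet)^{\Delta[1]}$ between $Id_{Y^\bullet}$ and $\eta^\bullet\rho^\bullet$, but at that point you write that you would ``transcribe Barr's explicit formulas from \cite[Prop.\ 3.3]{barr} and verify the prism identities'' --- that is, you invoke the very proposition being proved. As a blind proof this leaves the decisive construction missing: you need to actually write down the level-$n$ components of the homotopy (composites of the form ``$j$ extra codegeneracies followed by $j$ cofaces,'' interpolating from $Id_{Y^n}$ at $j=0$ to $\eta^n\rho^n$ at the other end), check that each level lands in the equalizer defining the external cotensor, and verify compatibility with the cofaces and codegeneracies of $Y^\bullet$ --- this last check being where the interchange relations $s^ns^i=s^is^{n+1}$ are used, as you rightly anticipate. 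Until those formulas and verifications are supplied, the proposal is an accurate plan for Barr's proof rather than a proof.
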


Motivated by the proposition above, we formulate the following definition.

\begin{defn}\label{defn:extSDR} Let $\cat C$  be a category admitting all finite limits and colimits.
\begin{enumerate}
\item An \emph{external cosimplicial strong deformation retract (SDR)} in $\cat C$  consists of a pair of  morphisms in $\cat C^{\bold \Delta}$
$$\xymatrix{X^\bullet \ar @<.7ex>[r]^-{\eta^\bullet}&Y^\bullet\ar @<.7ex>[l]^-{\rho^\bullet}}$$
such that $ \rho^\bullet \eta^\bullet = Id_{cc^\bullet X}$ and $\eta^\bullet \rho^\bullet$ is externally homotopic to $Id_{Y^\bullet}$.
\item An \emph{external simplicial strong deformation retract (SDR)} in $\cat C$  consists of a pair of  morphisms in $\cat C^{\bold \Delta^{op}}$
$$\xymatrix{X_\bullet \ar @<.7ex>[r]^-{\sigma_\bullet}&Y_\bullet\ar @<.7ex>[l]^-{\ve_\bullet}}$$
such that $\ve_{\bullet}\sigma_{\bullet} = Id_{cs_\bullet X}$ and $\sigma_\bullet \ve_{\bullet}$ is externally homotopic to $Id_{Y_\bullet}$.
\end{enumerate}
\end{defn}

\begin{notn} An external cosimplicial SDR is denoted
$$\xymatrix{X^\bullet \ar @<.7ex>[r]^-{\eta^\bullet}&Y^\bullet\circlearrowright h\ar @<.7ex>[l]^-{\rho^\bullet}},$$
while an external simplicial SDR is denoted
$$\xymatrix{X_\bullet \ar @<.7ex>[r]^-{\sigma_\bullet}&Y_\bullet\circlearrowright h\ar @<.7ex>[l]^-{\ve_\bullet}}.$$
\end{notn}

\subsection{Ext in simplicial model categories}\label{sec:ext}

In our study of (co)descent spectral sequences, we are led to consider an Ext-type construction for (co)descent data, for which we provide the theoretical framework here. Recall that, if $R$ is a ring and $M$ and $N$ are right $R$-modules, then the elements of $\ext^n_{R}(M,N)$ are in bijective correspondence with chain homotopy classes of chain maps of degree $-n$ from a resolution of $M$ to an injective resolution of $N$, as well as with chain homotopy classes of maps of degree $-n$ from a projective resolution of $M$ to a resolution of $N$.  

We begin by introducing the maps that play the role of injective or projective resolutions in simplicial model categories. 

\begin{defn}\label{defn:homres}  Let $\cat M$ be a simplicial model category, and let $Z\in \ob\cat M$.  
\begin{enumerate}
\item A cosimplicial morphism $\zeta^\bullet: cc^\bullet Z\to Y^\bullet$ is a \emph{cohomological fibrant resolution} of $Z$ if $Y^\bullet$ is Reedy fibrant and for every commuting diagram
$$\xymatrix{cc^\bullet Z\ar@{ >->}[d]^{\sim}_{\iota^\bullet}\ar [r]^{\zeta^\bullet}&Y^\bullet\ar @{->>}[d]\\
R^\bullet \ar @{->>}[r]\ar [ur]^{\bar\zeta^\bullet}& cc^\bullet *}$$
and for every morphism $f:X\to Z$ in $\cat M$, where $X$ is cofibrant,
the induced homomorphisms
$$\pi^s\pi_{t}(\bar \zeta^\bullet)_{*}:\pi^s\pi_{t} \map_{\cat M}(X, R^\bullet)_{f}\to \pi^s\pi_{t} \map_{\cat M}(X, Y^\bullet)_{f}$$
are isomorphisms for all $t\geq s\geq 0$, where the mapping spaces are pointed with respect to the morphism induced by $f$.
\item A simplicial morphism $\zeta_\bullet: X_{\bullet}\to cs_{\bullet}Z$ is a \emph{homological cofibrant resolution} of $Z$ if $X_{\bullet}$ is Reedy cofibrant and for every commuting diagram
$$\xymatrix{cs_{\bullet}\emptyset\ar@{ >->}[d]\ar @{ >->} [r]&Q_{\bullet}\ar @{->>}[d]^\sim\\
X_{\bullet} \ar [r]^{\zeta_{\bullet}}\ar [ur]^{\bar\zeta_\bullet}& cs_{\bullet}Z}$$
and for every morphism $f:Z\to Y$ in $\cat M$, where $Y$ is fibrant,
the induced homomorphisms
$$\pi^s\pi_{t}(\bar \zeta_{\bullet})^*:\pi^s\pi_{t} \map_{\cat M}(Q_{\bullet}, Y)_{f}\to \pi^s\pi_{t} \map_{\cat M}(X_{\bullet}, Y)_{f}$$
are isomorphisms for all $t\geq s\geq 0$, where the mapping spaces are pointed by the morphisms induced in the obvious manner by $f$.
\end{enumerate}
\end{defn}

\begin{rmk} \label{rmk:homres}Any fibrant replacement of $cc^\bullet Z$ is obviously a cohomological fibrant resolution, as is any levelwise weak equivalence $cc^\bullet Z \to Y^\bullet$, where $Y^\bullet$ is Reedy fibrant.  In particular, if $Z$ is fibrant, then the identity map on $cc^\bullet Z$ is itself a cohomological fibrant resolution and therefore $\pi^s\pi_{t} \map_{\cat M}(X, Y^\bullet)_{f}$ is concentrated in bidegrees $(0,t)$, for $t\geq 0$, for all cofibrant $X$ and cohomological fibrant resolutions $cc^\bullet Z\to Y^\bullet$.

Not all cohomological fibrant resolutions are levelwise weak equivalences, however.  If $\zeta^\bullet: cc^\bullet Z \to Y^\bullet$ is an external homotopy equivalence,  and $Y^\bullet$ is Reedy fibrant, then $cc^\bullet Z$ is also Reedy fibrant, and $\zeta^\bullet$ is a cohomological fibrant resolution. The key to the proof is Bousfield's observation in section 2.12 of \cite {bousfield} that if $h^\bullet:A^\bullet \to B^\bullet$ is an external homotopy equivalence of cosimplicial abelian groups (respectively, groups or sets), then it induces an isomorphism $\pi^s h^\bullet:\pi^s A^\bullet \xrightarrow\cong \pi^s B^\bullet$ for all $s\geq 0$ (respectively, $s=0,1$ or $s=0$). On the other hand, there is no reason in general why $\zeta^n$ should be a weak equivalence for every $n$.
For example, for any monad $\T$ on a simplicial model category $\cat M$, if $(A,m)$ is a $\T$-algebra such that $\Om ^\T_{\bullet}A$ is Reedy fibrant, then there is an external homotopy equivalence $cc^\bullet A \to \Om _{\T}^\bullet A$ in $\cat M^{\bold \Delta}$ (Remark \ref{rmk:ex-Tcomp}) induced by the unit map $\eta :A\to TA$, which will certainly usually not be a weak equivalence.

Similar observations hold, of course, in the dual case.
\end{rmk}

\begin{rmk}\label{rmk:conseqcohomfibres} Let $X$ and $Z$ be a cofibrant and a fibrant object in $\cat M$, respectively, and let $f\in \cat M(X,Z)$. A cosimplicial map $\zeta^\bullet : cc^\bullet Z \to Y^\bullet$, where $Y^\bullet$ is Reedy fibrant, induces a morphism of spectral sequences between the extended homotopy spectral sequences \cite[X.6]{bousfield-kan} associated to  the fibrant simplicial sets $\map _{\cat M}(X,cc^\bullet Z) $ and $\map _{\cat M}(X,Y^\bullet )$, where basepoints are determined by $f$.  

If $\zeta^\bullet$ is a cohomological fibrant resolution, then the induced map on the $E_{2}$-terms is an isomorphism.  It follows that, if the two spectral sequences converge to $\pi_{*}\tot\map _{\cat M}(X, cc^\bullet Z)$ and to $\pi_{*}\tot\map _{\cat M}(X, Y^\bullet )$, respectively, then 
$$\pi_{*}\tot(\zeta^\bullet)_{*}: \pi_{*} \map_{\cat M}(X,Z)_{f} \to \pi_{*}\map _{\cat M}(X, \tot Y^\bullet)_{f}$$
is an isomorphism, where we have used that $\tot cc^\bullet Z\cong Z$ (Remark \ref{rmk:tot-cst}).  Thus, if the spectral sequences converge for all cofibrant $X$, then 
$$\tot(\zeta^\bullet)_{*}:  \map_{\cat M}(X,Z) \to \map _{\cat M}(X, \tot Y^\bullet)$$
is a weak equivalence for all cofibrant $X$, which implies by \cite[Proposition 9.7.1]{hirschhorn} that 
$$\tot (\zeta^\bullet): Z\to \tot Y^\bullet$$
is a weak equivalence.

Again, the obvious dual results clearly hold as well.
\end{rmk}

\begin{defn}\label{defn:ext} Let $\cat M$ be a simplicial model category. Let $X, Y\in \ob \cat M$ and $f\in \cat M(X,Y)$.
If $X$ is cofibrant, then for all $s,t\geq 0$, the group $\ext^{s,t}_{\cat M}(X,Y)_{f}$ is defined to be 
$$\pi^s\pi_{t}\map_{\cat M}(X, \widehat Y^\bullet)_{f},$$
where $Y\to \widehat Y^\bullet$ is a cohomological fibrant resolution. 
If $Y$ is fibrant, then for all $s,t\geq 0$, the group $\ext^{s,t}_{\cat M}(X,Y)_{f}$ is defined to be
$$\pi^s\pi_{t}\map_{\cat M}(\widetilde X_{\bullet},Y)_{f},$$
where $\widetilde X_{\bullet}\to X$ is a homological cofibrant resolution. The mapping spaces are pointed in the obvious way by the morphisms induced by $f$.
\end{defn}

\begin{rmk} If $X$ is cofibrant and $Y$ is fibrant, then it is clear that the value of $\ext^{s,t}_{\cat M}(X,Y)$, which is concentrated in bidegrees $(0,t)$ for $t\geq 0$, is independent of which component is resolved.   
\end{rmk}

\section{Model categories of Eilenberg-Moore (co)algebras}\label{sec:indmodcat}

In order to define model category structures on categories of Eilenberg-Moore (co)algebras, which we need for our discussion of both (co)completion and homotopic (co)descent,  we  work with model category structures that are induced across adjunctions,  of either of the types specified below.

\begin{defn}\label{defn:induced} Let $U: \cat C \to \cat M$ be a functor, where $\cat M$ is a model category. A model structure on $\cat C$ is \emph{right-induced} from $\cat M$ if $\mathsf {WE}_{\cat C}=U^{-1}(\mathsf {WE}_{\cat M})$ and $\mathsf {Fib}_{\cat C}=U^{-1}(\mathsf {Fib}_{\cat M})$.

Let $F: \cat C \to \cat M$ be a functor, where $\cat M$ is a model category. A model structure on $\cat C$ is \emph{left-induced} from $\cat M$ if $\mathsf {WE}_{\cat C}=F^{-1}(\mathsf {WE}_{\cat M})$ and $\mathsf {Cof}_{\cat C}=F^{-1}(\mathsf {Cof}_{\cat M})$.
\end{defn}

 Schwede and Shipley provided a careful treatment of the case of algebras over a monad in \cite {schwede-shipley}, to which we refer the reader for details of definitions and terminology.

\begin{prop}[\cite{schwede-shipley}, Lemma 2.3] \label{prop:schwede-shipley} Let $\cat M$ be a cofibrantly generated model category, with $\mathsf I$ a set of generating cofibrations and $\mathsf J$ a set of generating acyclic cofibrations.  Let $\T=(T,\mu,\eta)$ be a monad on $\cat M$ such that $T$ commutes with filtered direct limits.  Let $\mathsf I^\T$ and $\mathsf J^\T$ denote the images of $\mathsf I$ and $\mathsf J$ under the free $\T$-algebra functor $F^\T$. Assume that the domains of the morphisms in $\mathsf I^\T$ and $\mathsf J^\T$ are small with respect to regular $\mathsf I^\T$-cofibrations and regular $\mathsf J^\T$-cofibrations.

The forgetful functor $U^\T:\cat M^\T\to \cat M$ right-induces a cofibrantly generated  model category structure on $\cat M^\T$, with $\mathsf I^\T$ as generating cofibrations and $\mathsf J^\T$ as generating acyclic cofibrations, if 
\begin{enumerate}
\item [(a)] every regular $\mathsf J^\T$-cofibration is a weak equivalence, or
\item [(b)] every object of $\cat M$ is fibrant and every $\T$-algebra admits a path object.
\end{enumerate}
\end{prop}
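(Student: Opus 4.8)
The statement is an instance of the standard ``transfer of a cofibrantly generated model structure along a right adjoint'', so the plan is to apply Kan's recognition theorem (\cite[Theorem 11.3.1]{hirschhorn}) with $\mathsf I^\T$ and $\mathsf J^\T$ as the candidate generating cofibrations and generating acyclic cofibrations, and with a morphism of $\cat M^\T$ declared a weak equivalence exactly when $U^\T$ carries it to a weak equivalence of $\cat M$. All the hypotheses of that theorem except one are formal; the remaining one is the content of clause (a) or (b).

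\textbf{Formal preliminaries.} First I would record that $\cat M^\T$ is bicomplete: it is complete because $U^\T$ creates limits, and cocomplete by standard monad theory, using that $T$ preserves filtered colimits (so that $\cat M^\T$ has the needed reflexive coequalizers). The class $\mathsf{WE}_{\cat M^\T}:=(U^\T)^{-1}(\mathsf{WE}_{\cat M})$ inherits the two-out-of-three property and closure under retracts from $\mathsf{WE}_{\cat M}$, since $U^\T$ preserves composites, identities and retracts. By the $(F^\T,U^\T)$-adjunction, a morphism $f$ of $\cat M^\T$ has the right lifting property with respect to $\mathsf I^\T$ (respectively $\mathsf J^\T$) if and only if $U^\T f$ has it with respect to $\mathsf I$ (respectively $\mathsf J$), i.e.\ if and only if $U^\T f$ is an acyclic fibration (respectively a fibration) of $\cat M$; hence $\mathsf I^\T\text{-inj}=\mathsf{WE}_{\cat M^\T}\cap\mathsf J^\T\text{-inj}$. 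Likewise, each generating acyclic cofibration of $\cat M$ lifts against acyclic fibrations of $\cat M$, so $\mathsf J^\T\subseteq\mathsf I^\T\text{-cof}$ and therefore every relative $\mathsf J^\T$-cell complex is a cofibration. Finally, the smallness hypotheses on the domains of $\mathsf I^\T$ and $\mathsf J^\T$ guarantee that both sets permit the small object argument, supplying the two functorial factorizations.

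\textbf{The one nonformal point.} Everything then reduces, via the recognition theorem, to showing that every relative $\mathsf J^\T$-cell complex $j\colon A\to B$ is a weak equivalence, i.e.\ $U^\T j\in\mathsf{WE}_{\cat M}$. Under hypothesis (a) this is exactly the assumption. Under hypothesis (b) I would run Quillen's path-object argument. Since every object of $\cat M$ is fibrant, every $\T$-algebra is fibrant in the putative structure, so the unique morphism $A\to *$ to the terminal $\T$-algebra is a fibration; solving the lifting problem posed by $j$, the identity of $A$, and $A\to *$ produces a retraction $r\colon B\to A$ with $rj=\mathrm{id}_A$. Choosing a path object $B\xrightarrow{\sigma}B^I\xrightarrow{(d_0,d_1)}B\times B$ for $B$ in $\cat M^\T$ (which exists by (b)) and solving the lifting problem posed by $j$, the top edge $\sigma j$, the bottom edge $(\mathrm{id}_B,jr)$, and the fibration $(d_0,d_1)$, one obtains a right homotopy from $\mathrm{id}_B$ to $jr$. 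Applying $U^\T$ exhibits $U^\T j$ as a retract, in the arrow category, of $U^\T(jr)$, which is right-homotopic to the identity of the fibrant object $U^\T B$ and hence lies in $\mathsf{WE}_{\cat M}$; closure of $\mathsf{WE}_{\cat M}$ under retracts gives $U^\T j\in\mathsf{WE}_{\cat M}$.

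\textbf{Assembly.} With $\mathsf J^\T\text{-cell}\subseteq\mathsf{WE}_{\cat M^\T}\cap\mathsf I^\T\text{-cof}$ and $\mathsf I^\T\text{-inj}=\mathsf{WE}_{\cat M^\T}\cap\mathsf J^\T\text{-inj}$ in hand, \cite[Theorem 11.3.1]{hirschhorn} produces a cofibrantly generated model structure on $\cat M^\T$ with generating sets $\mathsf I^\T$, $\mathsf J^\T$, in which the cofibrations are $\mathsf I^\T\text{-cof}$, the fibrations are $\mathsf J^\T\text{-inj}$, and the weak equivalences are $\mathsf{WE}_{\cat M^\T}$. Since by the adjunction computation the fibrations are precisely $(U^\T)^{-1}(\mathsf{Fib}_{\cat M})$ and the weak equivalences are $(U^\T)^{-1}(\mathsf{WE}_{\cat M})$, this structure is right-induced in the sense of Definition \ref{defn:induced}, completing the proof. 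The main obstacle is the verification that relative $\mathsf J^\T$-cell complexes are weak equivalences; the two alternative hypotheses are tailored to this step, (a) postulating it directly and (b) furnishing exactly the fibrancy and path-object data needed for the argument above. (This is, of course, Schwede--Shipley's Lemma 2.3 of \cite{schwede-shipley}, whose proof is the one sketched here.)
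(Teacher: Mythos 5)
Your proof is correct and is essentially the argument of Schwede--Shipley's Lemma 2.3 (\cite{schwede-shipley}), which the paper simply cites without reproving: the reduction to Kan's recognition theorem via the $(F^\T,U^\T)$-adjunction, with the only nonformal step---that relative $\mathsf J^\T$-cell complexes are weak equivalences---handled by hypothesis (a) directly or by Quillen's path-object trick under (b). Since Proposition \ref{prop:schwede-shipley} appears in the paper only as an imported result, there is no in-paper proof to compare against, and your reconstruction matches the standard one.
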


The next proposition, which follows from  Corollary 5.15 in \cite{hess:hhg}, provides one example of conditions under which there exist model category structures in the coalgebra case.   Other sets of sufficient conditions in particular cases were introduced by Hovey in \cite {hovey:comod} and by Stanculescu in \cite{stanculescu}.

Before stating the proposition, we recall a bit of helpful notation and a key definition from \cite{hess:hhg}.

\begin{notn}Let $\mathsf X$ be any subset of morphisms in a category $\cat C$. 
The closure of $\mathsf X$ under formation of retracts is denoted $\widehat{\mathsf X}$, i.e., 
$$f\in \widehat{\mathsf X}\Longleftrightarrow  \exists\; g\in \mathsf X \text{ such that $f$ is a retract of $g$} .$$
\end{notn}

\begin{defn} \label{defn:postnikov} Let $\mathsf X$ be a set of morphisms in a category $\cat C$ that is closed under pullbacks.  If $\lambda$ is an ordinal and  $Y:\lambda ^{op}\to \cat C$ is a functor such that for all $\beta <\lambda$, the morphism $Y_{\beta +1}\to Y_{\beta}$ fits into a pullback
$$\xymatrix{Y_{\beta +1}\ar [d]_{}\ar [r]^{}&X_{\beta +1}\ar [d]^{x_{\beta+1}}\\ Y_{\beta}\ar [r]^{k_{\beta}}&X_{\beta}}$$
for some $x_{\beta +1}: X_{\beta +1}\to X_{\beta}$ in $\mathsf X$ and  $k_{\beta}:Y_{\beta}\to X_{\beta}$ in $\cat C$, while
$Y_{\gamma}:=\lim _{\beta<\gamma}Y_{\beta}$ for all limit ordinals $\gamma<\lambda$,
then the composition of the tower $Y$
$$\lim_{\lambda^{op}}Y_{\beta}\to Y_{0},$$
\emph{if it exists}, is an \emph{$\mathsf X$-Postnikov tower}.  The set of all $\mathsf X$-Postnikov towers is denoted $\mathsf {Post}_{\mathsf X}$.

A \emph{Postnikov presentation} of a model category $\cat M$ is  a pair of  sets of morphisms $\mathsf X$ and $\mathsf Z$ satisfying 
$$\mathsf {Fib}_{\cat M}=\widehat{\mathsf {Post}_{\mathsf X}}\quad\text { and }\quad \mathsf {Fib}_{\cat M}\cap \mathsf {WE}_{\cat M}=\widehat{\mathsf {Post}_{\mathsf Z}}$$ 
and such that for all $f\in \mor \cat M$,
there exist 
\begin{enumerate}
\item [(a)] $i\in \mathsf{Cof} $ and $p\in \mathsf {Post}_{\mathsf Z}$ such
that $f=pi$; 
\item [(b)] $j\in \mathsf{Cof}\cap \mathsf{WE}$ and $ q\in \mathsf {Post}_{\mathsf X}$ such that
$f=qj$.
\end{enumerate}
\end{defn}

\begin{prop}\label{prop:hhg}\cite{hess:hhg} Let $\cat M$ be a model category admitting a Postnikov presentation $(\mathsf X, \mathsf Z)$, and let $\K$ be a comonad on $\cat M$ such that $\cat M_{\K}$ is finitely bicomplete.  Let 
$$\mathsf W=(U_{\K})^{-1}(\mathsf{WE}_{\cat M}) \text { and } \mathsf C=(U_{\K})^{-1}(\mathsf{Cof}_{\cat M}).$$

The forgetful functor $U_{\K}:\cat M_{\K}\to \cat M$ left-induces a model category structure on $\cat M_{\K}$, where the set of fibrations is
$\widehat{\mathsf{Post}_{F_{\K}(\mathsf X)}},$
 if     
$\mathsf{Post}_{F_{\K}(\mathsf Z)}\subset \mathsf W$ and for all $f\in \mor \cat C$
there exist 
\begin{enumerate}
\item [(a)] $i\in \mathsf{C} $ and $p\in \mathsf{Post}_{F_{\K}( \mathsf Z)}$ such
that $f=pi$; 
\smallskip
\item [(b)]$j\in \mathsf{C}\cap \mathsf{W}$ and $q\in \mathsf{Post}_{F_{\K}(\mathsf X)}$ such that
$f=qj$.
\end{enumerate}
\end{prop}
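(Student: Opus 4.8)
The plan is to verify directly that the triple consisting of $\mathsf W$ as weak equivalences, $\mathsf C$ as cofibrations, and $\widehat{\mathsf{Post}_{F_{\K}(\mathsf X)}}$ as fibrations satisfies Quillen's axioms on $\cat M_{\K}$. Since $\mathsf W$ and $\mathsf C$ are by construction the preimages under $U_{\K}$ of the weak equivalences and cofibrations of $\cat M$, establishing such a structure exhibits it as left-induced along $U_{\K}$ in the sense of Definition \ref{defn:induced}. Finite bicompleteness of $\cat M_{\K}$ is part of the hypotheses. The two-out-of-three and retract axioms are then formal: $\mathsf W=U_{\K}^{-1}(\mathsf{WE}_{\cat M})$ inherits two-out-of-three from $\cat M$; both $\mathsf W$ and $\mathsf C$ are closed under retracts because $U_{\K}$ preserves retracts; and $\widehat{\mathsf{Post}_{F_{\K}(\mathsf X)}}$ is closed under retracts by the very definition of the operation $\widehat{(-)}$. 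Hence all the real content lies in producing the two weak factorization systems $(\mathsf C\cap\mathsf W,\ \widehat{\mathsf{Post}_{F_{\K}(\mathsf X)}})$ and $(\mathsf C,\ \widehat{\mathsf{Post}_{F_{\K}(\mathsf X)}}\cap\mathsf W)$.

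The key structural inputs are threefold. First, $F_{\K}$, being right adjoint to $U_{\K}$, preserves all limits, so it sends pullbacks, transfinite towers of pullbacks, and retracts of these to data of the same shape; consequently $F_{\K}$ carries $\mathsf{Post}_{\mathsf X}$ into $\mathsf{Post}_{F_{\K}(\mathsf X)}$ and $\widehat{\mathsf{Post}_{\mathsf X}}$ into $\widehat{\mathsf{Post}_{F_{\K}(\mathsf X)}}$, and likewise for $\mathsf Z$. Second, the adjunction $U_{\K}\dashv F_{\K}$ gives the transfer identity that a morphism $i$ of $\cat M_{\K}$ has the left lifting property against $F_{\K}(g)$ exactly when $U_{\K}i$ has the left lifting property against $g$ in $\cat M$. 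Third, one needs the elementary lemma that $\widehat{\mathsf{Post}_{\mathsf Y}}$ is closed under forming $\mathsf Y'$-Postnikov towers whenever $\mathsf Y'\subseteq\widehat{\mathsf{Post}_{\mathsf Y}}$, proved by amalgamating towers. Combining these: since $U_{\K}$ sends $\mathsf C\cap\mathsf W$ into $\mathsf{Cof}_{\cat M}\cap\mathsf{WE}_{\cat M}$, which lifts against $\mathsf{Fib}_{\cat M}=\widehat{\mathsf{Post}_{\mathsf X}}$ and in particular against $\mathsf X$, the class $\mathsf C\cap\mathsf W$ lifts against $F_{\K}(\mathsf X)$, hence — left lifting classes being stable under pullback, transfinite composition and retract — against all of $\widehat{\mathsf{Post}_{F_{\K}(\mathsf X)}}$. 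Symmetrically, $\mathsf C=U_{\K}^{-1}(\mathsf{Cof}_{\cat M})$ lifts against $F_{\K}(\mathsf Z)$ and hence against $\widehat{\mathsf{Post}_{F_{\K}(\mathsf Z)}}$.

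Next I would identify the acyclic fibrations. The hypothesis $\mathsf{Post}_{F_{\K}(\mathsf Z)}\subseteq\mathsf W$, together with $F_{\K}(\mathsf Z)\subseteq F_{\K}(\widehat{\mathsf{Post}_{\mathsf X}})\subseteq\widehat{\mathsf{Post}_{F_{\K}(\mathsf X)}}$ (using $\mathsf Z\subseteq\mathsf{Fib}_{\cat M}=\widehat{\mathsf{Post}_{\mathsf X}}$ in $\cat M$) and the closure lemma, shows $\widehat{\mathsf{Post}_{F_{\K}(\mathsf Z)}}\subseteq\widehat{\mathsf{Post}_{F_{\K}(\mathsf X)}}\cap\mathsf W$; the reverse inclusion follows from the standard retract argument once factorizations are available. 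Those factorizations are exactly the content of hypotheses (a) and (b): every $f\in\mor\cat M_{\K}$ factors as $pi$ with $i\in\mathsf C$ and $p\in\mathsf{Post}_{F_{\K}(\mathsf Z)}\subseteq\widehat{\mathsf{Post}_{F_{\K}(\mathsf X)}}\cap\mathsf W$, and as $qj$ with $j\in\mathsf C\cap\mathsf W$ and $q\in\mathsf{Post}_{F_{\K}(\mathsf X)}\subseteq\widehat{\mathsf{Post}_{F_{\K}(\mathsf X)}}$. Feeding these, together with the lifting properties of the previous paragraph, into the usual retract argument promotes the two factorization-plus-lifting pairs into genuine weak factorization systems — in particular $\mathsf C\cap\mathsf W$ becomes precisely the class of maps with the left lifting property against $\widehat{\mathsf{Post}_{F_{\K}(\mathsf X)}}$, and $\mathsf C$ the class with the left lifting property against $\widehat{\mathsf{Post}_{F_{\K}(\mathsf X)}}\cap\mathsf W$ — and completes the verification of the model category axioms. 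Concretely all of this bookkeeping is already packaged in Corollary 5.15 of \cite{hess:hhg}, to which one may simply appeal.

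The step I expect to be the main obstacle is precisely this Postnikov-tower bookkeeping: showing that $\widehat{\mathsf{Post}_{(-)}}$ is closed under iterated tower formation and under the cofree functor $F_{\K}$, and pinning down the identity $\widehat{\mathsf{Post}_{F_{\K}(\mathsf X)}}\cap\mathsf W=\widehat{\mathsf{Post}_{F_{\K}(\mathsf Z)}}$. The asymmetry inherent in a left-induced structure — one controls cofibrations and weak equivalences but must manufacture fibrations out of cofree, possibly transfinite, limit data rather than via the small object argument — means that every lifting and factorization has to be threaded carefully through the adjunction and through the limit constructions defining Postnikov towers, and one must check that the hypothesized (a)-factorization and (b)-factorization interact correctly with the retract argument (and, depending on one's chosen axiomatization, with functoriality of factorization).
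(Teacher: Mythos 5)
Your proposal is correct in outline and ends by appealing to Corollary 5.15 of \cite{hess:hhg}, which is exactly all the paper itself does: the proposition is stated as an import from that reference, with no independent proof given here. Your sketch of the underlying dual-transfer argument (factorizations from (a)--(b), liftings via the $(U_{\K},F_{\K})$-adjunction and closure of Postnikov-type classes under pullback, towers and retracts, then the retract argument identifying the acyclic fibrations) matches the intended argument, and you correctly flag the genuinely delicate point, namely the closure of $\widehat{\mathsf{Post}_{(-)}}$ under tower formation needed to see that $\mathsf{Post}_{F_{\K}(\mathsf Z)}$ lands in the declared fibration class.
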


\begin{rmk} Together with B. Shipley, we have provided conditions on $\cat M$ and on $\K$ such that the hypotheses of Proposition \ref{prop:hhg} are satisfied \cite{hess-shipley} and shown that these conditions are satisfied in a number of cases of interest.  In particular our results show that, under reasonable conditions on a monoidal model category $\cat M$, the category of Grothendieck descent data $\D(\T_{\vp})$ associated to a monoid morphism $\vp:B\to A$ admits a model category structure left-induced from $\cat {Mod}_{A}$.
\end{rmk}

\begin{rmk} It is an easy exercise to show that for any comonad $\K=(K,\Delta, \ve)$ on a category $\cat D$, the forgetful functor $U_{\K}:\cat D_{\K}\to \cat D$ creates colimits.  It follows that if $\cat D$ is cocomplete, then $\cat D_{\K}$ is as well.  
On the other hand,  the duals of Corollary 3.3 and Theorem 3.9 in section 9.3 of \cite{barr-wells} together imply that $\cat D_{\K}$ is complete if  $\cat D$ is complete and $K$ preserves countable inverse limits, which is true for many comonads of interest.
\end{rmk}

 \bibliographystyle{amsplain}
\bibliography{adams}

\end{document}